 \DeclareMathOperator{\diag}{diag}
\newcommand{\D}{\mathrm{d}}
\newcommand{\p}{\partial}
\newcommand{\tr}{\mathrm{tr}}
\newtheorem{theorem}{Theorem}
\newtheorem{lemma}{Lemma}
\newtheorem{corollary}{Corollary}
\newtheorem{remark}{Remark}
\newtheorem{proposition}{Proposition}
\newtheorem{definition}{Definition}
\numberwithin{equation}{section} \numberwithin{theorem}{section}
\numberwithin{lemma}{section} \numberwithin{corollary}{section}
\numberwithin{remark}{section} \numberwithin{proposition}{section}
\numberwithin{definition}{section}
\begin{document}
\title{The rank 1 real Wishart spiked model\footnote{The author  acknowledges financial
   support by the EPSRC grant EP/G019843/1.}}
\author{M. Y. Mo}
\date{}
\maketitle
\begin{abstract}
In this paper, we consider $N$-dimensional real Wishart matrices $Y$
in the class $W_{\mathbb{R}}\left(\Sigma,M\right)$ in which all but
one eigenvalues of $\Sigma$ is $1$. Let the non-trivial eigenvalue
of $\Sigma$ be $1+\tau$, then as $N$, $M\rightarrow\infty$, with
$M/N=\gamma^2$ finite and non-zero, the eigenvalue distribution of
$Y$ will converge into the Marchenko-Pastur distribution inside a
bulk region. When $\tau$ increases from zero, one starts to see a
stray eigenvalue of $Y$ outside of the support of the
Marchenko-Pastur density. As the this stray eigenvalue leaves the
bulk region, a phase transition will occur in the largest eigenvalue
distribution of the Wishart matrix. In this paper we will compute
the asymptotics of the largest eigenvalue distribution when the
phase transition occur. We will first establish the results that are
valid for all $N$ and $M$ and will use them to carry out the
asymptotic analysis. In particular, we have derived a contour
integral formula for the Harish-Chandra Itzykson-Zuber integral
$\int_{O(N)}e^{\tr(XgYg^T)}g^T\D g$ when $X$, $Y$ are real symmetric
and $Y$ is a rank 1 matrix. This allows us to write down a Fredholm
determinant formula for the largest eigenvalue distribution and
analyze it using orthogonal polynomial techniques. As a result, we
obtain an integral formula for the largest eigenvalue distribution
in the large $N$ limit characterized by Painlev\'e transcendents.
The approach used in this paper is very different from a recent
paper \cite{BB}, in which the largest eigenvalue distribution was
obtained using stochastic operator method. In particular, the
Painlev\'e formula for the largest eigenvalue distribution obtained
in this paper is new.
\end{abstract}
\section{Introduction}
Let $X$ be an $N\times M$ (throughout the paper, we will assume $M>
N$ and $N$ is even) matrix such that each column of $X$ is an
independent, identical $N$-variate random variable with normal
distribution and zero mean. Let $\Sigma$ be its covariance matrix,
i.e. $\Sigma_{ij}=E(X_{i1}X_{j1})$. Then $\Sigma$ is an $N\times N$
positive definite symmetric matrix and we will denote its
eigenvalues by $1+\tau_j$. The matrix $Y$ defined by
$Y=\frac{1}{M}XX^T$ is a real Wishart matrix in the class
$W_{\mathbb{R}}\left(\Sigma,M\right)$ and $M$ is called the degree
of freedom of the Wishart matrix. We can think of each column of $X$
as a draw from a $N$-variate random variable with the normal
distribution and zero mean, and $Y$ the the sample covariance matrix
for the samples represented by $X$. Real Wishart matrices are good
models of sample covariance matrices in many situations and have
applications in many areas such as finance \cite{JP}, \cite{Hd},
genetic studies and climate data. (See \cite{johnstone} for
example.)

In many of these applications, one has to deal with data in which
both $N$ and $M$ are large, while the ratio $M/N$ is finite and
non-zero. In this case, the sample covariance matrix $Y$ becomes a
poor approximation for the covariance matrix $\Sigma$. However,
since it is often reasonable to approximate the data by $N$-variate
gaussian random variables, a comparison between the eigenvalue
distribution of the sample covariance matrix and Wishart matrices
with a known covariance matrix will give a good estimate for the
spectrum of the true covariance matrix $\Sigma$. In particular, in
applications to principle component analysis, one would like to
study the asymptotic behavior of the largest eigenvalue of $Y$ as
$N$, $M\rightarrow\infty$ with $M/N\rightarrow\gamma^2\geq 1$ fixed.

For many statistical data with large $N$ and $M$ and $M/N$ finite,
it was noted in \cite{johnstone} that the eigenvalue distribution of
the sample covariance matrix is well-approximated by the
Marchenko-Pastur law \cite{MP} inside a bulk region (See
\cite{Bai},\cite{BS}, \cite{BS98}, \cite{BS99}, \cite{S95}.)
\begin{equation}\label{eq:MP}
\rho(\lambda)=\frac{\gamma^2}{2\pi\lambda}\sqrt{(\lambda-b_-)(b_+-\lambda)}\chi_{[b_-,b_+]},
\end{equation}
where $\chi_{[b_-,b_+]}$ is the characteristic function for the
interval $[b_-,b_+]$ and $b_{\pm}=(1\pm \gamma^{-1})^2$. However,
outside of the bulk region, there are often a finite number of large
eigenvalues at isolated locations. This behavior prompted the
introduction of spiked models in \cite{johnstone}, which are Wishart
matrices with a covariance matrix $\Sigma$ such that only finitely
many eigenvalues of $\Sigma$ are different from one. These
non-trivial eigenvalues in $\Sigma$ will then be responsible for the
spikes that appear in the eigenvalue distribution of the sample
covariance matrix \cite{BaiS}. The number of these non-trivial
eigenvalues in $\Sigma$ is called the rank of the spiked model.

Of particular interest is a phase transition that arises in the
largest eigenvalue distributions when the first of these spikes
starts leaving the bulk region. This phenomenon was first studied in
\cite{baik04} for the complex Wishart spiked model and then in
\cite{Wang} for the rank 1 quarternionic Wishart spiked model. Let
$1+\tau$ be the non-trivial eigenvalue in $\Sigma$, in both cases,
it was shown that the phase transition in the largest eigenvalue
distribution occurs when $\tau=\gamma^{-1}$. Their results are
expressed in terms of the Hastings-McLeod solution of the Painlev\'e
II equation, which is the unique solution to the Painlev\'e II
equation
\begin{equation}\label{eq:PII}
\phi_0^{\prime\prime}(\zeta)=\zeta\phi_0+2\phi_0^3,
\end{equation}
with the following asymptotic behavior
\begin{equation}\label{eq:HM}
\begin{split}
&\phi_0\sim Ai(\zeta),\quad \zeta\rightarrow+\infty,\\
&\phi_0=\sqrt{\frac{-\zeta}{2}}\left(1+\frac{1}{8\zeta^3}+O\left(\zeta^{-6}\right)\right),\quad
\zeta\rightarrow-\infty.
\end{split}
\end{equation}
In particular, they have proved the following.
\begin{theorem}(\cite{baik04} for complex and \cite{Wang} for
quarternions) Let $W(\Sigma,M,2)=W_{\mathbb{C}}(\Sigma,M)$ and
$W(\Sigma,M,4)=W_{\mathbb{Q}}(\Sigma,M)$ be the $N$-dimensional
complex and quaternionic Wishart matrices with $M$ degrees of
freedom respectively. Suppose all but one eigenvalues of $\Sigma$
are 1 and the other eigenvalue is $1+\tau$. Then as $N$,
$M\rightarrow\infty$ with $M/N\rightarrow\gamma^2\geq 1$ finite, we
have
\begin{enumerate}
\item For $-1<\tau\leq \gamma^{-1}$, the largest eigenvalue
distribution is given by
\begin{equation*}
\lim_{M\rightarrow\infty}P\left(\left(\lambda_{max}-(1+\gamma^{-1})^2\right)\frac{\gamma
M^{\frac{2}{3}}}{(1+\gamma)^{\frac{4}{3}}}\leq
\zeta\right)=TW_{\beta}(\zeta)
\end{equation*}
where $TW_{\beta}(\zeta)$ is the Tracy-Widom distribution.
\begin{equation}\label{eq:TWdis}
\begin{split}
TW_2(\zeta)&=\exp\left(-\int_{\zeta}^{\infty}(y-\zeta)\phi_0^2(y)\D
y\right),\\
TW_4(\zeta)&=\frac{1}{2}\sqrt{TW_2(\zeta)}\left(e^{-\frac{1}{2}\int_{\zeta}^{\infty}\phi_0\D
y} +e^{\frac{1}{2}\int_{\zeta}^{\infty}\phi_0\D y}\right).
\end{split}
\end{equation}
\item For $\tau=\gamma^{-1}$,
\begin{equation*}
\lim_{M\rightarrow\infty}P\left(\left(\lambda_{max}-(1+\gamma^{-1})^2\right)\frac{\gamma
M^{\frac{2}{3}}}{(1+\gamma)^{\frac{4}{3}}}\leq
\zeta\right)=F_{\beta}(\zeta)
\end{equation*}
where $F_{\beta}(\zeta)$ is given by
\begin{equation*}
\begin{split}
F_2(\zeta)=TW_1^2(\zeta),\quad F_4(\zeta)=TW_1(\zeta),\quad
TW_1(\zeta)=\sqrt{TW_2(\zeta)}e^{-\frac{1}{2}\int_{\zeta}^{\infty}\phi_0\D
y}.
\end{split}
\end{equation*}
\item For $\tau>\gamma^{-1}$,
\begin{equation*}
\lim_{M\rightarrow\infty}P\left(\left(\lambda_{max}-(\tau+1)\left(1+\gamma^{-2}\tau^{-1}\right)\right)\frac{
\sqrt{\beta M/2}}{(1+\tau)\sqrt{1-\gamma^{-2}\tau^{-2}}}\leq
\zeta\right)=erf\left(\zeta\right)
\end{equation*}
where $erf(\zeta)$ is the error function.
\end{enumerate}
\end{theorem}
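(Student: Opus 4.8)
The plan is to express the largest-eigenvalue distribution of the spiked ensemble $W(\Sigma,M,\beta)$, $\beta\in\{2,4\}$, as a Fredholm determinant (for $\beta=2$) or a Fredholm Pfaffian (for $\beta=4$) with an explicit kernel, and then to run a steepest-descent analysis in the three ranges of $\tau$. The starting point is the joint eigenvalue density. Because $\Sigma$ has all eigenvalues equal to $1$ except one equal to $1+\tau$, the matrix $\Sigma^{-1}$ is a rank-one perturbation of the identity, so the group integral $\int e^{-\frac{M}{2}\tr(\Sigma^{-1}g\Lambda g^{\dagger})}\,\D g$ appearing in the density is, after extracting the scalar factor $e^{-\frac{M}{2}\tr\Lambda}$, a rank-one Harish-Chandra--Itzykson--Zuber integral. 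For $\beta=2$ the classical Harish-Chandra formula turns it into a one-dimensional integral, and expanding the resulting density shows that the eigenvalues form a determinantal point process whose kernel $K_{N,M}$ is a one-parameter deformation of the Laguerre kernel; using the integral representations of the Laguerre functions, $K_{N,M}$ can be rewritten as a double contour integral $\frac{1}{(2\pi i)^2}\oint\!\!\oint e^{M(\Phi(w)-\Phi(z))}\,\frac{h(z,w)}{w-z}\,\D z\,\D w$, where the phase $\Phi$ depends on $\tau$ and $\gamma$ but not on $N$, and $h$ carries a spike pole at a real point $z_\tau$. For $\beta=4$ the symplectic analogue of the Harish-Chandra formula produces a Pfaffian point process whose $2\times2$ matrix kernel is assembled from the same scalar ingredients.

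Next I would write $P(\lambda_{\max}\le t)=\det(I-K_{N,M})\big|_{L^2(t,\infty)}$, respectively the Fredholm Pfaffian of the matrix kernel, and rescale near the relevant edge: $t=b_+ + \zeta(1+\gamma)^{4/3}/(\gamma M^{2/3})$ in cases (1)--(2), and $t=(\tau+1)(1+\gamma^{-2}\tau^{-1})+\zeta(1+\tau)\sqrt{1-\gamma^{-2}\tau^{-2}}/\sqrt{\beta M/2}$ in case (3). The key structural fact is that $\Phi$ has a double critical point sitting exactly above $b_+$ — this is what produces the Airy behaviour in the unspiked model — while the spike pole $z_\tau$ lies strictly on the inactive side of the descent contour when $\tau<\gamma^{-1}$, collides with the double critical point when $\tau=\gamma^{-1}$, and escapes past $b_+$ when $\tau>\gamma^{-1}$, so that deforming the contour past it picks up a residue.

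The asymptotics then proceed case by case. For $\tau<\gamma^{-1}$ the pole $z_\tau$ contributes only exponentially small corrections, the rescaled kernel converges to the Airy kernel (respectively the $2\times2$ Airy matrix kernel), and the classical evaluation of its Fredholm determinant/Pfaffian through the Hastings--McLeod solution \eqref{eq:PII}--\eqref{eq:HM} yields $TW_2$ and $TW_4$ as in \eqref{eq:TWdis}, giving case (1). For $\tau=\gamma^{-1}$ one performs a uniform steepest-descent analysis in which the pole is allowed to merge with the double critical point; the local model is still governed by the Airy equation but is modified by the coalescing pole, producing a deformed Airy kernel whose Fredholm determinant, after the Tracy--Widom-type ODE reduction, collapses to $F_2=TW_1^2$, respectively $F_4=TW_1$; this is case (2). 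For $\tau>\gamma^{-1}$ the outlier is controlled by the residue at $z_\tau$: after rescaling near $z_\tau$ the surviving part of the kernel is of rank one and Gaussian, its Fredholm determinant becomes a Gaussian distribution function (the error function), and the $\sqrt{\beta M/2}$ scaling reflects that the outlier is a smooth function of $O(M)$ independent Gaussians; this is case (3), and it can also be obtained by a direct secular-equation and central-limit-theorem argument isolating the spiked coordinate.

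The main obstacle is \emph{case (2)}: one must control the coalescence of the spike pole with the double saddle uniformly in a shrinking neighbourhood of $\tau=\gamma^{-1}$, extract the correct deformed-Airy limiting kernel, and then carry out the nontrivial identification of its Fredholm determinant with the Painlev\'e~II expression $TW_1^2$ — and, for $\beta=4$, do all of this at the level of the $2\times2$ matrix kernel and Fredholm Pfaffian, where tracking the edge asymptotics of the four scalar entries and their derivatives is considerably more delicate. By comparison, the Harish-Chandra reduction, the rewriting of the kernel as a contour integral, and the sub- and supercritical steepest-descent estimates are fairly routine once this framework is set up.
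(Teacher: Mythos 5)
This theorem is not proved in the paper you are reading: it is quoted verbatim as a result of \cite{baik04} (for $\beta=2$) and \cite{Wang} (for $\beta=4$), and the paper's own contribution is the real case $\beta=1$, so strictly there is no proof of record here to compare your sketch against. That said, your outline for $\beta=2$ does reproduce the architecture of Baik--Ben-Arous--P\'ech\'e: the HCIZ formula collapses the unitary integral, the eigenvalues form a determinantal process with a one-parameter deformation of the Laguerre kernel, the kernel admits a double contour-integral representation, and the three regimes correspond exactly to the spike pole lying on the inactive side of, coinciding with, or crossing the double saddle of the phase function. Your recognition that case (2), the coalescence of the pole with the double critical point, is the hard part is also accurate; that is where the Painlev\'e~II reduction and the identification $F_2 = TW_1^2$ live.

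The genuine gap is in the opening step of the $\beta=4$ case. You write that ``the symplectic analogue of the Harish-Chandra formula produces a Pfaffian point process whose $2\times2$ matrix kernel is assembled from the same scalar ingredients,'' but there is no Harish-Chandra/Itzykson--Zuber formula for $Sp(N)$ in the form your argument requires, and the present paper says so explicitly: ``In the quaternionic case, although the Harish-Chandra and Itzykson Zuber formula does not apply, the integral over $Sp(N)$ in the j.p.d.f.\ can still be evaluated using Zonal polynomial expansion to obtain a determinantal formula for the j.p.d.f.\ in the rank 1 case \cite{Wang}.'' What actually makes $\beta=4$ tractable in \cite{Wang} --- and is rederived in Appendix~A of this paper via quaternion Zonal polynomials, yielding the one-dimensional contour formula \eqref{eq:compsym} --- is that for a rank-one spike only the single-row partition survives in the Zonal expansion. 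Your plan would need this rank-one collapse as a substitute for the missing HCIZ formula. Moreover the ``same scalar ingredients'' claim glosses over the fact that the $2\times 2$ kernel for $\beta=4$ is built from skew-orthogonal polynomial data rather than from the $\beta=2$ kernel itself, and its edge asymptotics require the full skew-orthogonal/Riemann--Hilbert machinery of Wang's paper (and of the present paper for $\beta=1$), not a transcription of the $\beta=2$ double contour integral. Until that step is filled in, the $\beta=4$ half of the theorem is unsupported.
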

The functions $TW_{\beta}$ are called the Tracy-Widom distributions
in the literature and they give the largest eigenvalue distributions
for the real ($\beta=1$), complex ($\beta=2$) and quarternionic
($\beta=4$) Wishart ensembles with $\Sigma=I$, as well as the
largest eigenvalue distributions for a large class of random matrix
models.

Note that in \cite{baik04}, the phase transition was in fact
computed for spiked models of any finite rank. These previous
results naturally divides the range of the non-trivial eigenvalue
$1+\tau$ into 3 regimes, which are illustrated in Figure
\ref{fig:regime}.
\begin{figure}
\includegraphics[scale=0.5]{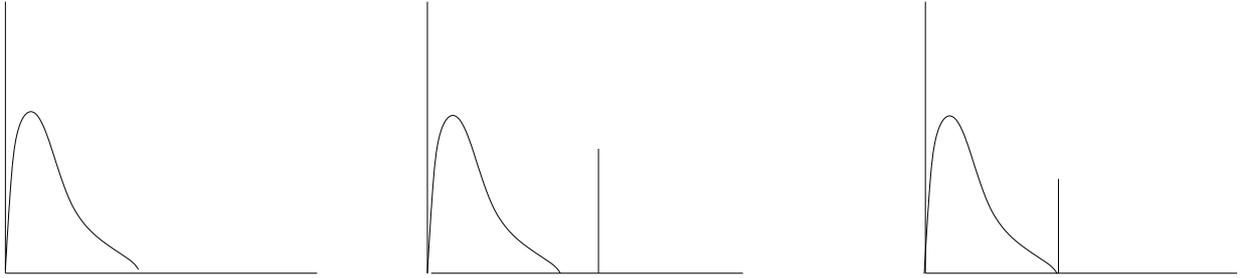}
\caption{The 3 different regimes. Left: The subcritical regime where
the perturbation in $\Sigma$ is not strong enough to form a spike in
the spectrum of $Y$. Middle: The critical regime where the
perturbation is just strong enough to form a spike at the edge of
the spectrum. Right: The super-critical regime where a spike has
left the spectrum}\label{fig:regime}
\end{figure}
\begin{enumerate}
\item The subcritical regime: When $-1<\tau<\gamma^{-1}$, the
perturbation in $\Sigma$ is not strong enough to form a spike in the
eigenvalue distribution of $Y$ and the largest eigenvalue
distribution in the Wishart matrix is not affected by this
non-trivial eigenvalue. For real Wishart ensembles, this case was
studied in \cite{FS} and it was shown that the largest eigenvalue
distribution remains the same as the case when $\Sigma=I$. The
result of \cite{FS} also applies to much more general sample
covariance matrices that are not necessarily Gaussian.
\item The critical regime: When $1-\gamma^{-1}\tau=O\left(M^{-\frac{1}{3}}\right)$, the perturbation is just strong enough to form a spike and a phase transition
occurs in the largest eigenvalue distribution. The result in
\cite{baik04} for the complex phase transition in fact extends to
the whole regime where the authors showed that the largest
eigenvalue distribution is described by a function $F_2(\zeta,w)$
which equals $TW_1^2(\zeta)$ when $w=0$. The critical regime for the
real case is the subject of this paper. It was also studied recently
in \cite{BB} using a completely different approach.
\item The super-critical regime: When $\tau>1$, the perturbation in
$\Sigma$ is strong enough to form a spike and the largest eigenvalue
is located at the spike instead of the edge of the bulk region. For
rank 1 real Wishart spiked model, this was studied in \cite{Paul}.
The results in \cite{Paul} has also been generalized to a large
class of rank 1 spiked perturbed random matrix models in
\cite{Wang2}.
\end{enumerate}
Despite having the most applications, the phase transition for real
Wishart spiked model has not been solved until very recently
\cite{BB}. The main goal of this paper is to obtain the asymptotic
largest eigenvalue distribution for the rank 1 real Wishart spiked
model in the critical regime. In a recent paper \cite{BB}, the
asymptotic largest eigenvalue distribution for the rank 1 real
Wishart ensemble was obtained by using a completely different
approach to ours. In \cite{BB}, the authors first use the
Housefolder algorithm to reduce a Wishart matrix into tridiagonal
form. Such tridiagonal matrix is then treated as a discrete random
Schr\"odinger operator and by taking an appropriate scaling limit,
the authors obtained a continuous random Schr\"odinger operator on
the half-line. By doing so, the authors in \cite{BB} bypass the
problem of determining the eigenvalue j.p.d.f. for the real Wishart
ensemble and obtain the largest eigenvalue distribution in the
asymptotic limit. In \cite{BB}, the largest eigenvalue distribution
is characterized in several different ways, one of which is the
solution of a PDE.
\begin{theorem}\label{thm:BB}(Theorem 1.7 of \cite{BB})
Let
$w=\left(\frac{N}{(1+\gamma^{-1})^2}\right)^{\frac{1}{3}}(1-\gamma\tau)
\in(-\infty,\infty)$, then the following boundary value problem
\begin{equation*}
\begin{split}
&\frac{\p F}{\p\zeta}+\frac{2}{\beta}\frac{\p^2F}{\p w^2}+(\zeta-w^2)\frac{\p F}{\p w}=0,\\
&F(\zeta,w)\rightarrow 1,\quad \textrm{$\zeta$,$w\rightarrow+\infty$ together},\\
&F(\zeta,w)\rightarrow 0,\quad w\rightarrow-\infty,\quad x<x_0<\infty
\end{split}
\end{equation*}
has a unique solution $F_{\beta}(\zeta,w)$ and
\begin{equation*}
\lim_{M\rightarrow\infty}P\left(\left(\lambda_{max}-(1+\gamma^{-1})^2\right)\frac{\gamma
M^{\frac{2}{3}}}{(1+\gamma)^{\frac{4}{3}}}\leq
\zeta\right)=F_{\beta}(\zeta,w),
\end{equation*}
where $\beta=1$, $2$ and $4$ for the real, complex and quarternionic Wishart ensembles respectively.
\end{theorem}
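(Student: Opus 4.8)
The plan is to derive Theorem~\ref{thm:BB} as a consequence of the finite-$N$ identities established in the body of this paper, supplemented by a double-scaling steepest-descent analysis, and to treat its two assertions --- well-posedness of the boundary value problem and the identification of its solution with the limiting largest eigenvalue distribution --- in separate stages. First I would use the contour integral formula for $\int_{\ON}e^{\tr(XgYg^T)}g^T\D g$ with $Y$ of rank $1$ to convert $P(\lambda_{max}\le t)$ for $Y\in W_{\mathbb R}(\Sigma,M)$, $\Sigma=\diag(1+\tau,1,\dots,1)$, into a Fredholm determinant on $(t,\infty)$ whose kernel is assembled from the Laguerre-type orthogonal polynomials for the relevant weight, with the spike $\tau$ entering both through a low-rank additive correction to the unperturbed kernel and through the weight itself.

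Next I would introduce the double-scaling variables $t=(1+\gamma^{-1})^2+\zeta\,(1+\gamma)^{4/3}\gamma^{-1}M^{-2/3}$ and $w=((1+\gamma^{-1})^2/N)^{1/3}(1-\gamma\tau)$ and run the Deift--Zhou steepest-descent method on the Riemann--Hilbert problem for the associated orthogonal polynomials. In the critical regime the local parametrix at the soft edge is not the Airy parametrix but the Painlev\'e~II parametrix built from the Hastings--McLeod solution $\phi_0$ of \eqref{eq:PII}--\eqref{eq:HM}, while the spike contributes a pole whose location relative to the edge is controlled by $w$. Pushing this analysis through yields a limiting kernel, hence a limit $\lim_{M\to\infty}P(\,\cdot\le\zeta\,)=:\widetilde F_\beta(\zeta,w)$ given explicitly in terms of $\phi_0$ and its Lax-pair data.

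To obtain the PDE I would exploit the isomonodromic structure behind this limit: differentiating the Fredholm determinant in $\zeta$ and in $w$, and using the Painlev\'e~II Lax equations to rewrite the $w$-derivatives of the resolvent in terms of the $\zeta$-flow, one should find that the combination $\p_\zeta\widetilde F+\tfrac{2}{\beta}\p_w^2\widetilde F+(\zeta-w^2)\p_w\widetilde F$ telescopes to zero. The boundary conditions are verified probabilistically: when $\zeta$ and $w$ tend to $+\infty$ the spike stays strictly inside the bulk and the rescaled largest eigenvalue lies below $\zeta$ with probability tending to $1$, so $\widetilde F_\beta\to1$; when $w\to-\infty$ with $\zeta$ bounded the spike has left the bulk and pulls $\lambda_{max}$ above the window, forcing $\widetilde F_\beta\to0$. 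For uniqueness, regarding $-\zeta$ as time turns the equation into a (backward) convection--diffusion equation on $\mathbb R$, and uniqueness within the class of bounded solutions with the stated asymptotics follows from a parabolic maximum principle or an energy estimate; together with the previous step this shows that $F_\beta(\zeta,w)$ exists, is unique, and coincides with $\widetilde F_\beta$ and hence with the limiting distribution.

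The hard part will be the double-scaling Riemann--Hilbert analysis and the extraction of the PDE from it: one must control the transition in which the Airy parametrix degenerates into the Painlev\'e~II parametrix while simultaneously the spike-induced pole approaches the spectral edge, do this uniformly in the pair $(\zeta,w)$, and then isolate the exact differential identities that produce the equation of Theorem~\ref{thm:BB} rather than merely a closed formula for $\widetilde F_\beta$. A secondary subtlety is the bookkeeping of the real ($\beta=1$) case, where the determinantal formula carries a Pfaffian / quaternion-determinant structure and the correlation kernel is a $2\times2$ matrix, so the $w$-dependence is distributed among several coupled scalar functions rather than one; keeping the differential identities consistent across $\beta=1,2,4$ is where most of the care is needed.
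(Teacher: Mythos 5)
This statement is Theorem~1.7 of \cite{BB}, which the paper \emph{cites} but does not prove. The present paper offers no proof of Theorem~\ref{thm:BB} at all; it develops a different, Painlev\'e-transcendent integral representation of the same limiting distribution (Theorem~\ref{thm:phdis}), and the author explicitly remarks that it ``will be very interesting to see how these two representations of the largest eigenvalue distribution can be converted into one another.'' So there is no in-paper proof to compare against, and the reconciliation you are proposing is, by the author's own account, an open question.

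The route actually used in \cite{BB} is fundamentally different from what you sketch: they tridiagonalize the Wishart matrix (Householder), interpret it as a discrete random Schr\"odinger operator, and pass to a scaling limit given by the stochastic Airy operator on the half-line with a Robin boundary condition encoding $w$; the PDE in $(\zeta,w)$ then arises from the diffusion/Riccati description of that operator, not from orthogonal polynomials or Riemann--Hilbert analysis. Your proposal instead follows the orthogonal-polynomial / Riemann--Hilbert / steepest-descent path of this paper. That path, when carried through (Sections~\ref{se:asymskew}--\ref{se:Fred}), produces the Painlev\'e-transcendent formula of Theorem~\ref{thm:phdis}, not the boundary value problem of Theorem~\ref{thm:BB}.

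The concrete gap is your third step: ``using the Painlev\'e~II Lax equations to rewrite the $w$-derivatives of the resolvent in terms of the $\zeta$-flow, one should find that the combination \ldots\ telescopes to zero.'' This is precisely the unproved claim. No such identity is established in the paper, no mechanism is supplied for producing a second-order PDE in $w$ from the Lax pair (which controls $\zeta$-flow and isomonodromic $t$-deformation, not a $\beta$-dependent diffusion in $w$), and the $\tfrac{2}{\beta}$ coefficient has no visible origin on the Riemann--Hilbert side. Asserting that it ``should telescope'' is not a proof; it is the open problem. Two further issues: the paper's asymptotic analysis is carried out only for $\gamma=1$ and only for $\beta=1$ (the $\beta=2$ and $\beta=4$ results being imported from \cite{baik04} and \cite{Wang}), whereas the theorem covers all $\gamma$ and all three $\beta$; and your uniqueness argument via the parabolic maximum principle, while plausible, is also not something the paper contains, so you would need to supply it independently from \cite{BB} or prove it yourself. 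As written, the proposal does not constitute a proof of the statement, and it does not track what the paper does (which is to cite \cite{BB}).
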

The distribution also has another characterization which requires
more set up to explain. We will refer the readers to \cite{BB} for
more details. The above result in \cite{BB} in fact also extends to
the spiked perturbations of Gaussian ensembles in random matrix
theory and to their general $\beta$ analogues, which are defined in
\cite{BB}.

On the other hand, the approach in this paper uses orthogonal
polynomial techniques that are closer to those in \cite{baik04} and
\cite{Wang} and we obtain a characterization of the largest
eigenvalue distribution in the critical regime in terms of
Painlev\'e transcendents. It will be very interesting see how these
two representations of the largest eigenvalue distribution can be
converted into one another. We will now state our results and
outline the method used in the paper.

\section{Statement of result}
We will now explain the approach used in this paper.

Let $\lambda_j$ be the eigenvalues of the Wishart matrix, then the
j.p.d.f. for the real Wishart ensemble is given by
\begin{equation}\label{eq:jpdf}
P(\lambda)=\frac{1}{Z_{M,N}}|\Delta(\lambda)|\prod_{j=1}^{N}\lambda_j^{\frac{M-N-1}{2}}\int_{O(N)}
e^{-\frac{M}{2}\tr(\Sigma^{-1}gY g^{-1})}g^Tdg,
\end{equation}
where $g^Tdg$ is the Haar measure on $O(N)$ and $Z_{M,N}$ is a
normalization constant. The j.p.d.f. for complex and quarternionic
Wishart ensembles are similar
\begin{equation*}
P_2(\lambda)=\frac{1}{Z_{M,N}}|\Delta(\lambda)|^2\prod_{j=1}^{N}\lambda_j^{M-N}\int_{U(N)}
e^{-M\tr(\Sigma^{-1}gY g^{-1})}g^{\dagger}dg,
\end{equation*}
for complex Wishart and
\begin{equation*}
P_4(\lambda)=\frac{1}{Z_{M,N}}|\Delta(\lambda)|^4\prod_{j=1}^{N}\lambda_j^{2(M-N)+1}\int_{Sp(N)}
e^{-2M\mathrm{Re}\left(\tr(\Sigma^{-1}gY g^{-1})\right)}g^{-1}dg,
\end{equation*}
for the quarternionic Wishart.

A major difficulty in the asymptotic analysis of the real Wishart
ensembles is to find a simple expression for the j.p.d.f. in terms
of its eigenvalues. For complex Wishart ensembles, the integral over
$U(N)$ in the expression of the j.p.d.f. can be evaluated using the
Harish-Chandra \cite{HC} Itzykson Zuber \cite{IZ} formula to obtain
a compact determinantal formula for the j.p.d.f. in terms of the
eigenvalues $\lambda_j$. This has led to the results in
\cite{baik04} and results on the largest eigenvalue distributions
for more general complex Wishart ensembles \cite{El}, \cite{On}. In
the quarternonic case, although the Harish-Chandra and Itzykson
Zuber formula does not apply, the integral over $Sp(N)$ in the
j.p.d.f. can still be evaluated using Zonal polynomial expansion to
obtain a determinantal formula for the j.p.d.f. in the rank 1 case
\cite{Wang}. In the real case, however, neither of these methods
apply and so far there has not been any simple formula for the
j.p.d.f. that allows the asymptotic analysis in the real case. Our
first result is a contour integral formula that would allow the
asymptotic analysis of the rank 1 real Wishart spiked model.
\begin{theorem}\label{thm:main1}
Assuming $N$ is even. Let the non-trivial eigenvalue in the
covariance matrix $\Sigma$ be $1+\tau$ and suppose $\tau\neq 0$.
Then the j.p.d.f. of the eigenvalues in the rank 1 real Wishart
spiked model with covariance matrix $\Sigma$ is given by
\begin{equation}\label{eq:main1}
P(\lambda)=\tilde{Z}_{M,N}^{-1}
\int_{\Gamma}|\Delta(\lambda)|e^{\frac{M\tau}{2(1+\tau)}t}\prod_{j=1}^Ne^{-\frac{M}{2}\lambda_j}\lambda_j^{\frac{M-N-1}{2}}\left(t-\lambda_j\right)^{-\frac{1}{2}}dt,
\end{equation}
where $\Gamma$ is a contour that encloses all the points
$\lambda_1,\ldots,\lambda_N$ that is oriented in the
counter-clockwise direction and $\tilde{Z}_{M,N}$ is the
normalization constant. The branch cuts of the square root
$\left(t-x\right)^{-\frac{1}{2}}$ is chosen to be the line
$\arg(t-x)=\pi$.
\end{theorem}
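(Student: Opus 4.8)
The plan is to start from the representation \eqref{eq:jpdf} and evaluate the orthogonal group integral $\int_{\ON}e^{-\frac M2\tr(\Sigma^{-1}gYg^{-1})}\,dg$ in the rank~$1$ case, since that is the only $\lambda$-dependent ingredient of \eqref{eq:jpdf} not already expressed through the eigenvalues. Here $g^{-1}=g^{T}$, and because the j.p.d.f.\ depends only on the eigenvalues of $Y$ I may take $Y=\diag(\lambda_1,\dots,\lambda_N)$. The argument breaks into four steps: (1) use the rank~$1$ structure of $\Sigma$ to collapse the $\ON$-integral onto the unit sphere $S^{N-1}$; (2) rewrite the sphere integral as an integral over the standard simplex against a Dirichlet$(\tfrac12,\dots,\tfrac12)$ weight; (3) introduce a Laplace--Fourier representation of the constraint $\sum_jx_j=1$ and carry out the resulting $x$-integrals, which produces a vertical-line integral in an auxiliary variable $t$; (4) deform that line to a closed contour around $\lambda_1,\dots,\lambda_N$.

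For step~(1) I write $\Sigma=I+\tau uu^{T}$ with $\norm{u}=1$, so Sherman--Morrison gives $\Sigma^{-1}=I-\tfrac{\tau}{1+\tau}uu^{T}$ and hence
\[
\tr(\Sigma^{-1}gYg^{T})=\tr Y-\frac{\tau}{1+\tau}\,(g^{T}u)^{T}Y(g^{T}u).
\]
The factor $e^{-\frac M2\tr Y}=\prod_je^{-\frac M2\lambda_j}$ comes out of the integral, and since $\ON$ acts transitively on $S^{N-1}$ and pushes Haar measure forward to the uniform measure, $v:=g^{T}u$ is uniformly distributed on $S^{N-1}$; thus the group integral equals $\prod_je^{-\frac M2\lambda_j}$ times $\int_{S^{N-1}}e^{c\sum_j\lambda_jv_j^{2}}\,d\sigma(v)$ with $c:=\frac{M\tau}{2(1+\tau)}$. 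In step~(2) I use that $(v_1^{2},\dots,v_N^{2})$ follows the Dirichlet$(\tfrac12,\dots,\tfrac12)$ law, so the sphere integral is a $\lambda$-independent constant times $\int_{\Delta}e^{c\sum_j\lambda_jx_j}\prod_jx_j^{-1/2}\,dx$ over $\Delta=\{x_j\ge0,\ \sum_jx_j=1\}$. In step~(3) I write this as $\int_{[0,\infty)^{N}}e^{c\sum_j\lambda_jx_j}\prod_jx_j^{-1/2}\,\delta\big(1-\sum_jx_j\big)\,dx$, substitute $\delta(1-s)=\tfrac{1}{2\pi i}\int_{\mathrm{Re}\,t=\sigma}e^{t(1-s)}\,dt$ with $\sigma>\max_j(c\lambda_j)$, interchange the $t$- and $x$-integrals (legitimate because for $\sigma$ this large the double integral is absolutely convergent), and evaluate $\int_0^{\infty}x^{-1/2}e^{x(c\lambda_j-t)}\,dx=\Gamma(\tfrac12)(t-c\lambda_j)^{-1/2}$; up to a constant this turns the sphere integral into $\int_{\mathrm{Re}\,t=\sigma}e^{t}\prod_{j=1}^{N}(t-c\lambda_j)^{-1/2}\,dt$.

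Step~(4) is where the care is needed, and where the hypothesis that $N$ is even enters. With the branch convention $\arg(t-x)=\pi$, the function $\prod_j(t-c\lambda_j)^{-1/2}$ has its branch cut on $(-\infty,\max_j(c\lambda_j)]$, but a loop around the whole cluster $\{c\lambda_1,\dots,c\lambda_N\}$ multiplies it by $(-1)^{N}=1$, so it is actually single-valued off the segment joining $\min_jc\lambda_j$ to $\max_jc\lambda_j$ and in a neighbourhood of $\infty$. Since $e^{t}\to0$ as $\mathrm{Re}\,t\to-\infty$, a rectangle argument (with the top, bottom and far-left edges controlled by $|e^{t}|\,|t|^{-N/2}$) collapses the line $\mathrm{Re}\,t=\sigma$ onto a counter-clockwise loop around that segment. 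Substituting $t\mapsto ct$ restores the eigenvalues, replaces $e^{t}$ by $e^{ct}=e^{\frac{M\tau}{2(1+\tau)}t}$, and turns the loop into a counter-clockwise contour $\Gamma$ enclosing $\lambda_1,\dots,\lambda_N$. Feeding this back into \eqref{eq:jpdf} and absorbing all $\lambda$-independent factors — $Z_{M,N}$, the sphere/Haar normalization, $\Gamma(\tfrac12)^{N}$, the Jacobian of $t\mapsto ct$, and $2\pi i$ — into a single constant $\tilde Z_{M,N}$ gives precisely \eqref{eq:main1}.

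Steps~(1)--(2) are routine. I expect the real work to be in (3)--(4): making the Fubini interchange rigorous, which is exactly what forces $\sigma$ to be chosen large, and above all the contour deformation with its branch-cut bookkeeping — in particular, verifying that $N$ even makes $\prod_j(t-\lambda_j)^{-1/2}$ single-valued around the eigenvalue cluster, so that the closed-contour formula \eqref{eq:main1} is even well-defined.
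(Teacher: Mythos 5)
Your proof is correct and, despite the different packaging, takes essentially the same route as the paper's: reduce the $O(N)$ integral to a sphere integral by observing that all $\lambda$-dependence enters through a single column of $g$ (you do this cleanly via Sherman--Morrison and the uniform pushforward, whereas the paper builds an explicit coordinate decomposition of the Haar measure in Proposition~\ref{pro:haar} to extract the same sphere integral), then convert the sphere integral to a Bromwich contour integral via a Laplace-transform device, and finally close the vertical line to a loop around the eigenvalues. The paper works with the quadratic constraint $\sum x_j^2=1$ and Gaussian integrals, while you pass through the Dirichlet$(\tfrac12,\dots,\tfrac12)$ representation and a linear constraint; these are the same computation one change of variables apart, and your rescaling $t\mapsto ct$ at the end is in fact the correct substitution (the paper's stated $s=Mt$ appears to be a typo for $s=\tfrac{\tau M}{2(1+\tau)}t$).
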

We will present two different proofs of this in the paper. The first
one is a geometric proof which involves choosing a suitable set of
coordinates on $O(N)$ and decompose the Haar measure into two parts
so that the integral in (\ref{eq:jpdf}) can be evaluated. This will
be achieved in Sections \ref{se:haar} and \ref{se:int}. The second
proof is an algebraic proof that uses the Zonal polynomial expansion
to verify the formula in Theorem \ref{thm:main1}. This proof will be
given in Appendix A where integral formulae of the form
(\ref{eq:main1}) for the complex and quarternionic Wishart ensembles
will also be derived.
\begin{remark}
The integral formula derived here is very similar to a more general
formula in \cite{BE}, in which the matrix integral over $O(N)$ is
given by
\begin{equation*}
\int_{O(N)}e^{-\tr\left(XgYg^{-1}\right)}g^T\D g\propto
\int\frac{e^{\tr(S)}}{\prod_{j=1}^{N}\det(S-y_jX)^{\frac{1}{2}}}\D S
\end{equation*}
where the integral of $S$ is over $\sqrt{-1}$ times the space of
$N\times N$ real symmetric matrices and $y_j$ are the eigenvalues of
$Y$. The measure $\D S$ is the flat Lebesgue measure on this space.
The proof in \cite{BE} is also similar to our geometric proof of
(\ref{eq:main1}) presented in the main text. The main difference
between the derivation in \cite{BE} and our derivation is that in
obtaining (\ref{eq:main1}), we use the assumption that $Y$ is a rank
1 matrix to decompose the Haar measure to obtain the simpler formula
(\ref{eq:main1}) in the rank 1 case.
\end{remark}
\begin{remark}
Shortly after the first part of this paper, which contains
(\ref{eq:main1}) appeared in the preprint server ArXiv, we learnt
that D. Wang has also derived (\ref{eq:main1}) independently and use
it for the asymptotic analysis in the super-critical regime
\cite{Wang2}.
\end{remark}
From the expression of the j.p.d.f., we see that the largest
eigenvalue distribution is given by
\begin{equation}\label{eq:Pmax}
\begin{split}
&\mathbb{P}(\lambda_{max}\leq z)=\int_{\lambda_1\leq\ldots\leq\lambda_N\leq z}\ldots\int P(\lambda)\D\lambda_1\ldots\D\lambda_N,\\
&=\tilde{Z}_{M,N}^{-1} \int_{\Gamma}e^{\frac{M\tau
t}{2(1+\tau)}}\int_{\lambda_1\leq\ldots\leq\lambda_N\leq
z}\ldots\int|\Delta(\lambda)|\prod_{j=1}^Nw(\lambda_j)\D\lambda_1\ldots\D\lambda_Ndt
\end{split}
\end{equation}
where $w(x)$ is
\begin{equation}\label{eq:w1}
w(x)=e^{-\frac{M}{2}x}x^{\frac{M-N-1}{2}}\left(t-x\right)^{-\frac{1}{2}}
\end{equation}
and $\Gamma$ is a close contour that encloses the interval $[0,z]$.

We can analyze the integrand as in \cite{Met}, \cite{TW}, \cite{TW1}
and \cite{TW2}. By an identity of Brujin \cite{Brujin}, we can
express the multiple integral as a Pfaffian.
\begin{equation}\label{eq:pff}
\begin{split}
&\int_{\lambda_1\leq\ldots\leq\lambda_N\leq
z}\ldots\int|\Delta(\lambda)|\prod_{j=1}^Nw(\lambda_j)\D\lambda_1\ldots\D\lambda_N\\
&=Pf\left(\left<(1-\chi_{[z,\infty)})r_j(x),(1-\chi_{[z,\infty)})r_k(y)\right>_1\right).
\end{split}
\end{equation}
where $r_j(x)$ is an arbitrary sequence of degree $j$ monic
polynomials and $\left<f,g\right>_1$ is the skew product
\begin{equation}\label{eq:skewinner}
\left<f,g\right>_1=\int_0^{\infty}\int_0^{\infty}\epsilon(x-y)f(x)g(y)w(x)w(y)\D
x\D y.
\end{equation}
where $\epsilon(x)=\frac{1}{2}\mathrm{sgn}(x)$. In defining the skew
product, the contour of integration will be defined such that if $t$
is on $(0,\infty)$, then the interval $(0,\infty)$ will be deformed
into the upper or lower half plane near $t$ such that the integral
is well defined. As $\Gamma$ will only intersect $(0,\infty)$ at a
point $x_0>z$, the left hand side of (\ref{eq:pff}), and hence the
Pfaffian, will not be affected by such deformations. Then by
following the method in \cite{TW}, \cite{TW1} and \cite{TW2}, we can
write the Pffafian as the square root of a Fredholm determinant. Let
$\mathbb{D}$ be the moment matrix with entries
$\left<r_j,r_k\right>_1$ and $\det_2$ be the regularized
2-determinant $\det_2(I+A)=\det((I+A)e^{-A})e^{\tr(A_{11}+A_{22})}$
for the $2\times 2$ matrix kernel $A$ with entries $A_{ij}$, then we
have
\begin{equation*}
Pf\left(\left<(1-\chi_{[z,\infty)})(x)r_j(x),(1-\chi_{[z,\infty)})(y)r_k(y)\right>_1\right)
=\sqrt{\det\mathbb{D}(t)}\sqrt{\mathrm{det}_2\left(I-\chi
K\chi\right)},
\end{equation*}
where $\chi=\chi_{[z,\infty)}$ and $K$ is the operator whose kernel
is given by
\begin{equation}\label{eq:matker}
K(x,y)=\begin{pmatrix}S_1(x,y)&-\frac{\p}{\p y}S_1(x,y)\\
IS_1(x,y)&S_1(y,x)\end{pmatrix}
\end{equation}
and $S_1(x,y)$ and $IS_1(x,y)$ are the kernels
\begin{equation}\label{eq:ker}
\begin{split}
S_1(x,y)&=-\sum_{j,k=0}^{N-1}r_j(x)w(x)\mu_{jk}\epsilon(r_kw)(y),\\
IS_1(x,y)&=-\sum_{j,k=0}^{N-1}\epsilon(r_jw)(x)\mu_{jk}\epsilon(r_kw)(y)
\end{split}
\end{equation}
and $\mu_{jk}$ is the inverse of the matrix $\mathbb{D}$. As shown
in \cite{W}, the kernel can now be expressed in terms of the
Christoffel Darboux kernel of some suitable orthogonal polynomials,
together with a correction term which gives rise to a finite rank
perturbation to the Christoffel Darboux kernel. In this paper, we
introduce a new proof of this using skew orthogonal polynomials and
their representations as multi-orthogonal polynomials. By using
ideas from \cite{AF} to write skew orthogonal polynomials in terms
of orthogonal polynomials, we can express the skew orthogonal
polynomials with respect to the weight $w(x)$ in terms of a sum of
Laguerre polynomials. Let $\pi_{k,1}$ be the monic skew orthogonal
polynomial of degree $k$ with respect to the weight $w(x)$.
\begin{equation}\label{eq:sop}
\left<\pi_{2k+1,1},y^j\right>_1=\left<\pi_{2k,1},y^j\right>_1=0,\quad
j=0,\ldots,2k-1.
\end{equation}
Then we can write these down in terms of Laguerre polynomials.
\begin{proposition}
Let $L_k$ be the degree $k$ monic Laguerre polynomial with respect
to the weight $w_0(x)$
\begin{equation*}
\int_0^{\infty}L_k(x)L_j(x)w_0(x)\D x=\delta_{jk}h_{j,0},\quad
w_0(x)=x^{M-N}e^{-Mx}.
\end{equation*}
If $\left<L_{2k-1},L_{2k-2}\right>_1\neq 0$, then the skew
orthogonal polynomials $\pi_{2k,1}$ and $\pi_{2k+1,1}$ both exist
and $\pi_{2k,1}$ is unique while $\pi_{2k+1,1}$ is unique up to an
addition of a multiple of $\pi_{2k,1}$. Moreover, we have
$\left<L_{2k},L_{2k-1}\right>_1=0$ and the skew orthogonal
polynomials are given by
\begin{equation*}
\begin{split}
\pi_{2k,1}&=L_{2k}-\frac{\left<L_{2k},L_{2k-2}\right>_1}{\left<L_{2k-1},L_{2k-2}\right>_1}L_{2k-1},\\
\pi_{2k+1,1}&=L_{2k+1}-\frac{\left<L_{2k+1},L_{2k-2}\right>_1}{\left<L_{2k-1},L_{2k-2}\right>_1}L_{2k-1}
+\frac{\left<L_{2k+1},L_{2k-1}\right>_1}{\left<L_{2k-1},L_{2k-2}\right>_1}L_{2k-2}+c\pi_{2k,1},
\end{split}
\end{equation*}
where $c$ is an arbitrary constant.
\end{proposition}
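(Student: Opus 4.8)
The plan is to turn the skew-orthogonality relations \eqref{eq:sop} into a small, explicitly solvable linear system by relating the skew product \eqref{eq:skewinner} to the ordinary $w_0$-inner product; this is the ingredient drawn from \cite{AF}. Write $w$ as in \eqref{eq:w1} and set $A(x)=x(t-x)w(x)=x^{(M-N+1)/2}(t-x)^{1/2}e^{-Mx/2}$, so that $A(x)w(x)=w_0(x)$ while $A$ vanishes at $0$ and at $+\infty$. A short computation gives, for every polynomial $p$, $\frac{\D}{\D x}\bigl(A(x)p(x)\bigr)=\frac12\,w(x)\,\mathcal{L}[p](x)$, where $\mathcal{L}[p]=2x(t-x)p'+\bigl((M-N+1)(t-x)-x-Mx(t-x)\bigr)p$ is a first-order operator that raises the degree of a polynomial by exactly two, with leading coefficient $M$. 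Plugging this into \eqref{eq:skewinner} and integrating by parts (all boundary contributions vanish because $A$ does) produces the key identity $\langle f,\mathcal{L}[p]\rangle_1=2\int_0^\infty f(x)p(x)w_0(x)\,\D x$, valid for all polynomials $f,p$.

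Next I would use this to constrain the solutions. Since the image of $\mathcal{L}$ is a complement of $\mathrm{span}\{1,x\}$, the $2k$ polynomials $1,x,\mathcal{L}[1],\mathcal{L}[x],\dots,\mathcal{L}[x^{2k-3}]$ form a basis of the polynomials of degree $\le 2k-1$. Hence \eqref{eq:sop} is equivalent to $\langle\pi,\mathcal{L}[x^i]\rangle_1=0$ for $0\le i\le 2k-3$ together with the two scalar conditions $\langle\pi,1\rangle_1=\langle\pi,x\rangle_1=0$; by the key identity the first batch says $\int_0^\infty\pi(x)x^iw_0(x)\,\D x=0$ for $i\le 2k-3$, i.e.\ $\pi$ is $w_0$-orthogonal to every polynomial of degree $\le 2k-3$. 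Expanding $\pi$ in the monic Laguerre basis this forces $\pi_{2k,1}=L_{2k}+aL_{2k-1}+bL_{2k-2}$ and $\pi_{2k+1,1}=L_{2k+1}+aL_{2k}+bL_{2k-1}+eL_{2k-2}$, and only the two scalar conditions remain.

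The main work, and the step I expect to be the real obstacle, is the evaluation of the scalar skew products $\langle L_m,1\rangle_1$ and $\langle L_m,x\rangle_1$ for $m\ge 2$. Decomposing $L_m=\mathcal{L}[p_m]+\alpha_m+\beta_m x$ (with $\deg p_m=m-2$, the unique such data) and using the key identity together with the antisymmetry of $\langle\cdot,\cdot\rangle_1$, one reduces these to $\langle L_m,1\rangle_1=-2\int_0^\infty p_mw_0\,\D x-\beta_m\langle 1,x\rangle_1$ and $\langle L_m,x\rangle_1=-2\int_0^\infty xp_mw_0\,\D x+\alpha_m\langle 1,x\rangle_1$, so everything is reduced to the low-order Laguerre coefficients of $p_m$ and to $\alpha_m,\beta_m$, which in turn are read off from the incomplete integrals $\int_0^x wL_m$; here the $(t-x)^{-1/2}$ factor in $w$ has to be handled. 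The content of the resulting (somewhat lengthy) computation is that the combinations $L_{2k}-c_1L_{2k-1}$ and $L_{2k+1}-c_2L_{2k-1}+c_3L_{2k-2}$, with $c_1,c_2,c_3$ the ratios in the statement, are each annihilated by $1$ and $x$ under $\langle\cdot,\cdot\rangle_1$ — equivalently, the coefficient of $L_{2k-2}$ in the solution for $\pi_{2k,1}$ is forced to vanish. Granting this, $\pi_{2k,1}=L_{2k}-c_1L_{2k-1}$ and $\pi_{2k+1,1}=L_{2k+1}-c_2L_{2k-1}+c_3L_{2k-2}+c\,\pi_{2k,1}$ verify all of \eqref{eq:sop}; in particular $\langle L_{2k},L_{2k-1}\rangle_1=0$ drops out as the $j=2k-1$ instance of $\langle\pi_{2k,1},L_{2k-1}\rangle_1=0$, using $\langle L_{2k-1},L_{2k-1}\rangle_1=0$.

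Finally, for existence and uniqueness: for $\pi_{2k,1}$ the two scalar conditions form a $2\times 2$ linear system in $(a,b)$, whose determinant — using $\langle L_{2k-2},L_{2k-2}\rangle_1=0$ and the decomposition of $L_{2k-2}$ above — is a nonzero multiple of $\langle L_{2k-1},L_{2k-2}\rangle_1$; the hypothesis therefore gives a unique $(a,b)$, which by the computation of the previous paragraph is $(-c_1,0)$, so $\pi_{2k,1}=L_{2k}-c_1L_{2k-1}$ exists and is unique. For $\pi_{2k+1,1}$ there are three unknowns and two equations, so the solutions form a line; the difference of two of them has degree $\le 2k$ and is skew-orthogonal to every polynomial of degree $\le 2k-1$, and by the structure established in the second step the space of such polynomials is exactly $\mathbb{C}\,\pi_{2k,1}$, whence $\pi_{2k+1,1}$ is unique up to the addition of a multiple of $\pi_{2k,1}$.
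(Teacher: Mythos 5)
Your first half tracks the paper's proof essentially verbatim. The operator $\mathcal{L}$ you define is exactly $2Q_j$ of the paper (up to naming), the integration-by-parts identity $\langle f,\mathcal{L}[p]\rangle_1=2\int fpw_0\,\D x$ is the paper's equation (\ref{eq:parts}), and the reduction of the skew-orthogonality conditions to $w_0$-orthogonality plus two scalar conditions, forcing $\pi_{2k,1}=L_{2k}+aL_{2k-1}+bL_{2k-2}$ and the analogous ansatz for $\pi_{2k+1,1}$, is exactly (\ref{eq:sop2}). So far this is sound and is the paper's own first step.

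The genuine gap is in the third paragraph, and you flag it yourself with ``Granting this.'' You reduce everything to the claim that the $L_{2k-2}$ coefficient in $\pi_{2k,1}$ vanishes (equivalently $\langle L_{2k},L_{2k-1}\rangle_1=0$) and to the nonvanishing of a certain $2\times2$ determinant, and both are deferred to a ``somewhat lengthy computation'' that is never carried out. Your argument for $\langle L_{2k},L_{2k-1}\rangle_1=0$ in the same paragraph is then circular, since it takes $b=0$ as granted. The paper avoids this computation entirely by two structural facts you do not use. First, Lemma \ref{le:linear} shows the map $\varrho_k$ (projecting onto the ``remainder'' $R(x)$ in the decomposition $P=\frac{d}{dx}(qx(t-x)w)w^{-1}+R$, restricted to $\mathrm{span}\{L_k,L_{k-1}\}$) is invertible precisely when $\langle L_k,L_{k-1}\rangle_1\neq0$; this is what converts the two scalar conditions $\langle\pi,y^j\rangle_1=0$ into $\langle\pi,L_{2k-1}\rangle_1=\langle\pi,L_{2k-2}\rangle_1=0$, and it is what your determinant claim is silently invoking. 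Second, Corollary \ref{cor:linear} gives $\langle L_{2k},L_{2k-1}\rangle_1=0$ by a parity argument: the relevant anti-symmetric moment matrix has odd dimension when the index is even, hence vanishing determinant, so a nontrivial $q$ exists and Lemma \ref{le:linear} (contrapositive) forces the skew product to vanish. Once (\ref{eq:prod}) has turned the conditions into $\langle\pi,L_{2k-1-j}\rangle_1=0$, the explicit coefficients and the vanishing of $b$ drop out by trivial linear algebra using the antisymmetry $\langle L_j,L_j\rangle_1=0$; no evaluation of $\langle L_m,1\rangle_1$ or $\langle L_m,x\rangle_1$ is needed. Unless you supply the parity argument (or actually perform the computation you deferred), your proof does not go through.
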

Next, by representing skew orthogonal polynomials as
multi-orthogonal polynomials and write them in terms of the solution
of a Riemann-Hilbert problem as in \cite{Pierce}, we can apply the
results of \cite{KDaem} and \cite{baikext} to express the kernel
$S_1(x,y)$ as a finite rank perturbation of the Christoffel Darboux
kernel of the Laguerre polynomials.
\begin{theorem}\label{thm:baik}
Suppose
$\left<L_{N-1},L_{N-2}\right>_1\left<L_{N-3},L_{N-4}\right>_1\neq
0$. Let $S_1(x,y)$ be defined by (\ref{eq:ker}) and choose the
sequence of monic polynomials $r_j(x)$ such that $r_j(x)$ are
arbitrary degree $j$ monic polynomials that are independent on $t$
and $r_j(x)=\pi_{j,1}(x)$ for $j=N-2,N-1$. Then we have
\begin{equation}\label{eq:kerform1}
\begin{split}
&S_1(x,y)-K_2(x,y)=\\
&\epsilon\left(\pi_{N+1,1}w\quad\pi_{N,1}w\right)(y)\begin{pmatrix}0&-\frac{M }{2h_{N-1,0}}\\
-\frac{M }{2h_{N-2,0}}&\frac{Mt- (M+N)}{2h_{N-1,0}}\end{pmatrix}\begin{pmatrix}L_{N-2}(x)\\
L_{N-1}(x)\end{pmatrix}w(x)
\end{split}
\end{equation}
where $K_2(x,y)$ is the kernel of the Laguerre polynomials
\begin{equation}\label{eq:k2}
K_2(x,y)=\left(\frac{y(t- y)}{x(t- x)}\right)^{\frac{1}{2}}
w_0^{\frac{1}{2}}(x)w_0^{\frac{1}{2}}(y)\frac{L_{N}(x)L_{N-1}(y)-L_N(y)L_{N-1}(x)}{h_{N-1,0}(x-y)}
\end{equation}
\end{theorem}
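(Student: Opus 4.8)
The plan is to derive the identity \eqref{eq:kerform1} in two stages: first express the kernel $S_1(x,y)$ in \eqref{eq:ker} using a \emph{specific} choice of monic polynomial basis $r_j$, and then carry out a rank-reduction argument that collapses the finite sum over $j,k$ down to the Christoffel--Darboux kernel $K_2$ plus an explicit finite-rank tail. Because $S_1(x,y)$ is, up to conjugation by the factors $w$ and $\epsilon w$, independent of the choice of the monic basis $r_j$ (any such change acts by a unipotent transformation on both $\{r_j\}$ and the moment matrix $\mathbb{D}$, and its inverse $\mu$ compensates), I am free to take $r_j = L_j$ for $j \le N-3$ and $r_j = \pi_{j,1}$ for $j = N-2, N-1$. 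With this basis the moment matrix $\mathbb{D}$ with entries $\left<r_j,r_k\right>_1$ becomes almost block-diagonal: the skew orthogonality relations \eqref{eq:sop}, together with the Proposition's identity $\left<L_{2k},L_{2k-1}\right>_1=0$, force most entries $\left<L_j,L_k\right>_1$ with $|j-k|$ large to vanish, leaving $\mathbb{D}$ a direct sum of $2\times 2$ skew blocks plus a small correction in the top-right corner coming from the coupling of $L_{N-2},L_{N-1}$ to the lower-degree Laguerre polynomials through the weight $w$. Inverting this almost-block structure gives $\mu_{jk}$ in closed form, and substituting back into \eqref{eq:ker} produces a leading ``Christoffel--Darboux-like'' double sum plus the correction terms.

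The second stage is to recognize the leading double sum as $K_2(x,y)$. Here one uses the representation of the skew-orthogonal polynomials as multi-orthogonal polynomials and the associated Riemann--Hilbert problem, exactly as in \cite{Pierce}, \cite{KDaem}, \cite{baikext}: the entries $\pi_{N,1}, \pi_{N+1,1}$ and their $\epsilon$-transforms are read off from the first column (resp. a distinguished row) of the RHP solution, and the standard telescoping of $\sum_{j} r_j(x)\mu_{jk}\epsilon(r_k w)(y)$ within each $2\times 2$ block telescopes to the Christoffel--Darboux form $\frac{L_N(x)L_{N-1}(y)-L_N(y)L_{N-1}(x)}{h_{N-1,0}(x-y)}$, dressed by the square-root prefactors. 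The factor $\left(\frac{y(t-y)}{x(t-x)}\right)^{1/2}w_0^{1/2}(x)w_0^{1/2}(y)$ appears because $w(x)=w_0(x)^{1/2}\cdot x^{-1/2}e^{-Mx/2}(t-x)^{-1/2}$ up to the shift in the Laguerre parameter, so symmetrizing $r_j(x)w(x)\,\epsilon(r_kw)(y)$ to match the symmetric kernel convention introduces precisely this ratio. The residual contribution — everything not absorbed into $K_2$ — is a rank-two object supported on $\mathrm{span}\{L_{N-2},L_{N-1}\}$ in one variable and on $\mathrm{span}\{\epsilon(\pi_{N,1}w),\epsilon(\pi_{N+1,1}w)\}$ in the other; computing the $2\times 2$ coefficient matrix is a bookkeeping exercise using the three-term recurrence for Laguerre polynomials (which supplies the entry $Mt-(M+N)$ from the coefficient of $L_{N-1}$ in $xL_{N-1}$, shifted by the weight parameter $M-N$) and the leading coefficients $h_{N-1,0}, h_{N-2,0}$ from the skew-norms.

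The main obstacle is the combinatorics of the rank-reduction: one must track exactly which entries of $\mathbb{D}$ survive, invert the resulting structured matrix without losing the off-diagonal corner terms, and verify that all lower-degree Laguerre contributions cancel so that only $L_{N-2}, L_{N-1}$ and $\pi_{N,1}, \pi_{N+1,1}$ remain. The hypothesis $\left<L_{N-1},L_{N-2}\right>_1\left<L_{N-3},L_{N-4}\right>_1\neq 0$ is exactly what guarantees the relevant $2\times 2$ blocks are invertible and the skew-orthogonal polynomials $\pi_{N-2,1},\pi_{N-1,1}$ (and hence the basis change) are well defined, so this is where that assumption is consumed. A secondary technical point is justifying that the change of basis in the monic polynomials genuinely leaves $S_1$ invariant even though individual terms in \eqref{eq:ker} are not; this follows from writing $S_1(x,y) = -\,(r(x)w(x))^{T}\,\mathbb{D}^{-1}\,\epsilon(rw)(y)$ as a bilinear form and noting invariance under $r \mapsto Ur$, $\mathbb{D}\mapsto U\mathbb{D}U^{T}$ for any unipotent lower-triangular $U$ independent of $t$, which is where the requirement ``$r_j$ independent of $t$'' enters. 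Once these structural points are in place, the identity \eqref{eq:kerform1} follows by direct substitution and simplification.
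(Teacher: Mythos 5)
Your argument breaks down at the first stage, where you claim that with the basis $r_j=L_j$ for $j\le N-3$ and $r_j=\pi_{j,1}$ for $j=N-2,N-1$ the moment matrix $\mathbb{D}$ becomes ``a direct sum of $2\times 2$ skew blocks plus a small correction in the top-right corner,'' invertible in closed form. This is false. The Laguerre polynomials are orthogonal for $\left<\cdot,\cdot\right>_2$, not for the skew form $\left<\cdot,\cdot\right>_1$, and the partial-integration identity (\ref{eq:parts}) only yields $\left<L_n,Q_j\right>_1=\left<L_n,x^j\right>_2=0$ for $j<n$, where $Q_j$ is a specific degree-$(j+2)$ polynomial — it says nothing about $\left<L_n,L_m\right>_1$ directly. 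Writing $L_m=\frac{d}{dx}(q_m\,x(t-x)w)w^{-1}+R_m$ with $\deg R_m\le 1$ shows $\left<L_n,L_m\right>_1=\left<L_n,q_m\right>_2+\left<L_n,R_m\right>_1$; the second term is a combination of $\left<L_n,1\right>_1$ and $\left<L_n,y\right>_1$, which are generically nonzero for \emph{every} $n$. Corollary \ref{cor:linear} gives $\left<L_k,L_{k-1}\right>_1=0$ only for $k$ even, not for $|j-k|$ large. So the top-left $(N-2)\times(N-2)$ block $\mathbb{D}_0$ is a full antisymmetric matrix with no tractable sparse structure, and the proposed ``invert the almost-block form and telescope within $2\times 2$ blocks'' route does not get off the ground.

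The paper's actual argument is designed precisely to sidestep inverting $\mathbb{D}_0$. With the basis in the statement, $\mathbb{D}$ is block-diagonal only in the trivial sense
$\mathbb{D}=\mathrm{diag}\left(\mathbb{D}_0,\,h_{N-1,1}\mathcal{J}\right)$
(a $(N-2)\times(N-2)$ block plus one $2\times 2$ skew block), and Proposition \ref{pro:CD} uses the multi-orthogonal Christoffel--Darboux mechanism of \cite{KDaem}: one expands $xr_j$ and $x\epsilon(r_jw)$ in the $r$-basis together with $\pi_{N,1}$, $\psi_N$ and $P^I_{N,l}/w$, and the cancellation $\mathcal{C}^T\mu-\mu\mathcal{D}=0$ kills the entire double sum involving $\mathbb{D}_0^{-1}$. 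Only the explicitly known corner block $\mu_{N-1,N-2}=h_{N-1,1}^{-1}$ and coefficients read off from the multi-orthogonal polynomials $P^{II}_{N,l}$ survive, giving the RHP representation (\ref{eq:kerRHP}). Proposition \ref{pro:baik} from \cite{baikext} then converts that RHP kernel into the Laguerre CD kernel $K_2$ plus a rank-two tail, and Lemma \ref{le:A} pins down the $2\times 2$ coefficient matrix via leading-coefficient bookkeeping. Your citation of \cite{KDaem} and \cite{baikext} is pointing at the right ingredients, but the mechanism you describe — extracting $K_2$ from a block-diagonal $\mathbb{D}$ by telescoping within the blocks — is not what these results do, and without the correct identification of $\mathcal{C}^T\mu=\mu\mathcal{D}$ as the device that removes the $\mathbb{D}_0^{-1}$ dependence, the proof does not go through.

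Your observation that $S_1$ is invariant under unipotent, $t$-independent changes of the monic basis is correct and is indeed what legitimizes the particular choice of $r_j$, but it repairs neither of the above issues.
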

Note that the correction term on the right hand side of
(\ref{eq:kerform1}) is the kernel of a finite rank operator. Its
asymptotics can be computed using the known asymptotics of the
Laguerre polynomials and the method in \cite{St}, \cite{DG} and
\cite{DGKV}. The actual asymptotic analysis of this correction term,
however, is particularly tedious as one would need to compute the
asymptotics of the skew orthogonal polynomials up to the third
leading order term due to cancelations. To compute the contribution
from the determinant $\det\mathbb{D}$, we derive the following
expression for the logarithmic derivative of $\det\mathbb{D}$.
\begin{proposition}\label{pro:logder0}
Let $\mathbb{D}$ be the moment matrix with entries
$\left<r_j,r_k\right>_1$, where the sequence of monic polynomials
$r_j(x)$ is chosen such that $r_j(x)$ are arbitrary degree $j$ monic
polynomials that are independent on $t$ and $r_j(x)=\pi_{j,1}(x)$
for $j=N-2,N-1$. Then the logarithmic derivative of $\det\mathbb{D}$
with respect to $t$ is given by
\begin{equation}
\frac{\p}{\p
t}\log\det\mathbb{D}=-\int_{\mathbb{R}_+}\frac{S_1(x,x)}{t- x}\D x.
\end{equation}
\end{proposition}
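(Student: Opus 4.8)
The plan is to compute $\frac{\partial}{\partial t}\log\det\mathbb{D}$ by expressing it in terms of $\tr(\mathbb{D}^{-1}\partial_t\mathbb{D})$ and then recognizing the result as a contour/line integral of the diagonal of the kernel $S_1$. First I would write, using Jacobi's formula,
\begin{equation*}
\frac{\partial}{\partial t}\log\det\mathbb{D}=\sum_{j,k=0}^{N-1}\mu_{kj}\,\frac{\partial}{\partial t}\left<r_j,r_k\right>_1,
\end{equation*}
where $\mu_{kj}$ are the entries of $\mathbb{D}^{-1}$. The key observation is that the $t$-dependence of $\left<r_j,r_k\right>_1$ enters only through the weight $w(x)=e^{-\frac{M}{2}x}x^{\frac{M-N-1}{2}}(t-x)^{-\frac12}$, and that $\partial_t\log w(x)=-\frac{1}{2(t-x)}$. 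Hence
\begin{equation*}
\frac{\partial}{\partial t}\left<r_j,r_k\right>_1=\int_0^\infty\int_0^\infty\epsilon(x-y)r_j(x)r_k(y)w(x)w(y)\left(-\frac{1}{2(t-x)}-\frac{1}{2(t-y)}\right)\D x\,\D y.
\end{equation*}
By the antisymmetry of $\epsilon(x-y)$ and of the matrix $\mu$, the two terms in the bracket contribute equally after relabelling, so the whole expression collapses to $-\int_0^\infty\int_0^\infty\frac{\epsilon(x-y)}{t-x}r_j(x)r_k(y)w(x)w(y)\,\D x\,\D y$.

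Next I would substitute this back and interchange the (finite) sum over $j,k$ with the integrals. Recalling from (\ref{eq:ker}) that $S_1(x,x')=-\sum_{j,k}r_j(x)w(x)\mu_{jk}\epsilon(r_kw)(x')$ and that $\epsilon(r_kw)(x')=\int_0^\infty\epsilon(x'-y)r_k(y)w(y)\,\D y$, one sees that
\begin{equation*}
\sum_{j,k=0}^{N-1}\mu_{kj}\int_0^\infty\int_0^\infty\frac{\epsilon(x-y)}{t-x}r_j(x)r_k(y)w(x)w(y)\,\D x\,\D y=-\int_0^\infty\frac{S_1(x,x)}{t-x}\,\D x,
\end{equation*}
after using the symmetry $\mu_{kj}=-\mu_{jk}$ to match the index pattern of $S_1$. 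Combining the two displays yields the claim
\begin{equation*}
\frac{\partial}{\partial t}\log\det\mathbb{D}=-\int_{\mathbb{R}_+}\frac{S_1(x,x)}{t-x}\,\D x.
\end{equation*}

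The main obstacle I anticipate is bookkeeping rather than anything deep: one must be careful that the polynomials $r_j$ for $j\neq N-2,N-1$ are genuinely $t$-independent (so that differentiating $\left<r_j,r_k\right>_1$ really only hits the weight), handle the fact that $r_{N-2}=\pi_{N-2,1}$ and $r_{N-1}=\pi_{N-1,1}$ \emph{do} depend on $t$ through the skew-orthogonalization, and check that those extra $\partial_t r_j$ contributions cancel. The cancellation should follow because $\partial_t\pi_{j,1}$ is a polynomial of degree $<j$, hence lies in the span of $r_0,\dots,r_{j-1}$, and the skew-orthogonality relations (\ref{eq:sop}) together with the structure of $\mathbb{D}^{-1}$ kill such terms in the trace $\tr(\mathbb{D}^{-1}\partial_t\mathbb{D})$; one invokes here that the derivative of $\log\det\mathbb{D}$ is basis-independent, so one may as well compute in the $t$-independent basis and only afterwards re-express the answer via $S_1$, which by Theorem \ref{thm:baik} is itself basis-independent up to the specified normalization. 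A secondary point is justifying the interchange of integration and differentiation near $x=t$: since $\Gamma$ meets $(0,\infty)$ only at a point $x_0>z$ and the $x$-integration runs over $\mathbb{R}_+$ while $t$ sits off that contour (or the contour is deformed as described after (\ref{eq:skewinner})), the kernel $\frac{1}{t-x}$ stays bounded on the relevant region and the interchange is legitimate.
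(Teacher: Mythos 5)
Your proposal follows the same overall strategy as the paper: apply Jacobi's formula $\partial_t\log\det\mathbb{D}=\sum_{j,k}\mu_{kj}\,\partial_t\left<r_j,r_k\right>_1$, observe that the $t$-dependence enters through $\partial_t\log w=-\tfrac{1}{2(t-x)}$, use the antisymmetry of $\epsilon$ and $\mu$ to merge the two resulting terms, and identify the sum with $-\int S_1(x,x)/(t-x)\,\D x$. Where you genuinely diverge from the paper is in the treatment of the $t$-dependence of $r_{N-2}=\pi_{N-2,1}$ and $r_{N-1}=\pi_{N-1,1}$. The paper keeps the skew-orthogonal choice and kills the extra terms $\left<\partial_t\pi_{N-2,1},\pi_{N-1,1}\right>_1$ and $\left<\pi_{N-2,1},\partial_t\pi_{N-1,1}\right>_1$ directly by degree counting and the skew-orthogonality of $\pi_{N-2,1},\pi_{N-1,1}$, also exploiting the block structure of $\mathbb{D}$ (so that $D_{i,N-1}$ vanishes identically for $i<N-2$). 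You instead argue that one may compute $\partial_t\log\det\mathbb{D}$ in a $t$-independent monic basis and then invoke the invariance of both $\log\det\mathbb{D}$ and $S_1$ under unipotent changes of monic basis. That route is valid and arguably cleaner, since it avoids bookkeeping with $\delta_{N-2,i}$ factors; but as written it leans on two statements that deserve a line of justification each. First, the cancellation mechanism you gesture at for the $\partial_t r_j$ contributions is not really "skew-orthogonality": if $\partial_t r_j=\sum_{l<j}c_{jl}r_l$, then $\sum_{j,k}\mu_{kj}\left<\partial_t r_j,r_k\right>_1=\sum_{j,l<j}c_{jl}\sum_k D_{lk}\mu_{kj}=\sum_{j,l<j}c_{jl}\delta_{lj}=0$ by matrix inversion alone, orthogonality playing no role. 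Second, the basis-independence of $S_1$ is not the content of Theorem \ref{thm:baik}; it is the elementary fact that under $r\mapsto Ar$ with $A$ unipotent one has $\mu\mapsto A^{-T}\mu A^{-1}$, so the bilinear form $r^T(x)w(x)\mu\,\epsilon(rw)(y)$ is unchanged. With those two clarifications supplied, your argument is complete and yields the same identity as the paper's.
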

This then allows us to express the largest eigenvalue distribution
$\mathbb{P}(\lambda_{max}\leq z)$ as an integral of determinant.
\begin{theorem}\label{thm:main2}
The largest eigenvalue distribution of the rank 1 real Wishart
ensemble can be written in the following integral form.
\begin{equation}\label{eq:Pmaxdet}
\begin{split}
\mathbb{P}(\lambda_{max}\leq z)= C\int_{\Gamma}\exp\left(\frac{M\tau
t}{2(1+\tau)}-\frac{1}{2}\int_{c_0}^t\int_{\mathbb{R}_+}\frac{S_1(x,x)}{s-
x}\D x\D s\right)\sqrt{\mathrm{det}_2\left(I-\chi K\chi\right)}\D t.
\end{split}
\end{equation}
for some constant $c_0$ and $K$ is the operator with kernel given by
(\ref{eq:ker}). The integration contour $\Gamma$ is a close contour
that encloses the interval $[0,z]$ in the anti-clockwise direction.
\end{theorem}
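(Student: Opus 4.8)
The plan is to obtain (\ref{eq:Pmaxdet}) by concatenating the three facts already in hand: the contour representation (\ref{eq:Pmax}) of $\mathbb{P}(\lambda_{max}\leq z)$, the reduction of the inner multiple integral to a square root of a regularized Fredholm determinant recalled just above, and the logarithmic-derivative identity of Proposition \ref{pro:logder0}. No new analytic input is needed; the content lies in checking that the intermediate objects are well defined along the integration contour.

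First I would start from (\ref{eq:Pmax}),
\[
\mathbb{P}(\lambda_{max}\leq z)=\tilde{Z}_{M,N}^{-1}\int_{\Gamma}e^{\frac{M\tau t}{2(1+\tau)}}\Big(\int_{\lambda_1\leq\cdots\leq\lambda_N\leq z}\cdots\int|\Delta(\lambda)|\prod_{j=1}^{N}w(\lambda_j)\,\D\lambda_1\cdots\D\lambda_N\Big)\,\D t,
\]
with $w$ the $t$-dependent weight (\ref{eq:w1}). For each fixed $t$ on $\Gamma$ I would apply the Bruijn identity (\ref{eq:pff}) followed by the Tracy--Widom-type manipulation described above, choosing the monic polynomials $r_j$ exactly as in Theorem \ref{thm:baik} and Proposition \ref{pro:logder0} (arbitrary for $j\leq N-3$, equal to $\pi_{j,1}$ for $j=N-2,N-1$). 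This choice is legitimate because a change of monic basis acts on the antisymmetric moment matrix by a unipotent congruence, leaving both the Pfaffian and $\det\mathbb{D}$ invariant; moreover the deformation of the skew-product contour near the unique point $x_0>z$ where $\Gamma$ meets $(0,\infty)$ does not affect the left-hand side of (\ref{eq:pff}), so the identity holds along all of $\Gamma$. The inner integral is thereby replaced by $\sqrt{\det\mathbb{D}(t)}\,\sqrt{\mathrm{det}_2(I-\chi K\chi)}$.

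Next I would integrate Proposition \ref{pro:logder0}. From $\p_t\log\det\mathbb{D}(t)=-\int_{\mathbb{R}_+}S_1(x,x)/(t-x)\,\D x$ and integration from a fixed base point $c_0$,
\[
\det\mathbb{D}(t)=\det\mathbb{D}(c_0)\,\exp\Big(-\int_{c_0}^{t}\int_{\mathbb{R}_+}\frac{S_1(x,x)}{s-x}\,\D x\,\D s\Big),
\]
hence $\sqrt{\det\mathbb{D}(t)}=\sqrt{\det\mathbb{D}(c_0)}\,\exp\big(-\tfrac12\int_{c_0}^{t}\int_{\mathbb{R}_+}S_1(x,x)/(s-x)\,\D x\,\D s\big)$. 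Substituting this into the integrand and absorbing the $t$-independent factor $\tilde{Z}_{M,N}^{-1}\sqrt{\det\mathbb{D}(c_0)}$ into a constant $C$ gives (\ref{eq:Pmaxdet}); changing $c_0$ only alters $C$, which is why the statement leaves $c_0$ unspecified.

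The main obstacle is bookkeeping rather than anything deep. One must ensure that $\det\mathbb{D}(t)$ is analytic and nonvanishing along the part of $\Gamma$ over which one integrates, so that $\log\det\mathbb{D}$ and its square root are single valued there; this is where the nondegeneracy hypothesis $\langle L_{N-1},L_{N-2}\rangle_1\langle L_{N-3},L_{N-4}\rangle_1\neq 0$ of Theorem \ref{thm:baik} enters, guaranteeing that $\pi_{N-2,1}$ and $\pi_{N-1,1}$ exist, and one must fix the branch of the square root consistently with the sign of the Pfaffian. One must also keep track of the fact that $w$, the kernel $K$, and the polynomials $\pi_{j,1}$ all depend on $t$, so that the $s$-integral in the exponent and the determinant in the integrand refer to this same $t$-dependent data. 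None of these checks requires a new idea, so Theorem \ref{thm:main2} follows as a corollary of the preceding results.
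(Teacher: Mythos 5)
Your proposal is correct and takes essentially the same route the paper implicitly uses: Theorem \ref{thm:main2} is an immediate corollary of (\ref{eq:Pmax}), the Bruijn/Tracy--Widom conversion of the multiple integral to $\sqrt{\det\mathbb{D}(t)}\sqrt{\mathrm{det}_2(I-\chi K\chi)}$, and the integration of Proposition \ref{pro:logder0}, with the $t$-independent prefactors absorbed into $C$. The auxiliary observations you make (unipotent congruence invariance of the Pfaffian under a change of monic basis, consistency of the contour deformation near $x_0>z$, and nonvanishing of $\det\mathbb{D}$ guaranteed by the nondegeneracy hypothesis) are exactly the bookkeeping points one needs, and nothing more is required.
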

These results so far are valid for all $N$ and $M$ and are exact. As
$N$, $M\rightarrow\infty$, the integral expression
(\ref{eq:Pmaxdet}) can be used for the asymptotic analysis to obtain
the largest eigenvalue distribution when the phase transition occur.
In this paper, we will demonstrate the asymptotic analysis for the
case where $M/N\rightarrow\gamma^2=1$, but the analysis can also be
applied to the case of any finite $\gamma$.

In the asymptotic limit, the $t$ integral in (\ref{eq:Pmaxdet}) can
be computed using steepest descent analysis. In fact, we shall see
that when $t\neq b_+$, the integrand in (\ref{eq:Pmaxdet}) is of
order
\begin{equation*}
\exp\left(\frac{M\tau
t}{2(1+\tau)}-\frac{1}{2}\int_{c_0}^t\int_{\mathbb{R}_+}\frac{S_1(x,x)}{s-
x}\D x\D s\right)\sqrt{\mathrm{det}_2\left(I-\chi K\chi\right)}\sim
Ce^{\frac{M\tau t}{2(1+\tau)}-\frac{1}{2
}\int_{\mathbb{R}_+}\rho(s)\log\left(t-s\right)\D s},
\end{equation*}
where $\rho(s)$ is given in (\ref{eq:MP}). The saddle point
$t_{saddle}$ of this equation does not belong to the bulk region
$[b_-,b_+]$ unless $\tau$ is at the critical value
$\tau_c=\gamma^{-1}$. When $\tau<\tau_c$, the steepest descent
contour $\Gamma$ can be deformed such that it does not intersect
$[b_-,b_+]$. In this case, $t$ will always be of finite distance
from $[b_-,b_+]$ and the factor $(t- x)^{-\frac{1}{2}}$ in $w(x)$
will have no effect on the asymptotics of the kernel $S_1$ at the
edge point $b_+$. In this case, the kernel at the edge point will be
given by the Airy kernel.
\begin{equation*}
\lim_{N\rightarrow\infty}\frac{(1+\gamma)^{\frac{4}{3}}}{\gamma
M^{\frac{2}{3}}}S_{1}\left(x,y\right)=\frac{Ai(\xi_1)Ai^{\prime}(\xi_2)-Ai(\xi_2)Ai^{\prime}(\xi_1)}{\xi_1-\xi_2}
+\frac{1}{2}Ai(\xi_1)\int_{-\infty}^{\xi_2}Ai(s)\D s
\end{equation*}
where $\xi_1=\left(x-b_+\right)\frac{\gamma
M^{\frac{2}{3}}}{(1+\gamma)^{\frac{4}{3}}}$ and
$\xi_2=\left(y-b_+\right)\frac{\gamma
M^{\frac{2}{3}}}{(1+\gamma)^{\frac{4}{3}}}$ are finite.

In the critical case, however, the main contribution to the contour
integral (\ref{eq:Pmaxdet}) comes from a small neighborhood of
$t=b_+$ and the factor $(t- x)^{-\frac{1}{2}}$ in $w(x)$ will now
significantly alter the behavior of the kernel $S_1$ and change it
from the Airy kernel to into a more complicated kernel. This gives
rise to the phase transition and a new distribution function. Our
next result is the representation of this new distribution function
in terms of the Hastings-McLeod solution of the Painlev\'e II
equation. First let us define some functions that will appear in our
formula.

Let $\psi(u,T)=(T- u)^{\frac{1}{2}}$ and
$H_j(u,T)=Ai^{(j)}(u)\psi^{-1}(u,T)$ and let $\mathcal{S}_{10}$,
$\mathcal{S}_{11}$ be
\begin{equation*}
\begin{split}
\mathcal{S}_{10}(T)&=\int_{-\infty}^{\infty}H_1(u,s)\D
u+\int_{T}^{\infty}\frac{\left(\int_{-\infty}^{\infty}H_1(u,s)\D
u\right)^2}{\int_{-\infty}^{\infty}H_0(u,s)\D u}\D s-\frac{2}{3}T^{\frac{3}{2}},\\
\mathcal{S}_{11}(T)&= 2\int_{-\infty}^0\left(H_1\int_u^{\infty}H_2\D
v-\frac{(-u)^{\frac{1}{2}}}{2\pi(T- u)}\right)\D u\\
&+2\int_0^{\infty}H_1\int_u^{\infty}H_2\D v\D u
-\int_{-\infty}^{\infty}H_1\D u\int_{-\infty}^{\infty}H_2\D u.
\end{split}
\end{equation*}
Define the function $\mathcal{S}(T)$ to be
\begin{equation}\label{eq:St}
\mathcal{S}(T)=\mathcal{S}_{10}(T)+\int_{0}^T\mathcal{S}_{11}(s)\D
T-1/2\int_{0}^{\infty}\left(\mathcal{S}_{11,+}(s)-\mathcal{S}_{11,-}(s)\right)\D
s,
\end{equation}
where the contour of integration in the first term remains in the
upper half plane and $\mathcal{S}_{11,\pm}$ are the boundary values
of $\mathcal{S}_{11}$ as it approaches the real axis in the
upper/lower half plane.

Now let $U$ be the matrix
\begin{equation*}
U=\begin{pmatrix}0&0&0&-\psi\phi_0\\
0&0&\psi^{-1}\phi_0&-\psi^{-1}\phi_0\\
0&-\frac{ \sigma}{\phi_0\psi}&\frac{\p}{\p\zeta}\log\left(\phi_0/\psi\right)&0\\
-\phi_0\psi^{-1}&0&\frac{1}{2\psi^2}&0
\end{pmatrix}
\end{equation*}
and define $\vec{h}_j$ to be the vector
$\vec{h}_j=\left(0,0,0,\frac{\phi_j}{\psi}\right)^T$ for $j=0,1,2$
and $\vec{h}_j=0$ for $j=3$ and $j=4$, where $\phi_0$ is the
Hastings-McLeod solution of Painlev\'e II (\ref{eq:HM}) and
\begin{equation*}
\begin{split}
\sigma(\zeta)&=\int_{\zeta}^{\infty}\phi_0^2\D\xi,\quad
\phi_1=\phi_0^{\prime}+\sigma\phi_0^{\prime},\\
\phi_2&=\left(\zeta+\int_{\zeta}^{\infty}\phi_0(\xi)\phi_1(\xi)\D\xi\right)\phi_0-\sigma\phi_1.
\end{split}
\end{equation*}
Let $\vec{v}_j$ be the vector that satisfies the linear system of
ODEs with the following boundary condition
\begin{equation}\label{eq:vecv}
\begin{split}
&\frac{\p\vec{v}_j}{\p\zeta}=U(\zeta)\vec{v}_j+\vec{h}_j,\quad
\vec{v}_j\sim\left(0,0,0,\int_{-\infty}^{\infty}H_j\D u\right),\quad
\zeta\rightarrow+\infty,\quad j=0,1,2,\\
&\vec{v}_3\sim\left(0,0,-1,0\right),\quad \zeta\rightarrow-\infty,
\quad \vec{v}_4\sim\left(0,0,-1,0\right),\quad
\zeta\rightarrow+\infty
\end{split}
\end{equation}
Then the largest eigenvalue distribution at the phase transition is
given by
\begin{theorem}\label{thm:phdis}
Suppose $N$ is even. Let
$w=\left(\frac{N}{4}\right)^{\frac{1}{3}}(1-\tau)\in(-\infty,\infty)$
and let $\zeta=(z-4)\left(N/4\right)^{\frac{2}{3}}$, then as
$N,M\rightarrow\infty$ such that $M/N\rightarrow\gamma^2=1$, the
largest eigenvalue distribution at the phase transition is given by
\begin{equation*}
\begin{split}
&\lim_{N\rightarrow\infty}\mathbb{P}\left(\left(\lambda_{max}-4\right)\left(\frac{N}{4}\right)^{\frac{2}{3}}\leq\zeta\right)
=C\sqrt{TW_2(\zeta)}\mathrm{Im}\Bigg(\int_{\Xi_+}e^{-\frac{wT}{2}-\frac{1}{2}\mathcal{S}}
\left(\int_{-\infty}^{\infty}H_0\D
u\right)^{\frac{1}{2}}\\
&\times\left(\det\left(\delta_{jk}-(\alpha_j,\beta_k)\right)_{1\leq
j,k\leq 3} \right)^{\frac{1}{2}}\D T\Bigg),
\end{split}
\end{equation*}
for some constant $C$, where $\Xi_+$ is a contour in the upper half
plane that does not contain any zero of
$\int_{-\infty}^{\infty}H_0\D u$ and approaches $\infty$ in the
sector $\pi/3<\arg T<\pi$. It intersects $\mathbb{R}$ at the point
$\zeta$. The entries in the $3\times 3$ matrix are given by
\begin{equation*}
\begin{split}
\left(\alpha_1,\beta_1\right)&=\frac{v_{34}-v_{33}+1}{2},\quad
\left(\alpha_1,\beta_2\right)=\frac{v_{32}}{2},\quad
\left(\alpha_1,\beta_1\right)=\frac{\psi^2W\left(v_{33},\phi_1\psi^{-1}\right)}{2 \sigma},\\
\left(\alpha_2,\beta_1\right)&=\frac{T}{2}q_0 -\mathcal{B}_1q_1- q_2
-\mathcal{B}_2\tilde{\mathcal{R}}_{-},\quad
\left(\alpha_3,\beta_1\right)=\frac{1}{2}q_1
+\mathcal{B}_1q_0\\
\left(\alpha_2,\beta_j\right)&=\frac{T}{2}v_{j-2,2}
-\mathcal{B}_1u_{-,1j-1}- u_{-,2j-2}
-\mathcal{B}_2\tilde{\mathcal{P}}_{-,j-2},\\
\left(\alpha_3,\beta_j\right)&=\frac{1}{2}u_{-,1j-2}
+\mathcal{B}_1v_{j-2,2},\quad j=2,3,
\end{split}
\end{equation*}
where $W(f,g)$ is the Wronskian $W(f,g)=fg^{\prime}-gf^{\prime}$ and
$v_{jk}$ are the components of the vectors $v_j$ in (\ref{eq:vecv}).
The functions $\mathcal{B}_1$, $\mathcal{B}_2$, $q_j$,
$\tilde{R}_-$, $\tilde{P}_{-,j}$ and $u_{-,jk}$ are given by
\begin{equation*}
\begin{split}
&q_{j}=v_{j4}-v_{j3}-\int_{-\infty}^{\infty}H_j\D
u,\quad u_{-,jk}=\frac{\psi^2W\left(v_{j3},\phi_k\psi^{-1}\right)}{ \sigma},\quad j=1,2,\\
&\tilde{\mathcal{R}}_-=v_{44}-v_{43}+1,\quad
\tilde{\mathcal{P}}_{-,0}=v_{42},\quad
\tilde{\mathcal{P}}_{-,1}=\frac{\psi^2W\left(v_{43},\phi_1\psi^{-1}\right)}{ \sigma},\\
\mathcal{B}_1&=-\frac{ \int_{-\infty}^{\infty}H_1(u)\D
u+1}{2\int_{-\infty}^{\infty}H_0(u)\D u},\\
\mathcal{B}_2&=-\frac{\mathcal{B}_1}{2}-\frac{T}{4}\int_{-\infty}^{\infty}H_0\D
u+\frac{\mathcal{B}_1}{2}\int_{-\infty}^{\infty}H_1\D
u+\frac{1}{2}\int_{-\infty}^{\infty}H_2\D u.
\end{split}
\end{equation*}
\end{theorem}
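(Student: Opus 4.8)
The plan is to start from the exact integral formula for $\mathbb{P}(\lambda_{max}\le z)$ in Theorem~\ref{thm:main2}, namely
\[
\mathbb{P}(\lambda_{max}\le z)= C\int_{\Gamma}\exp\left(\frac{M\tau t}{2(1+\tau)}-\frac{1}{2}\int_{c_0}^t\int_{\mathbb{R}_+}\frac{S_1(x,x)}{s-x}\D x\D s\right)\sqrt{\mathrm{det}_2(I-\chi K\chi)}\,\D t,
\]
and to perform a steepest descent analysis of the $t$-integral under the scaling $z=4+\zeta(N/4)^{-2/3}$, $\tau=1-w(N/4)^{-1/3}$. First I would localize the contour $\Gamma$ near the saddle $t=b_+=4$: as sketched in the text, away from $b_+$ the integrand behaves like $\exp\bigl(\frac{M\tau t}{2(1+\tau)}-\frac12\int_{\mathbb{R}_+}\rho(s)\log(t-s)\,\D s\bigr)$, whose saddle sits outside $[b_-,b_+]$ except exactly at criticality, so the dominant contribution comes from a window $t=4+T(N/4)^{-2/3}$ with $T$ in the scaled variable, and $\Gamma$ becomes the contour $\Xi_+$ in the upper half plane approaching infinity in the sector $\pi/3<\arg T<\pi$ (the steepest descent directions of the cubic exponent). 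The phase $\frac{M\tau t}{2(1+\tau)}-\frac12\int_{c_0}^t\int_{\mathbb{R}_+}\frac{S_1(x,x)}{s-x}\D x\D s$ must be expanded to the order that survives in the limit; the leading-order divergent pieces are absorbed into the $-\tfrac23 T^{3/2}$ term inside $\mathcal{S}_{10}$ and into the subtraction terms in $\mathcal{S}(T)$, while $\frac{M\tau t}{2(1+\tau)}$ contributes the $-\tfrac{wT}{2}$ factor after rescaling. This requires the asymptotics of the diagonal kernel $S_1(x,x)$, which in turn requires Theorem~\ref{thm:baik}.

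Next I would insert the decomposition $S_1=K_2+(\text{finite rank correction})$ from Theorem~\ref{thm:baik} and compute the scaling limit of each piece. The Christoffel--Darboux part $K_2$ has known Plancherel--Rotach asymptotics for Laguerre polynomials; because of the factor $\bigl(\frac{y(t-y)}{x(t-x)}\bigr)^{1/2}$ and the fact that $t$ itself scales like $b_+$, the usual Airy kernel gets modified by the functions $\psi(u,T)=(T-u)^{1/2}$ and $H_j(u,T)=Ai^{(j)}(u)\psi^{-1}(u,T)$. The finite-rank correction term in \eqref{eq:kerform1} involves $\epsilon(\pi_{N+1,1}w)$, $\epsilon(\pi_{N,1}w)$ and $L_{N-2}w$, $L_{N-1}w$ together with the explicit $2\times2$ matrix with entries $-M/(2h_{N-1,0})$, $-M/(2h_{N-2,0})$, $(Mt-(M+N))/(2h_{N-1,0})$; the Proposition expressing $\pi_{j,1}$ as short linear combinations of Laguerre polynomials lets me reduce everything to Laguerre asymptotics. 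As the text warns, one must track these up to the \emph{third} leading order because of cancellations — this is where the vectors $\vec{v}_j$ solving $\partial_\zeta\vec v_j=U\vec v_j+\vec h_j$ enter: the matrix $U$ built from $\phi_0,\sigma,\phi_1,\phi_2$ is precisely the effective linear ODE governing the subleading Airy-type data at criticality (a rank-one spiked deformation of the Hastings--McLeod Lax system), and the boundary data at $\pm\infty$ in \eqref{eq:vecv} are chosen to match the $H_j$-integrals and the Airy tails. Assembling the limiting matrix kernel $K$ and re-expressing $\sqrt{\mathrm{det}_2(I-\chi K\chi)}$ in the limit produces $\sqrt{TW_2(\zeta)}$ times the finite $3\times3$ determinant $\det(\delta_{jk}-(\alpha_j,\beta_k))$, whose entries are bilinear pairings of the correction-term vectors with the resolvent of the Airy operator — this is exactly the general ``finite rank perturbation of a determinantal/Pfaffian point process'' mechanism of \cite{KDaem}, \cite{baikext}, \cite{DG}, \cite{DGKV}, specialized here.

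Finally I would combine the phase asymptotics and the determinant asymptotics: the $t$-integral, after rescaling $\D t=(N/4)^{-2/3}\D T$ and absorbing the Jacobian and the $N$-dependent prefactors into the overall constant $C$, becomes $\int_{\Xi_+}e^{-wT/2-\mathcal{S}(T)/2}\bigl(\int_{-\infty}^\infty H_0\,\D u\bigr)^{1/2}\bigl(\det(\delta_{jk}-(\alpha_j,\beta_k))\bigr)^{1/2}\D T$, and since the left-hand side $\mathbb{P}(\lambda_{max}\le z)$ is real while the integrand is not (the contour lies strictly in the upper half plane, reflecting the Pfaffian square-root branch), one takes the imaginary part, giving the stated formula; the reality and the correct normalization fix $C$. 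The fact that $\Xi_+$ avoids the zeros of $\int_{-\infty}^\infty H_0\,\D u$ is needed so the square roots are well defined along the contour, and the subtraction term $-\tfrac12\int_0^\infty(\mathcal{S}_{11,+}-\mathcal{S}_{11,-})\D s$ in \eqref{eq:St} is precisely the correction that makes $e^{-\mathcal{S}/2}$ single-valued across the cut of $\psi$. The main obstacle I anticipate is the third-order matching of the skew-orthogonal-polynomial asymptotics through the Riemann--Hilbert / local-parametrix analysis at the hard edge $b_+$ in the presence of the extra branch point $t\to b_+$: controlling the $O(M^{-2/3})$ and $O(M^{-4/3})$ terms uniformly, identifying their limits with the components $v_{jk}$ of the solutions of \eqref{eq:vecv}, and checking that the miraculous cancellations that reduce an a priori divergent expression to the finite answer actually occur — in other words, verifying that the ODE system driven by $U$ and $\vec h_j$ with the prescribed boundary behavior is exactly the integrable structure that the Laguerre RH problem produces in this double-scaling limit.
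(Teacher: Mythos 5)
Your proposal tracks the paper's actual strategy quite closely: you start from the exact integral formula in Theorem~\ref{thm:main2}, localize the $t$-contour at $t=4$ under the double scaling $z=4+\zeta(N/4)^{-2/3}$, $\tau=1-w(N/4)^{-1/3}$, feed in the kernel decomposition $S_1=K_2+K_1$ from Theorem~\ref{thm:baik}, push the Laguerre/Plancherel--Rotach asymptotics to third order to survive the cancellations, arrive at a limiting kernel $K_\infty$ that is a rank-2 correction of (a $\psi$-conjugated) Airy kernel, and finally express $\sqrt{\det_2(I-\chi K_\infty\chi)}$ as $\sqrt{TW_2(\zeta)}$ times a $3\times 3$ determinant by the finite-rank-perturbation device of Tracy--Widom; the passage from the symmetric contour $\Xi$ to $\mathrm{Im}\int_{\Xi_+}$ by reality of the probability is also right. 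This is Sections~\ref{se:asymskew}--\ref{se:Fred} of the paper.

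The one place your account is off is the provenance of the vectors $\vec v_j$. You describe the ODE system $\partial_\zeta\vec v_j=U\vec v_j+\vec h_j$ as emerging from the third-order Laguerre/skew-orthogonal-polynomial matching at $b_+$, i.e.\ from the local Riemann--Hilbert parametrix analysis, and you say the $v_{jk}$ are "the limits" of the $O(M^{-2/3})$, $O(M^{-4/3})$ terms. In the paper the third-order matching only produces the explicit limiting kernel $K_{1,\infty}$ with its coefficients $H_j$, $\mathcal B_1$, $\mathcal B_2$ (Proposition~\ref{pro:asymker}); the $\vec v_j$ appear afterwards and have nothing to do with the Laguerre asymptotics. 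They are the Tracy--Widom-type closure variables: after the factorization $\det_2(I-\chi G K_\infty G^{-1}\chi)=\det(I+R_0\chi)\det(I-g^{-1}\tilde K g)$ (equation~(\ref{eq:detres})), the finite-rank operator $\tilde K$ has entries that are pairings like $u_{\pm,jk}$, $q_{\pm,j}$, $\mathcal R_\pm$, $\mathcal P_{\pm,j}$ with the Airy resolvent (equations~(\ref{eq:uij}), (\ref{eq:aux})), and differentiating these pairings in $\zeta$ using $\partial_\zeta(I-K_{2,airy}\chi)^{-1}=-R_0(\cdot,\zeta)\rho(\zeta,\cdot)$ and the translation identity (\ref{eq:drho}) closes the system (Lemmas~\ref{le:dif1}, \ref{le:seconddiff}, Proposition~\ref{pro:difeq}). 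So $U$, $\vec h_j$ and the boundary data in (\ref{eq:vecv}) come from differentiating the Airy-resolvent inner products, not from the Laguerre RH parametrix. Your closing sentence that these entries are "bilinear pairings of the correction-term vectors with the resolvent of the Airy operator" is the correct statement; you should discard the competing claim that they come out of the local RH matching.
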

The distribution in Theorem \ref{thm:phdis} is expressed in terms of
integrals of the Airy function, together with the functions
$v_{jk}$, which are solutions of linear ODEs with known boundary
conditions. The coefficients of the ODEs satisfied by the $v_{jk}$
are given in terms of the Hastings-McLeod solutions and its
derivatives and are therefore known functions.
\begin{remark}
The distribution in Theorem \ref{thm:phdis} can also be expressed in
terms of an integral involving the solution of a Riemann-Hilbert
problem. (see Appendix B) With the recent advancement in the
numerical computation of Riemann-Hilbert problems \cite{Ol}, this
representation may be useful for the numerical computation of the
distribution in Theorem \ref{thm:phdis}.
\end{remark}
\subsection{Other results}
In obtaining the main result in Theorem \ref{thm:phdis}, we have
obtained some other results which may also be of interest to
mathematicians and physicists working in random matrix theory.
\subsubsection{Random matrix with external source}
One applications of the orthogonal polynomial approach developed in
this paper is in the studies of random matrix with a rank 1 external
source. Random matrix with external source are random matrix models
on the space of real symmetric, Hermitian or Hermitian self-dual
$N\times N$ matrices with the following probability measure
\begin{equation*}
P(Y)\D Y=\frac{1}{Z}e^{-M\tr\left(V(Y)-AY\right)},
\end{equation*}
for some real-valued function $V(x)$ such that $e^{-V(x)}$ decays
fast enough as $x\rightarrow\pm \infty$ and a real symmetric,
Hermitian or Hermitian self-dual matrix $A$. The function $V(x)$ is
usually called the potential while the matrix $A$ is known as the
external source. The real Wishart ensemble can be thought of as a
special case when $V(x)=\frac{x}{2}-\frac{M-N-1}{2M}\log x$. Random
matrices with external source are first studied by Br\'ezin and
Hikami \cite{Bre1}, \cite{Bre2} and P. Zinn-Justin \cite{ZJ1},
\cite{ZJ2} as a model of systems with both random and deterministic
parts. For Hermitian random matrices, the external source model can
be studied using multi-orthogonal polynomial and Riemann-Hilbert
techniques \cite{BK1} and there are many recent advancements in the
asymptotic analysis of Hermitian external source models with
non-Gaussian potential $V(x)$ \cite{BW}, \cite{Ber1}, \cite{Ber2},
\cite{BK2}, \cite{BK3}, \cite{BK4}, \cite{BK5}. In \cite{Wang2}, the
contour integral formula (\ref{eq:main1}) was derived independently
and was used to study real symmetric random matrix with a rank 1
external source and a large class of potential $V(x)$. By using the
linear statistics results of Johansson \cite{Johan} and a
representation for the largest eigenvalue distribution that is
different to (\ref{eq:Pmaxdet}), the largest eigenvalue distribution
was obtained in the super-critical regime. The orthogonal polynomial
approach developed in this paper can be used to extend the results
in \cite{Wang2} to the critical regime. In fact, for real random
matrix with a rank 1 external source, the expression
(\ref{eq:Pmaxdet}) for the largest eigenvalue distribution remains
valid, although the kernels $S_1$ and $K$ will have to be modified
according to the potential. For a polynomial potential $V(x)$, the
analogue of (\ref{eq:kerform1}), which expresses the kernel $S_1$ in
terms of orthogonal polynomials is well-known \cite{W}. By using the
asymptotics of orthogonal polynomials found in \cite{DKV} and
\cite{DKV2}, such representation can be used to compute the
asymptotics of the kernel $S_1$, which can then be used in
(\ref{eq:Pmaxdet}) to obtain the largest eigenvalue distribution.
\subsubsection{Orthogonal ensembles}
A large part of this paper involves the analysis of an orthogonal
ensemble with weight (\ref{eq:w1}) and in doing so, we have obtained
some new results for orthogonal ensembles.

The first of these results is the logarithmic derivative of the
partition function $Z_1=\det\mathbb{D}$ in Proposition
\ref{pro:logder0}. While the proposition is stated in the terms of
the derivative of the parameter $t$ in $w(x)$, the proof can easily
be generalized to obtain the logarithmic derivative of
$\det\mathbb{D}$ for a general orthogonal ensemble with respect to
any parameter.
\begin{equation}\label{eq:DI}
\frac{\p}{\p
t}\log\det\mathbb{D}=2\int_{\mathbb{R}_+}S_1(x,x)\p_t\log w\D
x,\quad w(x)=e^{-\frac{NV(t,x)}{2}}.
\end{equation}
For polynomial potential, the asymptotics of $S_1(x,x)$ can be
obtained through the asymptotics of orthogonal polynomials found in
\cite{DKV}, \cite{DKV2}. This could then be used to compute the
asymptotics of the partition function $Z_1=\det\mathbb{D}$. (For
unitary ensembles, this was done in \cite{BI1} for the quartic
potential $V(x)=\frac{x^4}{4}+\frac{tx^2}{2}$ using a different
differential identity) Asymptotic analysis of the partition function
is of importance in extending the universality results of orthogonal
and symplectic ensembles to general weights $w$. At the moment,
universality in the orthogonal and symplectic ensembles are proven
for a large class of potentials $V(x)$ \cite{DG}, \cite{DGKV},
\cite{St}, \cite{Sc1}, \cite{Sc2}. However, except for the quartic
case $V(x)=\frac{x^4}{4}+\frac{tx^2}{2}$, all the available results
are restricted to the case where the limiting eigenvalue
distribution is supported on a single interval. The main obstacle in
extending these results to more general potential is the computation
of
\begin{equation*}
\det Q
=\left(\frac{Z_{N,4}(e^{-NV})Z_{2N,1}(e^{-\frac{NV}{2}})}{2^{2N}N!Z_{2N,2}(e^{-NV})}\right)^2
\end{equation*}
in the limit $N\rightarrow\infty$, where $Z_{n,\beta}(w)$ is the
partition functions of the ensembles (See remark 2.4 of \cite{St}
and remark 1.5 of \cite{DG})
\begin{equation*}
Z_{n,\beta}(w) =\int\cdots\int_{\mathbb{R}^n}\prod_{1\leq j<k\leq
n}|\lambda_j-\lambda_k|^{\beta}\prod_{j=1}^nw(\lambda_j)\D\lambda_j.
\end{equation*}
In order to proof the universality in orthogonal and symplectic
ensembles using the method in \cite{DG}, \cite{DG2} and \cite{St},
one needs to show that $\lim_{N\rightarrow\infty}\det Q\neq 0$.
While the leading order asymptotics of these partition functions can
be found using the estimates in \cite{Johan}, their combined
contributions to $\det Q$ cancel in the leading order and hence
higher order terms in the asymptotics of $Z_{n,\beta}(w)$ are needed
to show that $\lim_{N\rightarrow\infty}\det Q\neq 0$. At the moment,
asymptotics for the sub-leading order terms of partition functions
are only available for $\beta=2$. By using a differential identity
for $\log Z_{N,2}$, the authors in \cite{BI1} computed the
sub-leading order terms in $Z_{N,2}$ for the potential
$V(x)=\frac{x^4}{4}+\frac{tx^2}{2}$ as $N\rightarrow\infty$. A
combination of the method in \cite{BI1} and the differential
identity (\ref{eq:DI}) may provide a way to compute the sub-leading
order terms in the partition function $Z_{N,1}$ and help extend the
universality results in orthogonal and symplectic ensembles to more
general potentials.

Another interesting observation is Corollary \ref{cor:linear}, in
which we showed that $\left<L_k,L_{k-1}\right>_1=0$ whenever $k$ is
even. This turns out to be a very useful identity in the analysis of
the phase transition when the double scaling limit
$t-4=T\left(4/N\right)^{\frac{2}{3}}$ for the ensemble with weight
(\ref{eq:w1}) has to be considered. When analyzing this double
scaling limit, the identity $\left<L_k,L_{k-1}\right>_1=0$ leads to
cancelation in the leading order terms of the kernel $S_1$. This
enables us to show that $S_1$ is of order $N^{\frac{2}{3}}$ instead
of $N^{\frac{4}{3}}$, which is essential for the scaled limit of the
determinant $\det_2\left(I-\chi K\chi\right)$ to exist. We believe
this type of identity will also be useful in the analysis of other
double scaling limits in orthogonal ensembles.

The paper is organized as follows. In Section \ref{se:Cont} we will
prove the contour integral formula (\ref{eq:main1}) and in Section
\ref{se:Skew} the Christoffel-Darboux formula (\ref{eq:kerform1})
for the kernel $S_1$ will be derived. In Section \ref{se:Der}, we
will prove the differential identity for the moment matrix
$\mathbb{D}$ in Proposition \ref{pro:logder0}. The results in these
sections are all exact and apply to all $N$ and $M$.

We will start the asymptotic analysis in Section \ref{se:asymskew}
in which the asymptotics for the kernel $S_1$, $\det{\mathbb{D}}$
and $\det_2\left(I-\chi K\chi\right)$ will be obtained. Finally, we
will express the asymptotics of the determinant $\det_2\left(I-\chi
K\chi\right)$ in terms of the Painlev\'e transcendents in Section
\ref{se:Fred}.

Throughout the paper, we shall assume that $N$ is even and that
$M-N>0$.
\section{Contour integral formula for the j.p.d.f.}\label{se:Cont}
In this section we will prove the integral formula for the j.p.d.f.
in Theorem \ref{thm:main1}.
\subsection{Haar measure on $SO(N)$}\label{se:haar} In this section,
we will find a convenient set of coordinate on $O(N)$ to evaluate
the integral
\begin{equation*}
\int_{O(N)} e^{-\frac{M}{2}\tr(\Sigma^{-1}gY g^{-1})}g^Tdg
\end{equation*}
that appears in the expression of the j.p.d.f. (\ref{eq:jpdf}). As
both $\Sigma^{-1}$ and $Y$ are symmetric matrices, they can be
diagonalized by matrices in $O(N)$. We can therefore replace both
$\Sigma^{-1}$ and $Y$ by the diagonal matrices $\Sigma_d^{-1}$ and
$\Lambda_d$.
\begin{equation*}
\begin{split}
\Sigma_d^{-1}&=\diag\left(\frac{1}{1+\tau_1},\ldots,\frac{1}{1+\tau
_N}\right),\quad
\Lambda_d=\diag\left(\lambda_1,\ldots,\lambda_N\right)
\end{split}
\end{equation*}
The group $O(N)$ has two connected components, $SO(N)$ and $O_-(N)$
that consists of orthogonal matrices that have determinant $1$ and
$-1$ respectively. Let $T$ be the matrix
\begin{equation*}
T=\begin{pmatrix}0&1&0\\
1&0&0\\
0&0&I_{N-2}\end{pmatrix},
\end{equation*}
then the left multiplication by $T$ defines an diffeomorphism from
$O_-(N)$ to $SO(N)$. In particular, we can write the integral over
$O(N)$ in (\ref{eq:jpdf}) as
\begin{equation*}
\begin{split}
I(\Sigma,\Lambda)&=\int_{O(N)}e^{-\frac{M}{2}\tr(\Sigma_d^{-1}g\Lambda_d
g^{-1})}g^Tdg,\\&=
\int_{SO(N)}e^{-\frac{M}{2}\tr(\Sigma_d^{-1}g\Lambda_d
g^{-1})}g^Tdg+\int_{O_-(N)}e^{-\frac{M}{2}\tr(\Sigma_d^{-1}g\Lambda_d
g^{-1})}g^Tdg\\
&=\int_{SO(N)}e^{-\frac{M}{2}\tr(\Sigma_d^{-1}g\Lambda_d
g^{-1})}g^Tdg+\int_{SO(N)}e^{-\frac{M}{2}\tr(\Sigma_d^{-1}Tg\Lambda_d
g^{-1}T^{-1})}g^Tdg\\
&=\int_{SO(N)}e^{-\frac{M}{2}\tr(\Sigma_d^{-1}g\Lambda_d
g^{-1})}g^Tdg+\int_{SO(N)}e^{-\frac{M}{2}\tr(\tilde{\Sigma}_d^{-1}g\Lambda_d
g^{-1})}g^Tdg,
\end{split}
\end{equation*}
where $\tilde{\Sigma}_d$ is the diagonal matrix with the first two
entries of $\Sigma_d$ swapped.
\begin{equation*}
\tilde{\Sigma}_d^{-1}=\diag\left(\frac{1}{1+\tau _2},\frac{1}{1+\tau
_1}\ldots,\frac{1}{1+\tau _N}\right).
\end{equation*}
Note that $g^Tdg$ is also the Haar measure on $SO(N)$.

As we are considering the rank 1 spiked model, we let $\tau
_1=\ldots=\tau _{N-1}=0$ and $\tau _N=\tau $. Therefore
$\tilde{\Sigma}_d=\Sigma_d$ and we have
\begin{equation}\label{eq:I}
\begin{split}
I(\Sigma,\Lambda)
&=2\int_{SO(N)}e^{-\frac{M}{2}\tr(\Sigma_d^{-1}g\Lambda_d
g^{-1})}g^Tdg
\end{split}
\end{equation}
Let $g_{ij}$ be the entries of $g\in SO(N)$. Then the integral $I$
can be written as
\begin{equation*}
\begin{split}
I(\Sigma,\Lambda)&=2\int_{SO(N)}e^{-\frac{M}{2}\tr(\Sigma_d^{-1}g\Lambda_d
g^{-1})}g^Tdg,\\
&=2\int_{SO(N)}e^{-\frac{M}{2}\tr(\left(\Sigma_d^{-1}-I_N\right)g\Lambda_d
g^{-1})}e^{-\frac{M}{2}\tr(g\Lambda_d g^{-1})}g^Tdg,\\
&=2\prod_{j=1}^Ne^{-\frac{M}{2}\lambda_j}\int_{SO(N)}e^{\frac{\tau M}{2(1+\tau )}\sum_{j=1}^N\lambda_jg_{jN}^2}g^Tdg,\\
\end{split}
\end{equation*}
We will now find an expression of the Haar measure and use it to
compute the integral $I(\Sigma,\Lambda)$.

First let us define a set of coordinates on $SO(N)$ that is
convenient for our purpose. We will then express the Haar measure on
$SO(N)$ in terms of these coordinates.

An element $g\in SO(n)$ can be written in the following form
\begin{equation*}
g=\left(\vec{g}_1,\ldots,\vec{g}_n\right),\quad |\vec{g}_i|=1,\quad
\vec{g}_i\cdot\vec{g}_j=\delta_{ij},\quad i,j=1,\ldots, n.
\end{equation*}
This represents $SO(N)$ as the set of positively oriented
orthonormal frames in $\mathbb{R}^N$ whose coordinate axis are given
by the vectors $\vec{g}_i$. As the vector $\vec{g}_N$ is a unit
vector, we can write its components as
\begin{equation}\label{eq:angles}
\begin{split}
g_{1N}&=\cos\phi_1,\quad
g_{jN}=\prod_{k=1}^{j-1}\sin\phi_k\cos\phi_j,\quad
j=2,\ldots,n-1,\\
g_{NN}&=\prod_{k=1}^{N-1}\sin\phi_k
\end{split}
\end{equation}
The remaining vectors $\vec{g}_1,\ldots,\vec{g}_{N-1}$ form an
orthonormal frame with positive orientation in a copy of
$\mathbb{R}^{N-1}$ that is orthogonal to $\vec{g}_N$. Therefore the
set of vectors $\vec{g}_1,\ldots,\vec{g}_{N-1}$ can be identified
with $SO(N-1)$. To be precise, let $\vec{u}$ be a unit vector in
$\mathbb{R}^N$ and let $G(\vec{u})\in SO(N)$ be a matrix that maps
$\vec{u}$ to the vector $\left(0,\ldots,0,1\right)^T$. As $G$ is
orthogonal, we have
\begin{equation}\label{eq:Gvecn}
G(\vec{g}_N)\vec{g}_j=\left(v_{j1},\ldots,v_{j,N-1},0\right)^T,\quad
j<N
\end{equation}
In particular, the matrix $V$ with entries $v_{ij}$ for $1\leq
i,j\leq N-1$ is in $SO(N-1)$. The following then gives a set of
coordinates on $SO(N)$.
\begin{equation}\label{eq:coord}
g=\left(\vec{g}_N,V\right).
\end{equation}
In the above equation, $\vec{g}_N$ is identified with the
coordinates $\phi_{j}$ in (\ref{eq:angles}), while the matrix $V$ is
identified with the coordinates in $SO(N-1)$ that correspond to $V$.
In terms of these coordinates, the left action of an element $S\in
SO(N)$ on $g$ is given by the following.
\begin{equation*}
\begin{split}
Sg&=\left(S\vec{g}_1,\ldots,S\vec{g}_{N-1},S\vec{g}_N\right)^T \\
&=\left(SG(\vec{g}_N)^{-1}\vec{v}_1,\ldots,SG(\vec{g}_N)^{-1}\vec{v}_{N-1},S\vec{g}_N\right)^T
\end{split}
\end{equation*}
Then as in (\ref{eq:Gvecn}), we have
\begin{equation*}
G(S\vec{g}_N)SG(\vec{g}_N)^{-1}\vec{v}_j=\left(\tilde{v}_{j1},\ldots,\tilde{v}_{j,N-1},0\right)^T.
\end{equation*}
The matrix $\tilde{V}$ with entries $\tilde{v}_{ij}$ are again in
$SO(N-1)$, therefore the matrix $G(S\vec{g}_N)SG(\vec{g}_N)^{-1}$ is
of the form
\begin{equation}\label{eq:action}
G(S\vec{g}_N)SG(\vec{g}_N)^{-1}=\begin{pmatrix}\tilde{S}_{N-1}&\vec{s}\\
                                               0&s_N\end{pmatrix}
\end{equation}
From the fact that $G(S\vec{g}_N)SG(\vec{g}_N)^{-1}$ is an
orthogonal matrix, it is easy to check that $\vec{s}=0$ and $s_N=\pm
1$. To determine $s_N$, let us consider the action of
$G(S\vec{g}_N)SG(\vec{g}_N)^{-1}$ on $(0,0,\ldots,1)^T$. We have
\begin{equation*}
G(S\vec{g}_N)SG(\vec{g}_N)^{-1}(0,0,\ldots,1)^T=G(S\vec{g}_N)S\vec{g}_N=(0,0,\ldots,1)^T
\end{equation*}
Therefore $s_N=1$ and $\tilde{S}_{N-1}$ is in $SO(N-1)$. The action
of $S$ on $g$ is therefore given by
\begin{equation}\label{eq:Saction}
Sg=\left(S\vec{g}_N,\tilde{S}_{N-1}V\right).
\end{equation}
We will now write the Haar measure on $SO(N)$ in terms of the
coordinates (\ref{eq:coord}). These coordinates give a local
diffeomorphism between $SO(N)$ and $S^{N-1}\times SO(N-1)$ as
$\vec{g}_N\in S^{N-1}$ and $V\in SO(N-1)$. Let $dX$ be a measure on
$S^{N-1}$ that is invariant under the action of $SO(N)$ and $V^TdV$
be the Haar measure on $SO(N-1)$, then the following measure
\begin{equation*}
dH=dX\wedge V^TdV,
\end{equation*}
is invariant under the left action of $SO(N)$. Let $S\in SO(N)$,
then its action on the point $(\vec{g}_N,V)$ is given by
(\ref{eq:Saction}), where $\tilde{S}_{N-1}$ depends only on the
coordinates $\phi_1,\ldots,\phi_{N-1}$. Therefore under the action
of $S$, the measure $dH$ becomes
\begin{equation}\label{eq:dehaar}
dH\rightarrow dX\wedge
V^T\tilde{S}_{N-1}^T\tilde{S}_{N-1}dV=dX\wedge V^TdV,
\end{equation}
as $dX$ is invariant under the action of $S$. Therefore if we can
find a measure on $S^{N-1}$ that is invariant under the action of
$SO(N)$, then $dX\wedge V^TdV$ will give us a left invariant measure
on $SO(N)$. Since the left invariant measure on a compact group is
also right invariant, this will give us the Haar measure on $SO(N)$.
As the metric on $S^{N-1}$ is invariant under the action of $SO(N)$,
it is clear that the volume form on $S^{N-1}$ is invariant under the
action of $SO(N)$. Let $dX$ be the volume form on $S^{N-1}$, then
from (\ref{eq:dehaar}), we see that the measure $dX\wedge V^TdV$ is
invariant under the action of $SO(N)$.
\begin{proposition}\label{pro:haar}
Let $dX$ be the volume form on $S^{N-1}$ given by
\begin{equation*}
\begin{split}
dX=\sin^{N-2}(\phi_1)\sin^{N-1}(\phi_2)\ldots\sin(\phi_{N-2})\wedge_{j=1}^{N-1}d\phi_j
\end{split}
\end{equation*}
in terms of the coordinates $\phi_1,\ldots,\phi_{N-1}$ in
(\ref{eq:angles}) and (\ref{eq:coord}), then the Haar measure on
$SO(N)$ is equal to a constant multiple of
\begin{equation*}
dH=dX\wedge V^TdV,
\end{equation*}
where $V^TdV$ is the Haar measure on $SO(N-1)$ in terms of the
coordinates (\ref{eq:coord}).
\end{proposition}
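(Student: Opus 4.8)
The plan is to assemble two ingredients, most of the work for the first having already been done in the preceding discussion. The first ingredient is that $dH=dX\wedge V^TdV$ is invariant under the left $SO(N)$-action on $SO(N)$; the second is the explicit identification of the $SO(N)$-invariant volume form $dX$ on $S^{N-1}$ with the stated product of powers of sines in the hyperspherical coordinates $\phi_1,\dots,\phi_{N-1}$ of (\ref{eq:angles}) and (\ref{eq:coord}).

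\textbf{Invariance.} By (\ref{eq:Saction}), left translation by $S\in SO(N)$ sends the point $(\vec{g}_N,V)$ to $(S\vec{g}_N,\tilde{S}_{N-1}V)$, where $\tilde{S}_{N-1}\in SO(N-1)$ is the block appearing in (\ref{eq:action}) and depends only on $\vec{g}_N$ and $S$, never on $V$. Consequently, in the product chart $S^{N-1}\times SO(N-1)$ the pullback of $dH$ factors: the sphere factor is acted on by $X\mapsto SX$, under which the round volume form $dX$ is invariant (the metric on $S^{N-1}$ induced from $\mathbb{R}^N$ is $SO(N)$-invariant), while the group factor is acted on by $V\mapsto\tilde{S}_{N-1}V$, under which the Haar measure $V^TdV$ of $SO(N-1)$ is invariant; since $\tilde{S}_{N-1}$ is constant along the $V$-fibre the wedge is unchanged. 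This is exactly the computation recorded in (\ref{eq:dehaar}); I would simply present it as the proof of left-invariance, patching over a cover of $S^{N-1}$ by coordinate charts because $V^TdV$ is globally defined on $SO(N-1)$ and the transition maps on the sphere preserve $dX$. A smooth left-invariant positive measure on a compact Lie group is a constant multiple of the Haar measure (uniqueness of Haar measure), so $dH$ is proportional to the Haar measure of $SO(N)$.

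\textbf{The volume form.} It remains to compute $dX$ in the coordinates $\phi_j$. Differentiating the expressions $g_{1N}=\cos\phi_1$, $g_{jN}=\prod_{k<j}\sin\phi_k\cos\phi_j$, $g_{NN}=\prod_k\sin\phi_k$ in (\ref{eq:angles}) and using $\sin^2+\cos^2=1$ to collapse the cross terms, the Euclidean metric of $\mathbb{R}^N$ pulls back to the diagonal metric $d\phi_1^2+\sin^2\phi_1\,d\phi_2^2+\sin^2\phi_1\sin^2\phi_2\,d\phi_3^2+\cdots+(\sin^2\phi_1\cdots\sin^2\phi_{N-2})\,d\phi_{N-1}^2$ on $S^{N-1}$. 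Its determinant is the product of the diagonal entries, $\sin^{2(N-2)}\phi_1\sin^{2(N-3)}\phi_2\cdots\sin^2\phi_{N-2}$, so the Riemannian volume form is $\sqrt{\det}\,d\phi_1\wedge\cdots\wedge d\phi_{N-1}=\sin^{N-2}\phi_1\sin^{N-3}\phi_2\cdots\sin\phi_{N-2}\,d\phi_1\wedge\cdots\wedge d\phi_{N-1}$, which is the form named $dX$ in the statement (this is an $SO(N)$-invariant object, so it is admissible in the previous step).

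There is no deep obstacle here; the only points requiring care are bookkeeping. One is the well-definedness of $dH$ as a global measure: the chart $(\vec{g}_N,V)$ only covers the part of $SO(N)$ lying over a hyperspherical chart of $S^{N-1}$, but since the $SO(N-1)$-factor and its Haar measure are globally defined and the sphere transition maps preserve $dX$, the local pieces patch together into a single invariant measure. The other is the structural fact, already established in the derivation of (\ref{eq:action})--(\ref{eq:Saction}), that $\tilde{S}_{N-1}$ depends only on $\vec{g}_N$ and not on $V$ — this is precisely what lets the wedge product factor through the left action and makes the invariance computation go through.
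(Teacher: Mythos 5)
Your proof is correct and follows the same route as the paper: left-invariance of $dH = dX\wedge V^TdV$ via the factored action $(\vec g_N,V)\mapsto(S\vec g_N,\tilde S_{N-1}V)$ together with the independence of $\tilde S_{N-1}$ from $V$, followed by uniqueness of the (left-)Haar measure on a compact group. The one substantive addition is that you actually compute $dX$ from the pullback of the Euclidean metric; this is a worthwhile step, and the result $\sin^{N-2}\phi_1\,\sin^{N-3}\phi_2\cdots\sin\phi_{N-2}$ is the standard spherical volume density, which also reveals a typo in the proposition as printed (the exponent on $\sin\phi_2$ should be $N-3$, not $N-1$). You also make explicit the point, glossed over in the paper's display (\ref{eq:dehaar}), that the $d\tilde S_{N-1}$ cross terms in the pullback of $V^T dV$ are irrelevant because they are annihilated on wedging with the top form $dX$ of the sphere factor.
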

We can now compute the integral $I(\Sigma,\Lambda)$.

\subsection{Integral formula}\label{se:int} By
using the expression of the Haar measure derived in the last
section, we can now write the integral $I(\Sigma,\Lambda)$ as
\begin{equation*}
\begin{split}
I(\Sigma,\Lambda)&=2\prod_{j=1}^Ne^{-\frac{M}{2}\lambda_j}\int_{SO(N)}e^{\frac{\tau M}{2(1+\tau )}\sum_{j=1}^N\lambda_jg_{jN}^2}g^Tdg,\\
&=2\prod_{j=1}^Ne^{-\frac{M}{2}\lambda_j}\int_{SO(N-1)}V^TdV\int_{S^{N-1}}e^{\frac{\tau M}{2(1+\tau )}\sum_{j=1}^N\lambda_jg_{jN}^2}dX,\\
&=2C\prod_{j=1}^Ne^{-\frac{M}{2}\lambda_j}\int_{S^{N-1}}e^{\frac{\tau
M}{2(1+\tau )}\sum_{j=1}^N\lambda_jg_{jN}^2}dX,
\end{split}
\end{equation*}
for some constant $C$, where the $N-1$ sphere $S^{N-1}$ in the above
formula is defined by $\sum_{j=1}^Ng_{jN}^2=1$ and $dX$ is the
volume form on it. If we let $g_{jN}=x_j$, then the above can be
written as
\begin{equation}\label{eq:Iint}
\begin{split}
I(\Sigma,\Lambda)&=2C\prod_{j=1}^Ne^{-\frac{M}{2}\lambda_j}\int_{\mathbb{R}^N}e^{\frac{\tau
M}{2(1+\tau )}\sum_{j=1}^N\lambda_jx_{j}^2}
\delta\left(\sum_{j=1}^Nx_j^2-1\right)dx_1\ldots dx_N.
\end{split}
\end{equation}
This can be seen most easily by the use of polar coordinates in
$\mathbb{R}^N$, which are given by
\begin{equation*}
\begin{split}
x_{1}&=r\cos\phi_1,\quad
x_{j}=r\prod_{k=1}^{j-1}\sin\phi_k\cos\phi_j,\quad
j=2,\ldots,N-1,\\
x_{N}&=r\prod_{k=1}^{N-1}\sin\phi_k,
\end{split}
\end{equation*}
Then the volume form in $\mathbb{R}^N$ is given by
\begin{equation*}
dx_1\ldots
dx_N=r^{N-1}\sin^{N-2}\phi_1\ldots\sin\phi_{N-2}drd\phi_1\ldots
d\phi_{N-1}
\end{equation*}
Therefore in terms of polar coordinates, we have
\begin{equation*}
\begin{split}
&\int_{\mathbb{R}^N}e^{\frac{\tau M}{2(1+\tau
)}\sum_{j=1}^N\lambda_jx_{j}^2}
\delta\left(\sum_{j=1}^Nx_j^2-1\right)dx_1\ldots dx_N\\
&=\int_{0}^{\pi}d\phi_1\int_0^{2\pi}d\phi_2\ldots\int_0^{2\pi}d\phi_{N-1}
\int_{0}^{\infty}dr\delta\left(\sum_{j=1}^Nr^2-1\right)r^{N-1}\\
&\times e^{\frac{\tau M}{2(1+\tau )}\sum_{j=1}^N\lambda_jx_{j}^2}
\sin^{N-2}\phi_1\ldots\sin\phi_{N-2}\\
&=\int_{S^{N-1}}e^{\frac{\tau M}{2(1+\tau
)}\sum_{j=1}^N\lambda_jx_{j}^2} dX.
\end{split}
\end{equation*}
To compute the integral $I(\Sigma,\Lambda)$, we use a method in the
studies of random pure quantum systems (see, e.g. \cite{Maj}). The
idea is to consider the Laplace transform of the function
$I(\Sigma,\Lambda,t)$ defined by
\begin{equation*}
I(\Sigma,\Lambda,t)=2C\prod_{j=1}^Ne^{-\frac{M}{2}\lambda_j}\int_{\mathbb{R}^N}e^{\frac{\tau
M}{2(1+\tau )}\sum_{j=1}^N\lambda_jx_{j}^2}
\delta\left(\sum_{j=1}^Nx_j^2-t\right)dx_1\ldots dx_N,
\end{equation*}
then $I(\Sigma,\Lambda,1)=I(\Sigma,\Lambda)$. The Laplace transform
of $I(\Sigma,\Lambda,t)$ in the variable $t$ is given by
\begin{equation*}
\int_0^{\infty}e^{-st}I(\Sigma,\Lambda,t)dt=
2C\prod_{j=1}^Ne^{-\frac{M}{2}\lambda_j}\int_{\mathbb{R}^N}e^{\sum_{j=1}^N\left(-s+\frac{\tau
M}{2(1+\tau )}\lambda_j\right)x_{j}^2} dx_1\ldots dx_N
\end{equation*}
Then, provided $\mathrm{Re}(s)>\max_j\left(\lambda_j\right)$, the
integral can be computed explicitly to obtain
\begin{equation*}
\int_0^{\infty}e^{-st}I(\Sigma,\Lambda,t)dt=
2C\prod_{j=1}^Ne^{-\frac{M}{2}\lambda_j}\left(s-\frac{\tau
M}{2(1+\tau )}\lambda_j\right)^{-\frac{1}{2}}
\end{equation*}
Taking the inverse Laplace transform, we obtain an integral
expression for $I(\Sigma,\Lambda)$.
\begin{equation*}
I(\Sigma,\Lambda)= \frac{C}{\pi i}
\int_{\Gamma}e^s\prod_{j=1}^Ne^{-\frac{M}{2}\lambda_j}\left(s-\frac{\tau
M}{2(1+\tau )}\lambda_j\right)^{-\frac{1}{2}}ds,
\end{equation*}
where $\Gamma$ is a contour that encloses all the points $\frac{\tau
M}{2(1+\tau )}\lambda_1,\ldots,\frac{\tau M}{2(1+\tau )}\lambda_N$
that is oriented in the counter-clockwise direction. Rescaling the
variable $s$ to $s=Mt$, we obtain
\begin{equation*}
I(\Sigma,\Lambda)\propto \int_{\Gamma}e^{\frac{M\tau
t}{2(1+\tau)}}\prod_{j=1}^Ne^{-\frac{M}{2}\lambda_j}\left(t-\lambda_j\right)^{-\frac{1}{2}}dt,
\end{equation*}
This then give us an integral expression for the j.p.d.f. in Theorem
\ref{thm:main1}.

For the purpose of computing the largest eigenvalue distribution
$\mathbb{P}\left(\lambda_{max}\leq z\right)$, we can assume that the
eigenvalues are all smaller than or equal to a constant $z$.
\section{Skew orthogonal polynomials and the kernel
$S_1$}\label{se:Skew} In this section we will prove the
representation of the kernel $S_1$ in Theorem \ref{thm:baik}. We
will do so by using the multi-orthogonal polynomial representation
of skew orthogonal polynomials in \cite{V} and then apply the
Christoffel-Darboux formula for multi-orthogonal polynomials in
\cite{KDaem} to write the kernel $S_1$ as a finite sum of
multi-orthogonal polynomials. We then simplify this sum further by
using a result in \cite{baikext}. This gives a new proof to a
well-known result of Widom \cite{W}.
\subsection{Skew orthogonal polynomials}
As explain in the introduction, we need to find the skew orthogonal
polynomials with the weight (\ref{eq:w1}). Let us consider the skew
orthogonal polynomials with respect to the weight $w$ in
(\ref{eq:w1}). We shall use the ideas in \cite{AF} to express the
skew orthogonal polynomials in terms of a linear combinations of
Laguerre polynomials.

Let $Q_j(x)$ to be the degree $j+2$ polynomial
\begin{equation*}
Q_j(x)=\frac{d}{dx}\left(x^{j+1}(t- x)w(x)\right)w^{-1}(x),\quad
j\geq 0.
\end{equation*}
Then as we assume $M-N>0$, it is easy to see that
\begin{equation}\label{eq:parts}
\left<f(x),H_j(y)\right>_1=\left<f(x),x^j\right>_2,
\end{equation}
for any $f(x)$ such that $\int_0^{\infty}f(x)w(x)\D x$ is finite,
where the product $\left<\right>_2$ is defined by
\begin{equation}\label{eq:w0}
\left<f(x)g(x)\right>_2=\int_{0}^{\infty}f(x)g(x)w_0(x)dx,\quad
w_0(x)=x^{M-N}e^{-Mx}.
\end{equation}
Note that $w_0(x)$ is not the square of $w(x)$. The fact that
$w_0(x)$ is the weight for the Laguerre polynomials allows us to
express the skew orthogonal polynomials for the weight (\ref{eq:w1})
in terms of Laguerre polynomials.

In particular, this implies that the conditions (\ref{eq:sop}) is
equivalent to the following conditions
\begin{equation}\label{eq:sop2}
\begin{split}
\left<\pi_{2k,1},y^j\right>_1&=0,\quad j=0,1,\\
\left<\pi_{2k,1},y^j\right>_2&=0,\quad j=0,\ldots,2k-3.
\end{split}
\end{equation}
and the exactly same conditions for $\pi_{2k+1,1}(x)$. In
particular, the second condition implies the skew orthogonal
polynomials can be written as
\begin{equation*}
\begin{split}
\pi_{2k,1}(x)&=L_{2k}(x)+\gamma_{2k,1}L_{2k-1}(x)+\gamma_{2k,2}L_{2k-2}(x),\\
\pi_{2k+1,1}(x)&=L_{2k+1}(x)+\gamma_{2k+1,0}L_{2k}(x)+\gamma_{2k+1,1}L_{2k-1}(x)+\gamma_{2k+1,2}L_{2k-2}(x),
\end{split}
\end{equation*}
where $L_j(x)$ are the degree $j$ monic Laguerre polynomials that
are orthogonal with respect to the weight $w_0(x)$.
\begin{equation}\label{eq:laguerre}
\begin{split}
L_{n}(x)&=\frac{(-1)^ne^{Mx}x^{-M+N}}{M^n}\frac{d^n}{dx^n}\left(e^{-Mx}x^{n+M-N}\right),\\
&=x^n-\frac{(M-N+n)n}{M}x^{n-1}+O(x^{n-2}).
\end{split}
\end{equation}
The constants $\gamma_{k,j}$ are to be determined from the first
condition in (\ref{eq:sop2}). We will now show that if
$\left<L_{2k-1},L_{2k-2}\right>_1\neq 0$, then the skew orthogonal
polynomials $\pi_{2k,1}$ and $\pi_{2k+1,1}$ exist and that
$\pi_{2k,1}$ is unique. First let us show that the first condition
in (\ref{eq:sop2}) is equivalent to
\begin{equation*}
\begin{split}
\left<\pi_{2k,1},L_{2k-j}\right>_1&=0,\quad j=1,2.
\end{split}
\end{equation*}
To do this, we will first define a map $\varrho_N$ from the span of
$L_{2k-1}$ and $L_{2k-2}$ to the span of $y$ and $1$.

Let $P(x)$ be a polynomial of degree $m$. Then we can write the
polynomial $P(x)$ as
\begin{equation}\label{eq:PRq}
P(x)=\frac{d}{dx}\left(q(x)x(t- x)w(x)\right)w^{-1}(x)+R(x)
\end{equation}
where $q(x)$ is a polynomial of degree $m-2$ and $R(x)$ is a
polynomial of degree less than or equal to 1. By writing down the
system of linear equations satisfied by the coefficients of $q(x)$
and $R(x)$, we see that the polynomials $q(x)$ and $R(x)$ are
uniquely defined for any given $P(x)$. In particular, the map
$f:P(x)\mapsto R(x)$ is a well-defined linear map from the space of
polynomial to the space of polynomials of degrees less than or equal
to $1$. Let $\varrho_{k}$ be the following restriction of this map.
\begin{definition}\label{de:fn}
For any polynomial $P(x)$, let $f$ be the map that maps $P(x)$ to
$R(x)$ in (\ref{eq:PRq}). Then the map $\varrho_{k}$ is the
restriction of $f$ to the linear subspace spanned by the orthogonal
polynomials $L_{k},L_{k-1}$.
\end{definition}
We then have the following.
\begin{lemma}\label{le:linear}
If $\left<L_{k},L_{k-1}\right>_1\neq 0$, then the map $\varrho_{k}$
is invertible.
\end{lemma}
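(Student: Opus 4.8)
The plan is to show that $\varrho_k$ is injective; since both its domain $\mathrm{span}\{L_k,L_{k-1}\}$ and its codomain (the polynomials of degree at most one, that is the span of $y$ and $1$) are two-dimensional, injectivity is equivalent to invertibility, so it suffices to prove $\ker\varrho_k=\{0\}$. I would therefore suppose that $P=aL_k+bL_{k-1}$ lies in $\ker\varrho_k$ and deduce $a=b=0$. By Definition \ref{de:fn} together with (\ref{eq:PRq}), $\varrho_k(P)=0$ means exactly that the remainder $R$ in the decomposition of $P$ vanishes, so that
\begin{equation*}
P(x)=\frac{d}{dx}\left(q(x)\,x(t-x)w(x)\right)w^{-1}(x)
\end{equation*}
for a polynomial $q=\sum_j c_jx^j$ with $\deg q\le\deg P-2\le k-2$. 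This is precisely the statement that $P=\sum_j c_jQ_j$, so by the integration-by-parts identity $\left<g,Q_j\right>_1=\left<g,x^j\right>_2$ recorded in (\ref{eq:parts}), for every polynomial $g$ one has
\begin{equation*}
\left<g,P\right>_1=\sum_j c_j\left<g,Q_j\right>_1=\sum_j c_j\left<g,x^j\right>_2=\left<g,q\right>_2 .
\end{equation*}

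The rest is to feed two carefully chosen test polynomials into this identity. First I would take $g=L_{k-1}$: the right-hand side is $\left<L_{k-1},q\right>_2=0$, because $\deg q\le k-2$ and $L_{k-1}$ is $\left<\cdot,\cdot\right>_2$-orthogonal to all polynomials of strictly lower degree, while the left-hand side equals $a\left<L_{k-1},L_k\right>_1+b\left<L_{k-1},L_{k-1}\right>_1$; by the skew-symmetry of $\left<\cdot,\cdot\right>_1$ the second term vanishes, so the identity reads $-a\left<L_k,L_{k-1}\right>_1=0$, and since $\left<L_k,L_{k-1}\right>_1\neq 0$ by hypothesis this forces $a=0$. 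Then $P=bL_{k-1}$, and taking $g=L_k$ the same reasoning gives $\left<L_k,q\right>_2=0$ (again $\deg q\le k-2<k$) while the left-hand side equals $b\left<L_k,L_{k-1}\right>_1$, forcing $b=0$. Hence $P=0$, so $\ker\varrho_k=\{0\}$ and $\varrho_k$ is invertible.

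I do not expect a serious obstacle here. The only ingredients are the pairing identity (\ref{eq:parts}) — whose derivation uses $M-N>0$, so that the boundary terms in the integration by parts vanish — together with the skew-symmetry of $\left<\cdot,\cdot\right>_1$ and the $\left<\cdot,\cdot\right>_2$-orthogonality of the Laguerre polynomials. The one place that needs a little care is the degree bookkeeping in (\ref{eq:PRq}): one must check that $\deg q$ stays strictly below $k-1$, so that it is annihilated against $L_{k-1}$, and strictly below $k$, so that it is annihilated against $L_k$, both of which follow from $\deg P\le k$. Conceptually, the argument shows that invertibility of $\varrho_k$ on this two-dimensional slot is equivalent to the non-vanishing of the single scalar $\left<L_k,L_{k-1}\right>_1$, the $2\times 2$ Pfaffian of the relevant moments; this is exactly the hypothesis of the lemma, and it is also the quantity that will govern the existence of the skew orthogonal polynomials $\pi_{2k,1}$ and $\pi_{2k+1,1}$ constructed afterwards.
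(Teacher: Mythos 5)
Your proof is correct and takes essentially the same approach as the paper: both reduce $\varrho_k(P)=0$ to $P=\sum_j c_j Q_j$, convert skew products against $L_k$ and $L_{k-1}$ into $\left<\cdot,q\right>_2$ via the pairing identity (\ref{eq:parts}), and use Laguerre $\left<\cdot,\cdot\right>_2$-orthogonality together with $\deg q\le k-2$ to force both coefficients to vanish. The only cosmetic difference is the order in which the two test polynomials are applied.
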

\begin{proof} Suppose there is exists non-zero constants $a_1$ and
$a_2$ such that
\begin{equation*}
a_1L_{k}+a_2L_{k-1}=\frac{d}{dx}(q(x)x(t-  x)w)w^{-1}
\end{equation*}
for some polynomial $q(x)$ of degree $k-2$, then by taking the skew
product $\left<\right>_1$ of this polynomial with $L_{k}$, we obtain
\begin{equation*}
a_2\left<L_{k-1},L_{k}\right>_1=
\left<a_1L_{k}+a_2L_{k-1},L_{k}\right>_1=\left<q(x),L_{k}\right>_2=0.
\end{equation*}
As $q(x)$ is of degree $k-2$. Since
$\left<L_{k-1},L_{k}\right>_1\neq 0$, this shows that $a_2=0$. By
taking the skew product with $L_{k-1}$, we conclude that $a_1=0$ and
hence the map $\varrho_{k}$ has a trivial kernel.
\end{proof}
In particular, we have the following.
\begin{corollary}\label{cor:linear}
If $k$ is even, then $\left<L_{k},L_{k-1}\right>_1=0$.
\end{corollary}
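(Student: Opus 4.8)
The plan is to derive Corollary~\ref{cor:linear} from Lemma~\ref{le:linear} by showing that when $k$ is even the map $\varrho_k$ cannot be invertible, so that $\langle L_k,L_{k-1}\rangle_1=0$; the parity of $k$ will enter only at the very end, through the fact that a skew-symmetric matrix of odd size is singular. The one structural input I need is a symmetry of the operator $\mathcal{T}\colon P\mapsto \frac{d}{dx}\big(P(x)\,x(t-x)w(x)\big)w^{-1}(x)$ (so that $\mathcal{T}[x^j]=Q_j$ and, extending $(\ref{eq:parts})$ by linearity in its second argument, $\langle f,\mathcal{T}[P]\rangle_1=\langle f,P\rangle_2$ for all polynomials $f,P$). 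Applying this twice and using that $\langle\cdot,\cdot\rangle_1$ is antisymmetric while $\langle\cdot,\cdot\rangle_2$ is symmetric,
\[
\langle\mathcal{T}[P],R\rangle_2=\langle\mathcal{T}[P],\mathcal{T}[R]\rangle_1=-\langle\mathcal{T}[R],\mathcal{T}[P]\rangle_1=-\langle\mathcal{T}[R],P\rangle_2 ,
\]
that is, $\mathcal{T}$ is skew-adjoint for the Laguerre product $\langle\cdot,\cdot\rangle_2$. Since $Q_j$ has degree $j+2$ with leading coefficient $M/2$, the operator $\mathcal{T}$ raises the degree of any polynomial by exactly $2$; in particular it is injective on polynomials.

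Now fix $k$ even and look at the $(k-1)\times(k-1)$ matrix $A=\big(\langle\mathcal{T}[L_j],L_i\rangle_2\big)_{0\le i,j\le k-2}$. Skew-adjointness of $\mathcal{T}$ gives $A^{T}=-A$, and since $k-1$ is odd, $\det A=0$. Choose $0\neq(c_0,\dots,c_{k-2})$ with $Ac=0$ and set $q=\sum_{j=0}^{k-2}c_jL_j$, a nonzero polynomial of degree $\le k-2$. Then $\langle\mathcal{T}[q],L_i\rangle_2=\sum_jc_j\langle\mathcal{T}[L_j],L_i\rangle_2=0$ for $i=0,\dots,k-2$, and since $\deg\mathcal{T}[q]\le k$ this says exactly that $\mathcal{T}[q]\in\mathrm{span}\{L_{k-1},L_k\}$; moreover $\mathcal{T}[q]\neq0$ by injectivity of $\mathcal{T}$. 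Write $\mathcal{T}[q]=a_1L_k+a_2L_{k-1}$ with $(a_1,a_2)\neq(0,0)$. Then $a_1L_k+a_2L_{k-1}=\frac{d}{dx}\big(q\,x(t-x)w\big)w^{-1}$ is the decomposition $(\ref{eq:PRq})$ of $a_1L_k+a_2L_{k-1}$ with $R\equiv0$ (this being forced by the uniqueness in $(\ref{eq:PRq})$, since $\deg q=\deg(a_1L_k+a_2L_{k-1})-2$ automatically), so $a_1L_k+a_2L_{k-1}$ is a nonzero element of $\ker\varrho_k$; hence $\varrho_k$ is not invertible, and Lemma~\ref{le:linear} forces $\langle L_k,L_{k-1}\rangle_1=0$.

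Alternatively, one can finish without invoking Lemma~\ref{le:linear}: pairing $a_1L_k+a_2L_{k-1}=\mathcal{T}[q]$ against $L_k$ and against $L_{k-1}$ under $\langle\cdot,\cdot\rangle_1$, and using $(\ref{eq:parts})$ together with $\langle g,g\rangle_1=0$ and the Laguerre orthogonality relations $\langle L_k,q\rangle_2=\langle L_{k-1},q\rangle_2=0$ (valid because $\deg q\le k-2$), gives $a_2\langle L_k,L_{k-1}\rangle_1=0$ and $a_1\langle L_k,L_{k-1}\rangle_1=0$, whence $\langle L_k,L_{k-1}\rangle_1=0$. The only substantive point in the argument is the observation that $\mathcal{T}$ is skew-adjoint for $\langle\cdot,\cdot\rangle_2$, which converts the existence of the auxiliary polynomial $q$ into the triviality that an odd skew-symmetric matrix is singular; the degree count for $\mathcal{T}$ and the vanishing of the boundary terms underlying $(\ref{eq:parts})$ have already been dealt with above, so I do not anticipate any further obstacle.
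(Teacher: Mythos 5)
Your proof is correct and takes essentially the same route as the paper: you produce a nonzero $q$ of degree at most $k-2$ by noting that an odd-order skew-symmetric matrix is singular, observe that $\mathcal{T}[q]$ is then a nonzero element of $\mathrm{span}\{L_{k-1},L_k\}$ killed by $\varrho_k$, and conclude via (the contrapositive of) Lemma~\ref{le:linear}. The only differences are cosmetic — you form the moment matrix in the Laguerre basis rather than the monomials $x^j w_4$, and you derive its skew-symmetry via the identity $\langle\mathcal{T}[P],R\rangle_2=-\langle\mathcal{T}[R],P\rangle_2$ (a tidy repackaging of the integration-by-parts fact (\ref{eq:parts}) that the paper just asserts).
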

\begin{proof}
Let $q(x)$ be a polynomial of degree $k-2$ that satisfies the
following conditions
\begin{equation}\label{eq:qcon}
\int_{\mathbb{R}_+}\frac{d}{dx}(q(x)w_4(x))x^jw_4(x)\D x=0, \quad
j=0,\ldots, k-2,
\end{equation}
where $w_4(x)=x^{\frac{M-N+1}{2}}(t-
x)^{\frac{1}{2}}e^{-\frac{Mx}{2}}$. A non trivial polynomial $q(x)$
of degree $k-2$ that satisfies these conditions exists if and only
if the moment matrix with entries
$\int_{\mathbb{R}_+}\frac{d}{dx}(x^iw_4(x))x^jw_4(x)\D x$ has a
vanishing determinant. For even $k$, the moment matrix is of odd
dimension and anti-symmetric and hence its determinant is always
zero.

Assuming $k$ is even and let $q(x)$ be a polynomial that satisfies
(\ref{eq:qcon}). By taking the inner product $\left<\right>_2$ with
$x^j$, we see that there exists non-zero constants $a_1$ and $a_2$
such that
\begin{equation*}
a_1L_{k}+a_2L_{k-1}=\frac{d}{dx}(q(x)x(t-  x)w)w^{-1},
\end{equation*}
Therefore by Lemma \ref{le:linear}, we see that if $k$ is even, we
will have $\left<L_{k},L_{k-1}\right>_1=0$.
\end{proof}
Lemma \ref{le:linear} shows that if
$\left<L_{i},L_{i-1}\right>_1\neq 0$, then there exist two
independent polynomials $R_0(y)$ and $R_1(y)$ in the span of $y$ and
$1$ such that $R_j(y)=\varrho_{i}(L_{i-j})$. Then we have
\begin{equation*}
R_j(y)=-\frac{d}{dy}(q_j(y)y(t- y)w)w^{-1}+L_{i-j}(y),\quad j=0,1.
\end{equation*}
In particular, the skew product of $R_j(y)$ with $L_{i-l}$, $l<2$ is
given by
\begin{equation*}
\left<L_{i-l}(x),R_j(y)\right>_1=-\left<L_{i-l},q_j\right>_2+\left<L_{i-l},L_{i-j}\right>_1.
\end{equation*}
As $q_j$ is a polynomial of degree less than or equal to $i-2$ and
$l<2$, the first term on the right hand side is zero. Therefore we
have
\begin{equation}\label{eq:prod}
\left<L_{i-l}(x),R_j(y)\right>_1=\left<L_{i-l},L_{i-j}\right>_1,\quad
l< 2,\quad j=0,1.
\end{equation}
We can now show that the skew orthogonal polynomials $\pi_{2k,1}$
and $\pi_{2k+1,1}$ exist if $\left<L_{2k-1},L_{2k-2}\right>_1\neq
0$.
\begin{proposition}\label{pro:p2N}
If $\left<L_{2k-1},L_{2k-2}\right>_1\neq 0$, then the skew
orthogonal polynomials $\pi_{2k,1}$ and $\pi_{2k+1,1}$ both exist
and $\pi_{2k,1}$ is unique while $\pi_{2k+1,1}$ is unique up to an
addition of a multiple of $\pi_{2k,1}$. Moreover, we have
$\left<L_{2k},L_{2k-1}\right>_1=0$ and the skew orthogonal
polynomials are given by
\begin{equation}\label{eq:p2N}
\begin{split}
\pi_{2k,1}&=L_{2k}-\frac{\left<L_{2k},L_{2k-2}\right>_1}{\left<L_{2k-1},L_{2k-2}\right>_1}L_{2k-1},\\
\pi_{2k+1,1}&=L_{2k+1}-\frac{\left<L_{2k+1},L_{2k-2}\right>_1}{\left<L_{2k-1},L_{2k-2}\right>_1}L_{2k-1}
+\frac{\left<L_{2k+1},L_{2k-1}\right>_1}{\left<L_{2k-1},L_{2k-2}\right>_1}L_{2k-2}+c\pi_{2k,1},
\end{split}
\end{equation}
for $k\geq 2$, where $c$ is an arbitrary constant.
\end{proposition}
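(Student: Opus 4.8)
The plan is to pin down $\pi_{2k,1}$ and $\pi_{2k+1,1}$ explicitly in the Laguerre basis, using the reformulation of the defining relations in (\ref{eq:sop2}) together with Lemma~\ref{le:linear} and Corollary~\ref{cor:linear}. First, the second line of (\ref{eq:sop2}) says that $\pi_{2k,1}$ is orthogonal with respect to $\langle\cdot,\cdot\rangle_2$ to every polynomial of degree $\le 2k-3$, so, being monic of degree $2k$, it must have the form $\pi_{2k,1}=L_{2k}+\gamma_{2k,1}L_{2k-1}+\gamma_{2k,2}L_{2k-2}$, and similarly $\pi_{2k+1,1}=L_{2k+1}+\gamma_{2k+1,0}L_{2k}+\gamma_{2k+1,1}L_{2k-1}+\gamma_{2k+1,2}L_{2k-2}$. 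Everything then reduces to determining the coefficients $\gamma$ from the first line of (\ref{eq:sop2}), i.e.\ from $\langle\pi,1\rangle_1=\langle\pi,y\rangle_1=0$.

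The key reduction, and the only step that is not pure bookkeeping, is to show that under the hypothesis $\langle L_{2k-1},L_{2k-2}\rangle_1\neq0$ these two relations are equivalent to $\langle\pi,L_{2k-1}\rangle_1=\langle\pi,L_{2k-2}\rangle_1=0$ (exactly the equivalence stated just before the Proposition). Indeed, Lemma~\ref{le:linear} applied with index $2k-1$ shows $\varrho_{2k-1}$ is invertible, so $R_0=\varrho_{2k-1}(L_{2k-1})$ and $R_1=\varrho_{2k-1}(L_{2k-2})$ form a basis of the span of $\{1,y\}$; hence $\langle\pi,1\rangle_1=\langle\pi,y\rangle_1=0$ is the same as $\langle\pi,R_0\rangle_1=\langle\pi,R_1\rangle_1=0$. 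Writing $R_j=-\frac{d}{dy}\left(q_j(y)\,y(t-y)w(y)\right)w^{-1}(y)+L_{2k-1-j}(y)$ with $\deg q_j\le 2k-3$, and using that $\langle f(x),\frac{d}{dy}(q(y)y(t-y)w)w^{-1}\rangle_1=\langle f,q\rangle_2$ (immediate from (\ref{eq:parts}) by linearity) together with the $\langle\cdot,\cdot\rangle_2$-orthogonality of $\pi$ to degrees $\le 2k-3$, the $q_j$-term drops out and we are left with $\langle\pi,R_j\rangle_1=\langle\pi,L_{2k-1-j}\rangle_1$, which is (\ref{eq:prod}). This argument applies verbatim to both $\pi=\pi_{2k,1}$ and $\pi=\pi_{2k+1,1}$.

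It then remains to expand $\langle\pi,L_{2k-1}\rangle_1=\langle\pi,L_{2k-2}\rangle_1=0$ in the Laguerre basis, using that $\langle\cdot,\cdot\rangle_1$ is skew-symmetric (so $\langle L_m,L_m\rangle_1=0$) and that $\langle L_{2k},L_{2k-1}\rangle_1=0$ by Corollary~\ref{cor:linear} with the even index $2k$ (which is also the ``Moreover'' assertion). For $\pi_{2k,1}$ this gives $\langle L_{2k},L_{2k-1}\rangle_1-\gamma_{2k,2}\langle L_{2k-1},L_{2k-2}\rangle_1=0$ and $\langle L_{2k},L_{2k-2}\rangle_1+\gamma_{2k,1}\langle L_{2k-1},L_{2k-2}\rangle_1=0$, which, since $\langle L_{2k-1},L_{2k-2}\rangle_1\neq0$, have the unique solution $\gamma_{2k,2}=0$ and $\gamma_{2k,1}=-\langle L_{2k},L_{2k-2}\rangle_1/\langle L_{2k-1},L_{2k-2}\rangle_1$; this is the first formula in (\ref{eq:p2N}) and shows $\pi_{2k,1}$ is unique. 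For $\pi_{2k+1,1}$, the relation against $L_{2k-1}$ fixes $\gamma_{2k+1,2}=\langle L_{2k+1},L_{2k-1}\rangle_1/\langle L_{2k-1},L_{2k-2}\rangle_1$, while the relation against $L_{2k-2}$ is a single linear equation in $(\gamma_{2k+1,0},\gamma_{2k+1,1})$; its solution set is an affine line whose direction vector is exactly the $(L_{2k},L_{2k-1})$-coefficient vector of $\pi_{2k,1}$, so the freedom is precisely the addition of $c\pi_{2k,1}$ (which indeed preserves all relations, since $\pi_{2k,1}$ is itself orthogonal to $1,y,\dots,y^{2k-1}$). Taking the particular solution $\gamma_{2k+1,0}=0$ yields the second formula in (\ref{eq:p2N}). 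Existence follows because these explicit polynomials satisfy all the defining relations, and the hypothesis $k\ge2$ is exactly what guarantees that the auxiliary polynomials $q_j$ have nonnegative degree, so that the reduction in the second paragraph is valid.
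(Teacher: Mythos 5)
Your proof is correct and follows the same route as the paper: reduce the conditions $\langle\pi,1\rangle_1=\langle\pi,y\rangle_1=0$ to $\langle\pi,L_{2k-1}\rangle_1=\langle\pi,L_{2k-2}\rangle_1=0$ via the invertibility of $\varrho_{2k-1}$ (Lemma~\ref{le:linear}) and the identity (\ref{eq:prod}), then solve the resulting linear system using $\langle L_{2k},L_{2k-1}\rangle_1=0$ from Corollary~\ref{cor:linear}. The only cosmetic difference is that you keep the $L_{2k}$ term in the ansatz for $\pi_{2k+1,1}$ and identify the one-parameter solution line with $c\pi_{2k,1}$ directly, whereas the paper sets that coefficient to zero and argues separately that adding multiples of $\pi_{2k,1}$ preserves the orthogonality conditions.
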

\begin{proof} Let $\pi_{2k,1}$ and $\pi_{2k+1,1}$ be polynomials
defined by
\begin{equation*}
\begin{split}
\pi_{2k,1}(x)&=L_{2k}(x)+\gamma_{2k,1}L_{2k-1}(x)+\gamma_{2k,2}L_{2k-2}(x),\\
\pi_{2k+1,1}(x)&=L_{2k+1}(x)+\gamma_{2k+1,1}L_{2k-1}(x)+\gamma_{2k+1,2}L_{2k-2}(x),
\end{split}
\end{equation*}
for some constants $\gamma_{j,k}$. If we can show that
$\left<\pi_{2k-l,1},y^j\right>_1=0$ for $j=0,1$ and $l=-1,0$, then
$\pi_{2k-l,1}$ will be the skew orthogonal polynomial. Let $R_0$ and
$R_1$ be the images of $L_{2k-1}$ and $L_{2k-2}$ under the map
$\varrho_{2k-1}$. Then by the assumption in the Proposition, they
are independent in the span of $y$ and $1$. Therefore the conditions
$\left<\pi_{2k-l,1},y^j\right>_1=0$ are equivalent to
$\left<\pi_{2k-l,1},R_j(y)\right>_1=0$. By taking $i=2k-1$ in
(\ref{eq:prod}), we see that this is equivalent to
$\left<\pi_{2k-l,1},L_{2k-1-j}\right>_1=0$. This implies
\begin{equation*}
\begin{split}
\left<\pi_{2k,1},L_{2k-1}\right>_1&=\left<L_{2k},L_{2k-1}\right>_1+\gamma_{2k,2}\left<L_{2k-2},L_{2k-1}\right>_1=0,\\
\left<\pi_{2k,1},L_{2k-2}\right>_1&=\left<L_{2k},L_{2k-2}\right>_1+\gamma_{2k,1}\left<L_{2k-1},L_{2k-2}\right>_1=0.
\end{split}
\end{equation*}
Hence we have
\begin{equation*}
\gamma_{2k,1}=-\frac{\left<L_{2k},L_{2k-2}\right>_1}{\left<L_{2k-1},L_{2k-2}\right>_1},\quad
\gamma_{2k,2}=\frac{\left<L_{2k},L_{2k-1}\right>_1}{\left<L_{2k-1},L_{2k-2}\right>_1},
\end{equation*}
which exist and are unique as $\left<L_{2k-1},L_{2k-2}\right>_1\neq
0$. This determines $\pi_{2k,1}$ uniquely. By Corollary
\ref{cor:linear}, we have $\left<L_{2k},L_{2k-1}\right>_1=0$ and
hence $\gamma_{2k,2}=0$. Similarly, the coefficients for
$\pi_{2k+1,1}$ are
\begin{equation*}
\gamma_{2k+1,1}=-\frac{\left<L_{2k+1},L_{2k-2}\right>_1}{\left<L_{2k-1},L_{2k-2}\right>_1},\quad
\gamma_{2k+1,2}=\frac{\left<L_{2k+1},L_{2k-1}\right>_1}{\left<L_{2k-1},L_{2k-2}\right>_1}.
\end{equation*}
Again, these coefficients exist and are unique. However, as
$\left<\pi_{2k,1},\pi_{2k,1}\right>_1=0$ and
$\left<\pi_{2k,1},y^j\right>_1=0$ for $j=0,\ldots,2k-1$, adding any
multiple of $\pi_{2k,1}$ to $\pi_{2k+1,1}$ will not change the
orthogonality conditions $\left<\pi_{2k+1,1},y^j\right>_1=0$ that is
satisfied by $\pi_{2k+1,1}$ and hence $\pi_{2k+1,1}$ is only
determined up to the addition of a multiple of $\pi_{2k,1}$.
\end{proof}
\subsection{The Christoffel Darboux formula for the kernel} In
\cite{Pierce}, skew orthogonal polynomials were interpreted as
multi-orthogonal polynomials and represented as the solution of a
Riemann-Hilbert problem. This representation allows us to use the
results in \cite{KDaem} to derive a Christoffel-Darboux formula for
the kernel (\ref{eq:ker}) in terms of the Riemann-Hilbert problem.

Let us recall the definitions of multi-orthogonal polynomials. First
let the weights $w_0$, $w_1$ and $w_2$ be
\begin{equation*}
w_0(x)=x^{M-N}e^{-Mx},\quad
w_l(x)=w(x)\int_{\mathbb{R}_+}\epsilon(x-y)L_{N-l-2}(y)w(y)\D
y,\quad l=1,2.
\end{equation*}
Note that the weights $w_l(x)$ are defined with the polynomials
$L_{N-3}$ and $L_{N-4}$ instead of $L_{N-1}$ and $L_{N-2}$. This is
because the construction below involves the polynomial $\pi_{N-2,1}$
as well as $\pi_{N,1}$. By taking $i=N-3$ in (\ref{eq:prod}), we see
that the orthogonality conditions for $\pi_{N,1}$ is also equivalent
to
\begin{equation*}
\begin{split}
\left<\pi_{N,1},x^j\right>_2=0,\quad j=0,\ldots,N-3,\quad
\left<\pi_{N,1},L_{N-j}\right>_2=0,\quad j=3,4,
\end{split}
\end{equation*}
provided $\left<L_{N-3},L_{N-4}\right>_1$ is also non-zero.

Then the multi-orthognoal polynomials of type II $P^{II}_{N,l}(x)$,
\cite{Ap1}, \cite{Ap2}, \cite{BK1}, \cite{vanGerKuij} are
polynomials of degree $N-1$ such that
\begin{equation}\label{eq:mop1}
\begin{split}
\int_{\mathbb{R}_+}P^{II}_{N,l}(x)x^jw_0(x)\D x&=0,\quad 0\leq j\leq N-3,\\
\int_{\mathbb{R}_+}P^{II}_{N,l}(x)w_m(x)\D x&=-2\pi
i\delta_{lm},\quad l,m=1,2.
\end{split}
\end{equation}
\begin{remark} More accurately, these are in fact the
multi-orthogonal polynomials with indices $(N-2,\vec{e}_l)$, where
$\vec{e}_1=(0,1)$ and $\vec{e}_2=(1,0)$.
\end{remark}
We will now define the multi-orthogonal polynomials of type I. Let
$P^{I}_{N,l}(x)$ be a function of the following form
\begin{equation}\label{eq:typeI}
P^{I}_{N,l}(x)=B_{N,l}(x)w_0+\sum_{k=1}^2\left(\delta_{kl}x+A_{N,k,l}\right)w_k,
\end{equation}
where $B_{N,l}(x)$ is a polynomial of degree $N-3$ and $A_{N,k,l}$
is independent on $x$. Moreover, let $P^{I}_{N,l}(x)$ satisfy
\begin{equation*}
\int_{\mathbb{R}_+}P^{I}_{N,l}(x)x^j\D x=0,\quad j=0,\ldots,N-1.
\end{equation*}
Then the polynomials $B_{N,l}(x)$ and $\delta_{kl}x+A_{N,k,l}$ are
multi-orthogonal polynomials of type I with indices $(N,2,1)$ for
$l=1$ and $(N,1,2)$ for $l=2$. We will now show that $P^{II}_{N,1}$
and $P^{II}_{N,2}$ exist and are unique if both
$\left<L_{N-1},L_{N-2}\right>_1$ and
$\left<L_{N-3},L_{N-4}\right>_1$ are non-zero.
\begin{lemma}\label{le:exist}
Suppose both $\left<L_{N-1},L_{N-2}\right>_1$ and
$\left<L_{N-3},L_{N-4}\right>_1$ are non-zero, then the polynomials
$P^{II}_{N,1}$ and $P^{II}_{N,2}$ exist and are unique.
\end{lemma}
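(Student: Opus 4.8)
The plan is to reduce the existence and uniqueness of the type II multi-orthogonal polynomials $P^{II}_{N,1}$ and $P^{II}_{N,2}$ to the non-degeneracy of a linear system whose solvability is controlled by the two skew products $\left<L_{N-1},L_{N-2}\right>_1$ and $\left<L_{N-3},L_{N-4}\right>_1$, which are assumed non-zero. First I would write $P^{II}_{N,l}(x)=x^{N-1}+\sum_{j=0}^{N-2}c_j^{(l)}x^j$ with $N$ unknown coefficients (including the leading $1$, so really $N-1$ unknowns after normalization) and translate the $N-1$ defining conditions in (\ref{eq:mop1}) — namely $N-2$ orthogonality conditions against $x^j w_0(x)$ for $0\le j\le N-3$, plus the two conditions $\int P^{II}_{N,l}w_m=-2\pi i\delta_{lm}$ — into an inhomogeneous linear system for the $c_j^{(l)}$. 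Existence and uniqueness of $P^{II}_{N,l}$ for both $l=1,2$ is then equivalent to the invertibility of the single coefficient matrix of this system; it suffices to show the associated homogeneous system has only the trivial solution.

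So suppose $P(x)$ is a polynomial of degree at most $N-1$ (now allowing arbitrary leading coefficient) satisfying $\int_{\mathbb{R}_+}P(x)x^jw_0(x)\D x=0$ for $0\le j\le N-3$ and $\int_{\mathbb{R}_+}P(x)w_m(x)\D x=0$ for $m=1,2$; I must show $P\equiv 0$. From the first set of conditions, $P$ is $\left<\,\cdot\,,\,\cdot\,\right>_2$-orthogonal to all polynomials of degree $\le N-3$, hence $P\in\mathrm{span}\{L_{N-1},L_{N-2}\}$, say $P=aL_{N-1}+bL_{N-2}$. Next I would unwind the definition $w_m(x)=w(x)\int_{\mathbb{R}_+}\epsilon(x-y)L_{N-m-2}(y)w(y)\D y$ to recognize that $\int_{\mathbb{R}_+}P(x)w_m(x)\D x = \left<P,L_{N-m-2}\right>_1$ (this is exactly the skew product (\ref{eq:skewinner}), after interchanging the order of the double integral and using $\epsilon(x-y)=-\epsilon(y-x)$; one should check the sign and that the deformation of the contour near $t$ is harmless since $t$ lies outside $(0,z)$, as already noted after (\ref{eq:pff})). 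Thus the two remaining conditions become $\left<aL_{N-1}+bL_{N-2},L_{N-3}\right>_1=0$ and $\left<aL_{N-1}+bL_{N-2},L_{N-4}\right>_1=0$.

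Now I would invoke Proposition \ref{pro:p2N} and its surrounding machinery applied at level $2k=N-2$: since $\left<L_{N-3},L_{N-4}\right>_1\ne 0$, Lemma \ref{le:linear} gives that $\varrho_{N-3}$ is invertible, and by (\ref{eq:prod}) with $i=N-3$ the skew products $\left<L_{N-1},L_{N-j}\right>_1$ and $\left<L_{N-2},L_{N-j}\right>_1$ for $j=3,4$ are precisely the entries one needs. Since $\pi_{N-2,1}=L_{N-2}-\frac{\left<L_{N-2},L_{N-4}\right>_1}{\left<L_{N-3},L_{N-4}\right>_1}L_{N-3}$ is the monic skew orthogonal polynomial and satisfies $\left<\pi_{N-2,1},x^j\right>_1=0$ for $j=0,\dots,N-3$, a short computation with the $2\times2$ matrix $\bigl(\left<L_{N-1-i},L_{N-3-j}\right>_1\bigr)_{i,j=0,1}$ shows this matrix has determinant a non-zero multiple of $\left<L_{N-1},L_{N-2}\right>_1$ (using Corollary \ref{cor:linear} to kill the diagonal term $\left<L_{N-2},L_{N-3}\right>_1$ when the parity is right, and using the recursive structure of the $\left<\,,\,\right>_1$-moment matrix). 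Hence the hypothesis $\left<L_{N-1},L_{N-2}\right>_1\ne 0$ forces $a=b=0$, so $P\equiv0$ and the homogeneous system is trivial.

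The main obstacle I anticipate is the bookkeeping in the last step: correctly identifying the $2\times2$ Gram-type matrix $\bigl(\left<L_{N-1-i},L_{N-3-j}\right>_1\bigr)$ and verifying that its determinant is, up to a non-zero factor built out of $\left<L_{N-3},L_{N-4}\right>_1$, equal to $\left<L_{N-1},L_{N-2}\right>_1$. This requires care with the parity of $N$ (the paper assumes $N$ even), with which skew products vanish by Corollary \ref{cor:linear}, and with the three-term-recurrence-type relations that let one slide indices in $\left<L_i,L_j\right>_1$; the cleanest route is probably to re-run the argument of Proposition \ref{pro:p2N} one level up rather than to manipulate the determinant directly. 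Everything else is linear algebra and the translation $\int P\,w_m = \left<P,L_{N-m-2}\right>_1$, which is routine given (\ref{eq:parts})–(\ref{eq:w0}).
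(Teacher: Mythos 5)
Your reduction is the right one and matches the paper's: the $N-2$ orthogonality conditions against $x^jw_0$ force $P^{II}_{N,l}\in\operatorname{span}\{L_{N-1},L_{N-2}\}$, the translation $\int P^{II}_{N,l}w_j\,\D x=\left<P^{II}_{N,l},L_{N-j-2}\right>_1$ is correct (and the contour-deformation caveat is handled exactly as you say), and existence/uniqueness comes down to the invertibility of the $2\times2$ Gram matrix $W=\bigl(\left<L_{N-1-i},L_{N-3-j}\right>_1\bigr)_{i,j=0,1}$. Where you leave a genuine gap is the invertibility of $W$ itself: you assert that "a short computation ... shows this matrix has determinant a non-zero multiple of $\left<L_{N-1},L_{N-2}\right>_1$", attribute the nonzeroness to "the recursive structure of the $\left<\,,\,\right>_1$-moment matrix," and then conclude from $\left<L_{N-1},L_{N-2}\right>_1\neq0$. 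This is not established, and the accounting of which hypothesis does what is off. Using Corollary \ref{cor:linear} you get $W_{10}=\left<L_{N-2},L_{N-3}\right>_1=0$ and hence $\det W=\left<L_{N-1},L_{N-3}\right>_1\left<L_{N-2},L_{N-4}\right>_1$, and there is no direct reason this product is a nonzero multiple of $\left<L_{N-1},L_{N-2}\right>_1$; appealing vaguely to a three-term recurrence among skew products would require a separate argument (and the identities of the form (\ref{eq:relprod}) are derived much later and in a different context).

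The clean way to close the gap — and what the paper's proof actually does — is to note that by Lemma \ref{le:linear} the hypothesis $\left<L_{N-3},L_{N-4}\right>_1\neq0$ makes $\varrho_{N-3}$ invertible and the hypothesis $\left<L_{N-1},L_{N-2}\right>_1\neq0$ makes $\varrho_{N-1}$ invertible, so one can write $L_{N-j-2}=\frac{d}{dx}\bigl(\tilde q_j\,x(t-x)w\bigr)w^{-1}+c_{j,1}L_{N-1}+c_{j,2}L_{N-2}$ with $\deg\tilde q_j\le N-3$ and $C=(c_{j,k})$ invertible. The derivative part is annihilated in the skew product against $L_{N-1-i}$ by (\ref{eq:parts}) together with Laguerre orthogonality, so $W=M\,C^{T}$ where $M_{ik}=\left<L_{N-1-i},L_{N-1-k}\right>_1$. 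The matrix $M$ is skew-symmetric with $\det M=-\left<L_{N-1},L_{N-2}\right>_1^{2}\neq0$, and $\det C\neq0$, so $\det W\neq0$. This is exactly "re-running the argument of Proposition \ref{pro:p2N} one level up" that you gesture at in your last paragraph — so you identified the right mechanism, but you did not carry it out, and the determinant claim you state in its place is not a valid substitute.
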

\begin{proof}
Let us write $L_{N-l}$ as
\begin{equation*}
L_{N-l}=\frac{d}{dx}\left(q_l(x)x(t- x)w\right)w^{-1}+R_l(x),\quad
l=1,\ldots, 4.
\end{equation*}
for some polynomials $q_l$ of degree $N-l-2$ and $R_l$ of degree
$1$, then by Lemma \ref{le:linear}, we see that both $\varrho_{N-1}$
and $\varrho_{N-3}$ are invertible and hence the composition
$\varrho_{N-1}\varrho_{N-3}^{-1}$ is also invertible. In particular,
there exists a $2\times 2$ invertible matrix with entries $c_{l,k}$
and polynomials $\tilde{q}_l$ of degree $N-3$ such that
\begin{equation*}
\begin{split}
L_{N-l}=\frac{d}{dx}\left(\tilde{q}_l(x)x(t-
x)w\right)w^{-1}+c_{l-2,1}L_{N-1}+c_{l-2,2}L_{N-2},\quad l=3,4.
\end{split}
\end{equation*}
Then by the first condition in (\ref{eq:mop1}), we see that
\begin{equation}\label{eq:ex}
\int_{\mathbb{R}_+}P^{II}_{N,l}w_j(x)dx=\left<P^{II}_{N,l},c_{l,1}L_{N-1}+c_{l,2}L_{N-2}\right>_1=-2\pi
i\delta_{lj},\quad l=1,2,\quad j=1,2.
\end{equation}
As the matrix with entries $c_{ij}$ is invertible, we see that the
linear equations (\ref{eq:ex}) has a unique solution in the linear
span of $L_{N-1}$ and $L_{N-2}$ if and only if
$\left<L_{N-1},L_{N-2}\right>_1\neq 0$.
\end{proof}
As we shall see, existence and uniqueness of $P^{II}_{N,l}$ would
imply that the multi-orthogonal polynomials of type I also exist and
are unique. As in \cite{Pierce}, the multi-orthogonal polynomial
together with the skew orthogonal polynomials form the solution of a
Riemann-Hilbert problem. Let $Y(x)$ be the matrix
\begin{equation}\label{eq:Ymatr}
Y(x)=\begin{pmatrix}\pi_{N,1}(x)&C\left(\pi_{N,1}w_0\right)&C\left(\pi_{N,1}w_1\right)
&C\left(\pi_{N,1}w_{2}\right)\\
\kappa\pi_{N-2,1}(x)&\cdots&&\cdots&\\
P^{II}_{N,1}(x)&\ddots&&\cdots&\\
P^{II}_{N,2}(x)&\cdots&&\cdots
\end{pmatrix},
\end{equation}
where $\kappa$ is the constant
\begin{equation}\label{eq:kappa}
\kappa=-\frac{2\pi i}{
\left<\pi_{N-2,1},x^{N-3}\right>_2}=-\frac{4\pi i}{
Mh_{N-1,1}},\quad
h_{2j-1,1}=\left<\pi_{2j-2,1},\pi_{2j-1,1}\right>_1.
\end{equation}
and $C(f)$ is the Cauchy transform
\begin{equation}\label{eq:cauchy}
C(f)(x)=\frac{1}{2\pi i}\int_{\mathbb{R}_+}\frac{f(s)}{s-x}ds.
\end{equation}
Then by using the orthogonality conditions of skew orthogonal
polynomials and multi-orthogonal polynomials, together with the jump
discontinuity of the Cauchy transform, one can check that $Y(x)$
satisfies the following Riemann-Hilbert problem.
\begin{equation}\label{eq:RHPY}
\begin{split}
1.\quad &\text{$Y(z)$ is analytic in
$\mathbb{C}\setminus\mathbb{R}_+$},\\
2.\quad &Y_+(z)=Y_-(z)\begin{pmatrix}1&w_0&w_1(z)&w_{2}(z)\\
0&1&0&0\\
0&0&1&0\\
0&0&0&1
\end{pmatrix},\quad z\in\mathbb{R}_+,\\
3.\quad &Y(z)=\left(I+O(z^{-1})\right)\begin{pmatrix}z^{N}\\
&z^{-N+2}\\
&&z^{-1}\\
&&&z^{-1}
\end{pmatrix},\quad z\rightarrow\infty.
\end{split}
\end{equation}
Similarly, multi-orthogonal polynomials of type I can also be
arranged to satisfy a Riemann-Hilbert problem. Let $\epsilon$ be the
operator
\begin{equation}\label{eq:epsilon}
\epsilon(f)(x)=\frac{1}{2}\int_{0}^{\infty}\epsilon(x-y)f(y)\D y.
\end{equation}
First note that the functions $\psi_{j}(x)=\epsilon(\pi_{j,1}w)$ for
$j\geq 2$ can be express in the form of (\ref{eq:typeI}). By Lemma
\ref{le:linear}, we can write $\pi_{j,1}(x)$ as
\begin{equation*}
\pi_{j,1}(x)=\frac{d}{dx}\left(B_j(x)x(t-
x)w\right)w^{-1}+A_{j,1}L_{N-3}+A_{j,2}L_{N-4},
\end{equation*}
Then we have
\begin{equation}\label{eq:pitypeI}
\epsilon(\pi_{j,1}w)w=B_j(x)w_0+A_{j,1}w_1+A_{j,2}w_2.
\end{equation}
where $B_j(x)$ is a polynomial of degree $j-2$.

Let $X(z)$ be the matrix value function defined by
\begin{equation}\label{eq:Xmat}
X(z)=\begin{pmatrix}-\frac{\kappa M}{2}
C\left(\psi_{N-2}w\right)&\frac{\kappa M}{2}B_{N-2}&\frac{\kappa
M}{2}
A_{N-2,1}&\frac{\kappa M}{2}A_{N-2,2}\\
-\frac{ M}{2}C\left(\psi_{N}w\right)&\frac{ M}{2}B_{N}&\frac{
M}{2}A_{N,1}&\frac{ M}{2}
A_{N,2}\\
-C\left(P^{I}_{N,1}\right)&\cdots&\cdots&\\
-C\left(P^{I}_{N,2}\right)&\cdots&\cdots
\end{pmatrix}.
\end{equation}
Then by using the orthogonality and the the jump discontinuity of
the Cauchy transform, it is easy to check that $X^{-T}(z)$ and
$Y(z)$ satisfy the same Riemann-Hilbert problem and hence the
multi-orthogonal polynomials of type I also exist and are unique.

We will now show that the kernel $S_1(x,y)$ given by (\ref{eq:ker})
can be expressed in terms of the matrix $Y(z)$.
\begin{proposition}\label{pro:CD}
Suppose
$\left<L_{N-3},L_{N-4}\right>_1\left<L_{N-1},L_{N-2}\right>_1\neq 0
$ and let the kernel $S_1(x,y)$ be
\begin{equation}\label{eq:kerS}
S_1(x,y)=-\sum_{j,k=0}^{N-1}r_j(x)w(x)\mu_{jk}\epsilon(r_kw)(y),
\end{equation}
where $r_j(x)$ is an arbitrary degree $j$ monic polynomial for
$j<N-2$ and $r_j(x)=\pi_{j,1}(x)$ for $j\geq N-2$. The matrix $\mu$
with entries $\mu_{jk}$ is the inverse of the matrix $\mathbb{D}$
whose entries are given by
\begin{equation}\label{eq:M}
\left(\mathbb{D}\right)_{jk}=\left<r_j,r_k\right>_1,\quad
j,k=0,\ldots,N-1.
\end{equation}
Then the kernel $S_1(x,y)$ exists and is equal to
\begin{equation}\label{eq:kerRHP}
S_{1}(x,y)=\frac{w(x)w^{-1}(y)}{2\pi i(x-y)}\begin{pmatrix}0&w_0(y)&
w_1(y)&w_{2}(y)\end{pmatrix}Y_+^{-1}(y)Y_+(x)\begin{pmatrix}1\\0\\0\\0\end{pmatrix}.
\end{equation}
\end{proposition}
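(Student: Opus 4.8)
The plan is to reduce the bilinear sum defining $S_1$ to the Christoffel--Darboux kernel of the multi-orthogonal polynomial system packaged by the Riemann--Hilbert problem (\ref{eq:RHPY}), and then to invoke the Christoffel--Darboux identity of \cite{KDaem}. The first step is to record that $S_1$ is independent of the choice of the $r_j$ with $j<N-2$, and hence well defined once $\mathbb{D}$ is invertible. Writing $\vec r(x)=(r_0,\dots,r_{N-1})^T$ and $\vec\epsilon(y)_k=\epsilon(r_kw)(y)$, the sum (\ref{eq:kerS}) reads $S_1(x,y)=-w(x)\,\vec r(x)^T\mathbb{D}^{-1}\vec\epsilon(y)$, and under an invertible linear change of basis $\vec r\mapsto A\vec r$ one has $\mathbb{D}\mapsto A\mathbb{D}A^T$, hence $\mathbb{D}^{-1}\mapsto A^{-T}\mathbb{D}^{-1}A^{-1}$, while $\vec\epsilon\mapsto A\vec\epsilon$; the three transformations cancel. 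So I may choose the $r_j$, $j<N-2$, at will while keeping $r_{N-2}=\pi_{N-2,1}$, $r_{N-1}=\pi_{N-1,1}$.

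Next I would choose a basis adapted to (\ref{eq:parts}) and (\ref{eq:PRq}): for $2\le j\le N-3$ take $r_j$ in the form $r_j=\frac{\D}{\D x}\!\left(q_j(x)x(t-x)w\right)w^{-1}+R_j(x)$ with $q_j$ of degree $j-2$ and $R_j$ of degree $\le 1$ (suitably normalised so that $r_j$ is monic of degree $j$), which by (\ref{eq:parts}) turns most of the Gram matrix $\mathbb{D}$ into a shifted Laguerre moment matrix of $\left<\cdot,\cdot\right>_2$, while the $2\times2$ corner coupling $r_{N-2},r_{N-1}$ to $r_{N-3},r_{N-4}$ is controlled, via (\ref{eq:prod}) and Lemma \ref{le:linear}, by $\left<L_{N-1},L_{N-2}\right>_1$ and $\left<L_{N-3},L_{N-4}\right>_1$. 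Together with the hypothesis $\left<L_{N-1},L_{N-2}\right>_1\left<L_{N-3},L_{N-4}\right>_1\ne0$ this gives $\det\mathbb{D}\ne0$, so $S_1$ exists; and the same computation expresses $\vec r\,w$ and $\vec\epsilon$ in terms of the weights $w_0,w_1,w_2$ and the polynomials $B_j$ of (\ref{eq:typeI}) and (\ref{eq:pitypeI}). In this way $w^{-1}(x)w(y)S_1(x,y)$ is identified, up to a scalar, with the reproducing kernel of the type II polynomials $\pi_{N,1},\ \kappa\pi_{N-2,1},\ P^{II}_{N,1},\ P^{II}_{N,2}$ (the rows of $Y$ in (\ref{eq:Ymatr})) paired against the dual type I system built from the $P^{I}_{N,l}$ and the functions $\epsilon(\pi_{j,1}w)$ (the rows of $X=Y^{-T}$ in (\ref{eq:Xmat})); the constant $\kappa$ of (\ref{eq:kappa}) is exactly what makes the degree $N-2$ block match the normalisation of the second row of $Y$ at infinity, and $\pi_{N,1}$ enters, as in the classical Christoffel--Darboux formula, because it is skew orthogonal to all of $1,x,\dots,x^{N-1}$.

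Finally I would apply the Christoffel--Darboux formula of \cite{KDaem} for multi-orthogonal ensembles of this type, which rewrites the reproducing kernel as the single term $\frac{1}{2\pi i(x-y)}(0,w_0(y),w_1(y),w_2(y))\,Y_+^{-1}(y)Y_+(x)\,(1,0,0,0)^T$; here $(0,w_0,w_1,w_2)$ is precisely the first row of the jump matrix in (\ref{eq:RHPY}) minus the identity, and the boundary values $Y_+$ (not $Y$) appear because the Cauchy transforms in the first row of $Y$ jump across $\mathbb{R}_+$. Restoring the prefactor $w(x)w^{-1}(y)$, and checking the distribution of the elementary factors $x^{M-N}e^{-Mx}$ and $(t-x)^{\pm1/2}$ among $w$, $w_0$ and (\ref{eq:w1}) via $w(x)^2=w_0(x)/\!\left(x(t-x)\right)$, then gives exactly (\ref{eq:kerRHP}).

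The main obstacle is the identification carried out in the second and third paragraphs: constructing the precise dictionary between the skew orthogonal polynomial data (the $\pi_{j,1}$, their $\epsilon$-transforms, the normalisation $\kappa$, and the fact that it is $\pi_{N,1}$ and $\pi_{N-2,1}$, rather than $\pi_{N-1,1}$, that occur in $Y$) and the type I/type II multi-orthogonal polynomial data for which the Christoffel--Darboux identity of \cite{KDaem} is stated, together with the verification that the hypotheses of that identity are met under $\left<L_{N-1},L_{N-2}\right>_1\left<L_{N-3},L_{N-4}\right>_1\ne0$ (using Lemma \ref{le:exist}). Once that dictionary is in place, the remainder is routine change-of-basis and normalisation bookkeeping.
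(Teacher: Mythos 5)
Your proposal heads in the same general direction as the paper's proof -- both reduce the skew-orthogonal kernel $S_1$ to a Christoffel--Darboux type expression for a multi-orthogonal system and appeal to the framework of \cite{KDaem} -- but the step you defer as ``routine change-of-basis and normalisation bookkeeping'' is exactly where all of the work is, and as written there is a genuine gap there.

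Concretely, the paper does not invoke the Christoffel--Darboux formula of \cite{KDaem} as a black box that one ``applies'' once the right basis is chosen. Rather, it reproduces \emph{the method} of \cite{KDaem} in the skew-orthogonal setting: it expands the multiplication-by-$x$ operators
$xr_j(x)=\sum_k c_{jk}r_k(x)+\delta_{N-1,j}\pi_{N,1}(x)$ and
$x\epsilon(r_jw)(x)=\sum_k d_{jk}\epsilon(r_kw)+d_{j,N}\psi_N+\sum_{l}d_{j,N+l}P^I_{N,l}/w$ as in (\ref{eq:cd}),
multiplies $S_1(x,y)$ by $(y-x)$, and then (i) cancels all the ``interior'' terms using $\mathcal{C}^T\mu=\mu\mathcal{D}$, which relies crucially on the anti-symmetry $\mu^T=-\mu$, (ii) computes the coefficients $d_{k,N}=\delta_{k,N-1}$ and $d_{k,N+l}$ that produce the boundary contributions, and (iii) identifies the surviving boundary terms with the rows of $Y$ and $X=Y^{-T}$ via (\ref{eq:Ymatr}), (\ref{eq:Xmat}), and the constant $\kappa$ in (\ref{eq:kappa}). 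None of this is recorded in your proposal: you assert that ``$w^{-1}(x)w(y)S_1(x,y)$ is identified, up to a scalar, with the reproducing kernel of the type II polynomials'' and then say one ``applies'' the \cite{KDaem} formula, but this identification is precisely the statement of the proposition -- it cannot be assumed.

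Two further points. First, the basis-independence observation in your opening paragraph is correct and pleasant (under $\vec r\mapsto A\vec r$ the three transformations of $\vec r$, $\mathbb{D}^{-1}$, $\vec\epsilon$ cancel), but it is already built into the hypotheses of the proposition (the $r_j$ for $j<N-2$ are allowed to be arbitrary monic) and is used implicitly; it does not by itself advance the proof. Second, the special choice of $r_j$ with $2\le j\le N-3$ written purely as $\frac{\D}{\D x}\!\left(q_j(x)x(t-x)w\right)w^{-1}+R_j(x)$ is not what the paper does and does not simplify the essential difficulty: the boundary terms in the multiplication-operator expansion come from the top indices $j=N-1$ and the type-I side, not from the middle block, so a special choice there buys nothing. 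What actually makes the interior cancellation and boundary-term accounting work is the block form of $\mu$ in (\ref{eq:mu}) coming from $r_{N-2}=\pi_{N-2,1}$, $r_{N-1}=\pi_{N-1,1}$, together with Lemma \ref{le:exist} for the existence of the $P^{II}_{N,l}$. To close your proof you would need to carry out the $(y-x)S_1(x,y)$ expansion and coefficient computations as in the paper; referencing \cite{KDaem} alone does not discharge this.
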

\begin{proof} First note that, since $\left<L_{N-1},L_{N-2}\right>_1\neq 0$ and
$\left<L_{N-3},L_{N-4}\right>_1\neq 0$, the skew orthogonal
polynomials $\pi_{N-l,1}$ exist for $l=-1,\ldots,2$. In particular,
the moment matrix $\tilde{\mathbb{D}}$ with entries
\begin{equation*}
\left(\tilde{\mathbb{D}}\right)_{jk}=\left<x^j,y^k\right>_1, \quad
j,k=0,\ldots,N-1
\end{equation*}
is invertible. Since the polynomials $\pi_{N-j,1}$ exist for
$j=-1,\ldots,2$, the sequence $r_k(x)$ and $x^k$ are related by an
invertible transformation. Therefore the matrix $\mathbb{D}$ in
(\ref{eq:M}) is also invertible. As the matrix $\mathbb{D}$ is of
the form
\begin{equation*}
\mathbb{D}=\begin{pmatrix} \mathbb{D}_0&0\\
0&h_{N-1,1}\mathcal{J}
\end{pmatrix},\quad \mathcal{J}=\begin{pmatrix}0&1\\-1&0\end{pmatrix}
\end{equation*}
where $\mathbb{D}_0$ has entries $\left<r_j,r_k\right>_1$ for $j,k$
from $0$ to $N-3$. From this, we see that the matrix $\mu_{jk}$ is
of the form
\begin{equation}\label{eq:mu}
\mu=\begin{pmatrix}
\mathbb{D}_0^{-1}&0&\\
0&-h_{N-1,1}^{-1}\mathcal{J}
\end{pmatrix}.
\end{equation}
As in \cite{KDaem}, let us now expand the functions $xr_j(x)$ and
$x\epsilon(r_jw)$.
\begin{equation}\label{eq:cd}
\begin{split}
xr_j(x)&=\sum_{k=0}^{N-1}c_{jk}r_k(x)+\delta_{N-1,j}\pi_{N,1}(x),\\
x\epsilon(r_jw)(x)&=\sum_{k=0}^{N-1}d_{jk}\epsilon(r_kw)+d_{j,N}\psi_{N}
+d_{j,N+1}\frac{P^I_{N,1}(x)}{w(x)}+d_{j,N+2}\frac{P^I_{N,2}(x)}{w(x)}.
\end{split}
\end{equation}
Then the coefficients $c_{jk}$ and $d_{jk}$ for $j,k=0,\ldots,N-1$
are given by
\begin{equation*}
\begin{split}
c_{jk}=\sum_{l=0}^{N-1}\left<xr_j,r_l\right>_1\mu_{lk},\quad
d_{jk}=\sum_{l=0}^{N-1}\mu_{kl}\left<xr_l,r_k\right>_1.
\end{split}
\end{equation*}
Therefore if we let $\mathcal{C}$ be the matrix with entries
$c_{jk}$, $j,k=0,\ldots N-1$ and $\mathcal{D}$ be the matrix with
entries $d_{jk}$ for $j,k=0,\ldots N-1$, then we have
\begin{equation}\label{eq:expand}
\mathcal{C}=\mathbb{D}_1\mu,\quad
\mathcal{D}=\left(\mu\mathbb{D}_1\right)^T,
\end{equation}
where $\mathbb{D}_1$ is the matrix with entries
$\left<xr_j,r_k\right>_1$ for $j,k=0,\ldots,N-1$. From
(\ref{eq:cd}), we obtain
\begin{equation}\label{eq:S1}
\begin{split}
&(y-x)S_1(x,y)=\sum_{j=0}^{N-1}\pi_{N,1}(x)w(x)\mu_{N-1,j}\epsilon\left(r_{j}w\right)(y)\\
&-\sum_{j,k=0}^{N-1}r_j(x)w(x)\mu_{jk}\left(d_{k,N}\psi_{N}(y)+d_{k,N+1}\frac{P^I_{N,1}(y)}{w(y)}
+d_{k,N+2}\frac{P^I_{N,2}(y)}{w(y)}\right)\\
&+r^T(x)w(x)\left(\mathcal{C}^T\mu-\mu\mathcal{D}\right)\epsilon(rw)(y),
\end{split}
\end{equation}
where $r(x)$ is the column vector with components $r_k(x)$. Now by
(\ref{eq:expand}), we see that
\begin{equation*}
\mathcal{C}^T\mu=\mu^T\mathbb{D}_1^T\mu,\quad
\mu\mathcal{D}=\mu\mathbb{D}_1^T\mu^T,
\end{equation*}
which are equal as $\mu^T=-\mu$. Now from the form of $\mu$ in
(\ref{eq:mu}), we see that
$\mu_{N-1,j}=\delta_{j,N-2}h_{N-1,1}^{-1}$. Hence we have
\begin{equation}\label{eq:first}
\sum_{j=0}^{N-1}\pi_{N,1}(x)w(x)\mu_{N-1,j}\epsilon\left(r_{j}w\right)(y)=h_{N-1,1}^{-1}\pi_{N,1}(x)w(x)\psi_{N-2}(y).
\end{equation}
Let us now consider the second term in (\ref{eq:S1}). As in
(\ref{eq:pitypeI}) we can write $\epsilon(r_kw)w$ as
\begin{equation*}
\epsilon(r_kw)w=q_k(x)w_0+D_{k,1}w_1+D_{k,2}w_2,
\end{equation*}
where $q_k(x)$ is of degree $k-2$. Then from the form of
$P_{N,j}^{I}$ in (\ref{eq:typeI}) and the orthogonality condition
(\ref{eq:mop1}), we see that the coefficients $d_{k,N+l}$, $l=1,2$
are given by
\begin{equation*}
d_{k,N+l}=D_{k,l}=-\frac{1}{2\pi
i}\int_{\mathbb{R}_+}P_{N,l}^{II}(x)\epsilon(r_kw)w\D x.
\end{equation*}
For $k\neq N-1$, the polynomial $q_k$ is of degree less than or
equal to $N-4$, while for $k=N-1$, $B_{N}$ in (\ref{eq:pitypeI}) is
a polynomial of degree $N-2$ and hence the coefficient $d_{k,N}$ is
zero unless $k=N-1$. For $k=N-1$, it is given by the leading
coefficient of $q_{N-1}$ divided by the leading coefficient of
$B_{N}$. Since
\begin{equation*}
\pi_{N-1,1}(x)=\frac{d}{dx}\left(q_{N-1}(x)x(t-
x)w\right)w^{-1}+D_{k,1}L_{N-3}+D_{k,2}L_{N-4},
\end{equation*}
we see that both the leading coefficient of $q_{N-1}(x)$ and
 $B_{N}$ is $\frac{2}{M }$. Hence $d_{k,N}$ is $\delta_{k,N-1}$. This gives us
\begin{equation*}
\sum_{k,j=0}^{N-1}r_j(x)w(x)\mu_{jk}d_{k,N}\psi_{N}(y)=-h_{N-1,1}^{-1}\pi_{N-2,1}(x)w(x)\psi_{N}(y).
\end{equation*}
To express the second term in (\ref{eq:S1}) in terms of the
multi-orthogonal polynomials, let us now express $P^{II}_{N,l}$ in
terms of the polynomials $r_k$. Let us write
$P^{II}_{N,l}=\sum_{j=0}^{N-1}a_jr_j(x)$. Then we have
\begin{equation*}
\begin{split}
\int_{\mathbb{R}_+}P^{II}_{N,l}(x)\epsilon(r_kw)w\D
x=\sum_{j=0}^{N-1}a_j\left(\mathbb{D}_1\right)_{jk},\quad
\sum_{k=0}^{N-1}\int_{\mathbb{R}_+}P^{II}_{N,l}(x)\epsilon(r_kw)w\D
x\mu_{kj}=a_j.
\end{split}
\end{equation*}
Hence $P^{II}_{N,l}(x)$ can be written as
\begin{equation*}
\begin{split}
P^{II}_{N,l}(x)=\sum_{k,j=0}^{N-1}\left(\int_{\mathbb{R}_+}P^{II}_{N,l}(x)\epsilon(r_kw)w\D
x\mu_{kj}\right)r_j(x)=-2\pi
i\sum_{k,j=0}^{N-1}d_{k,N+l}\mu_{kj}r_j(x)
\end{split}
\end{equation*}
Therefore the second term in (\ref{eq:S1}) is given by
\begin{equation*}
\begin{split}
&\sum_{j,k=0}^{N-1}r_j(x)w(x)\mu_{jk}\left(d_{k,N}\psi_{N}(y)+d_{k,N+1}\frac{P^I_{N,1}(y)}{w(y)}
+d_{k,N+2}\frac{P^I_{N,2}(y)}{w(y)}\right)\\
&=-h_{N-1,1}^{-1}\pi_{N-2,1}(x)\psi_{N}(y)+\frac{1}{2\pi
i}\sum_{l=1}^2P^{II}_{N,l}(x)w(x)w^{-1}(y)P^I_{N,l}(y).
\end{split}
\end{equation*}
From this and (\ref{eq:first}), we obtain
\begin{equation*}
\begin{split}
(y-x)S_1(x,y)&=h_{N-1,1}^{-1}\pi_{N,1}(x)w(x)\psi_{N-2}(y)+
h_{N-1,1}^{-1}\pi_{N-2,1}(x)w(x)\psi_{N}(y)\\
&-\frac{1}{2\pi
i}\sum_{l=1}^2P^{II}_{N,l}(x)w(x)w^{-1}(y)P^I_{N,l}(y).
\end{split}
\end{equation*}
By using the fact that $Y^{-1}(y)=X^T(y)$ and the expressions of the
matrix $Y$ (\ref{eq:Ymatr}) and $X$ (\ref{eq:Xmat}), together with
(\ref{eq:kappa}), we see that this is the same as (\ref{eq:kerRHP}).
\end{proof}
\subsection{The kernel in terms of Laguerre polynomials} We will
now use a result in \cite{baikext} to further simplify the
expression of the kernel $S_1(x,y)$ so that its asymptotics can be
computed using the asymptotics of Laguerre polynomials. Let us
recall the set up in \cite{baikext}. First let $Y(x)$ be a matrix
satisfying the Riemann-Hilbert problem
\begin{equation*}
\begin{split}
1.\quad &\text{$Y(z)$ is analytic in
$\mathbb{C}\setminus\mathbb{R}_+$},\\
2.\quad &Y_+(z)=Y_-(z)\begin{pmatrix}1&w_0(z)&w_1(z)&\cdots&w_{r}(z)\\
0&1&0&\cdots&0\\
\vdots&&\ddots&\vdots&\\
0&&\cdots&&1
\end{pmatrix},\quad z\in\mathbb{R}_+\\
3.\quad &Y(z)=\left(I+O(z^{-1})\right)\begin{pmatrix}z^{n}\\
&z^{-n+r}\\
&&z^{-1}\\
&&&\ddots\\
&&&&z^{-1}
\end{pmatrix},\quad z\rightarrow\infty.
\end{split}
\end{equation*}
and let $\mathcal{K}_1(x,y)$ be the kernel given by
\begin{equation}\label{eq:k1}
\mathcal{K}_1(x,y)=\frac{w_0(x)w_0^{-1}(y)}{2\pi
i(x-y)}\begin{pmatrix}0&w_0(y)&
\ldots&w_{r}(y)\end{pmatrix}Y_+^{-1}(y)Y_+(x)\begin{pmatrix}1\\0\\\vdots\\0\end{pmatrix}.
\end{equation}
Let $\pi_{j,2}(x)$ be the monic orthogonal polynomials with respect
to the weight $w_0(x)$. Let $\mathcal{K}_0(x,y)$ be the following
kernel.
\begin{equation}\label{eq:k0}
\mathcal{K}_0(x,y)=w_0(x)\frac{\pi_{2,n}(x)\pi_{2,n-1}(y)-\pi_{2,n}(y)\pi_{2,n-1}(x)}{h_{n-1,2}(x-y)},
\end{equation}
where $h_{j,2}=\int_{0}^{\infty}\pi_{j,2}^2w_0\D x$. Let $\pi(z)$,
$v(z)$ and $u(z)$ be the following vectors
\begin{equation}\label{eq:tv}
\pi(z)=\left(\pi_{n-r,2},\ldots,\pi_{n-1,2}\right)^T,\quad
v(z)=\left(w_1,\ldots,w_r\right)^Tw_0^{-1}, \quad
u(z)=\left(I-\mathcal{K}_0^T\right)v(z)
\end{equation}
and let $\mathcal{W}$ be the matrix
$\mathcal{W}=\int_{\mathbb{R}_+}\pi(z)v^T(z)w_0(z)dz$. Then the
kernel $\mathcal{K}_1(x,y)$ can be express as \cite{baikext}
\begin{proposition}\label{pro:baik}
Suppose $\int_{\mathbb{R}_+}p(x)w_i(x)\D x$ converges for any
polynomial $p(x)$. Then the kernel $\mathcal{K}_1(x,y)$ defined by
(\ref{eq:k1}) is given by
\begin{equation}\label{eq:kerform}
\mathcal{K}_1(x,y)-\mathcal{K}_0(x,y)=w_0(x)u^T(y)\mathcal{W}^{-1}\pi(x).
\end{equation}
\end{proposition}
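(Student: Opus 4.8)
The identity (\ref{eq:kerform}) is the main result of \cite{baikext}, and the plan is to reconstruct its proof in the present notation. The first step is to rewrite $\mathcal{K}_1$ in the standard ``correlation kernel of a Riemann--Hilbert problem'' form. Observing that the row vector $\bigl(0,w_0(y),w_1(y),\dots,w_r(y)\bigr)$ is precisely $e_1^T\bigl(J(y)-I\bigr)$, where $J$ is the jump matrix of the Riemann--Hilbert problem for $Y$ and $e_1=(1,0,\dots,0)^T$, and using that $Y_+=Y_-J$ gives $J\,Y_+^{-1}=Y_-^{-1}$ on $\mathbb{R}_+$, the definition (\ref{eq:k1}) becomes
\begin{equation*}
\mathcal{K}_1(x,y)=\frac{w_0(x)w_0^{-1}(y)}{2\pi i(x-y)}\,e_1^T\bigl(Y_-^{-1}(y)-Y_+^{-1}(y)\bigr)Y_+(x)e_1 .
\end{equation*}
In this form $\mathcal{K}_1$ is manifestly assembled from the first column of $Y$ (the degree--$n$ type~II multi-orthogonal polynomial and its companions) and the first row of $Y^{-1}$ (the type~I functions, whose jumps across $\mathbb{R}_+$ are governed by the Plemelj relation $C(f)_+-C(f)_-=f$). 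A direct computation, using only this jump relation, shows that the Laguerre kernel $\mathcal{K}_0$ of (\ref{eq:k0}) admits the very same expression with $Y$ replaced by the $2\times2$ orthogonal--polynomial matrix built from $\pi_{n,2}$, $\pi_{n-1,2}$ and their Cauchy transforms, which solves the Riemann--Hilbert problem with jump $I+w_0E_{12}$.

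The heart of the argument is to compare the two Riemann--Hilbert problems. One introduces the $(r+2)\times(r+2)$ ``generalized Christoffel--Darboux'' matrix $\tilde Y$ assembled from the consecutive Laguerre polynomials $\pi_{n,2},\pi_{n-1,2},\dots,\pi_{n-r-1,2}$ and their Cauchy transforms; it satisfies a Riemann--Hilbert problem with the same normalization at infinity as $Y$ and a jump matrix that agrees with $J$ except in the entries carrying $w_1,\dots,w_r$. Hence $R:=Y\tilde Y^{-1}$ has, across $\mathbb{R}_+$, only a jump confined to the first row and supported by the finitely many functions $w_1,\dots,w_r$; together with the control on its behaviour at infinity this forces $R=I+(\text{matrix of rank }r\text{ with explicit entries})$. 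Substituting $Y=R\tilde Y$ back into the formula for $\mathcal{K}_1$ and using the explicit entries of $\tilde Y$ reproduces $\mathcal{K}_0$ plus a rank--$r$ correction; computing the relevant part of $R$ from the moments $\int_{\mathbb{R}_+}\pi_{j,2}(z)w_i(z)\,\D z$ (finite by hypothesis) identifies this correction with $w_0(x)\,u^T(y)\,\mathcal{W}^{-1}\pi(x)$, the Gram matrix $\mathcal{W}=\int_{\mathbb{R}_+}\pi(z)v^T(z)w_0(z)\,\D z$ appearing as the coefficient matrix of the resulting linear system, and the operator $I-\mathcal{K}_0^T$ in $u=(I-\mathcal{K}_0^T)v$ entering because one must first remove from $v$ the part already reproduced by $\mathcal{K}_0$ (i.e.\ the expansion of $v$ in the first $n$ Laguerre polynomials).

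I expect the bookkeeping in this last step to be the main obstacle: checking that $R-I$ has rank exactly $r$ and not more, tracking precisely which Cauchy transforms occur in which entry, and matching normalizations so that the Gram matrix enters as $\mathcal{W}^{-1}$ rather than some other combination. This is also where the hypothesis $\int_{\mathbb{R}_+}p(x)w_i(x)\,\D x<\infty$ for all polynomials $p$ is genuinely used, both to make $\mathcal{W}$ and the moments of the $w_i$ well defined and to guarantee that the type~I Cauchy transforms decay at infinity at the rate the Riemann--Hilbert normalization demands. An alternative that avoids the auxiliary matrix $\tilde Y$ is a direct verification: one first shows that $\mathcal{K}_1-\mathcal{K}_0$ is a kernel of rank at most $r$ whose $x$--dependence lies in $\mathrm{span}\{w_0\pi_{n-r,2},\dots,w_0\pi_{n-1,2}\}$ --- this follows because both kernels reproduce polynomials of degree $\le n-r-1$ under the appropriate pairing (by orthogonality of the $\pi_{j,2}$ and by the orthogonality relations built into the multi-orthogonal system, see \cite{KDaem}), while a degree count caps the admissible polynomials from above --- and then determines the remaining $r$ factors by testing the claimed identity against $w_1,\dots,w_r$, which produces a linear system with coefficient matrix $\mathcal{W}$; inverting it yields (\ref{eq:kerform}). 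Either way, the delicate point is the precise identification of the coefficient matrix $\mathcal{W}^{-1}$ and of the projector $I-\mathcal{K}_0^T$ inside $u$.
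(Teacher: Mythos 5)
The paper does not actually prove this proposition: the remark that immediately follows it attributes the identity to Theorem~2.1 of \cite{baikext} and observes only that Baik's argument carries over verbatim once one imposes the hypothesis that $\int_{\mathbb{R}_+}p(x)w_i(x)\,\D x$ converges for every polynomial $p$. So there is no proof in the paper against which to compare your sketch, only a reference; I assess your reconstruction on its own terms below.

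The opening reduction is correct: $(0,w_0,\ldots,w_r)=e_1^T(J-I)$ and $J\,Y_+^{-1}=Y_-^{-1}$ do give $(0,w_0,\ldots,w_r)Y_+^{-1}=e_1^T\bigl(Y_-^{-1}-Y_+^{-1}\bigr)$, and this is a sensible way to expose the rank-one structure of the jump. Your first route, the auxiliary matrix $\tilde Y$ with $Y=R\tilde Y$, does not work as stated, however. If $\tilde Y$ is required to have jump $I+w_0E_{12}$ together with the same normalization $\diag\bigl(z^{n},z^{-n+r},z^{-1},\ldots,z^{-1}\bigr)$ at infinity as $Y$, then the last $r$ columns of $\tilde Y$ carry no jump, are entire, and decay like $z^{-1}$, hence vanish identically by Liouville's theorem, so $\tilde Y$ would be singular and the factorization $Y=R\tilde Y$ never gets started. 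One can repair this by filling those columns with specific Cauchy transforms of $w_0$ against lower-degree $\pi_{j,2}$ (which changes the jump precisely in the slots you removed), but the construction as you describe it is genuinely underdetermined and has a gap.

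Your second route is the correct one, and I believe it is essentially what \cite{baikext} does. One shows that $\mathcal{K}_1-\mathcal{K}_0$ is a kernel of rank at most $r$ whose $x$-dependence sits in $\mathrm{span}\{w_0\pi_{n-r,2},\ldots,w_0\pi_{n-1,2}\}$, and then solves for the $r$ remaining $y$-factors by pairing against $w_1,\ldots,w_r$; this linear system has $\mathcal{W}$ as its coefficient matrix, and the projector $I-\mathcal{K}_0^T$ in $u=(I-\mathcal{K}_0^T)v$ records exactly the part of $v$ already accounted for by the degree-$n$ Christoffel--Darboux kernel. Concretely, the first column of $Y$ (the type~II multi-OP of degree $n$ together with $P^{II}_{N,1},P^{II}_{N,2}$) and the first row of $Y^{-1}$ (the type~I data) are expanded in $\pi_{n-r,2},\ldots,\pi_{n,2}$ with coefficients determined by the orthogonality conditions against $w_1,\ldots,w_r$, and substituting these expansions into the Daems--Kuijlaars CD formula produces (\ref{eq:kerform}) after cancellation. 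The places you flag as needing care — the degree bound that caps the rank, and the normalizations that make the inverse Gram matrix $\mathcal{W}^{-1}$ come out rather than some other combination — are indeed where the work lies, but the blueprint is sound and this convergence hypothesis is exactly what makes $\mathcal{W}$, the moments of the $w_i$, and the Cauchy-transform asymptotics all well defined.
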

\begin{remark} Although in \cite{baikext}, the theorem is stated
with the jump of $Y(x)$ on $\mathbb{R}$ instead of $\mathbb{R}_+$,
while the weights are of the special form $w_0=e^{-NV(x)}$,
$w_j(x)=e^{a_jx}$, where $V(x)$ is an even degree polynomial, the
proof in \cite{baikext} in fact remain valid as long as integrals of
the form $\int_{\mathbb{R}_+}p(x)w_i(x)\D x$ converges for any
polynomial $p(x)$. This is true in our case.
\end{remark}
We can now apply Proposition \ref{pro:baik} to our case. In our
case, the vectors $\pi(z)$ and $v(z)$ are given by
\begin{equation*}
\pi(z)=\left(L_{N-2}\quad L_{N-1}\right)^T,\quad v(z)=\left(w_1\quad
w_2\right)^Tw_0^{-1},
\end{equation*}
while the matrix $\mathcal{W}$ is given by
\begin{equation*}
\mathcal{W}=\begin{pmatrix}\left<L_{N-2},L_{N-3}\right>_1&\left<L_{N-2},L_{N-4}\right>_1\\
\left<L_{N-1},L_{N-3}\right>_1&\left<L_{N-1},L_{N-4}\right>_1\end{pmatrix}
\end{equation*}
By corollary \ref{cor:linear}, we see that
$\left<L_{N-2},L_{N-3}\right>_1=0$ and hence the determinant of
$\mathcal{W}$ is
\begin{equation*}
\det
\mathcal{W}=\left<L_{N-2},L_{N-4}\right>_1\left<L_{N-1},L_{N-3}\right>_1.
\end{equation*}
Suppose $\det\mathcal{W}\neq 0$ and that the multi-orthogonal
polynomials $P^{II}_{N,l}$ exist. Let us now consider the vector
$u(z)$. By the Christoffel-Darboux formula, we have
\begin{equation*}
u(z)=\left(I-\mathcal{K}_0^T\right)v(z)=v(z)-\sum_{j=0}^{N-1}\frac{L_j(z)}{h_{j,0}}\left(\left<L_j,L_{N-3}\right>_1\quad\left<L_j,L_{N-4}\right>_1\right),
\end{equation*}
where $h_{j,0}=\left<L_j,L_j\right>_2$. We shall show that $L_{N-3}$
and $L_{N-4}$ can be written in the following form
\begin{equation}\label{eq:mappi}
L_{N-l}=\frac{d}{dx}\left(q_l(x)x(t-
x)w\right)w^{-1}+Q_{l-2,1}\pi_{N+1,1}+Q_{l-2,2}\pi_{N,1},\quad
l=3,4,
\end{equation}
for some polynomial $q_l(x)$ of degree $N-1$. By Lemma
\ref{le:linear}, we see that if $\left<L_{N-3},L_{N-4}\right>_1\neq
0$, then the map $\varrho_{N-3}$ in Definition \ref{de:fn} is
invertible. Therefore if the restriction of the map $f$ in
Definition \ref{de:fn} is also invertible on the span of
$\pi_{N+1,1}$ and $\pi_{N,1}$, we will be able to write $L_{N-3}$
and $L_{N-4}$ in the form of (\ref{eq:mappi}).
\begin{lemma}\label{le:pi2nmap}
Let $f$ be the map in Definition \ref{de:fn} and let $\varrho_{\pi}$
be its restriction to the span of $\pi_{N+1,1}$ and $\pi_{N,1}$.
Then $\varrho_{\pi}$ is invertible.
\end{lemma}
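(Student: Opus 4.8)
The plan is to argue just as in the proof of Lemma~\ref{le:linear}, one degree higher. Since $\varrho_{\pi}$ is a linear map from the two-dimensional space spanned by $\pi_{N+1,1}$ and $\pi_{N,1}$ into the two-dimensional space of polynomials of degree at most $1$, it suffices to show that $\varrho_{\pi}$ is injective. So I will suppose that $g:=a_{1}\pi_{N+1,1}+a_{2}\pi_{N,1}$ lies in the kernel of $f$, i.e.\ that
\[
g=\frac{d}{dx}\bigl(q(x)\,x(t-x)\,w(x)\bigr)w^{-1}(x)
\]
for some polynomial $q$, and I will deduce that $a_{1}=a_{2}=0$. If $(a_{1},a_{2})\neq(0,0)$, then, since $\pi_{N+1,1}$ and $\pi_{N,1}$ have degrees $N+1$ and $N$, the polynomial $g$ is nonzero of degree at most $N+1$; and because the operation $q\mapsto\frac{d}{dx}(q\,x(t-x)w)w^{-1}$ raises the degree by exactly two (the polynomial $Q_{j}$ of Section~\ref{se:Skew} has leading coefficient $\tfrac{M}{2}$), the polynomial $q$ is nonzero of degree $\deg g-2\le N-1$.

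The one identity I need is the counterpart of~(\ref{eq:parts}) for a general element of $\ker f$. Writing $q(x)=\sum_{j}b_{j}x^{j}$, so that $g=\sum_{j}b_{j}Q_{j}$, and applying~(\ref{eq:parts}) term by term together with the skew-symmetry of $\langle\cdot,\cdot\rangle_{1}$, I get, for every polynomial $p$,
\[
\langle g,p\rangle_{1}=-\langle q,p\rangle_{2}=-\int_{\mathbb{R}_{+}}q(x)\,p(x)\,w_{0}(x)\,dx .
\]

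Now I invoke skew-orthogonality. Since $N$ is even, $\pi_{N,1}=\pi_{2k,1}$ and $\pi_{N+1,1}=\pi_{2k+1,1}$, so by~(\ref{eq:sop}) both are skew-orthogonal to $1,y,\dots,y^{N-1}$, and hence so is $g$. Taking $p(x)=x^{j}$ for $j=0,\dots,N-1$ in the displayed identity gives $\int_{\mathbb{R}_{+}}q(x)x^{j}w_{0}(x)\,dx=0$ for $j=0,\dots,N-1$; that is, $q$ is orthogonal, with respect to the strictly positive weight $w_{0}(x)=x^{M-N}e^{-Mx}$, to every polynomial of degree at most $N-1$, and in particular to $q$ itself. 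Hence $\int_{\mathbb{R}_{+}}q(x)^{2}w_{0}(x)\,dx=0$, which forces $q\equiv 0$, contradicting that $q$ is a nonzero polynomial of degree at most $N-1$. Therefore $(a_{1},a_{2})=(0,0)$, $\varrho_{\pi}$ is injective, and consequently invertible.

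The only point that calls for care is the degree bookkeeping: one must check that $\deg q\le N-1$ in all cases, so that the $N$ orthogonality relations supplied by~(\ref{eq:sop}) span the space containing $q$ and therefore annihilate it. This is automatic from $\deg g\le\deg\pi_{N+1,1}=N+1$, but I would record the two sub-cases $a_{1}\neq 0$ (where $\deg q=N-1$) and $a_{1}=0,\ a_{2}\neq 0$ (where $\deg q=N-2$) separately to be safe. Everything else — the identity $\langle g,p\rangle_{1}=-\langle q,p\rangle_{2}$, obtained from~(\ref{eq:parts}) by linearity, and the vanishing of $q$ from orthogonality to a spanning family — is routine.
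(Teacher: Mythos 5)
Your proof is correct and takes essentially the same route as the paper: assume an element of the span lies in $\ker f$, write it as $\frac{d}{dx}(q\,x(t-x)w)w^{-1}$, use the integration-by-parts identity together with the skew-orthogonality relations $\langle\pi_{N,1},y^j\rangle_1=\langle\pi_{N+1,1},y^j\rangle_1=0$ for $j\le N-1$ to deduce $\langle q,x^j\rangle_2=0$ for $j=0,\ldots,N-1$, and conclude $q\equiv0$ since $\deg q\le N-1$. The paper states the bound $\deg q\le N-1$ as given (it is built into Definition~\ref{de:fn}), whereas you derive it explicitly; otherwise the arguments coincide.
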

\begin{proof} Suppose there exist $a_1$ and $a_2$ such that
\begin{equation*}
a_1\pi_{N+1,1}+a_2\pi_{N,1}=\frac{d}{dx}\left(q(x)x(t-
x)w\right)w^{-1}
\end{equation*}
for some polynomial $q(x)$ of degree at most $N-1$. Then we have
\begin{equation*}
\left<x^j,a_1\pi_{N+1,1}+a_2\pi_{N,1}\right>_1=\left<x^jq(x)\right>_2=0,\quad
j=0,\ldots,N-1.
\end{equation*}
As the degree of $q(x)$ is at most $N-1$, this is only possible if
$q(x)=0$.
\end{proof}
The composition of $\varrho_{N-3}$ and $\varrho_{\pi}^{-1}$ will
therefore give us a representation of $L_{N-3}$ and $L_{N-4}$ in the
form of (\ref{eq:mappi}). By using this representation and the fact
that $q_l(x)$ is of degree at most $N-1$, we see that
\begin{equation*}
\begin{split}
\mathcal{K}_0^T\left(w_lw_0^{-1}\right)&=\sum_{j=0}^{N-1}\frac{L_j(x)}{h_{j,0}}\left<L_j,L_{N-l-2}\right>_1\\
&=\sum_{j=0}^{N-1}\frac{L_j(x)}{h_{j,0}}\left(\left<L_j,q_l\right>_2+\left<L_j,Q_{l-2,1}\pi_{N+1,1}+Q_{l-2,2}\pi_{N,1}\right>_1\right),\\
&=q_l(x).
\end{split}
\end{equation*}
Therefore the vector $u(x)$ is given by
\begin{equation*}
u(x)=w(x)w_0^{-1}(x)Q\epsilon\left(\pi_{N+1,1}w\quad
\pi_{N,1}w\right)^T
\end{equation*}
where $Q$ is the matrix with entries $Q_{i,j}$. We will now
determine the constants $Q_{i,j}$.
\begin{lemma}\label{le:A}
Let $\pi_{N,1}$ and $\pi_{N+1,1}$ be the monic skew orthogonal
polynomial with respect to the weight $w(x)$ and choose
$\pi_{N+1,1}$ so that the constant $c$ in (\ref{eq:p2N}) is zero.
Then the vector $u(y)$ in (\ref{eq:kerform}) is given by
\begin{equation}\label{eq:ux}
u(x)=w(x)w_0^{-1}(x)Q\epsilon\left(\pi_{N+1,1}w\quad
\pi_{N,1}w\right)^T,
\end{equation}
where $Q$ is the matrix whose entries $Q_{i,j}$ are given by
\begin{equation}\label{eq:Aent}
\begin{split}
Q_{i,1}&=-\frac{M \left<L_{N-1},L_{N-i-2}\right>_1}{2h_{N-1,0}},\\
Q_{i,2}&=\left(Mt-
\left(N+M\right)\right)\frac{\left<L_{N-1},L_{N-i-2}\right>_1}{2h_{N-1,0}}-M
\frac{\left<L_{N-2},L_{N-i-2}\right>_1}{2h_{N-2,0}}.
\end{split}
\end{equation}
\end{lemma}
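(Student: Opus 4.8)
The task is to identify the entries $Q_{i,j}$ appearing in the expansion (\ref{eq:mappi}); once these are known the stated formula for $u(x)$ follows from the discussion preceding the lemma. Existence of the polynomial $q_l$ of degree $N-1$ in (\ref{eq:mappi}) is already granted by Lemmas \ref{le:linear} and \ref{le:pi2nmap}, so the plan is simply to read off the constants $Q_{i,j}$ by comparing coefficients and by pairing (\ref{eq:mappi}) against Laguerre polynomials.

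Two elementary ingredients will be used repeatedly. First, expanding with $\p_x\log w=-\tfrac{M}{2}+\tfrac{M-N-1}{2x}+\tfrac{1}{2(t-x)}$ gives
\begin{equation*}
\frac{d}{dx}\bigl(q_l(x)\,x(t-x)w\bigr)w^{-1}=q_l'(x)\,x(t-x)+q_l(x)\left(\frac{(M-N+1)(t-x)}{2}-\frac{x}{2}-\frac{M}{2}x(t-x)\right),
\end{equation*}
a polynomial of degree $N+1$ whose coefficient of $x^{N+1}$ is $\tfrac{M}{2}$ times the leading coefficient of $q_l$ and whose coefficient of $x^N$ is $-\tfrac{N+M+Mt}{2}\alpha+\tfrac{M}{2}\beta$, where $\alpha,\beta$ denote the coefficients of $x^{N-1},x^{N-2}$ in $q_l$. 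Second, the integration-by-parts identity $\left<f,\frac{d}{dx}(p\,x(t-x)w)w^{-1}\right>_1=\left<f,p\right>_2$ for polynomials $f,p$: writing $h=\frac{d}{dx}(p\,x(t-x)w)w^{-1}$ one has $h(y)w(y)=\frac{d}{dy}(p(y)\,y(t-y)w(y))$, and since $p\,y(t-y)w$ vanishes at $y=0$ (because $M-N>0$) and at $y=\infty$, integrating by parts in $y$ against $\epsilon(x-y)$ collapses $\left<f,h\right>_1$ to $\int_0^\infty f(x)p(x)\,x(t-x)w(x)^2\,\D x=\left<f,p\right>_2$, using $w_0=x(t-x)w^2$. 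This is the same computation that underlies (\ref{eq:parts}) and (\ref{eq:prod}).

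For $Q_{i,1}$ (with $l=i+2$): only the derivative term and $Q_{i,1}\pi_{N+1,1}$ have degree $N+1$, so comparing coefficients of $x^{N+1}$ in (\ref{eq:mappi}) forces $\alpha=-\tfrac{2}{M}Q_{i,1}$. Now pair (\ref{eq:mappi}) with $L_{N-1}$ and with $L_{N-2}$ in $\left<\cdot,\cdot\right>_1$. The terms $Q_{i,1}\pi_{N+1,1}$ and $Q_{i,2}\pi_{N,1}$ drop, since $\pi_{N,1}$ and $\pi_{N+1,1}$ are skew orthogonal to every polynomial of degree $\le N-1$; by the identity above together with orthogonality of the Laguerre polynomials with respect to $w_0$, the derivative term contributes $h_{N-1,0}$ (resp. $h_{N-2,0}$) times the $L_{N-1}$- (resp. $L_{N-2}$-) coefficient of $q_l$ in the Laguerre basis. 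Hence the $L_{N-2}$-coefficient of $q_l$ is $\left<L_{N-2},L_{N-i-2}\right>_1/h_{N-2,0}$, and its $L_{N-1}$-coefficient—which is the leading coefficient $\alpha$, as $\deg q_l=N-1$—is $\left<L_{N-1},L_{N-i-2}\right>_1/h_{N-1,0}$; combining the latter with $\alpha=-\tfrac{2}{M}Q_{i,1}$ gives $Q_{i,1}=-\tfrac{M}{2h_{N-1,0}}\left<L_{N-1},L_{N-i-2}\right>_1$.

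For $Q_{i,2}$ I would compare coefficients of $x^N$ in (\ref{eq:mappi}). On the right this is $-\tfrac{N+M+Mt}{2}\alpha+\tfrac{M}{2}\beta$ from the derivative term, plus $Q_{i,1}$ times the subleading coefficient $-\tfrac{(M+1)(N+1)}{M}$ of $L_{N+1}$ (which by (\ref{eq:p2N}) with $c=0$ is the coefficient of $x^N$ in $\pi_{N+1,1}$), plus $Q_{i,2}$ (as $\pi_{N,1}$ is monic of degree $N$); on the left it is $0$. One then substitutes $\alpha=-\tfrac{2}{M}Q_{i,1}$, the value just found for the $L_{N-2}$-coefficient of $q_l$, the resulting value of $\beta$ via $\beta=-\tfrac{(M-1)(N-1)}{M}\alpha+(L_{N-2}\text{-coefficient of }q_l)$ (the subleading coefficient of $L_{N-1}$ being $-\tfrac{(M-1)(N-1)}{M}$), and the expression for $Q_{i,1}$, and is left with the arithmetic simplification $(N+M+Mt)+(M-1)(N-1)-(M+1)(N+1)=Mt-(N+M)$, which yields $Q_{i,2}=\tfrac{Mt-(N+M)}{2h_{N-1,0}}\left<L_{N-1},L_{N-i-2}\right>_1-\tfrac{M}{2h_{N-2,0}}\left<L_{N-2},L_{N-i-2}\right>_1$, as claimed. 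The main work—and the one place where a sign or a coefficient can easily go astray—is precisely this order-$x^N$ comparison: one must expand the derivative term correctly and translate carefully between the monomial and Laguerre expansions of $q_l$, and it is here that the $t$-dependent factor $Mt-(N+M)$, which traces back to the $(t-x)^{-1/2}$ in the weight $w$, is generated.
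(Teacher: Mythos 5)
Your proof is correct and takes essentially the same route as the paper: comparing the $x^{N+1}$ coefficient and pairing against $L_{N-1}$, $L_{N-2}$ in $\left<\cdot,\cdot\right>_1$ to extract $\alpha$, $\beta$ and hence $Q_{i,1}$, then using one more linear relation at order $x^N$ to solve for $Q_{i,2}$. The only (cosmetic) difference is that for $Q_{i,2}$ you compare the $x^N$ monomial coefficient in (\ref{eq:mappi}), whereas the paper pairs both sides against $L_N$ in $\left<\cdot,\cdot\right>_2$; given the already-known $x^{N+1}$ relation these yield the same equation, and your arithmetic $(N+M+Mt)+(M-1)(N-1)-(M+1)(N+1)=Mt-(N+M)$ matches the paper's.
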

\begin{proof} First let us compute the leading order coefficients of
the polynomial $q_l(x)$ in (\ref{eq:mappi}). Let
$q_l(x)=q_{l,N-1}x^{N-1}+q_{l,N-2}x^{N-2}+O(x^{N-3})$, then we have
\begin{equation*}
\begin{split}
&\frac{d}{dx}\left(q_l(x)x(t- x)w\right)w^{-1}=\frac{M }{2}q_{l,N-1}x^{N+1},\\
&+\left(-\frac{ }{2}(N+M)q_{l,N-1}-\frac{Mt}{2}q_{l,N-1}+\frac{M
}{2}q_{l,N-2}\right)x^{N}+O(x^{N-1})
\end{split}
\end{equation*}
From (\ref{eq:mappi}), we see that
\begin{equation}\label{eq:qlead}
q_{l,N-1}=-\frac{2}{M }Q_{l-2,1}.
\end{equation}
On the other hand, by orthogonality, we have
\begin{equation*}
\left<L_{N-1},L_{N-l}\right>_1=\left<L_{N-1},q_l\right>_2=q_{l,N-1}h_{N-1,0}.
\end{equation*}
Therefore $Q_{l-2,1}$ is given by
\begin{equation*}
Q_{l-2,1}=-\frac{M \left<L_{N-1},L_{N-l}\right>_1}{2h_{N-1,0}}
\end{equation*}
Let us now compute $Q_{i,2}$. By taking the skew product, we have
\begin{equation*}
\left<L_{N-2},L_{N-l}\right>_1=q_{l,N-1}\left<L_{N-2},x^{N-1}\right>_2
+q_{l,N-2}h_{N-2,0}
\end{equation*}
Now from (\ref{eq:laguerre}), we obtain
\begin{equation}\label{eq:nnp1}
\begin{split}
\left<x^{j+1},L_{j}\right>_2&=\left<L_{j+1}+\frac{(M-N+j+1)(j+1)}{M}x^j,L_j\right>_2,\\
&=\frac{(M-N+j+1)(j+1)}{M}h_{j,0}.
\end{split}
\end{equation}
Hence $q_{l,N-2}$ is equal to
\begin{equation}\label{eq:q2}
q_{l,N-2}=\frac{\left<L_{N-2},L_{N-l}\right>_1}{h_{N-2,0}}-q_{l,N-1}\frac{(M-1)(N-1)}{M}
\end{equation}
By using the expansion (\ref{eq:p2N}) of the skew orthogonal
polynomials in terms of $L_{N-k}$, we have
\begin{equation*}
\begin{split}
\left<L_{N-l},L_{N}\right>_2&=\frac{M }{2}q_{l,N-1}\left<x^{N+1},L_{N}\right>_2+Q_{l-2,2}h_{N,0}\\
&+\left(\frac{ 1}{2}(-N-M)q_{l,N-1}-\frac{Mt}{2}q_{l,N-1}+\frac{M
}{2}q_{l,N-2}\right)h_{N,0}
\end{split}
\end{equation*}
By substituting (\ref{eq:q2}) into this, we obtain
\begin{equation*}
Q_{l-2,2}=\left(-\frac{1}{2}\left(M+N\right)+\frac{Mt}{2}\right)\frac{\left<L_{N-1},L_{N-l}\right>_1}{h_{N-1,0}}-\frac{M
}{2} \frac{\left<L_{N-2},L_{N-l}\right>_1}{h_{N-2,0}}
\end{equation*}
This proves the lemma.
\end{proof}
Note that the matrix $\det
(Q\mathcal{W}^{-1})=M^2/(4h_{N-1,0}h_{N-2,0})$ and hence
$Q\mathcal{W}^{-1}$ is invertible. From Lemma \ref{le:A} and
(\ref{eq:kerform}), we obtain Theorem \ref{thm:baik}.

We will now further simplify the expression for the correlation
kernel $S_1(x,y)$. First we make the following observation. From the
definition (\ref{eq:kerS}) of the kernel $S_1$, it is easy to check
that the following identity is true.
\begin{equation}\label{eq:commD}
-\frac{\p}{\p y}S_1(x,y)=\frac{\p}{\p x}S_1(y,x).
\end{equation}
We can use this simple observation in the expression
(\ref{eq:kerform1}) that we obtained in the last section. First let
us write the Christoffel Darboux kernel terms of the solution of a
Riemann-Hilbert problem, we have
\begin{equation}\label{eq:K2form}
K_2(x,y)=\left(\frac{y(t- y)}{x(t-
x)}\right)^{\frac{1}{2}}\frac{1}{2\pi i(x-y)}\left(0\quad
1\right)Z_+^{-1}(y)Z_+(x)\begin{pmatrix}1\\0\end{pmatrix},
\end{equation}
where $Z(x)$ is the following matrix
\begin{equation}\label{eq:Phimat}
Z(x)=\begin{pmatrix}L_N(x)&C\left(L_Nw_0\right)\\
\kappa_{0,N-1}L_{N-1}(x)&\kappa_{0,N-1}C\left(L_{N-1}w_0\right)
\end{pmatrix}w_0^{\frac{\sigma_3}{2}},
\end{equation}
where $\sigma_3$ is the Pauli matrix $\sigma_3=\diag(1,-1)$,
$\kappa_{0,n}=-\frac{2\pi i}{h_{0,n}}$ and $C(f)$ is the Cauchy
transform.

It is well-known that if the logarithmic derivative of $w_0$ is
rational, then the matrix $Z(x)$ in (\ref{eq:Phimat}) satisfies a
linear system of ODE
\begin{equation}\label{eq:LODE}
\frac{\p}{\p x}Z(x)=A(x)Z(x)
\end{equation}
for some rational function $A(x)$. By using (\ref{eq:laguerre}) and
the recurrence relation of the $L_n$
\begin{equation}\label{eq:recur}
L_n=-\frac{n}{M}\left(2+\frac{M-N-1-Mx}{n}\right)L_{n-1}-\frac{n(n-1)}{M^2}\left(1+\frac{M-N-1}{n}\right)L_{n-2},
\end{equation}
we obtain the matrix $A$ as
\begin{equation}\label{eq:matA}
A(x)=\begin{pmatrix}-\frac{M}{2}+\frac{M+N}{2x}&\frac{N}{\kappa_{0,N-1} x}\\
-\frac{M\kappa_{0,N-1}}{x}&\frac{M}{2}-\frac{M+N}{2x}\end{pmatrix}
\end{equation}
Let us now consider the derivative $\frac{\p K_2(x,y)}{\p y}$. We
have, from (\ref{eq:K2form})
\begin{equation}\label{eq:k2y}
\begin{split}
&\frac{\p K_2(x,y)}{\p y}=\frac{t-2 y}{\left(xy(t- x)(t-
y)\right)^{\frac{1}{2}}} \left(0\quad
1\right)\frac{Z_+^{-1}(y)Z_+(x)}{4\pi i(x-y)}\begin{pmatrix}1\\0\end{pmatrix}\\
&+\left(\frac{y(t- y)}{x(t-
x)}\right)^{\frac{1}{2}}\Bigg(\left(0\quad
1\right)\frac{Z_+^{-1}(y)Z_+(x)}{2\pi
i(x-y)^2}\begin{pmatrix}1\\0\end{pmatrix}-\left(0\quad
1\right)\frac{Z_+^{-1}(y)A(y)Z_+(x)}{2\pi
i(x-y)}\begin{pmatrix}1\\0\end{pmatrix}\Bigg).
\end{split}
\end{equation}
Similarly, $\frac{\p}{\p x}K_2(y,x)$ is given by
\begin{equation}\label{eq:k2x}
\begin{split}
&\frac{\p K_2(y,x)}{\p x}=\frac{t-2 x}{\left(xy(t- x)(t-
y)\right)^{\frac{1}{2}}} \left(0\quad
1\right)\frac{Z_+^{-1}(x)Z_+(y)}{4\pi i(y-x)}\begin{pmatrix}1\\0\end{pmatrix}\\
&+\left(\frac{x(t- x)}{y(t-
y)}\right)^{\frac{1}{2}}\Bigg(\left(0\quad
1\right)\frac{Z_+^{-1}(x)Z_+(y)}{2\pi
i(y-x)^2}\begin{pmatrix}1\\0\end{pmatrix}+\left(0\quad
1\right)\frac{Z_+^{-1}(x)A(x)Z_+(y)}{2\pi
i(x-y)}\begin{pmatrix}1\\0\end{pmatrix}\Bigg).
\end{split}
\end{equation}
The sum of these two derivatives is therefore given by
\begin{equation*}
\begin{split}
&\frac{\p K_2(y,x)}{\p x}+\frac{\p K_2(x,y)}{\p
y}=h_{0,N-1}^{-1}w(x)w(y)
\Bigg(-L_N(x)L_N(y)M \\
&+\left(L_{N-1}(y)L_N(x)+L_{N-1}(x)L_N(y)\right) \left(\frac{M
}{2}(x+y)-\frac{Mt+(M+N) }{2}\right)
\\
&-L_{N-1}(x)L_{N-1}(y)N \Bigg).
\end{split}
\end{equation*}
By using the recurrence relations of the Laguerre polynomials, we
obtain
\begin{equation}\label{eq:k2diff}
\begin{split}
&\frac{\p K_2(y,x)}{\p x}+\frac{\p K_2(x,y)}{\p y}=
h_{0,N-1}^{-1}w(x)w(y)\Bigg(\frac{(N-1)(M-1) }{2M}L_{N-2,N}(x,y)\\
&+\frac{M }{2}L_{N+1,N-1}(x,y) +\frac{(M+N) -Mt}{2}L_{N-1,N}(x,y)
\Bigg)
\end{split}
\end{equation}
where $L_{n,m}(x,y)$ is given by
\begin{equation}\label{eq:Lnm}
L_{n,m}(x,y)=L_{n}(x)L_m(y)+L_m(x)L_n(y).
\end{equation}
Let $K_1(x,y)$ be the correction kernel in (\ref{eq:kerform1}).
\begin{equation}\label{eq:ker1}
K_1(x,y)=\epsilon\left(\pi_{N+1,1}w\quad\pi_{N,1}w\right)(y)\begin{pmatrix}0&-\frac{M }{2h_{N-1}}\\
-\frac{M }{2h_{N-2}}&\frac{Mt-
(N+M)}{2h_{N-1}}\end{pmatrix}\pi(x)w(x)
\end{equation}
Then we have
\begin{equation}\label{eq:k1diff}
\begin{split}
&\frac{\p K_1(y,x)}{\p x}+\frac{\p K_1(x,y)}{\p
y}=w(x)w(y)\Bigg(-\frac{M }{2h_{0,N-2}}L_{N,N-2}(x,y)\\
&-\frac{M }{2h_{0,N-1}}L_{N+1,N-1}(x,y)+\frac{Mt- (N+M)}{2h_{0,N-1}}L_{N,N-1}(x,y)\\
&+\left( \frac{M }{2h_{0,N-2}}\frac{\left<L_N,L_{N-2}\right>_1}
{\left<L_{N-1},L_{N-2}\right>_1}-\frac{M
}{2h_{0,N-1}}\frac{\left<L_{N+1},L_{N-1}\right>_1}{\left<L_{N-1},L_{N-2}\right>_1}\right)L_{N-1,N-2}(x,y)
\\
&+\left(\frac{M }{h_{0,N-1}}\frac{\left<L_{N+1},L_{N-2}\right>_1}
{\left<L_{N-1},L_{N-2}\right>_1}-\frac{Mt-
(N+M)}{h_{0,N-1}}\frac{\left<L_N,L_{N-2}\right>_1}
{\left<L_{N-1},L_{N-2}\right>_1}\right)L_{N-1,N-1}(x,y)\Bigg)
\end{split}
\end{equation}
Then by using $\frac{\p S_1(y,x)}{\p x}+\frac{\p S_1(x,y)}{\p y}=0$
and the formula for $h_{0,n}$,
\begin{equation}\label{eq:hn}
h_{n,0}=\frac{n!\left(n+M-N\right)!}{M^{2n+M-N+1}},
\end{equation}
we obtain the following.
\begin{equation}\label{eq:relprod}
\begin{split}
\left<L_{N+1},L_{N-1}\right>_1
&=\frac{(N-1)(M-1)}{M^2}\left<L_{N},L_{N-2}\right>_1,\\
\left<L_{N+1},L_{N-2}\right>_1&=\frac{Mt- (N+M)}{M
}\left<L_{N},L_{N-2}\right>_1
\end{split}
\end{equation}
In particular, the correction term $K_1(x,y)$ in the kernel can be
written as
\begin{equation}\label{eq:k1form}
\begin{split}
&K_1(x,y)=-\frac{M }{2h_{0,N-2}}\Bigg(\epsilon\left(L_Nw\right)(y)L_{N-2}w(x)\\
&-\frac{\left<L_N,L_{N-2}\right>_1}
{\left<L_{N-1},L_{N-2}\right>_1}\left(\epsilon\left(L_{N-1}w\right)(y)L_{N-2}w(x)-
\epsilon\left(L_{N-2}w\right)(y)L_{N-1}w(x)\right)\Bigg)\\
&-\frac{M }{2h_{0,N-1}}\epsilon\left(L_{N+1}w\right)(y)L_{N-1}w(x)
+\frac{Mt-
(N+M)}{2h_{0,N-1}}\epsilon\left(L_Nw\right)(y)L_{N-1}w(x).
\end{split}
\end{equation}
\section{Derivative of the partition function}\label{se:Der}
In this section we will derive a formula for the derivative of
determinant of the matrix $\mathbb{D}$ given in (\ref{eq:M}). We
have the following.
\begin{proposition}\label{pro:derpar}
Let $\mathbb{D}$ be the matrix given by (\ref{eq:M}), where the
sequence of monic polynomials $r_j(x)$ in (\ref{eq:M}) is chosen
such that $r_j(x)$ are arbitrary degree $j$ monic polynomials that
are independent on $t$ and $r_j(x)=\pi_{j,1}(x)$ for $j=N-2,N-1$.
Then the logarithmic derivative of $\det\mathbb{D}$ with respect to
$t$ is given by
\begin{equation}\label{eq:derpar}
\frac{\p}{\p
t}\log\det\mathbb{D}=-\int_{\mathbb{R}_+}\frac{S_1(x,x)}{t- x}\D x,
\end{equation}
where $S_1(x,y)$ is the kernel given in (\ref{eq:kerS}).
\end{proposition}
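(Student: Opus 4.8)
The plan is to differentiate $\log\det\mathbb{D}$ directly by Jacobi's formula, $\frac{\p}{\p t}\log\det\mathbb{D}=\tr\!\left(\mathbb{D}^{-1}\tfrac{\p}{\p t}\mathbb{D}\right)=\sum_{j,k}\mu_{jk}\,\tfrac{\p}{\p t}\left<r_k,r_j\right>_1$, where $\mu=\mathbb{D}^{-1}$. The entries $\left<r_j,r_k\right>_1$ carry $t$ in two ways: through the factor $(t-x)^{-1/2}$ in $w(x)=e^{-\frac{M}{2}x}x^{\frac{M-N-1}{2}}(t-x)^{-1/2}$, and through $r_{N-2}=\pi_{N-2,1}$, $r_{N-1}=\pi_{N-1,1}$. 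Before computing I would record that differentiation under the integral sign is legitimate — the decay $e^{-\frac{M}{2}x}$ together with $M-N>0$ makes all the relevant double integrals absolutely convergent, locally uniformly in $t$ — and that the identity is read under the standing assumption $\det\mathbb{D}\neq0$, which is exactly what Theorem~\ref{thm:baik} needs for $S_1$ to exist; one also keeps $t$ off $[0,z]$ so the branch point of $(t-x)^{-1/2}$ is irrelevant, extending the final identity by analytic continuation.

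The first step is to show the $t$-dependence of the polynomials contributes nothing. Since each $r_j$ is monic, $\tfrac{\p}{\p t}r_j$ has degree at most $j-1$, so $\tfrac{\p}{\p t}r_j=\sum_l B_{jl}r_l$ with $B$ strictly lower triangular (and $B\equiv0$ in the rows $j<N-2$, where the $r_j$ are $t$-independent). Using $\mathbb{D}\mu=\mu\mathbb{D}=I$ one gets $\sum_{j,k}\mu_{jk}\left<\tfrac{\p}{\p t}r_k,r_j\right>_1=\tr B=0$, and likewise for the term carrying $\tfrac{\p}{\p t}r_j$. Equivalently, the change of basis from $\{x^j\}$ to $\{r_j\}$ is unipotent, so $\det\mathbb{D}=\det\!\left(\left<x^i,x^j\right>_1\right)$, which depends on $t$ only through $w$; and $S_1$ is insensitive to the choice of the $r_j$ with $j<N-2$. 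Either way, only $\p_t w$ survives.

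For the second step I would compute $\p_t\log w(x)=-\tfrac{1}{2(t-x)}$, so that $\p_t\!\left(w(x)w(y)\right)=\left(\p_t\log w(x)+\p_t\log w(y)\right)w(x)w(y)$, and substitute into $\p_t\left<r_j,r_k\right>_1$. The substitution $x\leftrightarrow y$ together with the oddness of $\epsilon$ gives $\left<f,g\right>_1=-\left<g,f\right>_1$, which I use to rewrite the ``$\p_t\log w(y)$'' half; this collapses the $w$-contribution to $\p_t\mathbb{D}_{jk}$ into $\tilde G_{jk}-\tilde G_{kj}$ with $\tilde G_{jk}=\left<(\p_t\log w)\,r_j,\,r_k\right>_1$. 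Feeding this into the trace and invoking the antisymmetry of $\mu$ a second time (so the two resulting sums reinforce rather than cancel) gives $\p_t\log\det\mathbb{D}=-2\sum_{j,k}\mu_{jk}\tilde G_{jk}$. Finally, unfolding $\tilde G_{jk}=\int_{\mathbb{R}_+}(\p_t\log w)(x)\,r_j(x)w(x)\,\epsilon(r_kw)(x)\,\D x$ and recognizing $\sum_{j,k}\mu_{jk}\,r_j(x)w(x)\,\epsilon(r_kw)(x)=-S_1(x,x)$ straight from~(\ref{eq:kerS}), I obtain the general differential identity $\p_t\log\det\mathbb{D}=2\int_{\mathbb{R}_+}S_1(x,x)\,\p_t\log w\,\D x$ of~(\ref{eq:DI}); putting $\p_t\log w=-\tfrac{1}{2(t-x)}$ gives exactly~(\ref{eq:derpar}).

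The only genuinely delicate point is the symmetrization in the third step: the two pieces coming from $\p_t w(x)$ and $\p_t w(y)$ must be reconciled through the antisymmetry of \emph{both} $\mathbb{D}$ and $\mu$, and the normalization constants (the $\tfrac12$'s in the skew pairing and in the operator $\epsilon$) must be tracked so that the prefactor comes out as in~(\ref{eq:DI}); I would also re-examine that all cross terms built from the arbitrary $r_j$, $j<N-2$, drop out, which they do precisely because $S_1$ and $\det\mathbb{D}$ are insensitive to that choice. None of this is deep — it is simply where a sign or a factor of two would most easily slip in.
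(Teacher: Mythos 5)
Your argument is correct and follows the same overall route as the paper: Jacobi's formula $\p_t\log\det\mathbb{D}=\tr(\mathbb{D}^{-1}\p_t\mathbb{D})$, isolation of the $\p_t w$ contribution, symmetrization via the antisymmetry of $\mathbb{D}$ and $\mu$, and identification of $\sum_{j,k}r_jw\,\mu_{jk}\,\epsilon(r_kw)$ with $-S_1(x,x)$. Where you deviate from the paper is in discarding the contribution of the $t$-dependent polynomials $r_{N-2}=\pi_{N-2,1}$, $r_{N-1}=\pi_{N-1,1}$: the paper shows directly that $\left<\p_t\pi_{N-2,1},\pi_{N-1,1}\right>_1+\left<\pi_{N-2,1},\p_t\pi_{N-1,1}\right>_1=0$ by invoking the skew-orthogonality conditions (\ref{eq:sop}) and the degree drop of $\p_t\pi$, and it needs the explicit block-diagonal structure of $\mu$ from (\ref{eq:mu}) to know that the cross terms cannot contribute. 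Your argument is cleaner and more general: since each $r_j$ is monic, $\p_t r_j=\sum_l B_{jl}r_l$ with $B$ strictly lower triangular, and $\sum_{j,k}\mu_{jk}\left<\p_t r_k,r_j\right>_1=\tr(\mu BD)=\tr B=0$ (and likewise the transposed piece) uses nothing but $\mu=\mathbb{D}^{-1}$. Equivalently, your unipotent change-of-basis observation $\det\mathbb{D}=\det\left(\left<x^i,x^j\right>_1\right)$ makes the basis insensitivity manifest from the start, without appealing to (\ref{eq:mu}) at all. This also lets you obtain the general identity (\ref{eq:DI}) for an arbitrary parameter dependence in $w$, which the paper states but only proves in the special case. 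One more small point in your favor: the paper's intermediate displays for $\p_tD_{i,N-1}$ silently lose a factor of $\tfrac12$ (the $-\tfrac12$ becomes $-1$ after the $\p_t\pi$ terms are discarded), which compensates for the double-counting of the $(N-2,N-1)$ block in its final double sum; the bookkeeping lands on (\ref{eq:derpar}) either way, but your route avoids having to untangle that. Your cautionary remarks about the $\tfrac12$'s in the skew product and the $\epsilon$ operator are well placed: the conventions in (\ref{eq:skewinner}), (\ref{eq:kerS}) and (\ref{eq:epsilon}) are not entirely uniform in the paper, and it is exactly tracking $\int_0^{\infty}\epsilon(x-y)r_kw\,\D y=\epsilon(r_kw)(x)$ with the $\epsilon$ used in (\ref{eq:kerS}) that fixes the prefactor to $-1$ rather than $-2$.
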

\begin{proof} First let us differentiate the determinant
$\det\mathbb{D}$ with respect to $t$. We have
\begin{equation*}
\begin{split}
\frac{\p}{\p
t}\det\mathbb{D}&=\det\begin{pmatrix}\p_tD_{00}&D_{01}&\cdots&D_{0,N-1}\\
\vdots&\vdots&\ddots&\vdots\\
\p_tD_{2n-1,0}&D_{2n-1,1}&\cdots&D_{N-1,N-1}\end{pmatrix}\\&+
\det\begin{pmatrix}D_{00}&\p_tD_{01}&\cdots&D_{0,N-1}\\
\vdots&\vdots&\ddots&\vdots\\
D_{2n-1,0}&\p_tD_{2n-1,1}&\cdots&D_{N-1,N-1}\end{pmatrix}\\
&+\cdots+
\det\begin{pmatrix}D_{00}&D_{01}&\cdots&\p_tD_{0,N-1}\\
\vdots&\vdots&\ddots&\vdots\\
D_{N-1,0}&D_{N-1,1}&\cdots&\p_tD_{N-1,N-1}\end{pmatrix}.
\end{split}
\end{equation*}
Computing the individual determinants using the Laplace formula, we
obtain
\begin{equation}\label{eq:logder}
\frac{\p}{\p
t}\det\mathbb{D}=\det\mathbb{D}\sum_{i,j=0}^{N-1}\p_tD_{ij}\mu_{ji}.
\end{equation}
As $r_j(x)$ are independent on $t$ for $j<N-2$, the derivative
$\p_tD_{ij}$ is given by
\begin{equation*}
\p_tD_{ij}=-\frac{1}{2}\left(\left<\frac{r_i}{t- x},r_j\right>_1+
\left<r_i,\frac{r_j}{t- y}\right>_1\right)=
-\frac{1}{2}\left(\left<\frac{r_i}{t- x},r_j\right>_1-
\left<\frac{r_j}{t- x},r_i\right>_1\right)
\end{equation*}
for $i,j<N-2$. For either $i$ or $j$ equal to $N-2$ or $N-1$, we
have
\begin{equation*}
\begin{split}
\p_tD_{i,N-1}&=\delta_{N-2,i}\Bigg(-\frac{1}{2}\left(\left<\frac{\pi_{N-2,1}}{t-
{x}},\pi_{N-1,1}\right>_1
-\left<\frac{\pi_{N-1,1}}{t- {x}},\pi_{N-2,1}\right>_1\right)\\
&+\left<\p_t\pi_{N-2,1},\pi_{N-1,1}\right>_1+\left<\pi_{N-2,1},\p_t\pi_{N-1,1}\right>_1
\Bigg).
\end{split}
\end{equation*}
Note that by orthogonality, the last two terms in the above
expression are zero as $\p_t\pi_{N-1,1}$ is of degree $N-2$ and
$\p_t\pi_{N-2,1}$ is of degree $N-3$. Applying the same argument to
$\p_tD_{i,N-2}$, we obtain
\begin{equation*}
\begin{split}
\p_tD_{i,N-1}&=-\delta_{N-2,i}\left(\left<\frac{\pi_{N-2,1}}{t-
{x}},\pi_{N-1,1}\right>_1
-\left<\frac{\pi_{N-1,1}}{t- {x}},\pi_{N-2,1}\right>_1\right)\\
\p_tD_{i,N-2}&=-\frac{\delta_{N-1,i}}{2}\left(\left<\frac{\pi_{N-1,1}}{t-
{x}},\pi_{N-2,1}\right>_1 -\left<\frac{\pi_{N-2,1}}{t-
{x}},\pi_{N-1,1}\right>_1\right).
\end{split}
\end{equation*}
As $\mathbb{D}$ is anti-symmetric, the derivatives $\p_tD_{N-1,i}$
and $\p_tD_{N-2,i}$ are given by $\p_tD_{N-1,i}=-\p_tD_{i,N-1}$ and
$\p_tD_{N-2,i}=-\p_tD_{i,N-2}$. From these and (\ref{eq:logder}), we
obtain
\begin{equation}\label{eq:de}
\begin{split}
\frac{\p}{\p
t}\det\mathbb{D}&=\det\mathbb{D}\Bigg(\sum_{i,j=0}^{N-3}\left<\frac{r_i}{t-
x},r_j\right>_1\mu_{ij}
\\&+2\left(\left<\frac{\pi_{N-2,1}}{t- {x}},\pi_{N-1,1}\right>_1
-\left<\frac{\pi_{N-1,1}}{t-
{x}},\pi_{N-2,1}\right>_1\right)\mu_{N-2,N-1}\Bigg),
\end{split}
\end{equation}
where we have used the anti-symmetry of $\mathbb{D}$ and $\mu$ to
obtain the last term. From the structure of the matrix $\mu$ in
(\ref{eq:mu}), we see that the last term in (\ref{eq:de}) can be
written as
\begin{equation*}
\begin{split}
&2\left(\left<\frac{\pi_{N-2,1}}{t- {x}},\pi_{N-1,1}\right>_1
-\left<\frac{\pi_{N-1,1}}{t- {x}},\pi_{N-2,1}\right>_1\right)\mu_{N-2,N-1}\Bigg)=\\
&\sum_{i=0}^{N-1}\sum_{j=N-2}^{N-1}\left<\frac{r_i}{t-
x},r_j\right>_1\mu_{ij}
+\sum_{i=N-2}^{N-1}\sum_{j=0}^{N-1}\left<\frac{r_i}{t-
x},r_j\right>_1\mu_{ij}
\end{split}
\end{equation*}
From this, (\ref{eq:de}) and the expression of the kernel in
(\ref{eq:kerS}), we obtain (\ref{eq:derpar}).
\end{proof}
\section{Asymptotic analysis}\label{se:asymskew}
We will now use the asymptotics of the Laguerre polynomials to
obtain asymptotic expression for the kernel $S_1(x,y)$. We will
demonstrate the asymptotic analysis with the assumption that
$M/N\rightarrow\gamma=1$. The same analysis also be applied for
general $\gamma$.

Let us recall the asymptotics of the Laguerre polynomials in
different regions of $\mathbb{R}_+$. These asymptotics can be found
in \cite{V}. When $\gamma=1$, the end points $b_-$ and $b_+$ of the
support of $\rho$ in (\ref{eq:MP}) are $0$ and $4$ respectively.

Let $n=N-j$, where $j=O(1)$. Let us define the function $\varphi(x)$
to be
\begin{equation}\label{eq:varphi}
\varphi(x)=\int_{4}^x\frac{1}{2}\sqrt{\frac{s-4}{s}}ds,
\end{equation}
where the contour of integration is chosen such that it does not
intersect the interval $(-\infty,4)$. We can now use $\varphi(x)$ to
define two maps $\tilde{f}_{n}$ and $f_{n}$ in the neighborhoods of
$0$ and $4$.
\begin{definition}\label{de:bihomap}
Let $\varphi_{+}(x)$ be the branch of $\varphi(x)$ in the upper half
plane and let $\delta>0$ be sufficiently small, then the maps
$\tilde{f}_{n}$ is defined in a disc of radius $\delta$ center at
$b_{-,n}$ as follows.
\begin{equation}\label{eq:ftilde}
\begin{split}
\tilde{f}_{n}&=\left(\frac{n}{2}(\varphi_{+}(x)-\varphi_{+}(0))\right)^2,\quad
\gamma=1.
\end{split}
\end{equation}
In the above expressions, the map $\tilde{f}_{n}$ is defined to be
the analytic continuation of the expression in the right hand side
to the whole disc.

Similarly, the map $f_{n}$ in a neighborhood of $4$ is defined to be
\begin{equation}\label{eq:fmap}
\begin{split}
f_{n}&=\left(\frac{3}{2}n\varphi_{+}(x)\right)^{\frac{2}{3}}
\end{split}
\end{equation}
As in \cite{V}, these maps are conformal inside the small discs
around $b_{\pm,n}$, provided $\delta$ is sufficiently small. In
particular, they behave as follows as near $b_{\pm,n}$. (Recall that
$b_{-,n}=0$ and $b_{+,n}=4$ when $\gamma=1$.)
\begin{equation}\label{eq:behf}
\begin{split}
\tilde{f}_{n}&=-n^2x\tilde{\hat{f}}_n(x),\quad
f_{n}=\left(\frac{n}{4}\right)^{\frac{2}{3}}(x-4)\hat{f}_n(x),
\end{split}
\end{equation}
where the maps $\tilde{\hat{f}}_n(x)$ and $\hat{f}_n(x)$ are of the
form
\begin{equation*}
\tilde{\hat{f}}_n(x)=1+O(x),\quad\hat{f}_n(x)=1+O(x-4).
\end{equation*}
Note that $\tilde{\hat{f}}_n(x)$ and $\hat{f}_n(x)$ are independent
on $n$.
\end{definition}
We can now state the asymptotics of the Laguerre polynomials. These
can be found, for example, in \cite{V}. When $M-N=\alpha=O(1)$, the
Laguerre polynomials have the following asymptotic expressions in
different regions on $\mathbb{R}_+$ (See \cite{V}).
\begin{proposition}\label{pro:V}
Suppose $M-N=\alpha=O(1)$, then for $n=N-j$ and $j=O(1)$, there
exists sufficiently small $\delta>0$ and the Laguerre polynomials
$L_{n}(x)$ in (\ref{eq:laguerre}) have the following asymptotic
behavior on $\mathbb{R}_+$ as $n,N,M\rightarrow\infty$.
\begin{enumerate}
\item
Uniformly for $x\in(0,\delta]$,
\begin{equation}\label{eq:bessel}
\begin{split}
&L_{n}\left(\frac{n}{N}x\right)\left(w_0\left(\frac{n}{N}x\right)\right)^{\frac{1}{2}}=\frac{2\sqrt{\pi}(-1)^{n}(-\tilde{f_{n}})^{\frac{1}{4}}\left(i\kappa_{n-1}\right)^{-\frac{1}{2}}}{x^{\frac{1}{4}}(4-x)^{\frac{1}{4}}}
\Bigg(\sin\zeta J_{\alpha}\left(2(-\tilde{f}_{n})^{\frac{1}{2}}\right)\\
&\times(1+O(N^{-1}))+\cos\zeta
J_{\alpha}^{\prime}\left(2(-\tilde{f}_{n})^{\frac{1}{2}}\right)(1+O(N^{-1}))\Bigg),
\end{split}
\end{equation}
where $J_{\alpha}$ is the Bessel function and $\zeta$ is the
following
\begin{equation}\label{eq:zeta}
\zeta=\frac{1}{2}(\alpha+1)\arccos\left(\frac{x}{2}-1\right)-\frac{\pi\alpha}{2}.
\end{equation}
The branch cut of $\arccos(x)$ in the above is chosen to be
$(-\infty,-1]\cup[1,\infty)$. The constant $\kappa_{n}$ is given by
$\kappa_{n}=-\frac{2\pi i}{h_{n,0}}$.
\item Uniformly for $x\in[\delta,1-\delta]$,
\begin{equation}\label{eq:bulk}
\begin{split}
&L_{n}\left(\frac{n}{N}x\right)\left(w_0\left(\frac{n}{N}x\right)\right)^{\frac{1}{2}}=\frac{2\left(i\kappa_{n-1}\right)^{-\frac{1}{2}}}{x^{\frac{1}{4}}(4-x)^{\frac{1}{4}}}
\Bigg(\cos\left(\eta_++in\varphi_{+}-\frac{\pi}{4}\right)\\
&\times\left(1+O\left(N^{-1}\right)\right)
+\cos\left(\eta_-+in\varphi_{+}-\frac{\pi}{4}\right)O(N^{-1})\Bigg)
\end{split}
\end{equation}
where $\eta_{\pm}$ are given by
\begin{equation}\label{eq:eta}
\eta_{\pm}=\frac{1}{2}(\alpha\pm1)\arccos\left(\frac{x}{2}-1\right)
\end{equation}
\item Uniformly for $x\in[4-\delta,4+\delta]$,
\begin{equation}\label{eq:airy}
\begin{split}
&L_{n}\left(\frac{n}{N}x\right)\left(w_0\left(\frac{n}{N}x\right)\right)^{\frac{1}{2}}=\frac{2\sqrt{\pi}(i\kappa_{n-1})^{-\frac{1}{2}}}{x^{\frac{1}{4}}\left|x-4\right|^{\frac{1}{4}}}\Bigg(
\left|f_{n}\right|^{\frac{1}{4}}\cos\eta_+(x)
Ai(f_{n})\\&\times\left(1+O(N^{-1})\right)
-\left|f_{n}\right|^{-\frac{1}{4}}\sin\eta_+(x)
Ai^{\prime}(f_{n})(1+O(N^{-1}))\Bigg),
\end{split}
\end{equation}
\end{enumerate}
\end{proposition}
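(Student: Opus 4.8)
The statement is classical and, as the excerpt indicates, is established in \cite{V} by the Deift--Zhou nonlinear steepest descent method; I therefore only outline the argument. Let $\mathcal{Y}(z)$ be the $2\times2$ matrix whose first row is $\left(L_n(z),\,C(L_nw_0)(z)\right)$ and whose second row is $\kappa_{n-1}\left(L_{n-1}(z),\,C(L_{n-1}w_0)(z)\right)$, with $C$ the Cauchy transform (\ref{eq:cauchy}) on $\mathbb{R}_+$ and $\kappa_{n-1}=-2\pi i/h_{n-1,0}$; by the Fokas--Its--Kitaev construction $\mathcal{Y}$ is the unique solution of the Riemann--Hilbert problem with jump $\begin{pmatrix}1&w_0\\0&1\end{pmatrix}$ on $\mathbb{R}_+$ and normalisation $\mathcal{Y}(z)=(I+O(z^{-1}))z^{n\sigma_3}$ at infinity, and $L_n$, $L_{n-1}$ are recovered from its $(1,1)$ and $(2,1)$ entries. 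First I would rescale the spectral variable by a factor asymptotic to $1$ (the factor $\tfrac{n}{N}$ appearing in the statement) so that the bulk of the zeros of $L_n$ occupies $[0,4]$; since $M-N=\alpha=O(1)$ and $n=N-j$ with $j=O(1)$, the associated external field tends to $V(x)=x$ and the equilibrium measure to the Marchenko--Pastur law (\ref{eq:MP}) with $\gamma=1$. Next comes the $g$-function transformation $\mathcal{Y}\mapsto U=e^{-n\ell\sigma_3/2}\,\mathcal{Y}\,e^{-n(g-\ell/2)\sigma_3}$, where $g(z)=\int\log(z-s)\,\D\rho(s)$ and $\ell$ is the Robin constant; by the Euler--Lagrange conditions this normalises the problem at infinity and turns the jump into one that is exponentially close to $I$ off $[0,4]$ and oscillatory on $(0,4)$, the oscillation being governed by the function $\varphi$ of (\ref{eq:varphi}).

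Second, I would open lenses: on $(0,4)$ the oscillatory jump factors as lower--diagonal--upper, and the two triangular factors are deformed onto contours in the upper and lower half-planes where they are exponentially small away from $[0,4]$, so that the resulting matrix $T$ has the constant jump $\begin{pmatrix}0&1\\-1&0\end{pmatrix}$ on $(0,4)$ together with exponentially small jumps elsewhere. Third, I would build the global parametrix $P^{(\infty)}$ solving the model problem with jump $\begin{pmatrix}0&1\\-1&0\end{pmatrix}$ on $(0,4)$ and $I$ at infinity; it is expressed through the Szeg\H{o} function of the weight $x^{\alpha}$ on $[0,4]$ and a Joukowski-type conformal map, and it is from $P^{(\infty)}$ that the cosines with phases $\eta_\pm$ and the prefactor $x^{-1/4}(4-x)^{-1/4}$ in (\ref{eq:bulk}) arise; evaluating $P^{(\infty)}$ near $x=0$ produces instead the trigonometric phase $\zeta$ of (\ref{eq:zeta}).

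Fourth comes the local analysis at the endpoints. Near the hard edge $x=0$ one constructs a parametrix out of $J_{\alpha}$, $J_{\alpha}'$ via the conformal map $\tilde{f}_n$ of Definition~\ref{de:bihomap}; matching it to $P^{(\infty)}$ on the boundary of a disc of fixed radius $\delta$ to relative order $O(n^{-1})$ yields, after unravelling $\mathcal{Y}\to U\to T$, precisely (\ref{eq:bessel})---the Bessel index $\alpha=M-N$ and the argument $2(-\tilde{f}_n)^{1/2}$ being dictated by the Bessel model problem. Near the soft edge $x=4$ one uses the classical Airy parametrix built from $Ai$, $Ai'$ via $f_n$, whose matching produces (\ref{eq:airy}). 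The explicit constants involving $\kappa_{n-1}$ and the norms $h_{n,0}$ of (\ref{eq:hn}) fall out of these matching computations together with Stirling's formula. Finally the standard small-norm argument: $R=T\bigl(P^{(\infty)},P^{\mathrm{Bessel}},P^{\mathrm{Airy}}\bigr)^{-1}$ solves a Riemann--Hilbert problem whose jump is $I+O(n^{-1})$ uniformly on its contour, hence $R=I+O(n^{-1})$, and tracing this estimate back through the transformations in each of the three regions $(0,\delta]$, $[\delta,1-\delta]$, $[4-\delta,4+\delta]$ gives the three asymptotic formulas with the stated uniform errors.

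The one point requiring genuine care, rather than routine bookkeeping, is the uniformity as $n=N-j$ and $M=N+\alpha$ vary together with $j,\alpha=O(1)$: one must check that the maps $\tilde{f}_n$ and $f_n$ are conformal on discs of a common radius $\delta$ independent of $n$, that the $g$-function and equilibrium measure depend continuously enough on the ratio $M/n$ (which tends to, but does not equal, $1$), and that the matching error on each endpoint disc is $O(n^{-1})$ with a constant independent of the shift $j$. These are exactly the estimates carried out in \cite{V}, and once they are in place the three expansions follow by reading off the global and local parametrices in the respective regions.
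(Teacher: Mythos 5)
Your outline is correct and is essentially the same route the paper takes: the paper simply cites Vanlessen~\cite{V} for these expansions, notes in a remark that the weight there is $x^{\alpha}e^{-4nx}$ so that a rescaling $x\mapsto \tfrac{n}{N}x$ is needed to bring the bulk to $[0,4]$, and otherwise gives no proof. Your sketch of the Fokas--Its--Kitaev Riemann--Hilbert problem, $g$-function normalisation, lens opening, Szeg\H{o}/Joukowski global parametrix, Bessel and Airy local parametrices, and the small-norm argument is exactly the content of \cite{V}, and your closing caveat about uniformity in $j,\alpha=O(1)$ (requiring conformality of $\tilde f_n$, $f_n$ on a common disc and continuity of the equilibrium problem in $M/n$) is the right point to flag.
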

\begin{remark} Reader may notice that the asymptotic formula
presented in Proposition \ref{pro:V} is different from the ones in
\cite{V}. This is because the weight in \cite{V} that is relevant to
us is $w_0(x)=x^{\alpha}e^{-4nx}$ and a rescaling of the variable
from $x$ to $y=\frac{4n}{N}x$ is needed to obtain the formula in
Proposition \ref{pro:V} from the results in \cite{V}.
\end{remark}
We will now use the asymptotic formulae in Proposition \ref{pro:V}
to compute the skew products of the form
$\left<L_{N-j},L_{N-k}\right>_1$. We shall follow the ideas in
\cite{DG}, \cite{DGKV} and \cite{St}.

\subsection{Asymptotics for the skew products}
Throughout the analysis, we will assume that $t$ is of finite
distance from $[0,4)$. We will also need to consider the case where
$|t- 4|$ is of order $N^{-\frac{2}{3}}$. We shall organize the
results according to the different regions on $\mathbb{R}_+$. We
will divide $\mathbb{R}_+$ into four regions $(0,N^{-\frac{1}{2}}]$,
$[N^{-\frac{1}{2}},4-N^{-\varepsilon}]$,
$[4-N^{-\varepsilon},4+N^{-\varepsilon}]$ and
$[4+N^{-\varepsilon},\infty)$ for some $\varepsilon>0$. These
regions are called the Bessel region, the bulk region, the Airy
region and the exponential region.
\subsubsection{The Bessel Region}
Let us define $\hat{L}_n(x)$ to be
\begin{equation}\label{eq:hatL}
\begin{split}
\hat{L}_n(x)&=L_n\left(\frac{n}{N}x\right)w\left(\frac{n}{N}x\right)\left(i\kappa_{n-1}\right)^{\frac{1}{2}}\\
&=L_n\left(\frac{n}{N}x\right)\left(w_0\left(\frac{n}{N}x\right)\right)^{\frac{1}{2}}\left(i\kappa_{n-1}\right)^{\frac{1}{2}}
\left(\frac{nx}{N}\left(t-
\frac{nx}{N}\right)\right)^{-\frac{1}{2}}.
\end{split}
\end{equation}
From (\ref{eq:zeta}), we see that as $x\rightarrow 0$, $\sin\zeta$
and $\cos\zeta$ have the following behavior.
\begin{equation}\label{eq:sincoszeta}
\begin{split}
\sin\zeta=1+O(x),\quad \cos\zeta=\frac{\alpha+1}{2}\sqrt{x}(1+O(x)).
\end{split}
\end{equation}
Let us now express the integrals in the Bessel region in terms of
the variable $\tilde{f}_n$. From (\ref{eq:ftilde}), we have
\begin{equation}\label{eq:xtilde}
\begin{split}
x&=-\frac{\tilde{f}_n}{n^2}\left(1-\frac{1}{12}\frac{\tilde{f}_n}{n^2}+O\left(\frac{\tilde{f}_n^2}{n^4}\right)\right),\\
\left(\frac{nx}{N}\left(t-
\frac{nx}{N}\right)\right)^{-\frac{1}{2}}&=
\left(\frac{-\tilde{f}_n}{tn^2}\right)^{-\frac{1}{2}}\left(1+O\left(\frac{\tilde{f}_n}{n^2}\right)+O(n^{-1})\right)
\end{split}
\end{equation}
As $x<N^{-\frac{1}{2}}$ in the Bessel region, we see that the term
$\tilde{f}_n^2/n^4$ is of order $O(n^{-1})$ at most. Then from the
asymptotic formula (\ref{eq:bessel}), we have the following
asymptotic formula for $\hat{L}_n(x)$ in the Bessel region.
\begin{equation}\label{eq:behatL}
\begin{split}
\hat{L}_n(x)&=\frac{(-1)^n\sqrt{2\pi}n^{\frac{3}{2}}}{t^{\frac{1}{2}}}\Bigg(
\frac{J_{\alpha}\left(2(-\tilde{f}_n)^{\frac{1}{2}}\right)}{(-\tilde{f}_n)^{\frac{1}{2}}}
\left(1+\frac{k_{1,0}}{n}+\sum_{j=1}^{\infty}\frac{k_{1,j}\tilde{f}_n^{j}}{n^{2j}}\right)\\
&+\frac{\alpha+1}{2n}J_{\alpha}^{\prime}\left(2(-\tilde{f}_n)^{\frac{1}{2}}\right)
\left(1+\frac{k_{2,0}}{n}+\sum_{j=1}^{\infty}\frac{k_{2,j}\tilde{f}_n^{j}}{n^{2j}}\right)
\Bigg)
\end{split}
\end{equation}
for some constants $k_{1,j}$ and $k_{2,j}$ that are bounded in $n$.
By using the asymptotic formula for the Bessel function in
\cite{AbSt}, we obtain the following.
\begin{lemma}\label{le:besint}
Let $k\geq0$ $s>v\geq0$ and $\alpha>0$, then as
$s\rightarrow\infty$, we have
\begin{equation}\label{eq:besint}
\begin{split}
\int_v^{s}J_{\alpha}(u)u^k\D
u=O(1)+O\left(s^{k-\frac{1}{2}}\right),\quad
\int_v^sJ_{\alpha}^{\prime}(u)u^k\D
u=O(1)+O\left(s^{k-\frac{1}{2}}\right).
\end{split}
\end{equation}
Let $s_1=O\left(s\right)$ as $s\rightarrow\infty$, and let
$B_1=J_{\alpha}$, $B_2=J^{\prime}_{\alpha}$, then as
$s\rightarrow\infty$, we have the following for the double
integrals.
\begin{equation}\label{eq:besdou}
\int_{0}^{s}B_i(u)u^k\int_u^{s_1}B_j(v)v^l\D v\D
u=O\left(s^{k+l}\right),\quad i,j=1,2,\quad k+l>0.
\end{equation}
\end{lemma}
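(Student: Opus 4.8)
The plan is to reduce everything to the elementary estimate $\int_{1}^{s}u^{\beta}e^{iu}\,du=O(1)+O(s^{\beta})$, valid for every real $\beta$ (one integration by parts when $\beta>0$, iterated at most once more for $\beta\in[-1,0)$, the integral being absolutely convergent once $\beta<-1$), combined with the classical large-argument expansion of Bessel functions \cite{AbSt}. Writing $\omega=u-\tfrac{\alpha\pi}{2}-\tfrac{\pi}{4}$, for every fixed $N$ one has, uniformly on $[1,\infty)$,
\begin{equation*}
J_{\alpha}(u)=\sum_{j=0}^{N}u^{-1/2-j}\big(p_{j}\cos\omega+q_{j}\sin\omega\big)+O(u^{-N-3/2}),
\end{equation*}
and since $J_{\alpha}'=\tfrac12\big(J_{\alpha-1}-J_{\alpha+1}\big)$ (equivalently, the asymptotic series may be differentiated term by term), $J_{\alpha}'$ has an expansion of exactly the same shape. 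In what follows $B$ denotes either $J_{\alpha}$ or $J_{\alpha}'$.

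For the single integrals (\ref{eq:besint}), split $\int_{v}^{s}=\int_{v}^{1}+\int_{1}^{s}$, dropping the first piece when $v\ge1$. On the part of $[v,s]$ inside $[0,1]$ the integrand $B(u)u^{k}$ is bounded and integrable (for $\alpha>0$ we have $J_{\alpha}(u)=O(u^{\alpha})$, $J_{\alpha}'(u)=O(u^{\alpha-1})$ as $u\to0^{+}$, and $k\ge0$), so that piece is $O(1)$, uniformly in $v$. On $[\max(v,1),s]$ insert the expansion: up to constants each term contributes $\int u^{k-1/2-j}e^{\pm iu}\,du=O(1)+O(s^{k-1/2-j})$, while the truncation error is absolutely integrable once $N$ is large. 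As $k-1/2-j\le k-1/2$, every contribution is $O(1)+O(s^{k-1/2})$, giving $\int_{v}^{s}B(u)u^{k}\,du=O(1)+O(s^{k-1/2})$ uniformly in $v\in[0,s]$; the uniformity in the lower endpoint is exactly what is needed inside the double integral.

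For the double integral (\ref{eq:besdou}), the crude bound $|B(u)|\le Cu^{-1/2}$ loses a power of $s$, so I use the oscillation via one integration by parts in the outer variable. Set $G(u)=\int_{0}^{u}B_{i}(w)w^{k}\,dw=O(1)+O(u^{k-1/2})$ and $I(u)=\int_{u}^{s_{1}}B_{j}(v)v^{l}\,dv$, so that $G(0)=0$, $G'(u)=B_{i}(u)u^{k}$, $I'(u)=-B_{j}(u)u^{l}$, and, by the previous paragraph together with $s_{1}=O(s)$, $I(s)=O(1)+O(s^{l-1/2})$. Since the boundary term at $0$ vanishes, integration by parts gives
\begin{equation*}
\int_{0}^{s}B_{i}(u)u^{k}I(u)\,du=G(s)I(s)+\int_{0}^{s}G(u)B_{j}(u)u^{l}\,du.
\end{equation*}
The boundary term $G(s)I(s)=\big(O(1)+O(s^{k-1/2})\big)\big(O(1)+O(s^{l-1/2})\big)=O(s^{k+l})$, where $k+l>0$ and $k,l\ge0$ absorb all the lower-order products. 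For the remaining integral, $|G(u)B_{j}(u)u^{l}|\le C(1+u^{k-1/2})u^{l-1/2}$ for $u\ge1$ and the integrand is integrable on $[0,1]$, so it is $O(1)+O\!\big(\int_{1}^{s}u^{k+l-1}\,du\big)=O(s^{k+l})$, again using $k+l>0$. Combining the two pieces yields (\ref{eq:besdou}).

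There is no serious obstacle; the work is bookkeeping, and the two things to watch are (i) keeping the single-integral estimate uniform in the lower limit $v$, since the double integral applies it with $v$ ranging over $[0,s]$, and (ii) handling the double integral by the outer integration by parts rather than by absolute values, which would be off by a factor of $s$. The only slightly delicate exponent is $\beta=-1$ (the half-integer values of $k$), which costs one extra integration by parts; for the integer values of $k$ occurring in the Bessel-region analysis this case does not arise at all.
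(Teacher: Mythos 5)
Your proof of the single-integral estimates~(\ref{eq:besint}) takes a genuinely different and self-contained route from the paper. The paper integrates by parts against the primitive $\int_u^s J_\alpha(v)\,\D v$, and then quotes (9.2.1) and (11.4.17) of \cite{AbSt} to obtain $\int_u^s J_\alpha(v)\,\D v = O(u^{-1/2})$ uniformly; the estimate for $J_\alpha'$ is handled by one further integration by parts together with the bounds~(\ref{eq:estbes}). You instead insert the large-argument asymptotic expansion of the Bessel functions and reduce everything to the elementary oscillatory model $\int_1^s u^\beta e^{iu}\,\D u = O(1)+O(s^\beta)$. Both give the same result with the same uniformity in the lower limit; your version has the advantage of not depending on the precise form of (11.4.17) and the (minor) disadvantage of breaking down at half-integer $k$, which you flag correctly and which never occurs in the application. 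The outer integration by parts for the double integral, which the paper dismisses as ``immediate,'' is also a reasonable and more explicit move.

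However, there is a genuine gap in your estimate of the remaining integral $\int_0^s G(u)B_j(u)u^l\,\D u$ when $k=0$. You bound the integrand in absolute value by $C(1+u^{k-1/2})u^{l-1/2}$ and claim this integrates to $O(1)+O(s^{k+l})$; but $(1+u^{k-1/2})u^{l-1/2}=u^{l-1/2}+u^{k+l-1}$, and the first piece gives $\int_1^s u^{l-1/2}\,\D u=O(s^{l+1/2})$, which is \emph{not} $O(s^{k+l})$ when $k=0$. Concretely, for $(k,l)=(0,1)$ — which does occur in the proof of Proposition~\ref{pro:besin}, as the product of the leading $J_\alpha(u)$ factor of $\hat L_n$ with the order-$1/m$ correction $\tfrac{\alpha+1}{4m}J'_\alpha(v)v$ — your bound yields $O(s^{3/2})$ in place of the required $O(s)$, and after multiplying by the prefactors this would give an error of order $N^{-7/8}$ rather than the needed $N^{-5/4}$ in~(\ref{eq:doubleB}). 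The problem is that when $k=0$ the function $G(u)$ does not decay; it tends to the finite limit $G(\infty)=\int_0^\infty B_i$, and you cannot afford to throw away the oscillation in the constant piece $G(\infty)B_j(u)u^l$. The fix is to write $G(u)=G(\infty)+r(u)$ with $r(u)=-\int_u^\infty B_i=O(u^{-1/2})$, apply~(\ref{eq:besint}) directly to $G(\infty)\int_0^s B_j(u)u^l\,\D u=O(1)+O(s^{l-1/2})$, and only use absolute values on the piece $\int_0^s r(u)B_j(u)u^l\,\D u=O(1)+O\big(\int_1^s u^{l-1}\,\D u\big)=O(s^l)$; together with your boundary term $G(s)I(s)=O(s^{k+l})$, this closes the $k=0$ case. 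For $k\ge1$ your absolute-value bound already works as written, since $l+\tfrac12\le k+l$ there.
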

\begin{proof} Integrating the first formula by parts, we obtain
\begin{equation*}
\int_0^{s}J_{\alpha}(u)u^k\D u=\left(-\int_u^sJ_{\alpha}(v)\D
vu^k\right)_v^{s}+k\int_v^s\left(\int_u^sJ_{\alpha}(v)\D
v\right)u^{k-1}\D u
\end{equation*}
By (9.2.1) and (11.4.17) of \cite{AbSt}, we see that
$\int_u^{s}J_{\alpha}(v)\D v=O(s^{-\frac{1}{2}})$ as $u\rightarrow
s$ and $s\rightarrow\infty$. The function $\int_u^{s}J_{\alpha}(v)\D
v$ is therefore bounded and behaves as $O(u^{-\frac{1}{2}})$ as
$u\rightarrow s$. Therefore we have
\begin{equation*}
\int_0^{s}J_{\alpha}(u)u^k\D
u=k\int_v^s\left(\int_u^sJ_{\alpha}(v)\D v\right)u^{k-1}\D
u+O(1)+O(s^{k-\frac{1}{2}})=O(1)+O(s^{k-\frac{1}{2}}).
\end{equation*}
This proves the first equation in (\ref{eq:besint}). Integrating the
second equation by parts and using $J_{\alpha}(0)=0$ for $\alpha>0$,
we obtain
\begin{equation*}
\int_v^sJ_{\alpha}^{\prime}(u)u^k\D
u=J_{\alpha}(s)s^k-J_{\alpha}(v)v^k-k\int_{v}^sJ_{\alpha}(u)u^{k-1}\D
u.
\end{equation*}
By using the following estimates for any $C>0$,
\begin{equation}\label{eq:estbes}
\begin{split}
\sup_{y\in[0,C)}|y^{-\alpha}J_{\alpha}(y)|<\infty,\quad
\sup_{y\in[C,\infty)}|\sqrt{y}J_{\alpha}(y)|<\infty,\\
\sup_{y\in[0,C)}|y^{-\alpha+1}J_{\alpha}^{\prime}(y)|<\infty,\quad
\sup_{y\in[C,\infty)}|\sqrt{y}J_{\alpha}^{\prime}(y)|<\infty,
\end{split}
\end{equation}
we obtain the second equation in (\ref{eq:besint}). The estimate
(\ref{eq:besdou}) now follows immediately from (\ref{eq:besint}) and
(\ref{eq:estbes}).
\end{proof}
We can now compute the integrals in the skew products in the Bessel
region.
\begin{proposition}\label{pro:besin}
The single integral involving $L_n(x)w(x)$ in the Bessel region is
given by
\begin{equation}\label{eq:singleB}
\begin{split}
\left(i\kappa_{n-1}\right)^{\frac{1}{2}}\int_{0}^{N^{-\frac{1}{2}}}L_n(x)w(x)\D
x&=\mathcal{I}_{1,n}=\frac{(-1)^n\sqrt{2\pi}}{\sqrt{nt}}+O(n^{-\frac{7}{8}}).
\end{split}
\end{equation}
The double integral in the Bessel region is given by
\begin{equation}\label{eq:doubleB}
\begin{split}
&i\sqrt{\kappa_{n-1}\kappa_{m-1}}\int_0^{N^{-\frac{1}{2}}}L_n(x)w(x)\int_x^{N^{-\frac{1}{2}}}L_m(y)w(y)\D
x\D
y\\
&=\mathcal{J}_{1}=\frac{(-1)^{m+n}2\pi}{nt}+O\left(n^{-\frac{5}{4}}\right).
\end{split}
\end{equation}
\end{proposition}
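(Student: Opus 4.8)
\emph{Proof strategy.} The plan is to pass from the physical integration variable to the variable in which the Laguerre asymptotics of Proposition \ref{pro:V} are uniform, insert the expansion (\ref{eq:behatL}), and then change variables so that each integral becomes a finite combination of integrals of $J_\alpha(s)s^k$ and $J_\alpha'(s)s^k$ over an interval $[0,S_n]$ whose endpoint grows like $N^{3/4}$, to which Lemma \ref{le:besint} applies. Concretely, I would first substitute $x\mapsto\tfrac{n}{N}x$ (and $y\mapsto\tfrac{m}{N}y$ in the double integral) so that the integrand is written through $\hat L_n$ of (\ref{eq:hatL}); since $n/N=1+O(N^{-1})$ this merely rescales the integrand and shifts the endpoint $N^{-1/2}$ by a bounded factor, keeping it deep inside the Bessel region at the cost of an $O(N^{-1})$ relative error. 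Then, using (\ref{eq:xtilde}), I would change the variable to $\tau=-\tilde f_n$, so that $x=\tfrac{\tau}{n^2}(1+O(\tau/n^2))$, $\D x=\tfrac{\D\tau}{n^2}(1+O(\tau/n^2))$, and the endpoint becomes $\tau=T_n\sim n^2N^{-1/2}\sim N^{3/2}$; finally $s=2\sqrt\tau$ turns $\tfrac{J_\alpha(2\sqrt\tau)}{\sqrt\tau}\tau^j\,\D\tau$ and $J_\alpha'(2\sqrt\tau)\tau^j\,\D\tau$ into constants times $J_\alpha(s)s^{2j}\,\D s$ and $J_\alpha'(s)s^{2j+1}\,\D s$ respectively, the range being $[0,S_n]$ with $S_n=2\sqrt{T_n}\sim N^{3/4}$.

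Applying this to the leading term $\tfrac{(-1)^n\sqrt{2\pi}\,n^{3/2}}{t^{1/2}}\tfrac{J_\alpha(2\sqrt\tau)}{\sqrt\tau}$ of (\ref{eq:behatL}) gives $\tfrac{(-1)^n\sqrt{2\pi}}{t^{1/2}\sqrt n}\int_0^{S_n}J_\alpha(s)\,\D s$, and combining $\int_0^\infty J_\alpha(s)\,\D s=1$ with the tail bound $\int_{S_n}^\infty J_\alpha(s)\,\D s=O(S_n^{-1/2})$ (which follows from the Bessel asymptotics of \cite{AbSt} used in the proof of Lemma \ref{le:besint}) yields the main term $\tfrac{(-1)^n\sqrt{2\pi}}{\sqrt{nt}}$ of (\ref{eq:singleB}) with error $O(n^{-1/2}S_n^{-1/2})=O(n^{-7/8})$. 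For the remaining pieces of (\ref{eq:behatL}) — the correction $k_{1,0}/n$, the series $\sum_{j\geq1}k_{1,j}\tilde f_n^{\,j}/n^{2j}$, the $J_\alpha'$ term with prefactor $(\alpha+1)/(2n)$ together with its series, and the $O(N^{-1})$ errors of (\ref{eq:bessel}) — the same substitutions reduce each to an integral governed by Lemma \ref{le:besint} (the oscillatory series terms by (\ref{eq:besint}), the $O(N^{-1})$ error by the sup bounds (\ref{eq:estbes})), producing a contribution of the form $n^{-a}O(S_n^{k\pm1/2})$; tracking the exponents — $n^{3/2}$ from the prefactor, $n^{-2}$ from $\D x$, $n^{-2j}$ from the series — shows every such term is $O(n^{-7/8})$ or smaller, the $j$-th series term carrying an extra factor $n^{-j/2}$. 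This gives (\ref{eq:singleB}).

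For the double integral I would do the $y$-integration first: after the substitutions the inner integral becomes $\tfrac{(-1)^m\sqrt{2\pi}}{t^{1/2}\sqrt m}\bigl(F_\alpha(2\sqrt{-\tilde f_m(x)})-F_\alpha(S_m)\bigr)$ plus lower-order terms, where $F_\alpha(s)=\int_s^\infty J_\alpha(u)\,\D u$. Since $-\tilde f_m(x)=(1+O(N^{-1}))(-\tilde f_n(x))$ throughout the Bessel region and $F_\alpha(S_m)=O(S_m^{-1/2})$, the outer integration in $s=2\sqrt{-\tilde f_n(x)}$ reduces to $\int_0^{S_n}J_\alpha(s)F_\alpha(s)\,\D s$, which tends to $\tfrac12 F_\alpha(0)^2=\tfrac12$ because $F_\alpha'=-J_\alpha$; together with the prefactors from (\ref{eq:behatL}) and the relation $1/\sqrt{nm}=1/n+O(n^{-2})$ this reproduces the leading term of (\ref{eq:doubleB}). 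The genuinely double error contributions are bounded using (\ref{eq:besdou}), and the constant-in-$x$ truncation term $-F_\alpha(S_m)$, once multiplied by the single integral $\mathcal I_{1,n}=O(n^{-1/2})$, contributes only $O(n^{-11/8})$; combining all of these with the single-variable errors above yields the remainder $O(n^{-5/4})$.

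The main obstacle is the bookkeeping rather than any conceptual difficulty. One has to carry the Jacobian corrections $1+O(\tau/n^2)$ through every term of the two series in (\ref{eq:behatL}) — each inserting an extra power of $\tau$, hence of $s$, under the integral — match the resulting growth rates $S_n^{k\pm1/2}$ against the accumulated negative powers of $n$, and confirm that the largest surviving term lands exactly on $n^{-7/8}$ (respectively $n^{-5/4}$) and no larger. For the double integral the delicate point is the nested structure: the partially evaluated inner integral $F_\alpha(2\sqrt{-\tilde f_m(x)})$ depends on $x$ and must be re-expanded — using $-\tilde f_m\approx-\tilde f_n$ and the behaviour of $F_\alpha$ for small argument near $x=0$ — before the outer $s$-substitution can be carried out, and one must keep the error from replacing $S_m$ by $+\infty$ inside $F_\alpha$ under control.
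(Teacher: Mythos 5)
Your proposal follows essentially the same route as the paper's proof: rescale to the $\hat L_n$ variable via (\ref{eq:hatL}), insert the Bessel expansion (\ref{eq:behatL}), change variables to $u=2\sqrt{-\tilde f_n}$ so the range is $[0,u_+]$ with $u_+=O(n^{3/4})$, invoke Lemma \ref{le:besint} for the correction series, and use $\int_0^\infty J_\alpha=1$ together with the tail bound (\ref{eq:jinf}) to close the single integral. For the double integral you reduce to $\int_0^{u_+}J_\alpha(u)\int_{\nu(u)}^{v_+}J_\alpha\,\D v\,\D u$ and extract the leading $\tfrac12$ by viewing the inner integral as $F_\alpha$ and integrating by parts against $F_\alpha'=-J_\alpha$; the paper does the identical thing phrased as $\nu'(u)J_\alpha(\nu(u))=-\tfrac{\D}{\D u}\int_{\nu(u)}^{v_+}J_\alpha$ plus a mean--value correction for $\nu'(u)J_\alpha(\nu(u))-J_\alpha(u)$ — your ``re-expansion near $x=0$'' step is precisely this correction, and it must be controlled with the same $J_\alpha'(\xi_u)(\nu(u)-u)$ estimate, so your acknowledged bookkeeping burden is exactly the one the paper carries out explicitly.
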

\begin{proof}Let us first prove the single integral. We have, by
(\ref{eq:hatL}),
\begin{equation*}
\left(i\kappa_{n-1}\right)^{\frac{1}{2}}\int_{0}^{N^{-\frac{1}{2}}}L_n(x)w(x)\D
x=\int_0^{N^{-\frac{1}{2}}}\hat{L}_n\left(\frac{N}{n}x\right)\D
x=\frac{n}{N}\int_0^{\frac{\sqrt{N}}{n}}\hat{L}_n(x)\D x.
\end{equation*}
From (\ref{eq:behatL}), we have
\begin{equation*}
\begin{split}
\int_0^{\frac{\sqrt{N}}{n}}\hat{L}_n(x)\D x=
\frac{(-1)^n\sqrt{2\pi}}{\sqrt{tn}}
\Bigg(\int_0^{u_+}\Bigg(J_{\alpha}(u)\left(1+\frac{k_{1,0}}{n}+\sum_{j=1}^{\infty}\frac{(-1)^jk_{1,j}u^{2j}}{(2n)^{2j}}\right)\\
+\frac{\alpha+1}{4}J_{\alpha}^{\prime}(u)\left(\frac{u}{n}+\frac{k_{2,0}u}{n^2}+\sum_{j=1}^{\infty}\frac{(-1)^jk_{2,j}u^{2j+1}}{2^{2j}n^{2j+1}}\right)\Bigg)
\D u\Bigg),
\end{split}
\end{equation*}
where we have changed the integration variable from $x$ to
$u=2(-\tilde{f}_n)^{\frac{1}{2}}$ and $u_+$ is the corresponding
upper limit
$u_+=2\sqrt{-\tilde{f}_n\left(\frac{\sqrt{N}}{n}\right)}=O\left(n^{\frac{3}{4}}\right)$.
By Lemma \ref{le:besint}, we have
\begin{equation*}
\begin{split}
\int_0^{\frac{\sqrt{N}}{n}}\hat{L}_n(x)\D x&=
\frac{(-1)^n\sqrt{2\pi}}{\sqrt{tn}} \Bigg(
\int_{0}^{u_+}J_{\alpha}(u)\D u+O\left(n^{-\frac{5}{8}}\right)\Bigg)
\end{split}
\end{equation*}
By using the asymptotic formula for the Bessel function
(\cite{AbSt}, (9.2.5), (9.2.9) and (9.2.10)), we have obtain the
following estimate
\begin{equation}\label{eq:jinf}
\int_{u_+}^{\infty}J_{\alpha}(u)\D
u=O\left(u_+^{-\frac{1}{2}}\right)
\end{equation}
By using this and the fact that $\int_{0}^{\infty}J_{\alpha}(u)\D
u=1$, we arrive at (\ref{eq:singleB}).

Let us now compute the double integral. Let $n=N-j$ and $m=N-k$,
where $j$ and $k$ are finite. As in the single integrals, we will
change the integration variables into $u=2\sqrt{-\tilde{f}_n}$ and
$v=2\sqrt{-\tilde{f}_m}$. Let
$v_+=2\sqrt{-\tilde{f}_m(\sqrt{N}/n)}=O\left(m^{\frac{3}{4}}\right)$
be the upper limit in the variable $v$ and $\nu(u)$ be the value of
$v$ when $y=\frac{n}{m}x$. Then by using (\ref{eq:xtilde}), we have
\begin{equation}\label{eq:nubes}
\begin{split}
\nu(u)=\sqrt{\frac{m}{n}}u\left(1+\frac{u^2}{24m}\frac{n-m}{n^2}+O\left(\frac{u^3}{n^4}\right)\right)
\end{split}
\end{equation}
Now the double integral is given by
\begin{equation*}
i\sqrt{\kappa_{n-1}\kappa_{m-1}}\int_0^{N^{-\frac{1}{2}}}L_n(x)w(x)\int_x^{N^{-\frac{1}{2}}}L_m(y)w(y)\D
x\D
y=\frac{mn}{N^2}\int_0^{\frac{\sqrt{N}}{n}}\hat{L}_n(x)\int_{\frac{n}{m}x}^{\frac{\sqrt{N}}{m}}\hat{L}_m(y)\D
y\D x.
\end{equation*}
By using Lemma \ref{le:besint} and (\ref{eq:estbes}). We see that it
is of the order
\begin{equation*}
\begin{split}
&\int_0^{\frac{\sqrt{N}}{n}}\hat{L}_n(x)\int_{\frac{n}{m}x}^{\frac{\sqrt{N}}{m}}\hat{L}_m(y)\D
y\D x=\frac{(-1)^{m+n}2\pi
}{\sqrt{nm}t}\int_0^{u_+}J_{\alpha}(u)\int_{\nu(u)}^{v_+}J_{\alpha}(v)\D
v\D u+O\left(N^{-\frac{5}{4}}\right).
\end{split}
\end{equation*}
To compute the first term, note that, since
\begin{equation*}
\nu^{\prime}(u)J_{\alpha}(\nu(u))=-\frac{d}{du}\left(\int_{\nu(u)}^{v_+}J_{\alpha}(v)\D
v\right),
\end{equation*}
we have, by (\ref{eq:nubes}) and mean value theorem, the following
\begin{equation*}
\begin{split}
\left(1+O(n^{-1})\right)\left(
J_{\alpha}(u)+J_{\alpha}^{\prime}(\xi_u)\left(\frac{(m-n)u}{2n}+O\left(n^{-\frac{3}{4}}\right)\right)\right)
=-\frac{d}{du}\left(\int_{\nu(u)}^{v_+}J_{\alpha}(v)\D v\right),
\end{split}
\end{equation*}
where $\xi_u$ is between $u$ and $\nu(u)$. As $J_{\alpha}(u)$ is
bounded, we have
\begin{equation*}
\begin{split}
&\int_0^{u_+}J_{\alpha}(u)\int_{\nu(u)}^{v_+}J_{\alpha}(v)\D v\D
u=\frac{1}{2}\left(\left(\int_{0}^{v_+}J_{\alpha}(v)\D
v\right)^2-\left(\int_{\nu(u_+)}^{v_+}J_{\alpha}(v)\D
v\right)^2\right)\\
&-\int_0^{u_+}\left(J_{\alpha}^{\prime}(\xi_u)\left(\frac{(m-n)u}{2n}+O\left(n^{-\frac{3}{4}}\right)\right)+O\left(n^{-1}\right)
\right)\int_{\nu(u)}^{v_+}J_{\alpha}(v)\D v\D u.
\end{split}
\end{equation*}
By using Lemma \ref{le:besint} and (\ref{eq:jinf}), we obtain
\begin{equation*}
\begin{split}
&\int_{\nu(u_+)}^{v_+}J_{\alpha}(v)\D
v=O\left(n^{-\frac{3}{8}}\right),\quad \int_{0}^{v_+}J_{\alpha}(v)\D
v=1+O\left(n^{-\frac{3}{8}}\right),\\
&\int_0^{u_+}\left(J_{\alpha}^{\prime}(\xi_u)\left(\frac{(m-n)u}{2n}+O\left(n^{-\frac{3}{4}}\right)\right)+O\left(n^{-1}\right)
\right)\int_{\nu(u)}^{v_+}J_{\alpha}(v)\D v\D
u=O\left(n^{-\frac{1}{4}}\right).
\end{split}
\end{equation*}
This gives us
\begin{equation*}
\begin{split}
&\int_0^{u_+}J_{\alpha}(u)\int_{\nu(u)}^{v_+}J_{\alpha}(v)\D v\D
u=\frac{1}{2}+O\left(n^{-\frac{1}{4}}\right).
\end{split}
\end{equation*}
This proves the proposition.
\end{proof}
This concludes the analysis in the Bessel region, we will now
consider the integrals in the bulk region.
\subsubsection{The Bulk region}
First recall the following matching formula in the region
$\left[\frac{\sqrt{N}}{2n},\delta\right]$ from \cite{DGKV}.
\begin{lemma}\label{le:match}(Proposition 4.4 of \cite{DGKV})
\begin{enumerate}
\item Uniformly for $x\in\left[\frac{\sqrt{N}}{2n},\delta\right]$, as
$n\rightarrow\infty$,
\begin{equation}\label{eq:match}
\begin{split}
&\left(-\tilde{f}_n\right)^{\frac{1}{4}}\left(\sin\zeta
J_{\alpha}\left(2\left(-\tilde{f}_n\right)^{\frac{1}{2}}\right)
+\cos\zeta
J_{\alpha}^{\prime}\left(2\left(-\tilde{f}_n\right)^{\frac{1}{2}}\right)\right)\\
&=\frac{(-1)^n}{\sqrt{\pi}}\left(\cos F_n(x)+\iota_n\sin
F_n(x)\right)+O(n^{-1}),
\end{split}
\end{equation}
where $\iota_n=\frac{4\alpha^2-1}{16(-\tilde{f}_n)^{\frac{1}{2}}}$
and $F_n=\eta_++in\varphi_{+}-\frac{\pi}{4}$.
\item Uniformly for
$x\in\left[4-\varepsilon,\frac{N}{n}c_-\right]$, as
$n\rightarrow\infty$, we have
\begin{equation}\label{eq:matchairy}
\begin{split}
\cos\eta_+|f_n|^{\frac{1}{4}}Ai(f_n)-\sin\eta_+|f_n|^{-\frac{1}{4}}Ai^{\prime}(f_n)
= \frac{1}{\sqrt{\pi}}\cos
F_n+O\left(\frac{1}{n(4-x)^{\frac{3}{2}}}\right).
\end{split}
\end{equation}
\end{enumerate}
\end{lemma}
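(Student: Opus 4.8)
The plan is to establish both formulae as \emph{parametrix matching} identities: each one asserts that, on the overlap of two adjacent regions, the local model of Proposition~\ref{pro:V} (Bessel near $0$, Airy near $4$) reproduces the oscillatory bulk formula of Proposition~\ref{pro:V}, and I will verify this directly by inserting the classical large-argument expansions of $J_\alpha,J_\alpha'$ and of $Ai,Ai'$ from \cite{AbSt} and collapsing the resulting trigonometric sums, in the spirit of \cite{DG}, \cite{DGKV}, \cite{St}. The two displayed right-hand sides already isolate the branches and constants that must conspire, so the proof is essentially a book-keeping of phases.

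For the first formula I begin by noting that $z:=2(-\tilde f_n)^{1/2}$ is large and tends to $\infty$ uniformly on $[\sqrt N/(2n),\delta]$: by (\ref{eq:behf}) one has $-\tilde f_n=n^2 x\,\tilde{\hat f}_n(x)$ with $\tilde{\hat f}_n(x)=1+O(x)$, so $z$ is comparable to $n\sqrt x$ and is bounded below by a positive power of $N$ throughout the interval. I then substitute the Hankel expansions $J_\alpha(z)=\sqrt{2/(\pi z)}\bigl(\cos\omega_z-\tfrac{4\alpha^2-1}{8z}\sin\omega_z+O(z^{-2})\bigr)$, $\omega_z=z-\tfrac{\alpha\pi}{2}-\tfrac\pi4$, and the companion expansion for $J_\alpha'$, from (9.2.5), (9.2.9)--(9.2.10) of \cite{AbSt}. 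The algebraic inputs are: (i) the elementary identity $\zeta=\eta_+-\tfrac{\alpha\pi}{2}$, immediate from (\ref{eq:zeta}) and (\ref{eq:eta}); and (ii) the branch relation $z=2(-\tilde f_n)^{1/2}=in\bigl(\varphi_+(0)-\varphi_+(x)\bigr)$ read off from (\ref{eq:ftilde}), together with the explicit value $\varphi_+(0)=-\tfrac i2\int_0^4\sqrt{(4-s)/s}\,\D s=-i\pi$ coming from (\ref{eq:varphi}) (recall $\varphi_+$ is purely imaginary on $(0,4)$), so that $in\varphi_+(x)=n\pi-z$. Using (i) and the addition formula, the leading terms of $\sin\zeta\,J_\alpha(z)+\cos\zeta\,J_\alpha'(z)$ collapse to $\sqrt{2/(\pi z)}\,\sin(\eta_+-z+\tfrac\pi4)=\sqrt{2/(\pi z)}\,\cos(\eta_+-z-\tfrac\pi4)$, and feeding in (ii) turns this into $\sqrt{2/(\pi z)}\,(-1)^n\cos F_n$ with $F_n=\eta_++in\varphi_+-\tfrac\pi4$; multiplying by $(-\tilde f_n)^{1/4}=(z/2)^{1/2}$ converts the prefactor $\sqrt{2/(\pi z)}$ into $1/\sqrt\pi$, giving the leading term $\tfrac{(-1)^n}{\sqrt\pi}\cos F_n$. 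The $O(z^{-1})$ corrections, processed through the same manipulation, combine into $\tfrac{(-1)^n}{\sqrt\pi}\iota_n\sin F_n$; the point is that $\iota_n=\tfrac{4\alpha^2-1}{16(-\tilde f_n)^{1/2}}=\tfrac{4\alpha^2-1}{8z}$ is exactly the coefficient appearing in the Hankel expansion of $J_\alpha$. The residual error is $O(z^{-2})$, which on $[\sqrt N/(2n),\delta]$ is $O(n^{-1})$, and this combines with the $O(N^{-1})$ factors of Proposition~\ref{pro:V} to give the stated uniform bound.

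For the second formula the computation is parallel but lighter, since no subleading term is retained. In the region $[4-\varepsilon,\tfrac Nn c_-]$ one has $x<4$, so by (\ref{eq:fmap}) the variable $f_n=(\tfrac32 n\varphi_+(x))^{2/3}$ is large and negative, with $\tfrac23|f_n|^{3/2}=n|\varphi_+(x)|=in\varphi_+(x)$ ($\varphi_+$ purely imaginary on $(0,4)$ again) and $|\varphi_+(x)|$ comparable to $(4-x)^{3/2}$ near $x=4$ by (\ref{eq:varphi}). Substituting $Ai(-s)=\pi^{-1/2}s^{-1/4}\bigl(\cos(\tfrac23 s^{3/2}-\tfrac\pi4)+O(s^{-3/2})\bigr)$ and $Ai'(-s)=\pi^{-1/2}s^{1/4}\bigl(\sin(\tfrac23 s^{3/2}-\tfrac\pi4)+O(s^{-3/2})\bigr)$ with $s=|f_n|$, the combination $\cos\eta_+|f_n|^{1/4}Ai(f_n)-\sin\eta_+|f_n|^{-1/4}Ai'(f_n)$ collapses by the addition formula to $\pi^{-1/2}\cos\bigl(\eta_++\tfrac23|f_n|^{3/2}-\tfrac\pi4\bigr)=\pi^{-1/2}\cos F_n$, with error $O(|f_n|^{-3/2})=O(1/(n(4-x)^{3/2}))$, which is the claimed bound.

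I expect the main obstacle to be the branch book-keeping underlying step (ii): one must track consistently the branch of $\varphi$ fixed in (\ref{eq:varphi}) (cut avoiding $(-\infty,4)$, upper-half-plane value), the fractional powers $(-\tilde f_n)^{1/2}$, $(-\tilde f_n)^{1/4}$ and $f_n=(\tfrac32 n\varphi_+)^{2/3}$ implicit in (\ref{eq:ftilde})--(\ref{eq:behf}) and (\ref{eq:fmap}), and the cut of $\arccos$ along $(-\infty,-1]\cup[1,\infty)$, so that the accumulated constant phase shifts ($-\tfrac{\alpha\pi}{2}$, $-\tfrac\pi4$, $n\varphi_+(0)$) cancel down to the single $-\tfrac\pi4$ in $F_n$ and the $(-1)^n$ factors from $\varphi_+(0)=-i\pi$ pair up correctly on the two sides. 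A secondary but necessary check is that the \cite{AbSt} expansions are used only where they are uniformly valid; this holds because $z=2(-\tilde f_n)^{1/2}\to\infty$ (indeed bounded below by a positive power of $N$) throughout $[\sqrt N/(2n),\delta]$, and likewise $|f_n|\to\infty$ on $[4-\varepsilon,\tfrac Nn c_-]$, so the error terms in the Hankel and Airy expansions, being uniform for argument bounded away from $0$, are controlled down to the inner edges of the matching regions.
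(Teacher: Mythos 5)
The paper does not actually prove this lemma---it is imported verbatim as Proposition 4.4 of \cite{DGKV}---so you are reconstructing an argument that the paper delegates to the reference. Your overall strategy is the right one and mirrors what \cite{DGKV} do: insert the Hankel and Airy large-argument expansions from \cite{AbSt}, use $\zeta=\eta_+-\tfrac{\alpha\pi}{2}$, the branch relation $z=2(-\tilde f_n)^{1/2}=in\bigl(\varphi_+(0)-\varphi_+(x)\bigr)$ with $\varphi_+(0)=-i\pi$, and collapse the phases. Your leading-order identification with $\frac{(-1)^n}{\sqrt\pi}\cos F_n$ is correct, and your treatment of part (ii) is complete: the error $O(|f_n|^{-3/2})=O\bigl(1/(n(4-x)^{3/2})\bigr)$ falls out of the Airy expansions with no cancellation needed.

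The subleading bookkeeping in part (i), however, has a genuine gap. You assert that the $O(z^{-1})$ corrections ``combine into $\frac{(-1)^n}{\sqrt\pi}\iota_n\sin F_n$'' because $\iota_n=\frac{4\alpha^2-1}{8z}$ is ``exactly the coefficient appearing in the Hankel expansion of $J_\alpha$.'' But the subleading coefficient in the expansion of $J_\alpha'$ (A\&S 9.2.11--13) is $\frac{4\alpha^2+3}{8z}$, not $\frac{4\alpha^2-1}{8z}$. Carrying both through, with $\omega_z=z-\tfrac{\alpha\pi}{2}-\tfrac{\pi}{4}$, one finds
\begin{equation*}
(-\tilde f_n)^{\frac14}\bigl(\sin\zeta\,J_\alpha(z)+\cos\zeta\,J_\alpha'(z)\bigr)
=\frac{(-1)^n}{\sqrt\pi}\bigl(\cos F_n+\iota_n\sin F_n\bigr)
-\frac{\cos\zeta\cos\omega_z}{2\sqrt\pi\,z}+O\bigl(z^{-2}\bigr),
\end{equation*}
and the leftover $\cos\zeta\cos\omega_z/(2\sqrt\pi\,z)$ is generically $O(z^{-1})$. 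At the inner edge $x\sim\sqrt N/(2n)$ one has $z\asymp N^{3/4}$, so $O(z^{-1})=O(N^{-3/4})$, which is \emph{larger} than the claimed error $O(n^{-1})=O(N^{-1})$: your error estimate fails exactly where the matching must be sharpest. The argument only closes once you invoke the hard-edge vanishing of $\cos\zeta$: by (\ref{eq:sincoszeta}), $\cos\zeta=\frac{\alpha+1}{2}\sqrt x\,\bigl(1+O(x)\bigr)$, while $z\asymp n\sqrt x$ by (\ref{eq:behf}), so $\cos\zeta/z=O(n^{-1})$ uniformly on $[\sqrt N/(2n),\delta]$. You need to add this step; the formula does not follow from the Hankel expansions and phase bookkeeping alone.
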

This shows that throughout the bulk region, the function
$\hat{L}_n(x)$ in (\ref{eq:hatL}) has the following asymptotic
behavior.
\begin{equation}\label{eq:buhatL}
\begin{split}
\hat{L}_n(x)&=\frac{2\sqrt{N}}{\sqrt{n}x^{\frac{3}{4}}(4-x)^{\frac{1}{4}}\left(t-
\frac{n}{N}x\right)^{\frac{1}{2}}} \left(\cos F_n(x)+E\right)
\end{split}
\end{equation}
uniformly for $x\in\left[\frac{\sqrt{N}}{2n},\frac{N}{n}c_-\right]$,
where $E$ is an error term of order
$O\left(n^{-1}(4-x)^{-\frac{3}{2}}\right)+O\left(n^{-1}x^{-\frac{1}{2}}\right)$.
From this we can now compute the single integral.
\begin{proposition}\label{pro:bulksing}
Let $c_-=4-N^{-\varepsilon}$ such that
$\varepsilon\leq\frac{1}{20}$. Suppose the distance from the point
$t$ to the interval $\left[\frac{\sqrt{N}}{n},\frac{N}{n}C_-\right]$
is greater than $CN^{-\varepsilon}$ for some $C>0$ and the distance
from $t$ to $0$ is finite. Then the single integral in the bulk
region is of the following order.
\begin{equation}\label{eq:bulksing}
\begin{split}
&\left(i\kappa_{n-1}\right)^{\frac{1}{2}}\int_{N^{-\frac{1}{2}}}^{c_-}L_n(x)w(x)\D
x=\mathcal{I}_{2,n}=O\left(n^{-\frac{7}{8}}\right)
\end{split}
\end{equation}
\end{proposition}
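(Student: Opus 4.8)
The plan is to substitute the bulk asymptotics (\ref{eq:buhatL}) into the integral and then estimate the resulting oscillatory integral by a single integration by parts, keeping careful track of the contributions of the two endpoints. First I would rescale $y=\tfrac nN x$ to write
\begin{equation*}
\mathcal{I}_{2,n}=\frac nN\int_{\frac{\sqrt N}{n}}^{\frac Nn c_-}\hat L_n(x)\,\D x
\end{equation*}
and insert (\ref{eq:buhatL}). This splits $\mathcal{I}_{2,n}$ into a main oscillatory part carrying the factor $\cos F_n(x)$ and a remainder carrying the error term $E=O\!\left(n^{-1}(4-x)^{-\frac32}\right)+O\!\left(n^{-1}x^{-\frac12}\right)$. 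Here the overall constant $\tfrac nN\cdot\tfrac{\sqrt N}{\sqrt n}=\tfrac{2\sqrt n}{\sqrt N}$ is $O(1)$ since $n/N\to1$, while the factor $\left(t-\tfrac nN x\right)^{-1/2}$ is bounded where $\tfrac nN x$ is close to $0$ (as $t$ stays at fixed distance from $0$ under the hypotheses) and is $O(N^{\varepsilon/2})$ where $\tfrac nN x$ is close to $4$ (by the distance hypothesis on $t$).

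The remainder is handled by direct power counting, since its integrand is an explicit product of powers of $x$ and $4-x$. The $n^{-1}x^{-1/2}$ piece produces $\int_{\sqrt N/n}x^{-5/4}\,\D x=O(n^{1/8})$, hence, after the $O(n^{-1})$ factor, a contribution of size $O(n^{-7/8})$; this is exactly the term that saturates the claimed bound. The $n^{-1}(4-x)^{-3/2}$ piece produces $\int^{4-N^{-\varepsilon}}(4-x)^{-7/4}\,\D x=O(N^{3\varepsilon/4})$, which even after including the extra $N^{\varepsilon/2}$ from $\left(t-\tfrac nN x\right)^{-1/2}$ is $O(n^{5\varepsilon/4-1})=o(n^{-7/8})$ once $\varepsilon\le\tfrac1{20}$.

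For the main oscillatory part I would use that $F_n=\eta_++in\varphi_+-\tfrac\pi4$ is real-valued on the bulk interval (there $\varphi_+$ is purely imaginary, so $in\varphi_+$ is real), hence $\cos F_n$ is bounded and the derivative of its phase, $\eta_+'(x)+in\varphi_+'(x)$, is real and nonvanishing: since $in\varphi_+'$ and $\eta_+'$ have the same sign on $(0,4)$ one gets $|\eta_+'+in\varphi_+'|\ge\tfrac n2\sqrt{(4-x)/x}$. Integrating by parts once, with amplitude $a(x)=x^{-3/4}(4-x)^{-1/4}\left(t-\tfrac nN x\right)^{-1/2}$ and phase $\Psi=\eta_++in\varphi_+$, produces a boundary term $\bigl[a/\Psi'\bigr]$ plus an integral $\int(a/\Psi')'e^{iF_n}\,\D x$. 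At the left endpoint $x\sim\sqrt N/n$ one has $a\sim n^{3/8}$ and $|\Psi'|\sim n^{5/4}$, so the boundary term is $O(n^{-7/8})$; at the right endpoint $a=O(N^{3\varepsilon/4})$ and $|\Psi'|\sim nN^{-\varepsilon/2}$, giving $O(n^{5\varepsilon/4-1})=o(n^{-7/8})$; and the remaining integral is controlled by the same power counting as the remainder, again $O(n^{-7/8})$. Summing the pieces gives $\mathcal{I}_{2,n}=O(n^{-7/8})$.

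The main technical obstacle is the bookkeeping at the two endpoints. At the Bessel--bulk interface $x\sim\sqrt N/n$ one must check that (\ref{eq:buhatL}) --- which rests on the matching Lemma \ref{le:match} --- is uniformly valid there, and that $1/\Psi'$ is of size exactly $n^{-5/4}$: it is precisely the combination (amplitude of order $n^{3/8}$)$\times$(inverse phase of order $n^{-5/4}$) that produces the exponent $-\tfrac78$ in the statement. At the near-edge endpoint $x\sim\tfrac Nn c_-$ one must verify that the simultaneous degeneration of $(4-x)^{-1/4}$, of $\left(t-\tfrac nN x\right)^{-1/2}$, and of the phase derivative $\Psi'$ keeps every contribution strictly below $n^{-7/8}$; this is exactly what forces the hypothesis $\varepsilon\le\tfrac1{20}$.
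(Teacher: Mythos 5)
Your proof is correct and follows essentially the same route as the paper's: substitute the uniform bulk asymptotics for $\hat L_n$, integrate the oscillatory $\cos F_n$ part by parts once using $1/F_n'=-2\sqrt{x}/(n\sqrt{4-x})\left(1+O(1/(n(4-x)))\right)$, track the boundary contributions at the Bessel end ($\sim n^{-7/8}$) and the near-edge end ($\sim n^{5\varepsilon/4-1}$), and power-count the residual integral and the matching error $E$. As a small bonus, your estimate correctly records that the $n^{-1}x^{-1/2}$ piece of $E$ contributes $n^{-1}\int_{\sqrt N/n}x^{-5/4}\,\D x=O(n^{-7/8})$ from the Bessel end — a term the paper's intermediate bound of $O(n^{5\varepsilon/4-1})$ for the $E$ contribution does not display, though this has no effect on the final $O(n^{-7/8})$ bound.
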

\begin{proof} By (\ref{eq:hatL}), we have
\begin{equation*}
\left(i\kappa_{n-1}\right)^{\frac{1}{2}}\int_{N^{-\frac{1}{2}}}^{c_-}L_n(x)w(x)\D
x=\frac{n}{N}\int_{\frac{\sqrt{N}}{n}}^{\frac{N}{n}c_-}\hat{L}_n(x)\D
x.
\end{equation*}
From this and (\ref{eq:buhatL}), we obtain
\begin{equation}\label{eq:sing1}
\begin{split}
\int_{\frac{\sqrt{N}}{n}}^{\frac{N}{n}c_-}\hat{L}_n(x)\D
x&=2\sqrt{\frac{n}{N}}\int_{\frac{\sqrt{N}}{n}}^{\frac{N}{n}c_-}
\frac{\cos F_n(x)+E}{x^{\frac{3}{4}}(4-x)^{\frac{1}{4}}\left(t-
\frac{n}{N}x\right)^{\frac{1}{2}}} \D x.
\end{split}
\end{equation}
Since $F_n=\eta_++in\varphi_{+}-\frac{\pi}{4}$, we see that its
derivative is given by
\begin{equation}\label{eq:Fprime}
F_n^{\prime}=-\frac{(\alpha+1)}{2\sqrt{x(4-x)}}+\frac{in}{2}\left(\sqrt{\frac{x-4}{x}}\right)_+=
-\frac{\alpha+1+n(4-x)}{2\sqrt{x(4-x)}}
\end{equation}
where the plus subscript indicates the boundary value as
$\sqrt{x-4}/\sqrt{x}$ approaches the real axis in the upper half
plane.

This gives us the following estimate of the order of
$1/F_n^{\prime}$.
\begin{equation}\label{eq:orderFn}
\frac{1}{F_n^{\prime}}=-\frac{2\sqrt{x}}{n\sqrt{4-x}}\left(1+O\left(\frac{1}{n(4-x)}\right)\right)
\end{equation}
We can now integrate the first term in (\ref{eq:sing1}) by parts to
obtain
\begin{equation*}
\begin{split}
&\int_{\frac{\sqrt{N}}{n}}^{\frac{N}{n}c_-} \frac{\cos
F_n(x)}{x^{\frac{3}{4}}(4-x)^{\frac{1}{4}}\left(t-
\frac{n}{N}x\right)^{\frac{1}{2}}} \D x=\frac{\sin
F_n}{F_n^{\prime}x^{\frac{3}{4}}(4-x)^{\frac{1}{4}}\left(t-
\frac{n}{N}x\right)^{\frac{1}{2}}}\Bigg|_{\frac{\sqrt{N}}{n}}^{\frac{N}{n}c_-}
\\
&-\int_{\frac{\sqrt{N}}{n}}^{\frac{N}{n}c_-} \sin
F_n(x)\frac{d}{dx}\left(\frac{1}{F_n^{\prime}x^{\frac{3}{4}}(4-x)^{\frac{1}{4}}\left(t-
\frac{n}{N}x\right)^{\frac{1}{2}}} \right)\D x
\end{split}
\end{equation*}
As the distance from $t$ to the bulk region is at least of order
$O\left(N^{-\varepsilon}\right)$, the first term in the above
equation is of order
$O\left(n^{\frac{5\varepsilon}{4}-1}\right)+O\left(n^{-\frac{7}{8}}\right)$,
which is of order $O\left(n^{-\frac{7}{8}}\right)$ as
$\varepsilon\leq 1/20$. By repeating the integration by parts
procedure to the second term, one can verify that it is of order at
most $O\left(n^{\frac{11\varepsilon}{4}-2}\right)$. This gives
\begin{equation*}
\begin{split}
\int_{\frac{\sqrt{N}}{n}}^{\frac{N}{n}c_-} \frac{\cos
F_n(x)}{x^{\frac{3}{4}}(4-x)^{\frac{1}{4}}\left(t-
\frac{n}{N}x\right)^{\frac{1}{2}}} \D
x&=O\left(n^{-\frac{7}{8}}\right),
\end{split}
\end{equation*}
Since the error term $E$ is of order
$O\left(n^{-1}(4-x)^{-\frac{3}{2}}\right)+O\left(n^{-1}x^{-\frac{1}{2}}\right)$,
we obtain the following estimate for contribution from $E$.
\begin{equation*}
2\sqrt{\frac{n}{N}}\int_{\frac{\sqrt{N}}{n}}^{\frac{N}{n}c_-}
\frac{E}{x^{\frac{3}{4}}(4-x)^{\frac{1}{4}}\left(t-
\frac{n}{N}x\right)^{\frac{1}{2}}} \D
x=O\left(n^{\frac{5\varepsilon}{4}-1}\right).
\end{equation*}
This concludes the proof of the proposition.
\end{proof}
In particular, from the proof of Proposition \ref{pro:bulksing}, we
see that
\begin{equation}\label{eq:bulkint}
\begin{split}
\int_{y}^{\frac{N}{n}c_-}\frac{\cos
F_n(x)}{x^{\frac{3}{4}}(4-x)^{\frac{1}{4}}\left(t-
\frac{n}{N}x\right)^{\frac{1}{2}}}\D
x=O\left(n^{-\frac{7}{8}}\right),\quad
y\in\left[\frac{\sqrt{N}}{n},\frac{N}{n}c_-\right].
\end{split}
\end{equation}
Let us now proceed to compute the double integrals. To begin with,
we have the following lemma that will help us to simplify the
results. (Proposition 5.12 in \cite{DGKV})
\begin{lemma}\label{le:diarg}
The function $\theta(x)$ defined by
\begin{equation}\label{eq:theta}
\theta(x)=\frac{1}{2}\int_{4}^x\sqrt{\frac{4-s}{s}}\D s=-i\varphi_+,
\quad x\in[0,4]
\end{equation}
satisfies the following
\begin{equation}\label{eq:diarg}
\theta(x)-x\theta^{\prime}(x)=-\arccos\left(\frac{x}{2}-1\right)
\end{equation}
\end{lemma}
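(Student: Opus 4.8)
The plan is to prove (\ref{eq:diarg}) by the elementary route of differentiating both sides and pinning down the integration constant at the endpoint $x=4$. First I would read off from (\ref{eq:theta}) that
\[
\theta'(x)=\frac12\sqrt{\frac{4-x}{x}},\qquad
\theta''(x)=-\frac14\,x^{-3/2}(4-x)^{-1/2}\bigl(x+(4-x)\bigr)=-\frac{1}{x^{3/2}(4-x)^{1/2}},
\]
valid for $x\in(0,4)$. Setting $g(x):=\theta(x)-x\theta'(x)$, the first two terms in $g'(x)=\theta'(x)-\theta'(x)-x\theta''(x)$ cancel, so
\[
g'(x)=-x\theta''(x)=\frac{1}{\sqrt{x(4-x)}},\qquad x\in(0,4).
\]

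Next I would differentiate the right-hand side of (\ref{eq:diarg}). Using the identity $1-\bigl(\tfrac{x}{2}-1\bigr)^2=\tfrac14 x(4-x)$ one gets
\[
\frac{\D}{\D x}\!\left(-\arccos\!\left(\tfrac{x}{2}-1\right)\right)
=\frac{1/2}{\sqrt{1-(x/2-1)^2}}=\frac{1}{\sqrt{x(4-x)}},
\]
so $g(x)$ and $-\arccos\bigl(\tfrac{x}{2}-1\bigr)$ have the same derivative on $(0,4)$ and hence differ by a constant there. Evaluating at $x=4$, where $\theta(4)=0$ and where $x\theta'(x)$ vanishes because $\theta'(x)$ is of order $(4-x)^{1/2}$, gives $g(4)=0=-\arccos(1)$; thus the constant is $0$, and (\ref{eq:diarg}) holds on $(0,4)$ and then on all of $[0,4]$ by continuity of both sides. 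Equivalently, one may simply integrate $g'(x)=\bigl(x(4-x)\bigr)^{-1/2}$ from $4$ to $x$ and recognize the antiderivative $\arcsin\bigl(\tfrac{s-2}{2}\bigr)=-\arccos\bigl(\tfrac{s}{2}-1\bigr)+\mathrm{const}$, again using $g(4)=0$ to fix the constant.

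There is no genuine obstacle in this computation; it is essentially a one-line calculus check. The only two points that deserve a word of care are, first, that the branch of $\arccos$ is the principal one with cut $(-\infty,-1]\cup[1,\infty)$, matching the convention fixed in the line after (\ref{eq:zeta}), so that $-\arccos\bigl(\tfrac{x}{2}-1\bigr)$ is continuous and real on $[0,4]$ with value $0$ at $x=4$; and second, that the boundary term $x\theta'(x)$ must be shown to vanish (not merely to stay finite) at $x=4$, which is immediate from the explicit form of $\theta'$. Since $\theta=-i\varphi_+$ as in (\ref{eq:theta}), the relation can also be rephrased directly in terms of $\varphi_+$, which is the form in which it will be used when estimating the oscillatory integrals in the bulk region.
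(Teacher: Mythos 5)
Your proof is correct and follows essentially the same route as the paper: both differentiate $g(x)=\theta(x)-x\theta'(x)$ to obtain $g'(x)=-x\theta''(x)=1/\sqrt{x(4-x)}$, match it with the derivative of $-\arccos\bigl(\tfrac{x}{2}-1\bigr)$, and fix the integration constant by evaluating at $x=4$, where $\theta(4)=0$ and $x\theta'(x)\to 0$. The only cosmetic difference is that the paper invokes the integral representation $\arccos(x)=\int_x^1\frac{\D s}{\sqrt{1-s^2}}$ rather than the derivative formula directly; your explicit treatment of the boundary term and the branch of $\arccos$ is a useful, though routine, clarification.
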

\begin{proof} By differentiating $\theta(x)$ twice we see that
\begin{equation*}
x\theta^{\prime\prime}(x)=-\frac{1}{\sqrt{x(4-x)}}.
\end{equation*}
As $\arccos(x)$ is given by
\begin{equation}\label{eq:arccos}
\arccos(x)=\int_x^1\frac{1}{\sqrt{1-s^2}}\D s,
\end{equation}
we see that
\begin{equation*}
\begin{split}
\frac{d}{dx}\left(\theta(x)-x\theta^{\prime}(x)\right)=-\frac{d}{dx}\arccos\left(\frac{x}{2}-1\right).
\end{split}
\end{equation*}
Integrating the above, we obtain
\begin{equation*}
\theta(x)-x\theta^{\prime}(x)=-\arccos\left(\frac{x}{2}-1\right)+C.
\end{equation*}
For some integration constant $C$. By evaluating the above equation
at $x=4$, we see that $C=0$. This proves the lemma.
\end{proof}
The lemma implies the following. (Proposition 5.13, \cite{DGKV})
\begin{lemma}\label{le:diffmn}
Let $n=N-j$, $m=N-k$ where $j$ and $k$ are finite integers, then for
$x\in\left[\frac{\sqrt{N}}{n},\frac{N}{n}c_-\right]$, we have
\begin{equation}\label{eq:diffmn}
F_m\left(x\frac{n}{m}\right)-F_n(x)=\left(m-n\right)\arccos\left(\frac{x}{2}-1\right)+O\left(m^{\frac{\varepsilon}{2}-1}\right).
\end{equation}
\end{lemma}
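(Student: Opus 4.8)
The plan is to reduce the identity to an elementary second-order Taylor expansion whose main term is extracted by Lemma \ref{le:diarg}. The first step is to rewrite $F_n$ in terms of the real-valued function $\theta$. Since $\theta(x)=-i\varphi_+(x)$ on $[0,4]$ (Lemma \ref{le:diarg}), we have $in\varphi_+(x)=-n\theta(x)$, so
\begin{equation*}
F_n(x)=\eta_+(x)-n\theta(x)-\frac{\pi}{4},\qquad \eta_+(x)=\frac{1}{2}(\alpha+1)\arccos\left(\frac{x}{2}-1\right).
\end{equation*}
Writing $y=\tfrac{n}{m}x$ — which lies in $(0,4)$ for $x$ in the stated range once $N$ is large, because $\tfrac{n}{m}=1+O(N^{-1})$ while $N^{-\varepsilon}\gg N^{-1}$ — this gives the clean decomposition
\begin{equation*}
F_m\left(\frac{n}{m}x\right)-F_n(x)=\left[\eta_+\left(\frac{n}{m}x\right)-\eta_+(x)\right]+\left[n\theta(x)-m\theta\left(\frac{n}{m}x\right)\right].
\end{equation*}

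For the second bracket I would Taylor-expand $\theta$ to second order: there is a $\xi$ between $x$ and $\tfrac{n}{m}x$ with
\begin{equation*}
\theta\left(\frac{n}{m}x\right)=\theta(x)+\theta'(x)\,x\,\frac{n-m}{m}+\frac{1}{2}\theta''(\xi)\,x^2\,\frac{(n-m)^2}{m^2}.
\end{equation*}
Multiplying by $m$ and subtracting from $n\theta(x)$ yields $n\theta(x)-m\theta(\tfrac{n}{m}x)=(n-m)\bigl(\theta(x)-x\theta'(x)\bigr)-\tfrac12\theta''(\xi)x^2\tfrac{(n-m)^2}{m}$, and by Lemma \ref{le:diarg} the quantity $\theta(x)-x\theta'(x)$ equals $-\arccos(\tfrac{x}{2}-1)$, which produces exactly the claimed main term $(m-n)\arccos(\tfrac{x}{2}-1)$. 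It then remains to bound the remainder, for which I would use $\theta''(x)=-x^{-3/2}(4-x)^{-1/2}$ (read off from $x\theta''(x)=-1/\sqrt{x(4-x)}$ established in the proof of Lemma \ref{le:diarg}), together with $n-m=O(1)$ and the fact that $\xi$ lies within $O(x/N)$ of $x$, so that $\xi\geq x/2$ and $4-\xi\geq\tfrac12(4-x)$ for large $N$; this makes the remainder $O\!\left(\dfrac{x^{1/2}}{m(4-x)^{1/2}}\right)$.

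For the first bracket I would apply the mean value theorem with $\eta_+'(x)=-\dfrac{\alpha+1}{2\sqrt{x(4-x)}}$ and $\tfrac{n}{m}x-x=O(x/N)$, which gives a bound of the same form $O\!\left(\dfrac{x^{1/2}}{m(4-x)^{1/2}}\right)$. Both error terms are increasing in $x$ on $\left[\frac{\sqrt N}{n},\frac{N}{n}c_-\right]$, so their supremum is attained at the right endpoint $x=\tfrac{N}{n}c_-$, where $4-x$ is of order $N^{-\varepsilon}$ and $x$ is bounded, yielding $O(N^{\varepsilon/2-1})=O(m^{\varepsilon/2-1})$; at the left endpoint $x\sim N^{-1/2}$ they are much smaller ($O(N^{-5/4})$). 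Combining the two brackets gives the lemma. The only point genuinely needing care is the uniformity of these estimates across the whole interval — in particular verifying that the mean-value points cannot drift into a region where the $(4-\,\cdot\,)^{-1/2}$ singularity is stronger than at $x$ itself — and this is exactly where the relation $\tfrac{n}{m}=1+O(N^{-1})$ against $4-x\gtrsim N^{-\varepsilon}$ (with $\varepsilon\leq 1/20$) is used.
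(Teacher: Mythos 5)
Your argument is correct and follows essentially the same route as the paper's proof: both reduce the statement to a second-order Taylor expansion and invoke Lemma \ref{le:diarg} to convert $\theta(x)-x\theta'(x)$ into $-\arccos\left(\frac{x}{2}-1\right)$, with the remainder controlled using the $\theta''$ estimate and the constraint $4-x\gtrsim N^{-\varepsilon}\gg N^{-1}$. Splitting $F_n=\eta_+-n\theta-\frac{\pi}{4}$ at the outset, rather than decomposing as $\bigl[F_m\bigl(\tfrac{n}{m}x\bigr)-F_m(x)\bigr]+\bigl[F_m(x)-F_n(x)\bigr]$ as the paper does, is an equivalent rearrangement of the same computation and the error bounds you obtain match the paper's.
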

\begin{proof} As in \cite{DGKV}, let us write the left hand side of
(\ref{eq:diffmn}) as
\begin{equation*}
F_m\left(x\frac{n}{m}\right)-F_n(x)=\left(F_m\left(x\frac{n}{m}\right)-F_m(x)\right)+\left(F_m(x)-F_n(x)\right)
\end{equation*}
By repeat application of the mean value theorem, we see that the
first term on the right hand side is given by
\begin{equation*}
F_m\left(x\frac{n}{m}\right)-F_m(x)=\frac{x(n-m)}{m}F^{\prime}_m(x)+\frac{x^2(n-m)^2}{2m^2}F^{\prime\prime}(\xi)
\end{equation*}
for some $\xi\in[x,xn/m]$. From (\ref{eq:Fprime}), we see that for
$x\in\left[\frac{\sqrt{N}}{n},\frac{N}{n}c_-\right]$, we have
\begin{equation*}
\begin{split}
\frac{x(n-m)}{m}F_m^{\prime}(x)&=(m-n)x\theta^{\prime}(x)+O\left(\frac{1}{m\sqrt{4-x}}\right),\\
\frac{x^2(m-n)^2}{2m^2}F^{\prime\prime}(\xi)&=O\left(\frac{1}{m^2(4-x)^{\frac{3}{2}}}\right)+O\left(\frac{1}{m\sqrt{4-x}}\right).
\end{split}
\end{equation*}
As $4-x=N^{-\varepsilon}$ and $\varepsilon\leq1/20$, we have
\begin{equation*}
\begin{split}
F_m\left(x\frac{n}{m}\right)-F_n(x)&=(m-n)x\theta^{\prime}(x)+\left(F_m(x)-F_n(x)\right)+O\left(m^{\frac{\varepsilon}{2}-1}\right)\\
&=(m-n)x\theta^{\prime}(x)+\left(n-m\right)\theta(x)+O\left(m^{\frac{\varepsilon}{2}-1}\right)\\
&=\left(m-n\right)\arccos\left(\frac{x}{2}-1\right)+O\left(m^{\frac{\varepsilon}{2}-1}\right),
\end{split}
\end{equation*}
where the last equality follows from lemma \ref{le:diarg}.
\end{proof}
We can now compute the double integrals in the bulk region.
\begin{proposition}
The double integral inside the bulk region is of the following
order.
\begin{equation}\label{eq:doubulk0}
\begin{split}
&i\sqrt{\kappa_{n-1}\kappa_{m-1}}\int_{N^{-\frac{1}{2}}}^{c_-}L_n(x)w(x)\int_x^{c_-}L_m(y)w(y)\D
x\D
y=\mathcal{J}_2\\
&=\frac{4}{N}\int_{\frac{\sqrt{N}}{n}}^{\frac{N}{n}c_-}
\frac{\sin\left((n-m)\arccos\left(\frac{x}{2}-1\right)\right)}{x(4-x)\left(t-
x\right)}\D x+O\left(n^{-\frac{7}{4}}\right)
\end{split}
\end{equation}
where $c_-=4-N^{-\varepsilon}$.
\end{proposition}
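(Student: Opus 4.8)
The plan is to follow the pattern of the Bessel--region computation in Proposition~\ref{pro:besin} and of the single bulk integral in Proposition~\ref{pro:bulksing}, in the spirit of \cite{DG}, \cite{DGKV} and \cite{St}. First I would rescale both variables, writing the left--hand side of (\ref{eq:doubulk0}) as
\begin{equation*}
i\sqrt{\kappa_{n-1}\kappa_{m-1}}\int_{N^{-\frac12}}^{c_-}L_n(x)w(x)\int_x^{c_-}L_m(y)w(y)\,\D y\,\D x
=\frac{mn}{N^2}\int_{\frac{\sqrt N}{n}}^{\frac Nn c_-}\hat L_n(x)\int_{\frac nm x}^{\frac Nm c_-}\hat L_m(y)\,\D y\,\D x ,
\end{equation*}
exactly as in the proof of Proposition~\ref{pro:besin}, and then insert the bulk asymptotics (\ref{eq:buhatL}) for both $\hat L_n$ and $\hat L_m$. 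Absorbing the slowly varying amplitudes into $G(x)=\bigl(x^{3/4}(4-x)^{1/4}(t-x)^{1/2}\bigr)^{-1}$ --- the replacement of $\tfrac nN x$ by $x$ costs only an $O(N^{\varepsilon-1})$ relative error, since $t$ stays at distance at least $CN^{-\varepsilon}$ from the bulk --- and using $\tfrac{mn}{N^2}\cdot\tfrac{4N}{\sqrt{mn}}=4\bigl(1+O(N^{-1})\bigr)$, this reduces, up to the contributions of the subleading oscillatory terms $E$ in (\ref{eq:buhatL}), to
\begin{equation*}
4\int_{\frac{\sqrt N}{n}}^{\frac Nn c_-}G(x)\cos F_n(x)\int_{\frac nm x}^{\frac Nm c_-}G(y)\cos F_m(y)\,\D y\,\D x .
\end{equation*}

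Next I would integrate by parts in the inner variable $y$, using that $F_m'$ is nonvanishing on the bulk with $1/F_m'(y)=-\tfrac{2\sqrt y}{m\sqrt{4-y}}\bigl(1+O(\tfrac1{m(4-y)})\bigr)$ by (\ref{eq:Fprime})--(\ref{eq:orderFn}). The boundary term at the fixed upper endpoint $\tfrac Nm c_-$ and the correction integral $\int\sin F_m\,\tfrac{\D}{\D y}\!\bigl(\tfrac{G}{F_m'}\bigr)\D y$, once fed back into the $x$--integral, oscillate in $x$ with phase derivative of size $\sim F_n'$; integrating by parts once more in $x$, as in Proposition~\ref{pro:bulksing} and (\ref{eq:bulkint}), shows that they contribute $O(n^{-7/4})$ --- morally the square of the $O(n^{-7/8})$ estimate for the single integral. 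The surviving term is the lower--endpoint boundary term at $y=\tfrac nm x$, namely $-G(\tfrac nm x)\sin F_m(\tfrac nm x)/F_m'(\tfrac nm x)$. Substituting the estimate for $1/F_m'$, replacing $m$ by $N$ to leading order, and using $\tfrac{G(x)^2\sqrt x}{\sqrt{4-x}}=\tfrac1{x(4-x)(t-x)}$, the whole expression collapses, up to $O(n^{-7/4})$, to
\begin{equation*}
\frac8N\int_{\frac{\sqrt N}{n}}^{\frac Nn c_-}\frac{\cos F_n(x)\,\sin F_m\!\bigl(\tfrac nm x\bigr)}{x(4-x)(t-x)}\,\D x .
\end{equation*}

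Finally I would apply the product--to--sum identity $\cos F_n(x)\sin F_m(\tfrac nm x)=\tfrac12\sin\!\bigl(F_m(\tfrac nm x)+F_n(x)\bigr)+\tfrac12\sin\!\bigl(F_m(\tfrac nm x)-F_n(x)\bigr)$. The sum--frequency term has phase derivative $\sim 2F_n'$ and is removed by one further integration by parts against $\tfrac1{x(4-x)(t-x)}$ (the boundary terms being controlled exactly as before); the difference--frequency term is slowly varying and, by Lemma~\ref{le:diffmn}, equals $\sin\!\bigl((n-m)\arccos(\tfrac x2-1)\bigr)+O(m^{\varepsilon/2-1})$ uniformly on $[\tfrac{\sqrt N}{n},\tfrac Nn c_-]$. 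Collecting the prefactor $\tfrac4N$ then gives (\ref{eq:doubulk0}); here one uses that $\arccos(\tfrac x2-1)\sim\sqrt{4-x}$ as $x\to4^-$, so the remaining integral is $O(1)$ rather than merely $O(\log N)$ and the $O(m^{\varepsilon/2-1})$ error integrates to something well within the stated $O(n^{-7/4})$.

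The step I expect to be the main obstacle is the error bookkeeping near the soft edge $x=4$, where the cutoff $c_-=4-N^{-\varepsilon}$ interacts with the $(4-x)^{-1/4}$ growth of $G$ and, worse, with the $O\!\bigl(n^{-1}(4-x)^{-3/2}\bigr)$ error in the matching asymptotics of Lemma~\ref{le:match}(ii). A naive $L^1$ bound on the contribution of the subleading pieces $E$ only yields $O(n^{3\varepsilon/4-1})$, which is too weak; one must therefore also integrate the $E$--terms by parts, exploiting their oscillatory structure together with the vanishing $\arccos(\tfrac x2-1)\sim\sqrt{4-x}$, and the condition $\varepsilon\le\tfrac1{20}$ (and the precise choice $c_-=4-N^{-\varepsilon}$) is exactly what makes all of these estimates close simultaneously.
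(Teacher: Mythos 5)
Your approach is essentially the paper's: rescale both variables, insert the bulk asymptotics (\ref{eq:buhatL}), integrate by parts in the inner variable to isolate the lower endpoint boundary term, estimate the other pieces via a second integration by parts, and finally apply the product-to-sum identity and Lemma~\ref{le:diffmn}. The structure, the key lemma, and the mechanism for controlling the $E$-terms and the $(4-x)^{-1/4}$ blow-up near $c_-$ are all the same as in the paper.

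There is, however, one sign to double-check in the last step. Lemma~\ref{le:diffmn} states $F_m(\tfrac nm x)-F_n(x)=(m-n)\arccos(\tfrac x2-1)+O(m^{\varepsilon/2-1})$, so the difference-frequency factor $\sin\bigl(F_m(\tfrac nm x)-F_n(x)\bigr)$ equals $\sin\bigl((m-n)\arccos(\tfrac x2-1)\bigr)+O(m^{\varepsilon/2-1})$, which is the \emph{negative} of the $\sin\bigl((n-m)\arccos(\tfrac x2-1)\bigr)$ you wrote. Together with your $+\tfrac 8N$ prefactor this would give $-\tfrac 4N\sin\bigl((n-m)\arccos\bigr)$ rather than the claimed $+\tfrac 4N\sin\bigl((n-m)\arccos\bigr)$. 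The paper's own proof arrives at the stated result through a $-2(\tfrac mn)^{1/4}$ prefactor at the analogous stage, and the two minus signs (the one from the lower-limit boundary evaluation $[\cdot]_a^b$ and the one from $1/F_m'<0$) are easy to let collide incorrectly here. You should trace the sign of the lower-boundary term $-G_m(\tfrac nm x)\sin F_m(\tfrac nm x)/F_m'(\tfrac nm x)$ through to the $\tfrac 8N$ coefficient one more time and then apply Lemma~\ref{le:diffmn} verbatim, so that whichever of $(m-n)$ or $(n-m)$ appears in the result is forced by the computation rather than by matching the target formula.
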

\begin{proof}
The proof is similar to the computation in \cite{DGKV} and
\cite{DG}. We have
\begin{equation*}
\mathcal{J}_2=\frac{mn}{N^2}\int_{\frac{\sqrt{N}}{n}}^{\frac{N}{n}c_-}\hat{L}_n(x)
\int_{\frac{n}{m}x}^{\frac{N}{n}c_-}\hat{L}_m(y)\D x\D y,
\end{equation*}
Then by using (\ref{eq:buhatL}), we see that the double integral is
given by the following.
\begin{equation*}
\begin{split}
\mathcal{J}_2
&=\frac{4\sqrt{mn}}{N}\int_{\frac{\sqrt{N}}{n}}^{\frac{N}{n}c_-}\frac{\cos
F_n(x)+E(x)}{x^{\frac{3}{4}}(4-x)^{\frac{1}{4}}\left(t-\frac{n}{N}
x\right)^{\frac{1}{2}}}
\int_{\frac{n}{m}x}^{\frac{N}{m}c_-}\frac{\cos
F_m(y)+E(y)}{y^{\frac{3}{4}}(4-y)^{\frac{1}{4}}\left(t-\frac{m}{N}
y\right)^{\frac{1}{2}}} \D x\D y
\end{split}
\end{equation*}
First let us compute the error terms. By changing the order of the
integration, we have
\begin{equation*}
\begin{split}
&\int_{\frac{\sqrt{N}}{n}}^{\frac{N}{n}c_-}\frac{\cos
F_n(x)}{x^{\frac{3}{4}}(4-x)^{\frac{1}{4}}\left(t-\frac{n}{N}
x\right)^{\frac{1}{2}}}
\int_{\frac{n}{m}x}^{\frac{N}{m}c_-}\frac{E(y)}{y^{\frac{3}{4}}(4-y)^{\frac{1}{4}}\left(t-\frac{n}{N} y\right)^{\frac{1}{2}}}\D y\D x\\
&=\int_{\frac{\sqrt{N}}{m}}^{\frac{N}{m}c_-}\frac{E(y)}{y^{\frac{3}{4}}(4-y)^{\frac{1}{4}}\left(t-\frac{n}{N}
y\right)^{\frac{1}{2}}}
\int_{\frac{\sqrt{N}}{n}}^{\frac{m}{n}y}\frac{\cos
F_n(x)}{x^{\frac{3}{4}}(4-x)^{\frac{1}{4}}\left(t-\frac{n}{N}
x\right)^{\frac{1}{2}}}
\D x\D y\\
&=\int_{\frac{\sqrt{N}}{m}}^{\frac{N}{m}c_-}\frac{E(y)}{y^{\frac{3}{4}}(4-y)^{\frac{1}{4}}\left(t-\frac{n}{N}
y\right)^{\frac{1}{2}}} O\left(n^{-\frac{7}{8}}\right)\D
y=O\left(n^{-\frac{7}{4}}\right),
\end{split}
\end{equation*}
where the last equality follows from (\ref{eq:bulkint}). The order
of the other error term can be estimated similarly. Let us now
consider the leading order term. Integrating by parts, we obtain
\begin{equation*}
\begin{split}
\mathcal{J}_{20}&=\int_{\frac{\sqrt{N}}{n}}^{\frac{N}{n}c_-}\frac{\cos
F_n(x)}{x^{\frac{3}{4}}(4-x)^{\frac{1}{4}}\left(t-\frac{n}{N}
x\right)^{\frac{1}{2}}}
\int_{\frac{n}{m}x}^{\frac{N}{m}c_-}\frac{\cos
F_m(y)}{y^{\frac{3}{4}}(4-y)^{\frac{1}{4}}\left(t-\frac{m}{N}
y\right)^{\frac{1}{2}}}\D
y\D x\\
&=\left(\frac{m}{n}\right)^{\frac{3}{4}}\int_{\frac{\sqrt{N}}{n}}^{\frac{N}{n}c_-}\frac{\cos
F_n(x)\sin
F_m\left(\frac{n}{m}x\right)}{F_m^{\prime}\left(\frac{n}{m}x\right)x^{\frac{3}{2}}(4-x)^{\frac{1}{4}}\left(4-\frac{n}{m}x\right)^{\frac{1}{4}}\left(t-\frac{n}{N}
x\right)}
\D x\\
&-\int_{\frac{\sqrt{N}}{n}}^{\frac{N}{n}c_-}\frac{\cos
F_n(x)}{x^{\frac{3}{4}}(4-x)^{\frac{1}{4}}\left(t-\frac{n}{N}
x\right)^{\frac{1}{2}}} \int_{\frac{n}{m}x}^{\frac{N}{m}c_-}\sin
F_m(y)\frac{d}{dy}\left(\frac{1}
{F_m^{\prime}y^{\frac{3}{4}}(4-y)^{\frac{1}{4}}\left(t-\frac{m}{N}
y\right)^{\frac{1}{2}}}\right)
\\&+O\left(n^{-\frac{7}{4}}\right)
\end{split}
\end{equation*}
To evaluate the second term, note that for $y$ in the interval
$\left[\frac{\sqrt{N}}{m},\frac{N}{m}c_-\right]$, we have
\begin{equation*}
\sin F_m(y)\frac{\D}{\D y}\left(\frac{1}
{F_m^{\prime}y^{\frac{3}{4}}(4-y)^{\frac{1}{4}}\left(t-\frac{m}{N}
y\right)^{\frac{1}{2}}}\right)=
O\left(\frac{1}{m^{1-\varepsilon}(4-y)^{\frac{5}{4}}y^{\frac{5}{4}}}\right)
\end{equation*}
Hence by interchanging the order of integration, we obtain
\begin{equation*}
\begin{split}
&\int_{\frac{\sqrt{N}}{n}}^{\frac{N}{n}c_-}\frac{\cos
F_n(x)}{x^{\frac{3}{4}}(4-x)^{\frac{1}{4}}\left(t-\frac{n}{N}
x\right)^{\frac{1}{2}}} \int_{\frac{n}{m}x}^{\frac{N}{m}c_-}\sin
F_m(y)\frac{d}{dy}\left(\frac{1}
{F_m^{\prime}y^{\frac{3}{4}}(4-y)^{\frac{1}{4}}\left(t-\frac{m}{N} y\right)^{\frac{1}{2}}}\right)\\
&=O\left(m^{-\frac{7}{4}}\right)+O\left(m^{-\frac{7}{8}-1+\frac{5\varepsilon}{4}}\right)=O\left(m^{-\frac{7}{4}}\right)
\end{split}
\end{equation*}
To compute the first term of $\mathcal{J}_{20}$, let us change the
integration variable from $x$ to $\frac{n}{m}x$. Then we obtain
\begin{equation*}
\begin{split}
\mathcal{J}_{20}=\left(\frac{m}{n}\right)^{\frac{1}{4}}\int_{\frac{\sqrt{N}}{m}}^{\frac{N}{m}c_-}\frac{\cos
F_n\left(\frac{m}{n}x\right)\sin
F_m\left(x\right)}{F_m^{\prime}\left(x\right)x^{\frac{3}{2}}(4-x)^{\frac{1}{4}}\left(4-\frac{m}{n}x\right)^{\frac{1}{4}}\left(t-\frac{m}{N}
x\right)} \D x+O(m^{-\frac{7}{4}})
\end{split}
\end{equation*}
Now by (\ref{eq:orderFn}) and the fact that both $4-x$ and
$t-\frac{n}{N} x$ are of order at least $n^{-\varepsilon}$, we
obtain the follow estimate
\begin{equation*}
\begin{split}
\mathcal{J}_{20}=-2\left(\frac{m}{n}\right)^{\frac{1}{4}}\int_{\frac{\sqrt{N}}{m}}^{\frac{N}{m}c_-}\frac{\cos
F_n\left(\frac{m}{n}x\right)\sin F_m\left(x\right)}{mx(4-x)\left(t-
x\right)}\D x+O(m^{-2+3\varepsilon})+O\left(m^{-\frac{7}{4}}\right).
\end{split}
\end{equation*}
To evaluate the integral, we use the angle addition formula for sine
to obtain
\begin{equation*}
\begin{split}
\mathcal{J}_{20}&=-\left(\frac{m}{n}\right)^{\frac{1}{4}}\left(\int_{\frac{\sqrt{N}}{m}}^{\frac{N}{m}c_-}\frac{\sin\left(
F_m(x)-F_n\left(\frac{m}{n}x\right)\right)}{mx(4-x)\left(t-
x\right)}\D x+
\int_{\frac{\sqrt{N}}{m}}^{\frac{N}{m}c_-}\frac{\sin\left(
F_n\left(\frac{m}{n}x\right)+
F_m\left(x\right)\right)}{mx(4-x)\left(t- x\right)}\D
x\right)\\
&+O(m^{-\frac{7}{4}}).
\end{split}
\end{equation*}
The first term can be simplified using (\ref{eq:diffmn}) while
integration by parts shows that the second term is of order
$O\left(m^{-\frac{7}{4}}\right)$. This gives
\begin{equation*}
\mathcal{J}_{20}=\int_{\frac{\sqrt{N}}{m}}^{\frac{N}{m}c_-}
\frac{\sin\left((n-m)\arccos\left(\frac{x}{2}-1\right)\right)}{mx(4-x)(t-
x)}\D x+O(m^{-\frac{4}{7}}).
\end{equation*}
As $\frac{N}{n}c_--\frac{N}{m}c_-$ is of order $m^{-1}$, we see that
\begin{equation*}
\int_{\frac{N}{n}c_-}^{\frac{N}{m}c_-}\frac{\sin\left((m-n)\arccos\left(\frac{x}{2}-1\right)\right)}{mx(4-x)(t-
x)}\D x=O\left(m^{-2+2\varepsilon}\right).
\end{equation*}
Similarly, we can change the lower limit to $\frac{\sqrt{N}}{n}$ and
add an error term of order $O\left(m^{-2}\right)$. This proves the
proposition.
\end{proof}
As we are only going to consider the values of $m$ and $n$ with
$m-n=\pm 1$ or $\pm 2$, we can simplify the double integrals
further. First let us consider the case when $m-n=\pm 1$. A simple
calculation shows that
\begin{equation}\label{eq:sinarc}
\sin\left(\arccos\left(\frac{x}{2}-1\right)\right)=\frac{\sqrt{x(4-x)}}{2}.
\end{equation}
From this we obtain
\begin{equation}\label{eq:sinarc2}
\begin{split}
\sin\left(2\arccos\left(\frac{x}{2}-1\right)\right)&=\frac{\sqrt{x(4-x)}(x-2)}{2}.
\end{split}
\end{equation}
We can now use these and residue calculation to compute the double
integrals.
\begin{proposition}\label{pro:doubulk}
Let $\mathcal{F}_l$ be
\begin{equation}\label{eq:fl}
\mathcal{F}_l=\int_{\frac{N}{n}c_-}^{4}\frac{4\sin\left(l\arccos\left(\frac{x}{2}-1\right)\right)}{Nx(4-x)\left(t-
x\right)}\D x
\end{equation}
Then the asymptotics of the double integrals in the bulk region are
given by the followings.
\begin{equation}\label{eq:doubulk1}
\begin{split}
\mathcal{J}_2&=
                   \pm\frac{2\pi}{N\sqrt{t(t-4 )}}\mp\mathcal{F}_1+O\left(N^{-\frac{5}{4}}\right), \quad n-m=\pm1, \\
\mathcal{J}_2&=\pm\frac{2\pi}{N }\left(\frac{t-2 }{\sqrt{t(t-4
)}}-1\right) \mp\mathcal{F}_2+O\left(N^{-\frac{5}{4}}\right), \quad
n-m=\pm2.
\end{split}
\end{equation}
\end{proposition}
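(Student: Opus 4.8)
The plan is to start from the representation established in the previous proposition, namely
\[
\mathcal{J}_2 = \frac{4}{N}\int_{\frac{\sqrt{N}}{n}}^{\frac{N}{n}c_-} \frac{\sin\left((n-m)\arccos\left(\frac{x}{2}-1\right)\right)}{x(4-x)(t-x)}\D x+O\left(n^{-\frac{7}{4}}\right),
\]
and to evaluate the remaining single integral in closed form once the half‑angle identities (\ref{eq:sinarc}) and (\ref{eq:sinarc2}) are invoked. First I would split the range of integration as $\int_{\frac{\sqrt N}{n}}^{\frac{N}{n}c_-} = \int_0^4 - \int_0^{\frac{\sqrt N}{n}} - \int_{\frac{N}{n}c_-}^4$ (note $\frac{N}{n}c_- = 4-N^{-\varepsilon}+O(N^{-1})<4$, so the whole range stays in $(0,4)$ and the branch of $\arccos$ is unambiguous). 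Up to the prefactor $4/N$, the last piece is precisely $\mathcal{F}_{|n-m|}$, carrying the sign dictated by the parity of $\sin$ in $n-m$; this is exactly the $\mp\mathcal{F}_l$ term in the statement, and it isolates the boundary layer at $x=4$ that cannot be integrated explicitly.

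Second, I would dispose of the contribution near $x=0$. Using (\ref{eq:sinarc}) and (\ref{eq:sinarc2}), one has $\sin\!\left(l\arccos\left(\frac{x}{2}-1\right)\right)=O(x^{1/2})$ as $x\to0^+$ for $l=1,2$, so the integrand is $O(x^{-1/2})$ there and $\int_0^{\sqrt N/n}x^{-1/2}\D x = O(N^{1/4}n^{-1/2}) = O(N^{-1/4})$; multiplied by $4/N$ this is $O(N^{-5/4})$. Combined with the $O(n^{-7/4})$ error from the previous proposition — which is $o(N^{-5/4})$ since $n=N-j$ with $j=O(1)$ — this produces the claimed error term. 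I expect this borderline estimate to be the only delicate point of the argument: the $N^{-5/4}$ in the statement is exactly the size of the endpoint contribution at $x=0$, so the splitting must be arranged with that endpoint in view and no cruder bound will suffice; everything else is exact bookkeeping.

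Finally I would compute $\int_0^4 \frac{\sin\left(l\arccos\left(\frac{x}{2}-1\right)\right)}{x(4-x)(t-x)}\D x$ for $l=1,2$. For $l=1$, (\ref{eq:sinarc}) turns the integrand into $\frac{1}{2\sqrt{x(4-x)}(t-x)}$, and the substitution $x=2-2\cos\theta$ (so $\sqrt{x(4-x)}=2\sin\theta$, $\D x=2\sin\theta\,\D\theta$) reduces it to $\frac12\int_0^\pi \frac{\D\theta}{(t-2)+2\cos\theta}=\frac{\pi}{2\sqrt{(t-2)^2-4}}=\frac{\pi}{2\sqrt{t(t-4)}}$, the branch of the root being fixed by $t$ lying at finite distance from $[0,4)$; multiplying by $4/N$ gives the term $\pm\frac{2\pi}{N\sqrt{t(t-4)}}$. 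For $l=2$, (\ref{eq:sinarc2}) gives integrand $\frac{x-2}{2\sqrt{x(4-x)}(t-x)}$, and writing $x-2=(t-2)-(t-x)$ splits this into $(t-2)$ times the $l=1$ integral plus $-\tfrac12\int_0^4\frac{\D x}{\sqrt{x(4-x)}}=-\tfrac{\pi}{2}$, which assembles to $\frac{2\pi}{N}\left(\frac{t-2}{\sqrt{t(t-4)}}-1\right)$. Tracking the sign of $\sin((n-m)\theta)$ through the four cases $n-m=\pm1,\pm2$ yields the $\pm/\mp$ pattern in (\ref{eq:doubulk1}), completing the proof.
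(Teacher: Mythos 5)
Your proposal is correct and follows essentially the same route as the paper: split the range into $[0,\sqrt N/n]$, $[\sqrt N/n,(N/n)c_-]$, $[(N/n)c_-,4]$; recognize the last piece as $\mathcal{F}_{n-m}$; bound the $[0,\sqrt N/n]$ contribution by $O(N^{-5/4})$ using the $O(x^{-1/2})$ behavior of the integrand at $x=0$; and evaluate $\int_0^4$ in closed form. The only point of departure is the evaluation of $\int_0^4$: you use the real-variable substitution $x=2-2\cos\theta$ reducing it to the elementary integral $\int_0^\pi\frac{\D\theta}{(t-2)+2\cos\theta}$, whereas the paper rewrites $\sqrt{x(4-x)}$ as the boundary value $(\sqrt{x(x-4)})_+/i$ and applies Cauchy's theorem to a contour surrounding $[0,4]$, collecting residues at $x=t$ and (for $l=2$) at infinity. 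The two evaluations are of comparable length and both fix the branch of $\sqrt{t(t-4)}$ by the hypothesis that $t$ stays a finite distance from $[0,4)$; the substitution is arguably the more self-contained, while the residue route keeps the argument uniform with the complex-analytic framework used elsewhere in the paper.
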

\begin{proof} We shall compute the integrals using Cauchy theorem.
Since $\left(\sqrt{x-4}\right)_+=i\sqrt{4-x}$ on $\left[0,4\right]$,
we have, by using (\ref{eq:sinarc}), the following
\begin{equation*}
\int_{0}^{4}\frac{\sin\left(\arccos\left(\frac{x}{2}-1\right)\right)}{x(4-x)\left(t-
x\right)}\D x= i\int_{0}^4\frac{1}{2\sqrt{x(x-4)}\left(t-
x\right)}\D x,
\end{equation*}
and similar relations for $n-m=2$. The right hand side can be
computed using Cauchy's theorem and we obtain
\begin{equation*}
\int_{0}^{4}\frac{\sin\left((n-m)\arccos\left(\frac{x}{2}-1\right)\right)}{x(4-x)(t-
x)}\D x=\left\{
    \begin{array}{ll}
       \frac{\pi}{2\sqrt{t(t-4 )}}
, & \hbox{$n-m=1$;} \\
      \frac{\pi}{2 }\left(\frac{t-2 }{\sqrt{t(t-4 )}}-1\right), & \hbox{$n-m=2$;} \\
    \end{array}
  \right.
\end{equation*}
By (\ref{eq:sinarc}) and (\ref{eq:sinarc2}), we see that
\begin{equation*}
\int_{0}^{\frac{\sqrt{N}}{n}}\frac{4\sin\left(l\arccos\left(\frac{x}{2}-1\right)\right)}{Nx(4-x)\left(t-
x\right)}\D x=O\left(N^{-\frac{5}{4}}\right).
\end{equation*}
From this and (\ref{eq:doubulk0}), (\ref{eq:sinarc}) and
(\ref{eq:sinarc2}), we have
\begin{equation*}
\begin{split}
\mathcal{J}_2&=\int_0^4\frac{\sin\left((n-m)\arccos\left(\frac{x}{2}-1\right)\right)}{x(4-x)\left(t-
x\right)}\D x-\mathcal{F}_{n-m}+O\left(N^{-\frac{5}{4}}\right).
\end{split}
\end{equation*}
This completes the proof of the Proposition.
\end{proof}
This completes the analysis in the bulk region. We will now move
onto the Airy region.
\subsubsection{The Airy region}\label{se:Airy}
The analysis in the Airy region is more difficult compare to the
other regions as we will need to consider the case where the point
$t$ is inside the Airy region. First let us compute the asymptotics
of the function $\hat{L}_n(x)$ in the Airy region.

As the asymptotics contain the functions $Ai(f_n)$ and
$Ai^{\prime}(f_n)$, we shall make a change of variable and write the
asymptotics in terms of the function $f_n$. First, from the
expression of $f_n$ in Definition \ref{de:bihomap}, we see that the
map $f_n$ is of the following order inside the Airy region.
\begin{equation}\label{eq:fnexp}
f_n=\left(\frac{n}{4}\right)^{\frac{2}{3}}(x-4)\left(1-\frac{x-4}{20}
+O((x-4)^2)\right),
\end{equation}
and hence $x-4$ is of order
\begin{equation}\label{eq:x4}
x-4=\left(\frac{4}{n}\right)^{\frac{2}{3}}f_n\left(1+\frac{1}{20}f_n\left(\frac{4}{n}\right)^{\frac{2}{3}}
+O\left(\frac{f_n^2}{n^{\frac{4}{3}}}\right)\right).
\end{equation}
Let us introduce the scaled variable $T$ to be
\begin{equation}\label{eq:T}
T=\left(\frac{N}{4}\right)^{\frac{2}{3}}\left(t-4 \right).
\end{equation}
If $T$ is close to the real axis, say $T=T_R+iT_I$, $T_R$,
$T_I\in\mathbb{R}$ and $|T_I|<\delta$, then we will deform the
integration contour $\mathbb{R}$ in $\left<L_n,L_m\right>_1$ inside
a small neighborhood of $T_R$ into the upper/lower half plane
depending on the sign of $T_I$, so that the distance between $T$ and
the integration contour is always greater than some finite
$\delta>0$. If $T\in\mathbb{R}$, then for definiteness, we will
deform the contour into the lower half plane and also deform the
branch cut of $(T-f_n)^{\frac{1}{2}}$ into the upper half plane so
that the integrals are well-defined. As pointed out in the
introduction, such deformations will not affect our final result.
For simplicity of the notations, we will denote the integration
contour in the Airy region by $\mathbb{R}$ in all cases, but with
the understand that appropriate deformations of the contours are
performed when $T$ is close to $\mathbb{R}$.

As $T$ is of finite distance from the integration contour, we have
\begin{equation}\label{eq:calB}
\begin{split}
&\left(t-\frac{n}{N}
x\right)^{-\frac{1}{2}}=\left(\frac{4}{N}\right)^{-\frac{1}{3}}
\left(T- f_n\right)^{-\frac{1}{2}}\\
&\times\left(1+\frac{4^{\frac{1}{3}} (N-n)}{N^{\frac{1}{3}}(T-
u)}+\frac{1}{T-
u}G_1\left(\frac{u(n-N)}{N}\right)+\frac{n}{4^{\frac{2}{3}}N^{\frac{1}{3}}(T-
u)}
G_2\left(\frac{u}{n^{\frac{2}{3}}}\right)\right)^{-\frac{1}{2}}\\
&=\left(\frac{4}{N}\right)^{-\frac{1}{3}} \left(T-
f_n\right)^{-\frac{1}{2}}\mathcal{G}_n^{-\frac{1}{2}}(f_n),
\end{split}
\end{equation}
where $G_1$ and $G_2$ are power series expansions in their arguments
with coefficients independent on $n$ and $N$. Note that the series
expansion $B$ starts at power 1 while the expansion $C$ starts at
power 2.

Let us write $f_n=u$, then by using (\ref{eq:calB}) and
(\ref{eq:x4}), we can write $\hat{L}_n(x)/u^{\prime}$ as a series in
terms of $u$.
\begin{equation}\label{eq:airyL}
\begin{split}
&\frac{\hat{L}_n(x)}{u^{\prime}(x)}=\frac{N^{\frac{5}{6}}\sqrt{\pi}}{\sqrt{2}n^{\frac{5}{6}}(T-
u)^{\frac{1}{2}}}\Bigg(
\left(\frac{n}{4}\right)^{-\frac{1}{6}}Ai(u)\left(a_0+\sum_{k=1}^{\infty}\frac{a_{k}u^k}{n^{\frac{2k}{3}}}
\right)
\\&+\frac{\alpha+1}{2}
\left(\frac{n}{4}\right)^{-\frac{1}{2}}Ai^{\prime}(u)\Bigg(b_0+\sum_{k=1}^{\infty}\frac{b_{k}u^k}{n^{\frac{2k}{3}}}
\Bigg)\Bigg)\mathcal{G}_n(u)
\end{split}
\end{equation}
For some constants $a_0$, $b_0$, $a_{k}$ and $b_{k}$ of the form
\begin{equation*}
\begin{split}
a_0&=1+O(n^{-1}),\quad b_0=1+O\left(n^{-1}\right),
a_{k}=a_{k,0}+O\left(n^{-1}\right),\\
b_{k}&=b_{k,0}+O\left(n^{-1}\right),\quad
\end{split}
\end{equation*}
where $a_{k,0}$ and $b_{k,0}$ are independent on $n$.

As the functions $\hat{L}_n$ are close to each other inside the Airy
region, we will introduce the following function $\mathcal{A}(U)$.
\begin{equation}\label{eq:A}
\mathcal{A}(U)=\frac{\hat{L}_N(x(U))}{u^{\prime}(x(U))},
\end{equation}
where $x(U)$ is the inverse function of $U=f_N(x)$. The function
$\mathcal{A}(U)$ is regarded as a function in the variable $U$. We
shall express the double integrals in terms of the function
$\mathcal{A}$.

As in (\ref{eq:x4}) we can write $x(U)$ as a power series in $U$
(but with $n$ replaced by $N$). In particular, we have
\begin{equation*}
\begin{split}
\mathcal{A}(U)&=\frac{\sqrt{\pi}}{\sqrt{2}(T-
U)^{\frac{1}{2}}}\Bigg(
\left(\frac{N}{4}\right)^{-\frac{1}{6}}Ai(U)\left(\tilde{a}_0+\sum_{k=1}^{\infty}\frac{\tilde{a}_{k}U^k}{N^{\frac{2k}{3}}}
\right)
\\&+\frac{\alpha+1}{2}
\left(\frac{N}{4}\right)^{-\frac{1}{2}}Ai^{\prime}(U)\Bigg(\tilde{b}_0+\sum_{k=1}^{\infty}\frac{\tilde{b}_{k}U^k}{N^{\frac{2k}{3}}}
\Bigg)\Bigg)\mathcal{G}_N(U)
\end{split}
\end{equation*}
where $\tilde{a}_k=a_k+O\left(n^{-1}\right)$ and
$\tilde{b}_k=b_k+O\left(n^{-1}\right)$. If we now let $x$ be the
inverse function of $u=f_n(x)$ and use (\ref{eq:calB}) and
(\ref{eq:airyL}) to represent $\hat{L}_n(x(u))/u^{\prime}(x(u))$ as
a function in $u$, then we see that, as a function of $u$, we have
\begin{equation}\label{eq:AL}
\begin{split}
\frac{\hat{L}_n(x(u))}{u^{\prime}(x(u))}&=\left(\mathcal{A}(u)+\frac{Ai(u)}{(T-
u)^{\frac{1}{2}}}\left(\frac{N}{4}\right)^{-\frac{1}{6}}\mathcal{C}_{1,n}(u)
+\frac{Ai^{\prime}(u)}{(T- u)^{\frac{1}{2}}N^{\frac{1}{2}}}\mathcal{C}_{2,n}(u)\right)\\
&\times\left(\mathcal{E}_n(u)+ \mathcal{D}_n(u)\right),
\end{split}
\end{equation}
where $\mathcal{C}_{i,n}(u)$ and $\mathcal{D}_n(u)$ are power series
of the form
\begin{equation}\label{eq:CD}
\begin{split}
\mathcal{C}_{i,n}(u)&=\sum_{k,j}^{\infty}\frac{c_{i,k,j,n}u^k}{N^{\frac{2(k-j+3)}{3}}(T- u)^j},\quad i=1,2,\\
\mathcal{D}_n(u)&=\sum_{k,j}^{\infty}\frac{d_{k,j,l,n}u^k}{N^{\frac{2(k+j)+l}{3}}(T-
u)^{j+l}},
\end{split}
\end{equation}
where $d_{k,j,l,n}$ and $c_{i,k,j,n}$ are bounded and $k\geq 2j$. In
the series $\mathcal{C}_{i,n}(u)$, both indices $k$ and $j$ start
from $0$, while in $\mathcal{D}_n(u)$, $k$ and $l$ start from $1$
and $j$ from $0$. The term $\mathcal{E}_n(u)$ is of the form
\begin{equation}\label{eq:E}
\mathcal{E}_n(u)=1-\frac{1}{2}v_{0,n}+\frac{3}{8}v_{0,n}^2+O\left(N^{-1}\right),
\end{equation}
where $v_0$ is given by
\begin{equation}\label{eq:v0}
v_{0,n}=\frac{4^{\frac{1}{3}} (N-n)}{N^{\frac{1}{3}}(T- u)}
\end{equation}
We are now ready to compute the single integrals. First let us show
the following
\begin{lemma}\label{le:airyint}
Let $k,j\geq 0$, then as $s_{\pm}\rightarrow +\infty$ and
$|s_-|/|s_+|=O(1)$, we have
\begin{equation}\label{eq:airyint}
\begin{split}
\int_{s_-}^{s_+}\frac{u^kAi(u)}{(T- u)^{\frac{j}{2}}}\D
u&=O\left(|s_-|^{k-\frac{j}{2}-\frac{3}{4}}\right)+O\left(1\right),\\
\int_{s_-}^{s_+}\frac{u^kAi^{\prime}(u)}{(T- u)^{\frac{j}{2}}}\D
u&=O\left(|s_-|^{k-\frac{j}{2}-\frac{1}{4}}\right)+O\left(1\right),\\
\end{split}
\end{equation}
Let $\nu(u)$ be a function of $u$ such that $\nu(u)=O(u)$ as
$u\rightarrow\infty$ and let $v_+=O(s_+)$, then we have
\begin{equation}\label{eq:airydou}
\begin{split}
\int_{s_-}^{s_+}\frac{u^{k_1}Ai^{(i_1)}(u)}{(T-
u)^{\frac{j_1}{2}}}\int_{\nu(u)}^{v_+}
\frac{u^{k_2}Ai^{(i_2)}(u)}{(T- u)^{\frac{j_2}{2}}}\D v \D
u=O\left(|s_-|^{k_1+k_2-\frac{j_1+j_2-i_1-i_2}{2}}\right)+O(1),
\end{split}
\end{equation}
where $Ai^{(0)}=Ai$ and $Ai^{(1)}=Ai^{\prime}$.
\end{lemma}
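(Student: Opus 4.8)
The plan is to reduce both estimates to the elementary oscillatory bound $\int^{W}w^{a}e^{\pm iw}\,\D w=O(W^{a})+O(1)$ (integration by parts in $w$) after changing variables to the Airy phase. Recall that $Ai(u)$ and $Ai^{\prime}(u)$ decay super-exponentially as $u\rightarrow+\infty$, while $Ai(-t)=\pi^{-1/2}t^{-1/4}\bigl(\sin(\tfrac23 t^{3/2}+\tfrac\pi4)+O(t^{-3/2})\bigr)$ and $Ai^{\prime}(-t)=-\pi^{-1/2}t^{1/4}\bigl(\cos(\tfrac23 t^{3/2}+\tfrac\pi4)+O(t^{-3/2})\bigr)$ as $t\rightarrow+\infty$. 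In the double-scaling regime the relevant $T$ stays bounded while $|s_{\pm}|\rightarrow\infty$, so $(T-u)^{-j/2}$ is bounded on the (possibly deformed) integration contour and equals $O(|u|^{-j/2})$ as soon as $|u|\geq 1$; the part of each integral with $|u|=O(1)$ contributes only $O(1)$.

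For the single integrals (\ref{eq:airyint}) I split the range at $u=0$. On $[0,s_{+}]$ the integrand is super-exponentially small, so that piece is $O(1)$. On $[s_{-},0]$ I substitute $u=-t$ and then $w=\tfrac23 t^{3/2}$, so that $t=(\tfrac32 w)^{2/3}$ and $\D t=(\tfrac32 w)^{-1/3}\,\D w$; inserting the asymptotics above and $(T+t)^{-j/2}=t^{-j/2}(1+O(t^{-1}))$, the large-$t$ part of the integrand becomes a constant multiple of $w^{a}\sin(w+\tfrac\pi4)\,\D w$ with $a=\tfrac23 k-\tfrac13 j-\tfrac12$ when $i=0$, respectively $w^{a}\cos(w+\tfrac\pi4)\,\D w$ with $a=\tfrac23 k-\tfrac13 j-\tfrac16$ when $i=1$, plus lower-order corrections; the corresponding cut-off is $W=\tfrac23|s_{-}|^{3/2}$. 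Integration by parts bounds the oscillatory part by $O(W^{a})+O(1)$, and $W^{a}\asymp|s_{-}|^{3a/2}$ yields exactly $O(|s_{-}|^{k-j/2-3/4})+O(1)$ for $i=0$ and $O(|s_{-}|^{k-j/2-1/4})+O(1)$ for $i=1$.

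For the double integral (\ref{eq:airydou}) set $G(u)=\int_{\nu(u)}^{v_{+}}v^{k_{2}}Ai^{(i_{2})}(v)(T-v)^{-j_{2}/2}\,\D v$. Since $\nu(u)=O(u)$ and $v_{+}=O(s_{+})$, (\ref{eq:airyint}) gives, for $u\leq 0$, a splitting $G(u)=c+G_{\mathrm{tail}}(u)$ with $c$ a constant and $G_{\mathrm{tail}}(u)$ oscillatory with envelope $O(|u|^{k_{2}-j_{2}/2+i_{2}/2-3/4})$. For $\int_{s_{-}}^{0}u^{k_{1}}Ai^{(i_{1})}(u)(T-u)^{-j_{1}/2}G_{\mathrm{tail}}(u)\,\D u$ the pointwise bounds $Ai^{(i)}(u)=O(|u|^{-1/4+i/2})$ give an integrand whose absolute value is $O(|u|^{\,k_{1}+k_{2}-1+(i_{1}+i_{2})/2-(j_{1}+j_{2})/2})$, and integration produces the claimed $O(|s_{-}|^{\,k_{1}+k_{2}-(j_{1}+j_{2}-i_{1}-i_{2})/2})$; for the constant piece $c\int_{s_{-}}^{0}u^{k_{1}}Ai^{(i_{1})}(u)(T-u)^{-j_{1}/2}\,\D u$ one applies (\ref{eq:airyint}) a second time and verifies, using the bound $k\geq 2j$ satisfied by the expansions to which the lemma is applied (cf.\ (\ref{eq:CD})), that the outcome is subordinate to the main term. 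The contributions of $[0,s_{+}]$ and of the endpoint $v_{+}$ are again $O(1)$ by super-exponential decay. This runs parallel to the proof of Lemma \ref{le:besint}.

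The step I expect to be the genuine obstacle is uniformity in $T$ when $T$ approaches $\mathbb{R}$. As explained before the lemma, the integration contour and the branch cut of $(T-f_{n})^{1/2}$ are then deformed into opposite half-planes so that the distance from $T$ to the contour stays bounded below by a fixed $\delta>0$; one must check that the substitution $w=\tfrac23 t^{3/2}$, the integration-by-parts estimates, and the matching of $\hat{L}_{n}$ with the Airy model near the edges of the Airy region (the source of the $O(t^{-3/2})$ corrections) all remain valid and uniform after this deformation and as $T$ varies over the relevant range. These verifications are routine but require care.
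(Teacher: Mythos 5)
Your single-integral argument lands the same exponents by the same mechanism (integration by parts against the Airy oscillation), but the bookkeeping is different: you substitute $u=-t$, $w=\tfrac23 t^{3/2}$ to turn the integrand into $w^a\sin(w+\tfrac\pi4)$ or $w^a\cos(w+\tfrac\pi4)$ and integrate by parts in $w$, whereas the paper integrates by parts directly in $u$ using the antiderivative $\int_{-\infty}^u Ai(v)\,\D v=O(|u|^{-3/4})$ (this is the analogue of its Lemma \ref{le:besint}, as you note). The paper's route is cleaner in that it avoids the Jacobian and phase bookkeeping, and it transfers immediately to a complex or slightly deformed $u$-contour without reinterpreting the real change of variables; your route makes the source of the $-\tfrac34$ vs.\ $-\tfrac14$ shift more transparent. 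Both are fine.

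Where you add genuine value is in the double integral. The paper simply asserts that (\ref{eq:airydou}) ``follows immediately from (\ref{eq:airyint}) and (\ref{eq:airyasymp}),'' but an absolute-value estimate on the outer integrand paired with the (\ref{eq:airyint}) bound on the inner one does \emph{not} reproduce (\ref{eq:airydou}) in full generality: when $k_2-j_2/2+i_2/2<3/4$ the resulting $O(|s_-|^{k_1+3/4+i_1/2-j_1/2})$ term is larger than claimed. You correctly handle this by splitting the inner integral into its $u\to-\infty$ limit plus a decaying tail, bounding the tail piece by brute force, and treating the ``constant'' piece by a second application of (\ref{eq:airyint}) to the outer integral; and you correctly observe that subordination of that second contribution is only guaranteed under the constraint $k\geq 2j$ inherited from (\ref{eq:CD}), which is not a stated hypothesis of the lemma. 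One small imprecision worth tightening: the $O(1)$ in (\ref{eq:airyint}) is a bounded function of $s_-$, not a literal constant, so the decomposition $G(u)=c+G_{\mathrm{tail}}(u)$ should be justified by taking $c=\lim_{u\to-\infty}G(u)$ (when the convergence exponent permits) via the integration-by-parts estimate rather than by quoting (\ref{eq:airyint}) verbatim. Your closing paragraph on uniformity in $T$ addresses the deployment of the lemma rather than the lemma itself; the paper defers that discussion to the surrounding text, which is consistent with your treating it as a separate check.
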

\begin{proof} First note that, as the Airy
function decays exponentially as $u\rightarrow \infty$,
\begin{equation}\label{eq:airyasymp}
\begin{split}
Ai(u)&=\frac{1}{2\sqrt{\pi}}u^{-\frac{1}{4}}e^{-\frac{2}{3}u^{\frac{3}{2}}}\left(1+O\left(u^{-\frac{3}{2}}\right)\right),\quad
u\rightarrow +\infty,\\
Ai(-u)&=\frac{\sin\left(\frac{2}{3}u^{\frac{3}{2}}+\frac{\pi}{4}\right)}
{\sqrt{\pi}u^{\frac{1}{4}}}\left(1+O\left(u^{-\frac{3}{2}}\right)\right),\quad
u\rightarrow-\infty.
\end{split}
\end{equation}
From this we have
\begin{equation*}
\int_0^{s_+}\frac{u^kAi(u)}{(T- u)^{\frac{j}{2}}}\D
u=O\left(1\right),
\end{equation*}
as $s_+\rightarrow +\infty$. Therefore let us consider the integral
in the negative real axis. Integrating by parts, we obtain
\begin{equation}\label{eq:intneg}
\begin{split}
\int_{s_-}^0\frac{u^kAi(u)}{(T- u)^{\frac{j}{2}}}\D u&=
\left(\frac{u^k\int_{-\infty}^uAi(v)\D v}{(T-
u)^{\frac{j}{2}}}\right)_{s_-}^0-k\int_{s_-}^0\frac{u^{k-1}\int_{-\infty}^uAi(v)\D
v}{(T- u)^{\frac{j}{2}}}\D u\\
&-\frac{j }{2}\int_{s_-}^0\frac{u^k\int_{-\infty}^uAi(v)\D v}{(T-
u)^{\frac{j}{2}+1}}\D u
\end{split}
\end{equation}
Now by (\ref{eq:airyasymp}), we see that $\int_{-\infty}^uAi(v)\D
v=O\left(u^{-\frac{3}{4}}\right)$ as $u\rightarrow-\infty$ and is
bounded for $u\in\mathbb{R}$. This gives us the following estimate
\begin{equation*}
\int_{s_-}^0\frac{u^kAi(u)}{(T- u)^{\frac{j}{2}}}\D
u=O\left(|s_-|^{k-\frac{j}{2}-\frac{3}{4}}\right)+O(1)
\end{equation*}
For integrals involving the derivative $Ai^{\prime}(u)$, we again
note that $Ai^{\prime}(u)$ is decaying exponentially as
$u\rightarrow+\infty$ and we again have
\begin{equation*}
\int_0^{s_+}\frac{u^kAi^{\prime}(u)}{(T- u)^{\frac{j}{2}}}\D
u=O\left(1\right).
\end{equation*}
For the integral on the negative real axis, we perform integration
by parts again and use the estimate
\begin{equation}\label{eq:airyest}
|Ai(s)|\leq C\left(1+|s|\right)^{-\frac{1}{4}},\quad s<0
\end{equation}
for some constant $C$. This shows that the integral on the negative
real axis is of order
\begin{equation*}
\int_{s_-}^0\frac{u^kAi^{\prime}(u)}{(T- u)^{\frac{j}{2}}}\D
u=O\left(|s_-|^{k-\frac{j}{2}-\frac{1}{4}}\right)+O\left(1\right).
\end{equation*}
This proves (\ref{eq:airyint}). The estimate (\ref{eq:airydou}) now
follows immediately from (\ref{eq:airyint}) and
(\ref{eq:airyasymp}).
\end{proof}
Let us now transform the limits in the Airy region into the variable
$u$. As in the previous cases, we are interested in the integral
\begin{equation*}
\left(i\kappa_{n-1}\right)^{\frac{1}{2}}\int_{c_-}^{c_+}L_n(x)w(x)\D
x=\mathcal{I}_{3,n}=\frac{n}{N}\int_{\frac{N}{n}c_-}^{\frac{N}{n}c_+}\hat{L}_n(x)\D
x,
\end{equation*}
The following is an immediate consequence of (\ref{eq:AL}) and the
estimates (\ref{eq:airyint}).
\begin{proposition}\label{pro:singairy} Let $T$ be defined as in
(\ref{eq:T}), then the single integral in the Airy region is given
by
\begin{equation}\label{eq:singairy}
\begin{split}
&\frac{N}{n}\left(i\kappa_{n-1}\right)^{\frac{1}{2}}\int_{c_-}^{c_+}L_n(x)w(x)\D
x=\int_{u_-}^{u_+}\mathcal{A}\mathcal{E}_n\D u
+O\left(N^{-\frac{7}{6}}\right)
\end{split}
\end{equation}
\end{proposition}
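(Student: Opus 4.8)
The plan is to reduce everything to the expansion (\ref{eq:AL}) and the integral estimates of Lemma \ref{le:airyint}. Starting from the rescaled integral $\frac{N}{n}\mathcal{I}_{3,n}=\int_{\frac{N}{n}c_-}^{\frac{N}{n}c_+}\hat{L}_n(x)\D x$, I would change variables to $u=f_n(x)$. Since $f_n$ is conformal on the disc of radius $\delta$ about $4$ (Definition \ref{de:bihomap}), for $N$ large the endpoints $\frac{N}{n}c_\pm$ lie inside this disc, so $u_\pm=f_n(\frac{N}{n}c_\pm)$ are well defined; moreover (\ref{eq:behf}) gives $|u_\pm|=O(N^{2/3-\varepsilon})$ and $|u_-|/|u_+|=O(1)$, which is exactly the regime in which Lemma \ref{le:airyint} is stated. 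Thus $\frac{N}{n}\mathcal{I}_{3,n}=\int_{u_-}^{u_+}\frac{\hat{L}_n(x(u))}{u^{\prime}(x(u))}\D u$.

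Next I would insert (\ref{eq:AL}), writing $\frac{\hat{L}_n(x(u))}{u^{\prime}(x(u))}=(\mathcal{A}+\mathcal{R})(\mathcal{E}_n+\mathcal{D}_n)$ with $\mathcal{R}=\frac{Ai(u)}{(T-u)^{1/2}}(N/4)^{-1/6}\mathcal{C}_{1,n}+\frac{Ai^{\prime}(u)}{(T-u)^{1/2}N^{1/2}}\mathcal{C}_{2,n}$. The integral then splits into the main term $\int_{u_-}^{u_+}\mathcal{A}\mathcal{E}_n\D u$ and the three error integrals $\int\mathcal{A}\mathcal{D}_n\D u$, $\int\mathcal{R}\mathcal{E}_n\D u$ and $\int\mathcal{R}\mathcal{D}_n\D u$. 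For each error integral I would substitute the explicit series (\ref{eq:CD}) and (\ref{eq:E}); every resulting summand is a fixed negative power of $N$ times an integral of the form $\int_{u_-}^{u_+}u^kAi^{(i)}(u)(T-u)^{-J/2}\D u$, which is controlled by (\ref{eq:airyint}) together with $|u_-|=O(N^{2/3-\varepsilon})$. Keeping track of exponents, the $\mathcal{R}$-type contributions are of strictly smaller order than $N^{-7/6}$ (they carry an extra factor $N^{-2}$ from the leading coefficient of $\mathcal{C}_{i,n}$), while the largest error comes from $\int\mathcal{A}\mathcal{D}_n\D u$: the $k=1$, $j=0$, $l=1$ term of $\mathcal{D}_n$ paired with the $(N/4)^{-1/6}Ai(u)(T-u)^{-1/2}$ part of $\mathcal{A}$ produces $N^{-7/6}$ times $\int_{u_-}^{u_+}u\,Ai(u)(T-u)^{-3/2}\D u=O(1)$, and every other summand is smaller. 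Collecting these bounds, and using that $\varepsilon$ is small, gives the remainder $O(N^{-7/6})$ in (\ref{eq:singairy}).

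The only delicate point — more bookkeeping than obstacle — is to check that the large-$k$ tails of the series $\mathcal{C}_{i,n}$ and $\mathcal{D}_n$ do not accumulate: here the constraint $k\geq 2j$ in (\ref{eq:CD}), together with the built-in exponential decay of $Ai$ and $Ai^{\prime}$ encoded in Lemma \ref{le:airyint} (whose estimates are uniform in $k$), keeps every power of $N$ generated by $u^k=O(N^{(2/3-\varepsilon)k})$ dominated by the matching negative power of $N$ in the coefficient. No analytic input beyond (\ref{eq:AL}) and (\ref{eq:airyint}) is needed, which is why the proposition follows essentially at once.
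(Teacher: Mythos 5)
Your proof is correct and takes essentially the same approach as the paper, which dismisses the proposition as an immediate consequence of the expansion (\ref{eq:AL}) and Lemma \ref{le:airyint}; you have simply filled in the term-by-term bookkeeping that the paper omits, correctly identifying the $k=1$, $j=0$, $l=1$ term of $\mathcal{D}_n$ as the dominant contribution to the $O(N^{-7/6})$ error.
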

This completes the analysis of the single integral in the Airy
region. We will now analyze the double integrals.

By (\ref{eq:hatL}), we have
\begin{equation}\label{eq:j3}
\begin{split}
&i\sqrt{\kappa_{n-1}\kappa_{m-1}}\int_{c_-}^{c_+}L_n(x)w(x)\int_x^{c_-}L_m(y)w(y)\D
x\D
y\\
&=\mathcal{J}_3=\frac{mn}{N^2}\int_{\frac{N}{n}c_-}^{\frac{N}{n}c_+}\hat{L}_n(x)
\int_{\frac{n}{m}x}^{\frac{N}{m}c_+}\hat{L}_m(y)\D x\D y,
\end{split}
\end{equation}
We will change the integration variables to $u=f_n(x)$ and
$v=f_m(y)$. We will denote the limits of the outer integral by
$u_{\pm}$ and the upper limit of the inner integral by $v_+$.

Let us now compute the lower limit of the inner integral. As both
$u=f_n(x)$ and $v=f_m(y)$ are conformal inside the Airy region, they
can be written as a series of each other. Then by (\ref{eq:x4}) and
the analogue for $f_m$, together with the fact that at the lower
bound, $y=\frac{n}{m}x$, we obtain
\begin{equation*}
\begin{split}
&4\left(\frac{n}{m}-1\right)+\frac{n}{m}\left(\left(\frac{4}{n}\right)^{\frac{2}{3}}u
+\frac{1}{20}\left(\frac{4}{n}\right)^{\frac{4}{3}}u^2+O\left(\frac{u^3}{n^2}\right)\right)\\
&=\left(\frac{4}{m}\right)^{\frac{2}{3}}v+\frac{1}{20}\left(\frac{4}{m}\right)^{\frac{4}{3}}v^2
+O\left(\frac{v^3}{m^2}\right)
\end{split}
\end{equation*}
The following can easily be seen by writing $v$ as a series
expansion in $u$.
\begin{lemma}\label{le:vcoef}
Let $u=f_n(x)$ and let $\nu(u)$ be the value of $v=f_{m}$ at
$y=\frac{n}{m}x$, then as $n,m\rightarrow\infty$ with $m-n$ finite,
we have
\begin{equation}\label{eq:vcoef}
\begin{split}
\nu(u)&=v_0+\left(1+v_1\right)
u+\sum_{l=2}^{\infty}v_lu^l,\quad v_0=(n-m)\left(\frac{4}{m}\right)^{\frac{1}{3}}+O\left(m^{-\frac{4}{3}}\right),\\
\delta v_1&=\frac{1}{15}\frac{m-n}{m}+O\left(m^{-2}\right),\quad
v_l=O\left(m^{-\frac{2l+1}{3}}\right),\quad l\geq 2,
\end{split}
\end{equation}
\end{lemma}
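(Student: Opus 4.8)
The plan is to establish Lemma~\ref{le:vcoef} by comparing coefficients of powers of $u$ in the relation already displayed just above its statement. I would first record that relation explicitly: writing $u=f_n(x)$ and $v=f_m(y)$ and imposing the lower-limit constraint $y=\frac{n}{m}x$, which amounts to $y-4=\frac{n}{m}(x-4)+4\left(\frac{n}{m}-1\right)$, and then substituting the inversion series (\ref{eq:x4}) for $f_n^{-1}$ together with its analogue derived from (\ref{eq:fnexp}) for $f_m$, one obtains the power-series identity
\[
4\left(\frac{n}{m}-1\right)+\frac{n}{m}\left(\left(\frac{4}{n}\right)^{\frac{2}{3}}u+\frac{1}{20}\left(\frac{4}{n}\right)^{\frac{4}{3}}u^{2}+O\!\left(\frac{u^{3}}{n^{2}}\right)\right)=\left(\frac{4}{m}\right)^{\frac{2}{3}}\nu+\frac{1}{20}\left(\frac{4}{m}\right)^{\frac{4}{3}}\nu^{2}+O\!\left(\frac{\nu^{3}}{m^{2}}\right),
\]
which is meaningful because $f_n,f_m$ are conformal on the Airy disc, so $\nu(u)=f_m\!\left(\frac{n}{m}f_n^{-1}(u)\right)$ is analytic there.

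Dividing through by $(4/m)^{2/3}$ and inserting the ansatz $\nu(u)=v_0+(1+v_1)u+\sum_{l\ge2}v_lu^l$, I would match coefficients order by order. The $u^0$ equation gives $v_0=(n-m)(4/m)^{1/3}$ up to the correction from the quadratic term, which is $O(m^{-4/3})$ since $m-n$ is bounded; hence $v_0=(n-m)(4/m)^{1/3}+O(m^{-4/3})$. The $u^1$ equation reads $(1+v_1)\big(1+O(m^{-1})\big)=(n/m)^{1/3}$, where the $O(m^{-1})$ is $\frac{1}{10}(4/m)^{2/3}v_0=\frac{2(n-m)}{5m}$; expanding $(n/m)^{1/3}=1+\frac13\frac{n-m}{m}+O(m^{-2})$ and combining yields $v_1=\frac{1}{15}\frac{m-n}{m}+O(m^{-2})$. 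For $l\ge2$ the $u^l$ equation expresses $v_l$ as the $u^l$-coefficient of the right-hand side (of size $O(m^{-2/3})$) minus $\frac{1}{20}(4/m)^{2/3}$ times the $u^l$-coefficient of $\nu^2$; the leading $O(m^{-2/3})$ pieces cancel because $n^{-1/3}$ and $m^{-1/3}$ agree up to $O(m^{-4/3})$, and the remainder is $v_l=O(m^{-(2l+1)/3})$.

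The routine part is power-counting in $m$, using that $n=m+O(1)$ so all the factors $(4/m)^{k/3}$, $(4/n)^{k/3}$ coincide up to $1+O(m^{-1})$. The one point requiring care, which I expect to be the main obstacle, is the cancellation in the $u^l$ equations for $l\ge2$ that produces the sharp smallness $O(m^{-(2l+1)/3})$; the transparent way to see it is to keep $\nu=f_m\!\left(\frac{n}{m}f_n^{-1}(u)\right)$ in factored form and observe that $f_n^{-1}(u)-4$ and the non-linear part of $f_m$ have expansion coefficients that differ between the indices $n$ and $m$ only at relative order $m^{-1}$, so the departure of $\nu(u)$ from the affine map $v_0+(1+v_1)u$ is correspondingly suppressed term by term. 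Collecting the estimates then gives (\ref{eq:vcoef}).
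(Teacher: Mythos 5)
Your approach is exactly the paper's: take the displayed relation obtained from the constraint $y=\tfrac{n}{m}x$ together with the inversion series (\ref{eq:x4}), expand $\nu$ as a power series in $u$, and match coefficients (the paper itself supplies no further detail, asserting only that the lemma ``can easily be seen by writing $v$ as a series expansion in $u$''). Your explicit computations of $v_0$ and $v_1$ are correct, and your closing factored-form observation --- that $\nu(u)=f_m\bigl(\tfrac{n}{m}f_n^{-1}(u)\bigr)$ collapses to the identity map when $n=m$, so that in the rescaled variable $\sigma=(4/m)^{2/3}u$ the deviation $\nu-u$ carries an overall factor $(n-m)/m$ with $m$-independent coefficients --- is the cleanest route to the uniform bound $v_l=O\bigl(m^{-(2l+1)/3}\bigr)$ for $l\ge 2$, and is a better justification than the ``$n^{-1/3}\approx m^{-1/3}$'' remark alone, since for $l\ge 2$ one must also account for the $(1+v_1)^2$ factor and the cubic and higher terms, which contribute at the same order.
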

By (\ref{eq:airydou}) and (\ref{eq:AL}), we see that the double
integral is given by
\begin{equation}\label{eq:douint}
\begin{split}
&i\sqrt{\kappa_{n-1}\kappa_{m-1}}\int_{c_-}^{c_+}L_n(x)w(x)\int_x^{c_-}L_m(y)w(y)\D
x\D
y=\mathcal{J}_3\\
&=\frac{mn}{N^2}\Bigg(
\int_{u_-}^{u_+}\mathcal{E}_n\mathcal{A}(u)\int_{\nu(u)}^{v_+}\mathcal{E}_m\mathcal{A}(v)\D
v\D u\Bigg)+O\left(N^{-\frac{4}{3}}\right).
\end{split}
\end{equation}
Let us consider the terms in
$\int_{u_-}^{u_+}\mathcal{E}_n\mathcal{A}(u)\int_{\nu(u)}^{v_+}\mathcal{E}_m\mathcal{A}(v)\D
v\D u$ in (\ref{eq:douint}). This term can be written as
\begin{equation}\label{eq:termdouint}
\begin{split}
&\int_{u_-}^{u_+}\mathcal{E}_n\mathcal{A}(u)\int_{\nu(u)}^{v_+}\mathcal{E}_m\mathcal{A}(v)\D
v\D
u\\
&=\sum_{j,k=0}^{2}\frac{h_{kj}}{N^{\frac{j+k}{3}}}\left((N-n)^j(N-m)^k\mathcal{A}(j,k)+(N-n)^k(N-m)^j\mathcal{A}(k,j)\right)
\\
&+O\left(N^{-\frac{4}{3}}\right)
\end{split}
\end{equation}
for some constants $h_{kj}$ that are independent on $N$, $n$ and
$m$, where $\mathcal{A}(j,k)$ are defined as follows.
\begin{equation}\label{eq:calH}
\mathcal{A}(j,k)=\int_{u_-}^{u_+}\frac{\mathcal{A}(u)}{(T-
u)^{j}}\int_{\nu(u)}^{v_+}\frac{\mathcal{A}(v)} {(T- v)^{k}}\D v\D
u.
\end{equation}
First let us consider the leading order term in
(\ref{eq:termdouint}), which is given by $\mathcal{A}(0,0)$.

Let $\mathcal{L}(u)$ be the following function in $u$.
\begin{equation}\label{eq:Lu}
\mathcal{L}(u)=\frac{\D x_N}{\D
u}\frac{2^{\frac{3}{2}}(-u)^{\frac{1}{4}}(T-
u)^{\frac{1}{2}}}{x_N(u)^{\frac{3}{4}}(4-x_N(u))^{\frac{1}{4}}\left(t-
x_N(u)\right)^{\frac{1}{2}}}
\end{equation}
where $x_N(u)$ is defined as the inverse function to $u=f_N(x_N)$.

Note that $\mathcal{L}(u)$ is of order
\begin{equation}\label{eq:Luord}
\mathcal{L}(u)=\left(\frac{N}{4}\right)^{-\frac{1}{6}}\left(1+O\left(\frac{u}{N^{\frac{2}{3}}}\right)\right)
\end{equation}
where the error term is uniform in $T$ as $T\rightarrow
N^{\frac{2}{3}}$. Then $\mathcal{A}(u)$ can be written as
\begin{equation*}
\mathcal{A}(u)=\sqrt{\frac{\pi}{2}}\frac{u^{\frac{1}{4}}\cos\eta_+(x_N)Ai(u)\left(1+O\left(N^{-1}\right)\right)
-u^{-\frac{1}{4}}\sin\eta_+(x_N)Ai^{\prime}(u)\left(1+O\left(N^{-1}\right)\right)}{(-u)^{\frac{1}{4}}
(T- u)^{\frac{1}{2}}}\mathcal{L}(u)
\end{equation*}
The following can be derived using (\ref{eq:matchairy}).
\begin{lemma}\label{le:Aasym}
The derivatives of $\mathcal{A}(u)$ behave as follows as
$u\rightarrow -CN^{\frac{2}{3}-\varepsilon}$ for some $C>0$.
\begin{equation}\label{eq:Aasym}
\begin{split}
\mathcal{A}^{(2k-1)}(u)&=\frac{1}{\sqrt{2}}\left(\frac{(-1)^{k-1}(-u)^{\frac{2k-1}{2}}\sin
F_N(x_N(u))}{(-u)^{\frac{1}{4}}(T- u)^{\frac{1}{2}}}\mathcal{L}(u)+O\left(\frac{u^{k-\frac{9}{4}}}{(T- u)^{\frac{1}{2}}}\right)\right),\\
\mathcal{A}^{(2k)}(u)&=\frac{1}{\sqrt{2}}\left(\frac{u^{k}\cos
F_N(x_N(u))}{(-u)^{\frac{1}{4}}(T-
u)^{\frac{1}{2}}}\mathcal{L}(u)+\left(\frac{u^{k-\frac{7}{4}}}{(T-
u)^{\frac{1}{2}}}\right)\right),
\end{split}
\end{equation}
where $k\geq 0$ and $x_N(u)$ is the inverse of $u=f_N(x)$.
\end{lemma}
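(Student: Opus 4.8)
The plan is to obtain the lemma from just two inputs: the Airy-function representation of $\mathcal{A}(u)$ displayed immediately above the statement, combined with the matching formula \eqref{eq:matchairy}, which recasts the relevant combination of $Ai(f_N)$ and $Ai^{\prime}(f_N)$ as $\tfrac{1}{\sqrt{\pi}}\cos F_N$ modulo an $O\!\left(1/(n(4-x)^{3/2})\right)$ error; and the Airy equation $Ai^{\prime\prime}(u)=uAi(u)$, which controls what differentiation in $u$ does. The relevant regime is $u\to-CN^{2/3-\varepsilon}$, in which $(-u)$ grows like $N^{2/3-\varepsilon}$ while $T$ stays at finite distance from the integration contour, so every relative error of the shape $(-u)^{-a}$ with $a>0$, $N^{-1}$, $(4/N)^{1/3}$, or $u^k/n^{2k/3}=O(N^{-\varepsilon k})$ is genuinely small; this smallness is what the whole argument rests on.

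First I would settle the base cases. Writing $\mathcal{A}(u)=P(u)Ai(u)+Q(u)Ai^{\prime}(u)$, the prefactors are, after absorbing the convergent series $\sum_k a_ku^k/n^{2k/3}$ and $\sum_k b_ku^k/n^{2k/3}$ (whose terms are $O(N^{-\varepsilon k})$) and the $(1+O(N^{-1}))$ factors,
\begin{equation*}
P(u)=\sqrt{\tfrac{\pi}{2}}\,\frac{\cos\eta_+(x_N(u))}{(T-u)^{1/2}}\,\mathcal{L}(u),\qquad Q(u)=-\sqrt{\tfrac{\pi}{2}}\,\frac{\sin\eta_+(x_N(u))}{(-u)^{1/2}(T-u)^{1/2}}\,\mathcal{L}(u).
\end{equation*}
Using \eqref{eq:matchairy} together with the fact that $iN\varphi_{+}$, which equals $\tfrac23(-f_N)^{3/2}$ on the bulk side, so that $F_N(x_N(u))=\eta_+(x_N(u))+\tfrac23(-u)^{3/2}-\tfrac{\pi}{4}$, one gets
\begin{equation*}
\mathcal{A}(u)=\frac{1}{\sqrt{2}}\,\frac{\cos F_N(x_N(u))}{(-u)^{1/4}(T-u)^{1/2}}\,\mathcal{L}(u)+O\!\left(\frac{u^{-7/4}}{(T-u)^{1/2}}\right),
\end{equation*}
the error coming from the matching error $O(1/(n(4-x)^{3/2}))=O((-u)^{-3/2})$ and the bound $\mathcal{L}(u)=O(N^{-1/6})$ from \eqref{eq:Luord}; this is the $k=0$ even case. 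Differentiating once, with $F_N^{\prime}=-(-u)^{1/2}+O((4/N)^{1/3}(-u)^{-1/2})$ and $\mathcal{L}(u)=(N/4)^{-1/6}(1+O(u/N^{2/3}))$, every term in which $\tfrac{\D}{\D u}$ lands on $\mathcal{L}$, $\eta_+$ or $(T-u)^{-1/2}$ is of lower order, and one finds $\mathcal{A}^{\prime}(u)=\tfrac{1}{\sqrt{2}}(-u)^{1/2}\sin F_N(x_N(u))(-u)^{-1/4}(T-u)^{-1/2}\mathcal{L}(u)$ up to an error $O(u^{-5/4}(T-u)^{-1/2})$, which is the $k=1$ odd case.

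Then I would run an induction on the order of the derivative. Differentiating $\mathcal{A}=pAi+qAi^{\prime}$ and invoking $Ai^{\prime\prime}=uAi$ shows the coefficient pair transforms by $(p,q)\mapsto(p^{\prime}+uq,\ p+q^{\prime})$; since multiplication by $u$ dominates $\tfrac{\D}{\D u}$ on the slowly varying prefactors, iterating gives, to leading order, $(p_{2k},q_{2k})=(u^kP,u^kQ)$ and $(p_{2k-1},q_{2k-1})=(u^kQ,u^{k-1}P)$, hence $\mathcal{A}^{(2k)}(u)=u^k\mathcal{A}(u)$ and $\mathcal{A}^{(2k-1)}(u)=u^{k-1}\mathcal{A}^{\prime}(u)$ up to lower order. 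Substituting the base-case asymptotics and using $u^{k-1}(-u)^{1/2}=(-1)^{k-1}(-u)^{(2k-1)/2}$ reproduces precisely the two formulae in the lemma. The error levels follow by multiplying the base-case errors by $u^k$, respectively $u^{k-1}$ --- with the extra factor $(-u)^{1/2}$ in the odd case appearing because differentiating the oscillatory error of $\mathcal{A}$ brings down $F_N^{\prime}$ --- which yields the claimed $O(u^{k-7/4}(T-u)^{-1/2})$ and $O(u^{k-9/4}(T-u)^{-1/2})$.

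The main obstacle is the uniform-in-$k$ bookkeeping of the errors in the induction: one must check that at each differentiation the contributions where $\tfrac{\D}{\D u}$ hits the slowly varying prefactors $\mathcal{L}(u)$, $\eta_+(x_N(u))$, $(-u)^{\pm1/2}$, $(T-u)^{-1/2}$ rather than an Airy function through $Ai^{\prime\prime}=uAi$ are smaller than the retained term by a factor $O((-u)^{-3/2})+O((-u)^{-1/2}(T-u)^{-1})+O((4/N)^{1/3}(-u)^{-1})$, and that after being amplified by the factor $(-u)^k$ coming from the Airy equation the $O(N^{-1})$ tails in the Airy coefficients, the $O((-u)^{-3/2})$ error in \eqref{eq:matchairy}, and the $O(u/N^{2/3})$ error in $\mathcal{L}$ all remain below the advertised error levels; in the regime $u\to-CN^{2/3-\varepsilon}$ this holds because the amplifying power is more than compensated. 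No new idea beyond the matching formula and the Airy equation enters; everything is in these estimates.
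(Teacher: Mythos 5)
The paper gives no actual proof of this lemma; it only remarks "The following can be derived using (\ref{eq:matchairy})." Your proposal therefore cannot be compared step-by-step against the author's argument, but it does exactly what that remark calls for, and I believe it is essentially correct.

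The structure you use --- write $\mathcal{A}=PAi+QAi'$, treat $k=0,1$ directly via \eqref{eq:matchairy} together with $F_N=\eta_++\tfrac23(-u)^{3/2}-\tfrac\pi4$ and $F_N'=-(-u)^{1/2}+O((4/N)^{1/3}(-u)^{-1/2})$, then bootstrap to higher derivatives through the recursion $(p,q)\mapsto(p'+uq,p+q')$ coming from $Ai''=uAi$, keeping only the dominant multiplication by $u$ --- reproduces both displays in the lemma, including the sign $(-1)^{k-1}$ (since $u^{k-1}(-u)^{1/2}=(-1)^{k-1}(-u)^{(2k-1)/2}$) and the error orders $u^{k-7/4}$ and $u^{k-9/4}$ (the odd case picks up one extra $(-u)^{1/2}$ from the first derivative). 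Two small caveats. First, your catalogue of relative correction sizes is a little imprecise --- e.g.\ $\mathcal{L}'/(u\mathcal{L})=O(N^{-2/3}(-u)^{-1})$ rather than $O((4/N)^{1/3}(-u)^{-1})$, and the dominant source is really just the matching error $O((-u)^{-3/2})$ and the $O(N^{-1})$ tails, both of which are controlled since $u\sim -N^{2/3-\varepsilon}$; none of the discrepancies is fatal since all these factors are $o(1)$ and the recursion is iterated only finitely many times. Second, you should be careful to apply \eqref{eq:matchairy} only after you have collapsed $\mathcal{A}^{(j)}$ to the form $p_jAi+q_jAi'$ via the Airy ODE, rather than differentiating the matching formula itself (the matching formula controls only one particular linear combination of $Ai$ and $Ai'$ and does not automatically provide a differentiated version). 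Reading your proof charitably, that is in fact the order in which you do things, so this is a presentational rather than substantive point.
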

Changing the upper limit $v_+$ into $u_+$, we can write the term
$\mathcal{A}(0,0)$ as
\begin{equation}\label{eq:Adoub}
\begin{split}
\mathcal{A}(0,0)&=\frac{1}{2}\left(\int_{u_-}^{u_+}\mathcal{A}(u)\D
u\right)^2-\sum_{j=0}^{\infty}\int_{u_-}^{u_+}\frac{\mathcal{A}(u)\mathcal{A}^{(j)}(u)}{(j+1)!}(\nu-u)^{j+1}\D
u+O\left(e^{-\frac{2}{3}u_+^{\frac{3}{2}}}\right)
\end{split}
\end{equation}
where $\nu$ is treated as a function of $u$ by using Lemma
\ref{le:vcoef}.

Let us estimate the order of these terms.
\begin{lemma}\label{le:jkprime}
Let $u_-/N^{\frac{2}{3}}=O\left(N^{-\varepsilon}\right)$. If $j$ is
odd, then we have
\begin{equation}\label{eq:jkodd}
\int_{u_-}^{u_+}\mathcal{A}(u)\mathcal{A}^{(j)}(u)(\nu-u)^{j+1}\D
u=\left\{
    \begin{array}{ll}
      O\left(m^{-2+\frac{3\varepsilon}{2}}\right)
    \end{array}
  \right.
\end{equation}
If $j$ is even, then we have
\begin{equation}\label{eq:jkeven}
\begin{split}
&\int_{u_-}^{u_+}\mathcal{A}(u)\mathcal{A}^{(j)}(u)(\nu-u)^{j+1}\D
u=\frac{1}{4}\int_{u_-}^{0}\frac{u^{\frac{j}{2}}\mathcal{L}^2(u)(\nu-u)^{j+1}}{(-u)^{\frac{1}{2}}(T-
u)}\D
u\\
&+\left\{
    \begin{array}{ll}
      O\left(N^{-\frac{4}{3}}\right), & \hbox{$j>0$;} \\
      O\left(N^{-\frac{2}{3}}\right), & \hbox{$j=0$.}
    \end{array}
  \right.
\end{split}
\end{equation}
\end{lemma}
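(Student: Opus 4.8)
The plan is to substitute the asymptotic expansion (\ref{eq:AL}) of $\hat{L}_n$ -- equivalently the matched oscillatory form of Lemma~\ref{le:Aasym} away from the edge $u=0$, together with the genuine Airy representation (\ref{eq:airyL}) near $u=0$ -- into $\int_{u_-}^{u_+}\mathcal{A}(u)\mathcal{A}^{(j)}(u)(\nu-u)^{j+1}\D u$ and then separate the non-oscillatory part. First I would split $[u_-,u_+]$ into $[u_-,-\delta_1]$, $[-\delta_1,\delta_1]$ and $[\delta_1,u_+]$ for a small fixed $\delta_1>0$. On $[\delta_1,u_+]$ both $\mathcal{A}$ and $\mathcal{A}^{(j)}$ decay like $Ai,Ai'$, so by (\ref{eq:airyasymp}) this piece is $O(e^{-c\delta_1^{3/2}})$ and is absorbed into the error. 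On $[-\delta_1,\delta_1]$ the factors $\mathcal{A},\mathcal{A}^{(j)}$ are bounded while $(\nu-u)^{j+1}=O(N^{-(j+1)/3})$ by Lemma~\ref{le:vcoef}, so that piece only contributes through its leading $\tfrac14$-slice in the even case and is otherwise negligible. Thus everything reduces to the oscillatory region $[u_-,-\delta_1]$, where Lemma~\ref{le:Aasym} applies, where $\mathcal{L}(u)=(N/4)^{-1/6}(1+O(u/N^{2/3}))$ by (\ref{eq:Luord}), and where $|\mathcal{A}^{(i)}(u)|=O\big(|u|^{i/2-3/4}N^{-1/6}\big)$ uniformly for $t$ at finite distance from the contour.

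For even $j=2k$ the leading part of $\mathcal{A}(u)\mathcal{A}^{(j)}(u)$ carries the factor $\cos^2 F_N=\tfrac12(1+\cos 2F_N)$: the constant part reproduces exactly $\tfrac14\,u^{j/2}\mathcal{L}^2(u)(-u)^{-1/2}(T-u)^{-1}$, and integrating this against $(\nu-u)^{j+1}$ over $[u_-,0]$ gives the claimed main term in (\ref{eq:jkeven}) modulo the pieces already discarded above. The $\cos 2F_N$ part, the cross terms between the leading and the subleading pieces of Lemma~\ref{le:Aasym}, and the $\nu$-expansion errors of Lemma~\ref{le:vcoef} are then estimated: the oscillatory contributions by repeated integration by parts in $u$, using that $\tfrac{\D}{\D u}F_N(x_N(u))$ is essentially the Airy phase $-(-u)^{1/2}(1+o(1))$ so each step gains $(-u)^{-1/2}$ plus a manageable boundary term, and the non-oscillatory remainders directly by Lemma~\ref{le:airyint} together with $|\nu-u|=O(N^{-1/3})$ and $|\nu'-1|=O(N^{-1})$. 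The dichotomy between $j=0$ and $j>0$ emerges because for $j\ge 2$ the extra factor $(\nu-u)^{j}=O(N^{-j/3})$ drops every error term from $O(N^{-2/3})$ to $O(N^{-4/3})$.

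For odd $j=2k-1$ there is no non-oscillatory part to leading order, and the key observation is that $\mathcal{A}\,\mathcal{A}^{(j)}$ is an exact total derivative: repeated integration by parts contracts $\int\mathcal{A}\,\mathcal{A}^{(2k-1)}$ down to $\pm\tfrac12\int\big((\mathcal{A}^{(k-1)})^{2}\big)'$, so $\mathcal{A}\,\mathcal{A}^{(j)}=\tfrac{\D}{\D u}P_j$ with $P_j$ bilinear in $\mathcal{A},\dots,\mathcal{A}^{(j-1)}$, built from terms $\mathcal{A}^{(a)}\mathcal{A}^{(b)}$ with $a+b=j-1$. Integrating $\int_{u_-}^{u_+}\mathcal{A}\mathcal{A}^{(j)}(\nu-u)^{j+1}\D u$ by parts once, the $u_+$ boundary term is exponentially small, the $u_-$ boundary term is $O\big(|u_-|^{(j-4)/2}N^{-1/3}\big)\cdot O\big((\nu(u_-)-u_-)^{j+1}\big)=O(N^{-2+3\varepsilon/2})$ using $|u_-|=O(N^{2/3-\varepsilon})$ and $|\nu(u_-)-u_-|=O(N^{-1/3})$, and the leftover integral $\int P_j\,(j+1)(\nu-u)^{j}(\nu'-1)\,\D u$ is handled by splitting $P_j$ into oscillatory and non-oscillatory parts as in the even case, with $|\nu'-1|=O(N^{-1})$ and Lemma~\ref{le:airyint} keeping it $O(N^{-2+3\varepsilon/2})$; the case $j=1$ is the worst and yields the stated bound (\ref{eq:jkodd}).

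I expect the main obstacle to be the exponent bookkeeping rather than any single conceptual step: one must balance the scale $N^{-1/3}$ coming from both $\mathcal{L}^2$ and $\nu-u$, the length $\sim N^{2/3-\varepsilon}$ of the oscillatory interval, the algebraic growth $|u|^{k}$ of $\mathcal{A}^{(2k)}$, the gain $(-u)^{-1/2}$ per integration by parts, and the requirement that all estimates be uniform in $T$ as $T\to N^{2/3}$ (the far edge of the Airy region); tracking the $\varepsilon$-powers accurately and checking that the matching estimate (\ref{eq:matchairy}) glues to the genuine Airy region near $u=0$ without loss is where the real work lies.
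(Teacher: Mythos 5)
Your proposal takes essentially the same route as the paper's proof: for even $j$ you peel off the constant half of $\cos^2 F_N$ to recover the non-oscillatory main term (the paper's (\ref{eq:AAj})) and dispose of the $\cos 2F_N$ and remainder pieces by stationary-phase integration by parts together with the $\nu$-expansion of Lemma~\ref{le:vcoef} and the estimates of Lemma~\ref{le:airyint}, while for odd $j$ you integrate by parts against the antiderivative of $\mathcal{A}\mathcal{A}^{(j)}$, which is precisely what the paper's ``repeating the integration by parts'' does, with the boundary term at $u_-$ supplying the $O(m^{-2+3\varepsilon/2})$ bound. The explicit three-region split around $u=0$ and the observation that $\mathcal{A}\mathcal{A}^{(2k-1)}$ is a total derivative of a bilinear expression in $\mathcal{A},\dots,\mathcal{A}^{(2k-2)}$ are tidy repackagings of steps the paper leaves terse, but the argument and the ingredients (Lemmas~\ref{le:Aasym}, \ref{le:vcoef}, \ref{le:airyint} and the order of $\mathcal{L}$ from (\ref{eq:Luord})) are the same.
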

\begin{proof} The first equation can be proven using integration by
parts. By (\ref{eq:vcoef}), (\ref{eq:Aasym}) and (\ref{eq:Luord}),
we have
\begin{equation*}
\begin{split}
&\int_{u_-}^{u_+}\mathcal{A}(u)\mathcal{A}^{(j)}(u)(\nu-u)^{j+1}\D
u=-\mathcal{A}(u_-)\mathcal{A}^{(j-1)}(u_-)(\nu(u_-)-u_-)^{j+1}\\
&-\int_{u_-}^{u_+}\mathcal{A}^{\prime}(u)\mathcal{A}^{(j-1)}(u)(\nu-u)^{j+1}\D
u+O\left(\left(\frac{u_-^{\frac{j-2}{2}}}{m^{\frac{j-2}{3}}}\right)m^{-2}\right)
\end{split}
\end{equation*}
From (\ref{eq:Aasym}), (\ref{eq:vcoef}) and (\ref{eq:Luord}), the
first term is of order
$O\left(\left(u_-/m^{\frac{2}{3}}\right)^{\frac{j-4}{2}}m^{-2}\right)$.
Repeating the integration by parts, we obtain (\ref{eq:jkodd}).

To prove (\ref{eq:jkeven}), we have, by (\ref{eq:Aasym}), the
following
\begin{equation}\label{eq:AAj}
\begin{split}
\mathcal{A}(u)\mathcal{A}^{(j)}(u)-\frac{1}{4}\frac{u^{\frac{j}{2}}\mathcal{L}^2(u)}
{(-u)^{\frac{1}{2}}(T- u)} -\frac{1}{4}\frac{u^{\frac{j}{2}}\cos
2F_N(x_N)}{(-u)^{\frac{1}{2}}(T- u)}\mathcal{L}^2(u)
=O\left(u^{\frac{j-6}{2}}m^{-\frac{1}{3}}\right)
\end{split}
\end{equation}
as $u\rightarrow u_-$. After multiplying by $(\nu(u)-u)^{j+1}$ and
integrate, the error term
$O\left(u^{\frac{j-6}{2}}m^{-\frac{1}{3}}\right)$ gives a
contribution of order $O\left(N^{-\frac{4}{3}}\right)$ for $j>0$ and
$O\left(N^{-\frac{2}{3}}\right)$ for $j=0$.

The term with the $\cos 2F_N$ factor can be integrated by parts
using
\begin{equation*}
\begin{split}
F_N&=\eta_++iN\varphi_{+}-\frac{\pi}{4}=\frac{2}{3}\left(-u\right)^{\frac{3}{2}}+O\left(\frac{u^{\frac{1}{2}}}{N^{\frac{1}{3}}}\right)
-\frac{\pi}{4}.\,\\
\frac{\D F_N}{\D u}&=-(-u)^{\frac{1}{2}}+O\left(\frac{1}{u^{\frac{1}{2}}N^{\frac{1}{3}}}\right)\,\\
\end{split}
\end{equation*}
Then by splitting the interval $[u_-,0]$ into $[u_-,u_0]$ and
$[u_0,0]$ for some $u_0$ between $u_-$ and $0$, and integrating the
integral on $[u_-,u_0]$ by parts, we have
\begin{equation*}
\begin{split}
&\int_{u_-}^{0}\left(\mathcal{A}(u)\mathcal{A}^{(j)}(u)-\frac{1}{4}\frac{u^{\frac{j}{2}}}
{(-u)^{\frac{1}{2}}(T-
u)}\mathcal{L}^2(u)\right)\left(\nu(u)-u\right)^{j+1} \D
u\\
&=O\left(\frac{u_-^{\frac{\max(0,j-4)}{2}}}{m^{\frac{j-4}{3}}}m^{-2}\right)+O\left(m^{-\frac{j+2}{3}}\right)
\end{split}
\end{equation*}
Hence we have
\begin{equation}\label{eq:halford}
\begin{split}
&\int_{u_-}^{0}\left(\mathcal{A}(u)\mathcal{A}^{(j)}(u)(\nu-u)^{j+1}
-\frac{1}{4}\frac{u^{\frac{j}{2}}(\nu-u)^{j+1}}{(-u)^{\frac{1}{2}}(T-
u)}\mathcal{L}^2(u) \right)\D
u\\
&=\left\{
    \begin{array}{ll}
      O\left(N^{-\frac{4}{3}}\right), & \hbox{$j>0$;} \\
      O\left(N^{-\frac{2}{3}}\right), & \hbox{$j=0$.}
    \end{array}
  \right.
\end{split}
\end{equation}
As $\mathcal{A}(u)\mathcal{A}^{(j)}(u)(\nu-u)^{j+1}$ is integrable
in $[0,\infty)$ and have exponential decay at $+\infty$,
(\ref{eq:jkeven}) now follows from (\ref{eq:halford}).
\end{proof}
We would now like to combine the terms in (\ref{eq:Adoub}) with the
end point terms $\mathcal{F}_l$ in (\ref{eq:fl}) from the double
integral in the bulk region.

\begin{lemma}\label{le:Fmn}
Let $\mathcal{F}_{n-m}$ be given by (\ref{eq:fl}), then we have
\begin{equation}\label{eq:Fmn}
\begin{split}
&\mathcal{F}_{n-m}+\frac{1}{4}\sum_{k=0}^{\infty}\int_{u_-}^0\frac{u^k\left(\nu(u)-u\right)^{2k+1}}{(-u)^{\frac{1}{2}}
(T- u)(2k+1)!}\mathcal{L}^2(u)\D u=\\
 &4\int_{\frac{N}{n}c_-}^4\frac{
(n-m)\phi} {Nx(4-x)(t- x)}\D
x+\frac{1}{4}\int_{u_-}^0\frac{\left(\nu(u)-u\right)}{(-u)^{\frac{1}{2}}
(T- u)}\mathcal{L}^2(u)\D u + O\left(N^{-\frac{4}{3}}\right).
\end{split}
\end{equation}
\end{lemma}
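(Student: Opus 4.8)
The plan is to transport every term of (\ref{eq:Fmn}) to the Airy scaling variable $u$ and compare the two sides piece by piece.

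\emph{Step 1: resum the series.} For $u<0$ the elementary identity $\sum_{k\geq 0}z^{2k+1}/(2k+1)!=\sinh z$, applied with $z=(\nu(u)-u)\sqrt{-u}$ and $\sinh(i\theta)=i\sin\theta$, gives
\[
\sum_{k=0}^{\infty}\frac{u^{k}(\nu(u)-u)^{2k+1}}{(-u)^{\frac12}(T-u)(2k+1)!}
=\frac{\sin\!\big((\nu(u)-u)\sqrt{-u}\big)}{(-u)(T-u)} .
\]
By Lemma \ref{le:vcoef} we have $|\nu(u)-u|=O(N^{-1/3})$ while $|u|\le|u_-|=O(N^{2/3-\varepsilon})$, so the general term is $O(N^{-1/3}N^{-\varepsilon k})$ and the interchange of sum and integral is legitimate. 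Thus the left side of (\ref{eq:Fmn}) equals $\mathcal F_{n-m}+\tfrac14\int_{u_-}^0\frac{\mathcal L^2(u)\sin((\nu(u)-u)\sqrt{-u})}{(-u)(T-u)}\D u$, and the first term on the right is obtained from it by replacing $\sin$ by its linear part (the $k=0$ contribution).

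\emph{Step 2: change variables in $\mathcal F_{n-m}$.} The map $u=f_n(x)$ is conformal on the overlap interval and sends $[\tfrac Nn c_-,4]$ onto $[u_-,0]$. Using (\ref{eq:x4}) and (\ref{eq:calB}) to expand $x$, $4-x$, $t-x$ and $\D x/\D u$ in powers of $uN^{-2/3}$ and $(N-n)N^{-1/3}$, together with the definition (\ref{eq:Lu}) of $\mathcal L$, one verifies
\[
\frac{4\,\D x}{Nx(4-x)(t-x)}=\frac14\,\frac{\mathcal L^{2}(u)}{(-u)(T-u)}\,\D u\;(1+\mathrm{corrections}),
\]
whose leading order reduces to (\ref{eq:Luord}). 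For the phase of the sine I would invoke Lemma \ref{le:diarg}, namely $\theta(x)-x\theta'(x)=-\arccos(\tfrac x2-1)$ with $\theta=-i\varphi_+$, together with $\varphi_+=\tfrac{2}{3n}f_n^{3/2}$, to expand $(n-m)\arccos(\tfrac{x(u)}{2}-1)$ in powers of $u$, and Lemma \ref{le:vcoef} to do the same for $(\nu(u)-u)\sqrt{-u}$; the two expansions share the leading term $(n-m)(4/n)^{1/3}\sqrt{-u}$. Consequently $\mathcal F_{n-m}$ equals $\tfrac14\int_{u_-}^0\frac{\mathcal L^2(u)\sin((\nu(u)-u)\sqrt{-u})}{(-u)(T-u)}\D u$ plus correction integrals, and its linear-in-sine part, pushed back to the $x$-variable, is precisely the term $4(n-m)\int_{\frac Nn c_-}^4\frac{\phi}{Nx(4-x)(t-x)}\D x$ on the right of (\ref{eq:Fmn}).

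\emph{Step 3 (the main obstacle): cancellation of the subleading corrections.} Combining Steps 1 and 2, the difference of the two sides of (\ref{eq:Fmn}) is a sum of correction integrals produced by (a) the discrepancy between $\tfrac{4\,\D x}{Nx(4-x)(t-x)}$ and $\tfrac14\tfrac{\mathcal L^2(u)\,\D u}{(-u)(T-u)}$, (b) the discrepancy between $(n-m)\arccos(\tfrac{x(u)}{2}-1)$ and $(\nu(u)-u)\sqrt{-u}$, and (c) the cubic and higher Taylor terms of the sine. Each of these is of size only $O(N^{-1})$, not $O(N^{-4/3})$, so none may be discarded individually; the content of the lemma is that they cancel. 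The precise identity of Lemma \ref{le:diarg} --- used here as it was in Lemma \ref{le:diffmn} --- forces the cubic-order coefficient of $(n-m)\arccos(\tfrac{x(u)}{2}-1)$ to agree with that of $(\nu(u)-u)\sqrt{-u}$, and this agreement, together with the subleading structure of the Jacobian above, collapses (a)+(b)+(c) down to $O(N^{-4/3})$. The residual integrals are then estimated by the Airy-type bounds of Lemma \ref{le:airyint} combined with the order estimates (\ref{eq:Luord}), (\ref{eq:vcoef}), (\ref{eq:CD}) for $\mathcal L$, $\nu$, $\mathcal G_n$, $\mathcal C_{i,n}$, and the geometric-in-$k$ decay of the series tail from Step 1; throughout, the $u$-contour is kept deformed off the real axis near $T$, exactly as in Section \ref{se:Airy}, so that every factor $(T-u)^{-1}$ stays bounded. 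Assembling these estimates yields (\ref{eq:Fmn}).
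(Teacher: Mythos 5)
Your overall strategy---expand $\sin$ as a power series, transport $\mathcal F_{n-m}$ to the Airy variable, and compare term by term---matches the paper's approach, which starts from (\ref{eq:Fmn1}), changes variables via $u=f_N(x_N)$, and proves the termwise bound (\ref{eq:dif}). But Step 3, which is where the lemma's content actually lives, is asserted rather than carried out, and the error structure you describe does not reflect how the bound comes out.

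Concretely: (i) with the change of variables $u=f_N(x_N)$ the Jacobian identity is \emph{exact} by the definition (\ref{eq:Lu}) of $\mathcal L$, namely $\mathcal L^2(u)=\frac{16u(T-u)}{Nx_N(4-x_N)(t-x_N)}\frac{\D x_N}{\D u}$, so your correction ``(a)'' is fictitious; using $u=f_n(x)$ instead imports an uncompensated mismatch with the $f_N$-based definition of $\mathcal L$ and leaves the integration variable of the transported $\mathcal F_{n-m}$ misaligned with that of the series. (ii) The one approximation actually required is the phase estimate $(n-m)\phi(x_N(u))=(\nu(u)-u)\sqrt{-u}+O\big(N^{-2/3}|u|^{-1/2}\big)$ (obtained from the $F_n-F_m$ expansion as in Lemma \ref{le:diffmn}); there is no need for ``cubic-order coefficients'' of $(n-m)\arccos(\tfrac{x(u)}{2}-1)$ and $(\nu(u)-u)\sqrt{-u}$ to agree. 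Inserting the blunt phase bound into $u^k(\,\cdot\,)^{2k+1}$ gives the pointwise estimate (\ref{eq:dif}), $O\big(u^{k-5/2}/N^{2(k+1)/3}\big)$, which integrates to $O(N^{-4/3})$ for every $k\geq 1$ but only to $O(N^{-2/3})$ for $k=0$; this disparity, which you never address, is precisely why the $k=0$ pieces are retained explicitly on the right of (\ref{eq:Fmn}) rather than absorbed into the error. (iii) Lemma \ref{le:airyint} bounds integrals of $Ai$ and $Ai'$, which do not appear here, and the series (\ref{eq:calB}), (\ref{eq:CD}) for $\mathcal G_n$, $\mathcal C_{i,n}$ play no role in this lemma; the residuals are integrals of algebraic powers of $u$ and are bounded directly. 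Without proving (\ref{eq:dif}), integrating it over $(u_-,0)$, and separating $k=0$ from $k\geq 1$, the proposal does not establish (\ref{eq:Fmn}).
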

\begin{proof}
From (\ref{eq:fl}), we can write $\mathcal{F}_{n-m}$ in the
following form. (After renaming the integration variable from $x$ to
$x_N$)
\begin{equation}\label{eq:Fmn1}
\begin{split}
\mathcal{F}_{n-m}&=4\int_{\frac{N}{n}c_-}^4\frac{
\sin\left((n-m)\phi(x_N)\right)} {Nx_N(4-x_N)(t- x_N)}\D x_N \\
&=4\sum_{k=0}^{\infty}\int_{\frac{N}{n}c_-}^4\frac{
(-1)^k\left((n-m)\phi(x_N)\right)^{2k+1}} {Nx_N(4-x_N)(t-
x_N)(2k+1)!}\D x_N
\end{split}
\end{equation}
where $\phi(x_N)=\arccos\left(\frac{x_N}{2}-1\right)$. Let
$u=f_N(x_N)$. Then from the proof of Lemma \ref{le:diffmn}, we can
express $\phi(x_N)$ as follows when $u\rightarrow u_-$.
\begin{equation*}
(n-m)\phi(x_N)=F_{n}(x_N)-F_m\left(x_N\frac{n}{m}\right)+O\left(\frac{1}{N}\left(\frac{u}{N^{\frac{2}{3}}}\right)^{-\frac{1}{2}}\right)
\end{equation*}
Now $F_n(x_N)$ is given by
\begin{equation*}
\begin{split}
F_n(x_N)&=in\varphi_{+}(x_N)+\eta_+(x_N)-\frac{\pi}{4}=\frac{n}{N}\frac{2}{3}(-u)^{\frac{3}{2}}+\eta_+(x_N)-\frac{\pi}{4}.
\end{split}
\end{equation*}
Similarly, we have
$F_m(x_N)=\frac{2}{3}(-f_m)^{\frac{3}{2}}+\eta_+(x_N)-\frac{\pi}{4}$
and $f_m(x_Nn/m)=\nu(f_n)$. Then by Lemma \ref{le:vcoef} and
$f_n(x_N)=\left(\frac{n}{N}\right)^{\frac{2}{3}}f_N(x_N)=\left(\frac{n}{N}\right)^{\frac{2}{3}}u$,
we obtain
\begin{equation*}
\begin{split}
F_m\left(\frac{n}{m}x_N\right)&=\frac{2}{3}\left((-\nu(u))^{\frac{3}{2}}+\frac{n-N}{N}u(-\nu(u))^{\frac{1}{2}}\right)
\\
&+\eta_+\left(\frac{n}{m}x_N\right)-\frac{\pi}{4}+O\left(\frac{1}{N}\left(\frac{u}{N^{\frac{2}{3}}}\right)^{-\frac{1}{2}}\right)
\end{split}
\end{equation*}
From (\ref{eq:eta}), we obtain
\begin{equation*}
F_n(x_N)-F_m\left(\frac{n}{m}x_N\right)=(-u)^{\frac{1}{2}}(\nu(u)-u)+O\left(\frac{1}{N}\left(\frac{u}{N^{\frac{2}{3}}}\right)^{-\frac{1}{2}}\right)
\end{equation*}
as $u\rightarrow u_-$. From (\ref{eq:Lu}), we see that
$\mathcal{L}^2(u)$ is given by
\begin{equation*}
\mathcal{L}^2(u)=\frac{8(-u)^{\frac{1}{2}} (T-
u)}{x_N^{\frac{3}{2}}(4-x_N)^{\frac{1}{2}}\left(t-
x_N\right)}\left(\frac{\D x_N}{\D u}\right)^2.
\end{equation*}
By using $u=f_N(x_N)$, (\ref{eq:fmap}) and (\ref{eq:varphi}), we
obtain $\D_u x_N=\frac{2u^{\frac{1}{2}}}{N}\sqrt{x_N/x_N-4}$.
Therefore $\mathcal{L}^2(u)$ is can be written as
\begin{equation*}
\mathcal{L}^2(u)=\frac{16u (T- u)}{Nx_N(4-x_N)\left(t-
x_N\right)}\frac{\D x_N}{\D u}.
\end{equation*}
This gives us
\begin{equation*}
\frac{4(-1)^k\left((n-m)\phi(x_N)\right)^{2k+1}} {Nx_N(4-x_N)(t-
x_N)}=-\frac{1}{4}\frac{u^k\left(\nu(u)-u+O\left(N^{-\frac{2}{3}}u^{-1}\right)\right)^{2k+1}}{(-u)^{\frac{1}{2}}
(T- u)}\mathcal{L}^2(u)\frac{\D u}{\D x_N}
\end{equation*}
as $u\rightarrow u_-$. As $\nu-u=O\left(N^{-\frac{1}{3}}\right)$ and
$u_-/N^{\frac{2}{3}}=O\left(N^{-\varepsilon}\right)$, we see that
for $k>1$,
\begin{equation}\label{eq:dif}
\begin{split}
&\frac{4(-1)^k\left((n-m)\phi(x_N)\right)^{2k+1}} {Nx_N(4-x_N)(t-
x_N)}+\frac{1}{4}\frac{u^k\left(\nu(u)-u\right)^{2k+1}}{(-u)^{\frac{1}{2}}
(T- u)}\mathcal{L}^2(u)\frac{\D u}{\D x_N}
=O\left(\frac{u^{k-\frac{5}{2}}}{N^{\frac{2(k+1)}{3}}}\right).
\end{split}
\end{equation}
as $u\rightarrow u_-$. Since
$\phi(x_N)=\arccos\left(\frac{x_N}{2}-1\right)$ it behaves as
$\sqrt{4-x_N}$ as $x\rightarrow 4$. Therefore the function on the
left hand side of (\ref{eq:dif}) is integrable for $u\in (u_-,0)$.
Therefore by using $\varepsilon\leq1/20$, we have
\begin{equation*}
\begin{split}
\int_{\frac{N}{n}c_-}^{4}\frac{4(-1)^k\left((n-m)\phi(x)\right)^{2k+1}}
{Nx(4-x)(t- x)}\D
x&=-\frac{1}{4}\int_{u_-}^0\frac{u^k\left(\nu(u)-u\right)^{2k+1}}{(-u)^{\frac{1}{2}}
(T- u)}\mathcal{L}^2(u)\D u+O\left(N^{-\frac{4}{3}}\right)
\end{split}
\end{equation*}
for $k>0$ and of order $O\left(N^{-\frac{2}{3}}\right)$ for $k=0$.
This, together with (\ref{eq:Fmn1}), implies the Lemma.
\end{proof}
By (\ref{eq:Adoub}), (\ref{eq:Fmn1}) and Lemma \ref{le:jkprime},
\ref{le:Fmn}, we see that $\mathcal{A}(0,0)$ can be written as
\begin{equation}\label{eq:H00}
\begin{split}
\mathcal{A}(0,0)&-\mathcal{F}_{n-m}=\frac{1}{2}\left(\int_{u_-}^{u_+}\mathcal{A}(u)\D
u\right)^2-\int_{u_-}^{u_+}\mathcal{A}^2(u)(\nu-u)\D u
\\
&-4\int_{\frac{N}{n}c_-}^{4}\frac{\left((n-m)\phi\right)}{Nx(4-x)(t-
x)}\D x+O\left(N^{-\frac{4}{3}}\right)
\end{split}
\end{equation}
By (\ref{eq:vcoef}), we see that the terms in the sum involving
$\mathcal{A}$ are of the form
\begin{equation}\label{eq:Acoef}
\begin{split}
\int_{u_-}^{u_+}\mathcal{A}^2(u)(\nu-u)\D
u=\frac{4^{\frac{1}{3}}(n-m)}{N^{\frac{1}{3}}}\int_{u_-}^{u_+}\mathcal{A}^2(u)\D
u+O\left(N^{-\frac{4}{3}+\frac{\varepsilon}{2}}\right).
\end{split}
\end{equation}
Let us now compute the other terms in (\ref{eq:termdouint}). These
terms are of the form
\begin{equation*}
\frac{h_{kj}}{N^{\frac{j+k}{3}}}\left((N-n)^j(N-m)^k\mathcal{A}(j,k)+(N-n)^k(N-m)^j\mathcal{A}(k,j)\right)
\end{equation*}
for some for some constants $h_{kj}$ that are independent on $N$,
$n$ and $m$.

To compute these terms, let us first prove the following.
\begin{lemma}\label{le:FG}
Let $F$ and $G$ be integrable functions on $[u_-,u_+]$ such that $F$
and $G$ are of order $e^{-ku^{\frac{3}{2}}}$ as
$u\rightarrow+\infty$ for some $k>0$. Then we have
\begin{equation}\label{eq:FG}
\begin{split}
&\int_{u_-}^{u_+}F(u)\int_{\nu(u)}^{v_+}G(v)\D v\D
u+\int_{u_-}^{u_+}G(u)\int_{\nu(u)}^{v_+}F(v)\D v\D
u\\
&=\int_{u_-}^{u_+}F(u)\D u\int_{u_-}^{u_+}G(u)\D u +
\int_{u_-}^{u_+}F(u)\int_{\nu(u)}^{u}G(v)\D v\D u\\
&+ \int_{u_-}^{u_+}G(u)\int_{\nu(u)}^{u}F(v)\D v\D
u+O\left(e^{-ku_+^{\frac{3}{2}}}\right)
\end{split}
\end{equation}
\end{lemma}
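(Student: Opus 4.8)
The plan is to prove Lemma \ref{le:FG} by a direct manipulation of the iterated integrals, splitting the domain of the inner integral at the point $u$ and recombining. The identity to establish is symmetric under exchanging $F$ and $G$, which suggests writing the left-hand side as a single symmetric expression and then reorganizing.

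First I would write, for the first term on the left-hand side,
\begin{equation*}
\int_{u_-}^{u_+}F(u)\int_{\nu(u)}^{v_+}G(v)\,\D v\,\D u
=\int_{u_-}^{u_+}F(u)\int_{u}^{v_+}G(v)\,\D v\,\D u
+\int_{u_-}^{u_+}F(u)\int_{\nu(u)}^{u}G(v)\,\D v\,\D u,
\end{equation*}
and similarly for the second term with $F$ and $G$ interchanged. The last two pieces on the right of each split are already exactly the ``correction'' terms $\int_{u_-}^{u_+}F(u)\int_{\nu(u)}^{u}G(v)\,\D v\,\D u$ and $\int_{u_-}^{u_+}G(u)\int_{\nu(u)}^{u}F(v)\,\D v\,\D u$ appearing on the right-hand side of \eqref{eq:FG}. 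So it remains to show that the sum of the two ``triangular'' pieces,
\begin{equation*}
\int_{u_-}^{u_+}F(u)\int_{u}^{v_+}G(v)\,\D v\,\D u
+\int_{u_-}^{u_+}G(u)\int_{u}^{v_+}F(v)\,\D v\,\D u,
\end{equation*}
equals $\int_{u_-}^{u_+}F(u)\,\D u\int_{u_-}^{u_+}G(u)\,\D u$ up to the stated error $O(e^{-ku_+^{3/2}})$. For this I would interchange the order of integration in the first triangular integral: the region $\{u_-\le u\le u_+,\ u\le v\le v_+\}$ becomes (after replacing $v_+$ by $u_+$ at the cost of an error controlled by the exponential decay hypothesis on $F$ and $G$, since the integrand is bounded by $|F(u)|\,|G(v)|$ with $v\ge u_+$ contributing $O(e^{-ku_+^{3/2}})$) the region $\{u_-\le v\le u_+,\ u_-\le u\le v\}$, so that
\begin{equation*}
\int_{u_-}^{u_+}F(u)\int_{u}^{u_+}G(v)\,\D v\,\D u
=\int_{u_-}^{u_+}G(v)\int_{u_-}^{v}F(u)\,\D u\,\D v.
\end{equation*}
Renaming $v\mapsto u$ and adding the second triangular integral (which is already in the form $\int_{u_-}^{u_+}G(u)\int_u^{u_+}F(v)\,\D v\,\D u$), the two combine to $\int_{u_-}^{u_+}G(u)\big(\int_{u_-}^{u}F(v)\,\D v+\int_{u}^{u_+}F(v)\,\D v\big)\,\D u=\int_{u_-}^{u_+}G(u)\,\D u\int_{u_-}^{u_+}F(v)\,\D v$, which is the claimed product term.

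The only genuinely non-routine point is the justification of replacing $v_+$ by $u_+$ in the upper limits: one must check that $v_+=O(u_+)$ and that $\nu(u_+)$ and $v_+$ differ from $u_+$ by $O(1)$, so that the sliver of integration between them contributes only $\int_{u_+}^{O(u_+)}|G(v)|\,\D v=O(e^{-ku_+^{3/2}})$ by the exponential-decay hypothesis, and likewise with $F$; this follows from Lemma \ref{le:vcoef}, which gives $\nu(u)=u+O(N^{-1/3})$ uniformly, together with the fact that $v_+$ and $u_+$ are the images of the common endpoint $c_+$ under the conformal maps $f_m$ and $f_n$ with $m-n$ finite, hence differ by $O(1)$. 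I expect no real obstacle here beyond bookkeeping; the heart of the lemma is the Fubini rearrangement, which is purely formal once the endpoint truncation is absorbed into the error term.
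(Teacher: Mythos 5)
Your proof is correct and follows essentially the same route as the paper's: you split the inner integral at $u$ to peel off the two ``correction'' terms, then handle the remaining piece $\int_{u_-}^{u_+}F(u)\int_u^{v_+}G(v)\,\D v\,\D u$ by truncating $v_+$ to $u_+$ (absorbing the sliver into $O(e^{-ku_+^{3/2}})$) and a Fubini swap; the paper phrases this last step as an integration by parts, but for iterated integrals the two manipulations are equivalent and yield the identical identity.
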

\begin{proof} The lemma can be proved by integration by parts.
We have
\begin{equation}\label{eq:FG1}
\int_{u_-}^{u_+}F(u)\int_{\nu(u)}^{v_+}G(v)\D v\D u=
\int_{u_-}^{u_+}F(u)\int_{u}^{v_+}G(v)\D v\D
u+\int_{u_-}^{u_+}F(u)\int_{\nu(u)}^{u}G(v)\D v\D u
\end{equation}
Integrating the first term by parts, we have
\begin{equation*}
\begin{split}
&\int_{u_-}^{u_+}F(u)\int_{u}^{v_+}G(v)\D v\D
u=\int_{u_-}^{u_+}F(u)\D u\int_{u_-}^{u_+}G(u)\D u\\
&- \int_{u_-}^{u_+}G(u)\int_{u}^{v_+}F(v)\D v\D
u+O\left(e^{-ku_+^{\frac{3}{2}}}\right)
\end{split}
\end{equation*}
as $F$ is of order $e^{-ku^{\frac{3}{2}}}$ when
$u\rightarrow+\infty$. This implies
\begin{equation*}
\begin{split}
&\int_{u_-}^{u_+}F(u)\int_{u}^{v_+}G(v)\D v\D
u=\int_{u_-}^{u_+}F(u)\D u\int_{u_-}^{u_+}G(u)\D u-
\int_{u_-}^{u_+}G(u)\int_{\nu(u)}^{v_+}F(v)\D v\D
u\\&+\int_{u_-}^{u_+}G(u)\int_{\nu(u)}^{u}F(v)\D v\D
u+O\left(e^{-ku_+^{\frac{3}{2}}}\right)
\end{split}
\end{equation*}
This, together with (\ref{eq:FG1}), implies the lemma.
\end{proof}
By taking $F=\mathcal{A}/(T- u)^{j}$ and $G=\mathcal{A}/(T- u)^{k}$
for $\mathcal{A}(j,k)$, we have the following.
\begin{corollary}\label{cor:Hjk}
The terms in (\ref{eq:termdouint}) are given by
\begin{equation}\label{eq:Hjk}
\begin{split}
&(N-m)^{k-j}\mathcal{A}(j,k)+(N-n)^{k-j}\mathcal{A}(k,j)=
AH(j,k)\left((N-m)^{k-j}-(N-n)^{k-j}\right)\\
&+SH(j,k)\left((N-m)^{k-j}+(N-n)^{k-j}\right)\\
&+\frac{1}{2}\left((N-m)^{k-j}+(N-n)^{k-j}\right)\int_{u_-}^{u_+}\mathcal{A}_j\D
u\int_{u_-}^{u_+}\mathcal{A}_k\D
u+O\left(e^{-ku_+^{\frac{3}{2}}}\right),
\end{split}
\end{equation}
where $AH(j,k)$ and $SH(j,k)$ and $\mathcal{A}_j$ are given by
\begin{equation*}
\begin{split}
\mathcal{A}_j(u)&=\frac{\mathcal{A}(u)}{(T- u)^{j}},\quad
AH(j,k)=\frac{1}{2}\left(\mathcal{A}(j,k)-\mathcal{A}(k,j)\right),\\
SH(j,k)&=\frac{1}{2}\left(\int_{u_-}^{u_+}\mathcal{A}_j(u)\int_{\nu(u)}^{u}\mathcal{A}_k(v)\D
v\D
u+\int_{u_-}^{u_+}\mathcal{A}_k(u)\int_{\nu(u)}^{u}\mathcal{A}_j(v)\D
v\D u\right)
\end{split}
\end{equation*}
\end{corollary}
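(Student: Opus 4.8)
The plan is to read off \eqref{eq:Hjk} as a purely algebraic consequence of Lemma \ref{le:FG}, feeding in no analytic information beyond the Airy asymptotics already recorded. First I would note that, by the definition \eqref{eq:calH}, one has $\mathcal{A}(j,k)=\int_{u_-}^{u_+}F(u)\int_{\nu(u)}^{v_+}G(v)\,\D v\,\D u$ with $F=\mathcal{A}_j=\mathcal{A}/(T-u)^{j}$ and $G=\mathcal{A}_k=\mathcal{A}/(T-u)^{k}$, while $\mathcal{A}(k,j)$ is the same expression with $F$ and $G$ exchanged. Both $F$ and $G$ satisfy the hypotheses of Lemma \ref{le:FG}: they are integrable on $[u_-,u_+]$, and since $\mathcal{A}(u)$ is a bounded combination of $Ai(u)$ and $Ai^{\prime}(u)$ (see \eqref{eq:airyL}), each of $F,G$ is of order $e^{-\frac{2}{3}u^{3/2}}$ as $u\to+\infty$ by \eqref{eq:airyasymp}, so the constant in the lemma may be taken to be $\tfrac{2}{3}$. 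Applying Lemma \ref{le:FG} then gives
\begin{equation*}
\mathcal{A}(j,k)+\mathcal{A}(k,j)=\int_{u_-}^{u_+}\mathcal{A}_j\,\D u\int_{u_-}^{u_+}\mathcal{A}_k\,\D u+2\,SH(j,k)+O\!\left(e^{-ku_+^{3/2}}\right),
\end{equation*}
where $SH(j,k)$ is precisely the symmetric combination of the two $\int_{\nu(u)}^{u}$ double integrals written in the statement.

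Next I would combine this with the tautology $\mathcal{A}(j,k)-\mathcal{A}(k,j)=2\,AH(j,k)$, which is just the definition of $AH(j,k)$. Adding and subtracting the two relations solves for the individual quantities,
\begin{equation*}
\mathcal{A}(j,k)=AH(j,k)+SH(j,k)+\tfrac12\int_{u_-}^{u_+}\mathcal{A}_j\,\D u\int_{u_-}^{u_+}\mathcal{A}_k\,\D u+O\!\left(e^{-ku_+^{3/2}}\right),
\end{equation*}
with the analogous formula for $\mathcal{A}(k,j)$ obtained by flipping the sign of $AH(j,k)$. Multiplying the first by $(N-m)^{k-j}$, the second by $(N-n)^{k-j}$, and summing, the antisymmetric pieces assemble into $AH(j,k)\big((N-m)^{k-j}-(N-n)^{k-j}\big)$, while the $SH$ terms and the (symmetric) product of single integrals assemble into the claimed $\big((N-m)^{k-j}+(N-n)^{k-j}\big)$ multiples. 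This is exactly \eqref{eq:Hjk}.

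The only step needing more than bookkeeping is checking that the exponentially small error survives the prefactors, and even this is immediate: since $j,k\in\{0,1,2\}$ the weights $(N-m)^{k-j}$ and $(N-n)^{k-j}$ are at most polynomially large in $N$, whereas in the Airy region $c_+=4+N^{-\varepsilon}$ forces $u_+$ to grow like $N^{2/3-\varepsilon}$, so $e^{-ku_+^{3/2}}$ decays faster than any power of $N$; hence a polynomial times $O(e^{-ku_+^{3/2}})$ is again $O(e^{-ku_+^{3/2}})$ after a harmless change of constant. I do not expect a genuine obstacle in this corollary — it is a repackaging of Lemma \ref{le:FG} via the symmetric/antisymmetric decomposition, and the paragraph above is essentially its entire content; the delicate analytic work lies upstream in establishing Lemmas \ref{le:airyint}, \ref{le:Aasym} and \ref{le:FG} themselves.
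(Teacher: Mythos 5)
Your argument is correct and takes essentially the same route as the paper: apply Lemma \ref{le:FG} with $F=\mathcal{A}_j$, $G=\mathcal{A}_k$ to get the symmetric relation, pair it with the tautological antisymmetric one coming from the definition of $AH(j,k)$, and assemble the weighted combination. The paper states the corollary with only the one-line instruction "by taking $F=\mathcal{A}/(T-u)^{j}$ and $G=\mathcal{A}/(T-u)^{k}$"; your write-up simply makes the bookkeeping (and the harmless absorption of polynomial prefactors into the exponentially small error) explicit.
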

We can now consider the terms in (\ref{eq:termdouint}). We shall
first consider the terms where $j$ and $k$ are not both zero. From
(\ref{eq:airydou}), we see that if $j+k>2$, then the term will be of
order $O\left(N^{-\frac{4}{3}}\right)$. Therefore we shall only
consider the cases where $j+k\leq 2$. Let us now compute the terms
$\mathcal{A}(j,k)$.
\begin{lemma}\label{le:calHjk}
Let $\mathcal{A}_0(j,k)$ be
\begin{equation}\label{eq:calH0}
\mathcal{A}_0(j,k)=\int_{u_-}^{u_+}\mathcal{A}_j(u)\int_{u}^{v_+}\mathcal{A}_k(v)\D
v\D u,
\end{equation}
Then for $j+k\geq 1$, we have
\begin{equation}\label{eq:calHjk}
\begin{split}
\mathcal{A}(j,k)&=\mathcal{A}_0(j,k)+\frac{4^{\frac{1}{3}}(m-n)}{N^{\frac{1}{3}}}\int_{u_-}^{u_+}\mathcal{A}_j(u)\mathcal{A}_k(u)\D
u+O\left(N^{-1}\right)
\end{split}
\end{equation}
\end{lemma}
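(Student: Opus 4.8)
The plan is to reduce the claim to the effect of moving the lower limit of the inner integral from $\nu(u)$ to $u$ and then to quantify that effect. Since $\mathcal{A}(j,k)$ and $\mathcal{A}_0(j,k)$ have the same outer integrand $\mathcal{A}_j(u)$ and the same inner upper limit $v_+$, the first step is simply to write
\[
\mathcal{A}(j,k)-\mathcal{A}_0(j,k)=\int_{u_-}^{u_+}\mathcal{A}_j(u)\int_{\nu(u)}^{u}\mathcal{A}_k(v)\,\D v\,\D u .
\]
Next I would Taylor-expand the inner integral about $v=u$ to a fixed finite order $L$, obtaining $\int_{\nu(u)}^{u}\mathcal{A}_k(v)\,\D v=-\sum_{l=0}^{L}\frac{1}{(l+1)!}\mathcal{A}_k^{(l)}(u)(\nu(u)-u)^{l+1}+R_L(u)$, with $R_L(u)$ the Lagrange remainder, a multiple of $\mathcal{A}_k^{(L+1)}(\xi_u)(\nu(u)-u)^{L+2}$ for some $\xi_u$ between $\nu(u)$ and $u$. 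The $l=0$ term $-\int_{u_-}^{u_+}\mathcal{A}_j\mathcal{A}_k\,(\nu-u)\,\D u$ should carry the advertised main term, and everything else must be shown to be $O(N^{-1})$.

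For the $l=0$ term I would substitute the expansion $\nu(u)-u=v_0+v_1u+\sum_{l\ge 2}v_lu^l$ of Lemma \ref{le:vcoef}. The constant piece contributes $-v_0\int_{u_-}^{u_+}\mathcal{A}_j\mathcal{A}_k\,\D u$; since $v_0=(n-m)(4/m)^{1/3}+O(m^{-4/3})=\tfrac{4^{1/3}(n-m)}{N^{1/3}}+O(N^{-4/3})$ (using $m=N-k$) and $\int\mathcal{A}_j\mathcal{A}_k=O(N^{-1/3})$ by the pointwise bounds of Lemma \ref{le:Aasym}, this reproduces $\tfrac{4^{1/3}(m-n)}{N^{1/3}}\int\mathcal{A}_j\mathcal{A}_k$ up to $O(N^{-5/3})$. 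The remaining pieces $-v_1\int u\,\mathcal{A}_j\mathcal{A}_k$ and $-\sum_{l\ge2}v_l\int u^l\,\mathcal{A}_j\mathcal{A}_k$ I would bound using $v_1=O(N^{-1})$, $v_l=O(N^{-(2l+1)/3})$, the decay $\mathcal{A}(u)=O\big((-u)^{-1/4}(T-u)^{-1/2}N^{-1/6}\big)$ from Lemma \ref{le:Aasym}, the exponential decay of the Airy factors as $u\to u_+$, and the size $|u_-|=O(N^{2/3-\varepsilon})$; here the hypothesis $j+k\ge1$ is exactly what keeps $\int_{u_-}^{0}|u|^{l}(-u)^{-1/2}(T-u)^{-1-j-k}\,\D u$ from growing too fast in $|u_-|$, so these contributions are all $O(N^{-1})$ or smaller.

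For the higher Taylor terms $l\ge1$ and for the remainder $R_L$, Lemma \ref{le:Aasym} gives $|\mathcal{A}_j(u)\mathcal{A}_k^{(l)}(u)|=O\big((-u)^{(l-1)/2}(T-u)^{-1-j-k}N^{-1/3}\big)$ (the $(T-u)^{-k}$ derivatives only improving the estimate), while $(\nu(u)-u)^{l+1}=O(N^{-(l+1)/3})$; combined with $j+k\ge1$, which again controls the $u_-$ endpoint, each such term is $O(N^{-1})$ (indeed $O(N^{-4/3})$ for $l\ge2$), so taking $L=2$ makes $R_L$ negligible. Assembling these estimates yields the lemma. The hard part will be the bookkeeping at the left endpoint: because $|u_-|$ is as large as $N^{2/3-\varepsilon}$, the polynomial-in-$u$ growth from the $u^l$ terms of $\nu(u)-u$ and from the derivatives $\mathcal{A}_k^{(l)}$ competes against the negative powers of $N$, and one must check case by case that each $\int_{u_-}^{0}(-u)^{\sigma}(T-u)^{-1-j-k}\,\D u$ either converges or grows slowly enough that the net stays $O(N^{-1})$ — this is precisely where $j+k\ge1$ is used. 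A simplification worth flagging is that no cancellation of oscillatory factors is needed here: crude absolute-value bounds on $\mathcal{A}$ and its derivatives, as supplied by Lemma \ref{le:Aasym}, suffice throughout, unlike the more delicate estimates elsewhere in this section.
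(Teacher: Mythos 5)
Your proof is correct and takes essentially the same approach as the paper's: the paper also starts from $\mathcal{A}(j,k)-\mathcal{A}_0(j,k)=\int_{u_-}^{u_+}\mathcal{A}_j(u)\int_{\nu(u)}^{u}\mathcal{A}_k(v)\,\D v\,\D u$, expands the inner integral (using a two-term mean value theorem expansion rather than your Taylor expansion to a general order $L$, but this is an immaterial difference), isolates the leading $(u-\nu(u))$ term to produce the $\tfrac{4^{1/3}(m-n)}{N^{1/3}}\int\mathcal{A}_j\mathcal{A}_k$ correction via Lemma \ref{le:vcoef}, and bounds the remainder using the decay estimates from Lemma \ref{le:Aasym} together with the hypothesis $j+k\geq 1$ to control the $u_-$ endpoint.
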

\begin{proof} This can be proved by using the mean value theorem. By
repeat use of mean value theorem, we have
\begin{equation}\label{eq:MVH}
\int_{\nu(u)}^u\mathcal{A}_k(v)\D
v=\mathcal{A}_k(u)(u-\nu(u))-\frac{1}{2}\mathcal{A}^{\prime}(\xi_u)(u-\nu(u))^2
\end{equation}
for some $\xi_u$ between $u$ and $\nu(u)$. Therefore we have
\begin{equation*}
\begin{split}
\mathcal{A}(j,k)=\mathcal{A}_0(j,k)+\int_{u_-}^{u_+}\mathcal{A}_j(u)\mathcal{A}_k(u)(u-\nu(u))\D
u-\frac{1}{2}\int_{u_-}^{u_+}\mathcal{A}_j(u)\mathcal{A}_k^{\prime}(\xi_u)(u-\nu(u))^2\D
u
\end{split}
\end{equation*}
Note that $\mathcal{A}_j\mathcal{A}_k$ is of order
$N^{-\frac{1}{3}}u^{-\frac{3}{2}-j-k}$ as $u\rightarrow-\infty$ and
decays exponentially as $u\rightarrow+\infty$. Then by Lemma
\ref{le:vcoef}, we see that
\begin{equation*}
\int_{u_-}^{u_+}\mathcal{A}_j(u)\mathcal{A}_k(u)(u-\nu(u))^i\D u
=\frac{4^{\frac{i}{3}}(m-n)^i}{N^{\frac{i}{3}}}\int_{u_-}^{u_+}\mathcal{A}_j\mathcal{A}_k\D
u+O\left(N^{-\frac{4}{3}}\right),
\end{equation*}
where $i=1,2$. Similarly, since $j+k\geq 1$, the function
$\mathcal{A}_j(u)\mathcal{A}_k^{\prime}(\xi_u)$ is of order
$N^{-\frac{1}{3}}u^{-2}$ as $u\rightarrow-\infty$ and decays
exponentially as $u\rightarrow\infty$. Hence the integral involving
$\mathcal{A}_j(u)\mathcal{A}_k^{\prime}(\xi_u)$ is of order
$N^{-1}$. This proves the lemma.
\end{proof}
From this, (\ref{eq:MVH}) and (\ref{eq:vcoef}), we can compute the
terms $AH(j,k)$ and $SH(j,k)$.
\begin{corollary}\label{cor:ASH} Let $AH_0(i,j)$ be
\begin{equation*}
AH_0(i,j)=\frac{1}{2}\left(\mathcal{A}_0(i,j)-\mathcal{A}_0(j,i)\right)
\end{equation*}
Then the terms $AH(j,k)$ and $SH(j,k)$ are of order
\begin{equation*}
\begin{split}
AH(j,k)=AH_0(j,k)+O\left(N^{-1}\right),\quad
SH(j,k)=\frac{4^{\frac{1}{3}}(m-n)}{N^{\frac{1}{3}}}\int_{u_-}^{u_+}\mathcal{A}_j\mathcal{A}_k\D
u+O\left(N^{-1}\right)
\end{split}
\end{equation*}
\end{corollary}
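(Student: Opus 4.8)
The plan is to read both estimates straight off Lemma~\ref{le:calHjk}, once the two double integrals appearing in the definition of $SH$ are recognised as differences of the form $\mathcal{A}(\cdot,\cdot)-\mathcal{A}_0(\cdot,\cdot)$.

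First I would record the elementary identity obtained by splitting the inner integration range at $u$: from $\mathcal{A}_0(j,k)=\int_{u_-}^{u_+}\mathcal{A}_j(u)\int_u^{v_+}\mathcal{A}_k(v)\,\D v\,\D u$ and $\mathcal{A}(j,k)=\int_{u_-}^{u_+}\mathcal{A}_j(u)\int_{\nu(u)}^{v_+}\mathcal{A}_k(v)\,\D v\,\D u$ one gets $\int_{u_-}^{u_+}\mathcal{A}_j(u)\int_{\nu(u)}^{u}\mathcal{A}_k(v)\,\D v\,\D u=\mathcal{A}(j,k)-\mathcal{A}_0(j,k)$. Hence, by the definitions in Corollary~\ref{cor:Hjk}, $SH(j,k)=\tfrac12\big[(\mathcal{A}(j,k)-\mathcal{A}_0(j,k))+(\mathcal{A}(k,j)-\mathcal{A}_0(k,j))\big]$ and $AH(j,k)=\tfrac12(\mathcal{A}(j,k)-\mathcal{A}(k,j))$, while $AH_0(j,k)=\tfrac12(\mathcal{A}_0(j,k)-\mathcal{A}_0(k,j))$ by definition.

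Next, restricting to $j+k\geq 1$ --- the only case relevant here, since $\mathcal{A}(0,0)$ was already treated via (\ref{eq:Adoub}) --- Lemma~\ref{le:calHjk} gives $\mathcal{A}(j,k)-\mathcal{A}_0(j,k)=\frac{4^{1/3}(m-n)}{N^{1/3}}\int_{u_-}^{u_+}\mathcal{A}_j(u)\mathcal{A}_k(u)\,\D u+O(N^{-1})$, and the same with $j$ and $k$ interchanged. The leading correction has the symmetric integrand $\mathcal{A}_j\mathcal{A}_k=\mathcal{A}_k\mathcal{A}_j$, so it cancels in $\mathcal{A}(j,k)-\mathcal{A}(k,j)$; this yields $AH(j,k)=\tfrac12(\mathcal{A}_0(j,k)-\mathcal{A}_0(k,j))+O(N^{-1})=AH_0(j,k)+O(N^{-1})$. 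On the other hand it adds in $SH(j,k)$, giving $SH(j,k)=\frac{4^{1/3}(m-n)}{N^{1/3}}\int_{u_-}^{u_+}\mathcal{A}_j\mathcal{A}_k\,\D u+O(N^{-1})$, as claimed.

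Since Lemma~\ref{le:calHjk} has already been established --- using the mean value bound (\ref{eq:MVH}), the expansion of $\nu(u)-u$ in Lemma~\ref{le:vcoef}, and the order estimates $\mathcal{A}_j\mathcal{A}_k=O(N^{-1/3}u^{-3/2-j-k})$ and $\mathcal{A}_j\mathcal{A}_k'=O(N^{-1/3}u^{-2})$ near $u_-$ together with exponential decay at $+\infty$ --- there is no genuine obstacle in this corollary; it is pure bookkeeping. The one point to keep straight is that the $O(N^{-1})$ errors from the two orderings combine as $O(N^{-1})+O(N^{-1})=O(N^{-1})$ and do not accumulate into a larger term.
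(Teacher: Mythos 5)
Your proof is correct and takes essentially the same route the paper intends: the paper gives no explicit argument for this corollary, merely citing Lemma~\ref{le:calHjk}, (\ref{eq:MVH}), and (\ref{eq:vcoef}) as its source, and your bookkeeping---splitting the inner range at $u$ to identify $\mathcal{A}(j,k)-\mathcal{A}_0(j,k)$ as the building block of $SH$, then invoking the symmetry $\mathcal{A}_j\mathcal{A}_k=\mathcal{A}_k\mathcal{A}_j$ so that the leading $O(N^{-1/3})$ correction cancels in $AH$ and doubles in $SH$---is exactly the intended deduction. Your observation that only $j+k\ge 1$ is needed here (since $\mathcal{A}(0,0)$ enters the analysis through (\ref{eq:H00}) and (\ref{eq:Acoef}) rather than through this corollary) is also the right reading of how the result is actually applied.
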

By using this, (\ref{eq:airydou}) and (\ref{eq:vcoef}), together
with the formula for $\mathcal{A}(0,0)$ (\ref{eq:H00}) and
(\ref{eq:Acoef}), we arrive at the following.
\begin{lemma}\label{le:corr1}
Let $n-m=k$. Then we have
\begin{equation}\label{eq:j3i}
\begin{split}
&\mathcal{J}_3-\frac{1}{2}\mathcal{I}_{n,3}\mathcal{I}_{m,3}-\mathcal{F}_{n-m}=k\mathcal{J}_{31}
+k(2(N-n)+k)\mathcal{J}_{32}\\
&+\frac{1}{2}\mathcal{I}_{n,3}\int_{u_-}^{v_-}\mathcal{A}(u)\mathcal{E}_m\D
u +O\left(N^{-\frac{4}{3}+\frac{\varepsilon}{2}}\right)
\end{split}
\end{equation}
where the terms $\mathcal{J}_{3j}$ are given by
\begin{equation}\label{eq:j3j}
\begin{split}
\mathcal{J}_{31}&=-\frac{4^{\frac{1}{3}}
}{2N^{\frac{1}{3}}}AH_0(0,1)
-\frac{4}{N^{\frac{1}{3}}}\int_{\frac{N}{n}c_-}^{4}\frac{\phi}{Nx(4-x)(t-
x)}\D
x-\frac{4^{\frac{1}{3}}}{N^{\frac{1}{3}}}\int_{u_-}^{u_+}\mathcal{A}^2(u)\D
u,\\
\mathcal{J}_{32}&=-\frac{4^{\frac{2}{3}}
}{2N^{\frac{2}{3}}}\left(\int_{u_-}^{u_+}\mathcal{A}_0\mathcal{A}_1\D
u+\frac{3 }{4}AH_0(0,2)\right)
\end{split}
\end{equation}
\end{lemma}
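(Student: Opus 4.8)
The plan is to assemble Lemma~\ref{le:corr1} by combining the Airy-region expansions already derived, carrying every contribution down to order $N^{-4/3+\varepsilon/2}$. First I would start from the reduction~\eqref{eq:douint}, which up to $O(N^{-4/3})$ writes $\mathcal{J}_3$ as $\frac{mn}{N^2}$ times the double integral $\int_{u_-}^{u_+}\mathcal{E}_n\mathcal{A}(u)\int_{\nu(u)}^{v_+}\mathcal{E}_m\mathcal{A}(v)\,\D v\,\D u$, and then apply the expansion~\eqref{eq:termdouint} of this quantity into the finite sum of the $\mathcal{A}(j,k)$ of~\eqref{eq:calH}. By~\eqref{eq:airydou} any term with $j+k>2$ is already $O(N^{-4/3})$, so only the pairs with $j+k\le 2$ matter; for those I would invoke Corollary~\ref{cor:Hjk} to rewrite each symmetric combination $(N-m)^{k-j}\mathcal{A}(j,k)+(N-n)^{k-j}\mathcal{A}(k,j)$ in terms of the antisymmetric piece $AH(j,k)$, the symmetric piece $SH(j,k)$, and the product $\int\mathcal{A}_j\int\mathcal{A}_k$ of single integrals.

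The leading diagonal term $\mathcal{A}(0,0)$ is handled separately. Here I use~\eqref{eq:Adoub} to split it into $\tfrac12(\int_{u_-}^{u_+}\mathcal{A})^2$ minus a Taylor series in $\nu-u$, Lemma~\ref{le:jkprime} to discard the odd terms of the series and to reduce the even ones to the explicit integrals on the left of~\eqref{eq:Fmn}, and Lemma~\ref{le:Fmn} to absorb precisely those integrals together with the bulk endpoint $\mathcal{F}_{n-m}$; this produces~\eqref{eq:H00}, and~\eqref{eq:Acoef} then converts the surviving $\int\mathcal{A}^2(\nu-u)$ into $\frac{4^{1/3}(n-m)}{N^{1/3}}\int\mathcal{A}^2$ up to $O(N^{-4/3+\varepsilon/2})$, which is the $\mathcal{A}^2$ contribution to $\mathcal{J}_{31}$. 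For the off-diagonal pairs $(0,1)/(1,0)$ and $(0,2)/(2,0)$ I feed Lemma~\ref{le:calHjk} and Corollary~\ref{cor:ASH} into Corollary~\ref{cor:Hjk}: the antisymmetric terms give $AH_0(0,1)$ and $AH_0(0,2)$ multiplied by $(N-m)-(N-n)=k$ and $(N-m)^2-(N-n)^2=k\big(2(N-n)+k\big)$ respectively, which is exactly the $(N-n)$-structure in~\eqref{eq:j3i}--\eqref{eq:j3j}, while the symmetric terms $SH(0,1)$, $SH(0,2)$ furnish the $\int\mathcal{A}_0\mathcal{A}_1$-type integrals of $\mathcal{J}_{32}$; the remaining diagonal pairs (in particular $(1,1)$) are checked to be $O(N^{-4/3})$ or to merge into quantities already listed.

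Finally I would account for the subtraction of $\tfrac12\mathcal{I}_{n,3}\mathcal{I}_{m,3}$ via the single-integral formula~\eqref{eq:singairy}, $\mathcal{I}_{n,3}\sim\frac{n}{N}\int_{u_-}^{u_+}\mathcal{A}\mathcal{E}_n$: the products $\tfrac12(\int\mathcal{A}_j)(\int\mathcal{A}_k)$ coming out of Corollary~\ref{cor:Hjk} reconstruct $\tfrac12\mathcal{I}_{n,3}\mathcal{I}_{m,3}$ except for the mismatch between the outer range $[u_-,u_+]$ and the $m$-integration range, and that mismatch is exactly the boundary term $\tfrac12\mathcal{I}_{n,3}\int_{u_-}^{v_-}\mathcal{A}(u)\mathcal{E}_m\,\D u$ of~\eqref{eq:j3i}. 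Pulling out the prefactor $\frac{mn}{N^2}=1+O(N^{-1})$ and substituting $n-m=k$, $N-m=(N-n)+k$, the surviving terms group into $k\mathcal{J}_{31}+k\big(2(N-n)+k\big)\mathcal{J}_{32}$ with $\mathcal{J}_{31},\mathcal{J}_{32}$ as in~\eqref{eq:j3j}, the bulk integral in $\mathcal{J}_{31}$ being the remnant of $\mathcal{F}_{n-m}$ left by Lemma~\ref{le:Fmn}. The main obstacle is the uniform error control: one must check that every oscillatory $\cos 2F_N$ contribution integrates by parts to $O(N^{-4/3})$, that replacing $v_+$ and $\nu(u_\pm)$ by $u_+$ and $u_\pm$ costs only exponentially small or higher-order terms, and that the $\varepsilon$-dependent errors generated near the matching point $u\to u_-$ never exceed $N^{-4/3+\varepsilon/2}$, which is where the restriction $\varepsilon\le 1/20$ is used again and again.
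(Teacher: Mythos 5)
Your proposal is correct and reconstructs exactly the argument the paper compresses into the single sentence ``By using this, (\ref{eq:airydou}) and (\ref{eq:vcoef}), together with the formula for $\mathcal{A}(0,0)$ (\ref{eq:H00}) and (\ref{eq:Acoef}), we arrive at the following'' — your account of how (\ref{eq:termdouint}) and Corollary~\ref{cor:Hjk} produce the $k$- and $k(2(N-n)+k)$-structure, how Lemma~\ref{le:Fmn} converts the $\nu-u$ Taylor tail together with $\mathcal{F}_{n-m}$ into the bulk integral remnant, and how the mismatch between the $n$- and $m$-integration ranges produces the $\tfrac12\mathcal{I}_{n,3}\int_{u_-}^{v_-}\mathcal{A}\mathcal{E}_m\,\D u$ boundary term are all faithful to the paper's chain of lemmas. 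One small caveat you could make explicit: $\mathcal{I}_{n,3}$ via (\ref{eq:singairy}) is $\frac{n}{N}\int\mathcal{A}\mathcal{E}_n$ rather than $\frac{n}{N}\int\mathcal{A}$, so matching $\tfrac12\mathcal{I}_{n,3}\mathcal{I}_{m,3}$ against the $\tfrac12(\int\mathcal{A}_j)(\int\mathcal{A}_k)$ products from Corollary~\ref{cor:Hjk} requires summing those products over all $(j,k)$ with the $\mathcal{E}$-expansion coefficients $h_{kj}/N^{(j+k)/3}$, not just the leading $\tfrac12(\int\mathcal{A})^2$ piece of~(\ref{eq:H00}); you clearly had this in mind when you wrote that the products ``reconstruct'' $\mathcal{I}_{n,3}\mathcal{I}_{m,3}$, but it is worth spelling out since it is where the non-$(0,0)$ single-integral products go.
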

As we shall see, the terms $\mathcal{J}_{31}$ and $\mathcal{J}_{32}$
will in fact not enter into the expression of the kernel $K_1$. What
is important is the structure of equation (\ref{eq:j3i}).

\begin{remark}\label{re:uniformT}
Throughout this section, all the dependence on $T$ are through
factors of $(T- u)^{-\frac{j}{2}}$ for $j>0$, with the exception of
the function $\mathcal{L}(u)$. From the definition of
$\mathcal{L}(u)$ in (\ref{eq:Lu}), we see that it can be written in
the form (\ref{eq:Luord})
\begin{equation*}
\mathcal{L}(u)=\left(\frac{N}{4}\right)^{-\frac{1}{6}}\left(1+O\left(\frac{u}{N^{\frac{2}{3}}}\right)\right)
\end{equation*}
with an error term that is uniform in $T$ as long as $T$ is not on
the integration contour of $u$ and $v$. Therefore all error terms in
this section is uniform in $T$ as $|T|\rightarrow N^{\frac{2}{3}}$
on the contour $\Xi_+$ in Theorem \ref{thm:phdis}.
\end{remark}
\subsubsection{The exponential region} Inside the exponential
region $[4+N^{-\varepsilon},\infty)$, we have the following matching
formula for the polynomials (See Lemma 4.8 of \cite{DGKV}).
\begin{lemma}\label{le:expmatch}
For any $d>0$ there exists a constant $k>0$ such that, uniformly for
$x\in[4+N^{-d},\infty)$, we have
\begin{equation*}
\hat{L}_n(x)=O\left(e^{-k(x-4)n^{\frac{2}{3}}}\right).
\end{equation*}
\end{lemma}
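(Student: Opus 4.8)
The plan is to recognize the estimate as a direct consequence of the Riemann--Hilbert steepest descent analysis of the Laguerre polynomials (this is Lemma~4.8 of \cite{DGKV}), and to organize the argument so that it rests on the asymptotics already recalled in Proposition~\ref{pro:V} together with the non-oscillatory global asymptotics valid to the right of the support. The only extra bookkeeping is the factor that distinguishes $\hat{L}_n$ from the orthonormal Laguerre polynomial times $w_0^{1/2}$.

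First I would separate off that factor. By~(\ref{eq:hatL}),
\begin{equation*}
\hat{L}_n(x)=\ell_n(x)\left(\frac{nx}{N}\left(t-\frac{nx}{N}\right)\right)^{-\frac12},\qquad
\ell_n(x)=L_n\!\left(\frac nN x\right)\left(w_0\!\left(\frac nN x\right)\right)^{\frac12}\left(i\kappa_{n-1}\right)^{\frac12}.
\end{equation*}
On the (possibly deformed) contour used in the Airy region the point $t$ stays at a fixed positive distance from it, so $\left(t-\frac{nx}{N}\right)^{-1/2}$ is uniformly bounded for $x$ in any fixed bounded set and is $O(x^{-1/2})$ as $x\to\infty$, while $\left(\frac{nx}{N}\right)^{-1/2}$ is bounded for $x$ bounded away from $0$. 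Hence it is enough to prove the stated decay for $\ell_n$, which is precisely the quantity controlled by the parametrices of \cite{V,DGKV}.

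For $x\in[4+N^{-d},4+\delta]$, with $\delta>0$ fixed and small enough that for large $N$ this interval lies inside the Airy region $[4-\delta,4+\delta]$, I would insert the Airy asymptotics~(\ref{eq:airy}). For $x>4$ one has $f_n>0$, and by~(\ref{eq:behf}) $f_n=\left(\frac n4\right)^{2/3}(x-4)\hat{f}_n(x)$ with $\hat{f}_n(x)=1+O(x-4)$ bounded below by a positive constant on $[4,4+\delta]$; moreover, by the definition of $f_n$ in~(\ref{eq:fmap}), $\frac23 f_n^{3/2}=n\varphi_+(x)$, and therefore $\frac23 f_n^{3/2}\ge c\,n(x-4)^{3/2}$ for some $c>0$. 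Using $Ai(s),Ai'(s)=O\!\left(s^{\mp1/4}e^{-\frac23 s^{3/2}}\right)$ for $s\ge s_0>0$ together with $\sin\eta_+(x)=O\!\left(\sqrt{x-4}\right)$ near $x=4$, the bracket in~(\ref{eq:airy}) over $x^{1/4}|x-4|^{1/4}$ is seen to be $O\!\left(N^{1/3}e^{-\frac23 f_n^{3/2}}\right)$. Since $n(x-4)^{3/2}\ge c\,N^{1-3d/2}$ dominates, for $N$ large, both the polynomial prefactor $N^{1/3}$ and $k(x-4)n^{2/3}\le k\,N^{2/3-d}$ on this interval (the comparison $1-\frac{3d}{2}>\frac23-d$ used here holds for $d<\frac23$, in particular for the values of $d$ occurring in this paper), a small reduction of the constant in the exponent absorbs the prefactor and yields $\hat{L}_n(x)=O\!\left(e^{-k(x-4)n^{2/3}}\right)$ there.

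For $x\ge 4+\delta$ I would instead use the non-oscillatory global parametrix asymptotics of \cite{V}: on this set $\ell_n(x)=g(x)\,e^{-n\varphi(x)}(1+O(1/n))$ with $g$ bounded and $\varphi$ as in~(\ref{eq:varphi}) strictly increasing on $(4,\infty)$, so $\ell_n(x)=O\!\left(e^{-n\varphi(4+\delta)}\right)=O\!\left(e^{-c'n}\right)$ on any bounded part of the region, while for $x$ large the Gaussian factor of $w_0\!\left(\frac nN x\right)^{1/2}$ forces super-exponential decay that swamps the polynomial $L_n$. In either case $\ell_n(x)=O\!\left(e^{-k(x-4)n^{2/3}}\right)$, since $c'n\ge k(x-4)n^{2/3}$ whenever $x-4\le C n^{1/3}$ and the weight handles $x-4>C n^{1/3}$; reinserting the bounded factor from the second paragraph completes the bound. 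The one genuinely delicate point, and the place where the careful bookkeeping of \cite{DGKV} is needed, is the uniformity as $x\downarrow 4+N^{-d}$: there the polynomial-in-$N$ prefactors produced by the Airy parametrix must be dominated by the stretched exponential $e^{-\frac23 f_n^{3/2}}$, which forces the constant $k$ to be chosen depending on $d$.
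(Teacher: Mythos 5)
The paper offers no proof of this lemma; it simply cites Lemma~4.8 of \cite{DGKV}. Your proposal is therefore a genuinely different route: a self-contained derivation from the material already recalled in the paper, namely the Airy-region asymptotics of Proposition~\ref{pro:V} on $[4+N^{-d},4+\delta]$ and the outer parametrix together with the decay of $w_0$ on $[4+\delta,\infty)$, with the factor $\bigl(\frac{nx}{N}(t-\frac{nx}{N})\bigr)^{-1/2}$ split off at the start. The mechanics are correct: the combination $|f_n|^{1/4}/|x-4|^{1/4}=O(n^{1/6})$ together with $\sin\eta_+=O(\sqrt{x-4})$ gives $L_n w_0^{1/2}(i\kappa_{n-1})^{1/2}=O\bigl(n^{1/6}e^{-\frac{2}{3}f_n^{3/2}}\bigr)$ on the Airy interval (your coarser $N^{1/3}$ suffices), and since $d<2/3$ forces $f_n\ge cN^{2/3-d}\to\infty$, the Airy decay is available throughout and $\frac{2}{3}f_n^{3/2}=n\varphi(x)\ge c'N^{1-3d/2}$ dominates both the polynomial prefactor and $k(x-4)n^{2/3}$ after shrinking $k$. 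The handling of $x\ge 4+\delta$ (outer parametrix plus direct weight decay for very large $x$) is standard and correct.

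One correction to your closing paragraph, though: the restriction $d<2/3$ is not a bookkeeping artifact that a more careful, $d$-dependent choice of $k$ would remove. For $d>2/3$ and $x=4+N^{-d}$ one has $f_n\to 0$, hence $\cos\eta_+\to 1$ and the leading Airy term gives $L_n w_0^{1/2}(i\kappa_{n-1})^{1/2}\sim c\,n^{1/6}Ai(0)\to\infty$, whereas the claimed bound $e^{-k(x-4)n^{2/3}}\to 1$; no admissible choice of $k$ or implied constant repairs this. The lemma as written is only true for $d\in(0,2/3)$, which is exactly what your argument establishes and all the paper requires (it is applied with $d=\varepsilon\le 1/20$).
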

From this, it is clear that the contribution from the exponential
region is of order $O\left(e^{-kN^{\frac{2}{3}-\varepsilon}}\right)$
for some $k>0$.
\subsection{Asymptotics of the skew inner
product}\label{se:asyminner}

We can now compute the asymptotics of the skew inner products
$\left<L_n,L_m\right>_1$. We have
\begin{equation}\label{eq:innersd}
\begin{split}
\left<L_n,L_m\right>_1&=\frac{1}{2}\int_{-\infty}^{\infty}L_n(x)w(x)\D
x\int_{-\infty}^{\infty}L_m(y)w(y)\D
y\\
&-\int_{-\infty}^{\infty}L_n(x)w(x)\int_{x}^{\infty}L_m(y)w(y)\D y\D
x.
\end{split}
\end{equation}
By breaking the range of these integrals into different regions, we
obtain
\begin{equation}\label{eq:innersd1}
\begin{split}
&\frac{N^2i\sqrt{\kappa_{n-1}\kappa_{m-1}}}{nm}\left<L_n,L_m\right>_1=
\sum_{j=1}^3\left(\frac{1}{2}\mathcal{I}_{n,j}\mathcal{I}_{m,j}-\mathcal{J}_j\right)
+\frac{1}{2}\Big(\mathcal{I}_{n,3}\left(\mathcal{I}_{m,2}+\mathcal{I}_{m,1}\right)\\
&-\left(\mathcal{I}_{n,1}+\mathcal{I}_{n,2}\right)\mathcal{I}_{m,3}\Big)+\frac{1}{2}\left(\mathcal{I}_{n,2}\mathcal{I}_{m,1}-\mathcal{I}_{n,1}\mathcal{I}_{m,2}\right)+O\left(e^{-kN^{\frac{2}{3}-\varepsilon}}\right)
\end{split}
\end{equation}
for some $k>0$. Then, as $\mathcal{I}_{n,2}$ and $\mathcal{I}_{m,2}$
are of order $O\left(N^{-\frac{7}{8}}\right)$ (see
(\ref{eq:bulksing})), we see that the last term in the above sum is
of order $O\left(N^{-\frac{11}{8}}\right)$. By using mean value
theorem and the asymptotic formulae for $\mathcal{A}(u)$, we see
that $\int_{u_-}^{v_-}\mathcal{A}(u)\mathcal{E}_m\D
u=O\left(N^{-\frac{7}{8}}\right)$. From these, (\ref{eq:singleB}),
(\ref{eq:bulksing}) and (\ref{eq:singairy}), we obtain
\begin{equation*}
\begin{split}
&\mathcal{I}_{n,3}\left(\mathcal{I}_{m,2}+\mathcal{I}_{m,1}\right)
-\left(\mathcal{I}_{n,1}+\mathcal{I}_{n,2}\right)\mathcal{I}_{m,3}-\mathcal{I}_{n,3}\int_{u_-}^{v_-}\mathcal{A}(u)\mathcal{E}_m\D
u\\
&=\sqrt{\frac{2\pi}{Nt}}\left((-1)^m\mathcal{I}_{n,3}-(-1)^n\mathcal{I}_{m,3}\right)+O\left(N^{-\frac{25}{24}}\right).
\end{split}
\end{equation*}
From this and (\ref{eq:singleB}), (\ref{eq:doubleB}),
(\ref{eq:doubulk1}) and Lemma \ref{le:corr1}, we arrive at the
following.
\begin{proposition}\label{pro:asyminner}
The product $\left<L_n,L_m\right>_1$ is given by
\begin{equation}\label{eq:asyminner}
\begin{split}
&\frac{N^2i\sqrt{\kappa_{n-1}\kappa_{m-1}}}{nm}\left<L_n,L_m\right>_1=
-\Bigg(k\mathcal{J}_{31}+k(2(N-n)+k)\mathcal{J}_{32}\Bigg)+k\mathcal{J}_{B1}+\\
&\mathcal{J}_{B2}+\mathcal{J}_{B3,k}+\left\{
   \begin{array}{ll}
     (-1)^{n}\Bigg(I_0+\left(2(n-N)-k\right)I_1\Bigg), & \hbox{$k$ odd;} \\
     (-1)^{n+1}kI_1, & \hbox{$k$ even.}
   \end{array}
 \right.+O\left(N^{-\frac{25}{24}}\right)
\end{split}
\end{equation}
where $\mathcal{J}_{3j}$ are given in (\ref{eq:j3j}) and
$\mathcal{J}_{B1}$, $\mathcal{J}_{B2}$ and $I_j$ are given by
\begin{equation}\label{eq:jBI}
\begin{split}
I_0&=-\sqrt{\frac{2\pi}{Nt}}\int_{u_-}^{u_+}\mathcal{A}(u)\D u,\quad
I_1=-\frac{\sqrt{2\pi}
}{4\sqrt{Nt}}\left(\frac{4}{N}\right)^{\frac{1}{3}}\int_{u_-}^{u_+}\mathcal{A}_1(u)\D
u,\\
\mathcal{J}_{B1}&=\frac{2\pi}{N^{\frac{2}{3}}4^{\frac{1}{3}}\sqrt{tT}}-\frac{2\pi}{N
},\quad
\mathcal{J}_{B2}=\frac{2\pi}{N },\\
\end{split}
\end{equation}
while $\mathcal{J}_{B3,k}$ is given by
\begin{equation}\label{eq:jB3}
\begin{split}
\mathcal{J}_{B3,1}&=0,\quad \mathcal{J}_{B3,2}=\frac{2\pi
4^{\frac{1}{3}}}{N^{\frac{4}{3}} }\sqrt{\frac{T}{t}}.
\end{split}
\end{equation}
\end{proposition}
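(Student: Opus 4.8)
The plan is to assemble the claimed expansion by substituting the region-by-region estimates of the previous subsections into the decomposition (\ref{eq:innersd1}) and then bookkeeping by order of magnitude in $N$. Recall that (\ref{eq:innersd}) writes $\left<L_n,L_m\right>_1$ as one half the product of the two single integrals $\int L_nw$, $\int L_mw$ minus the iterated integral, and that breaking each integration range into the Bessel, bulk, Airy and exponential subintervals turns the right side into the combination of $\mathcal{I}_{n,j}$'s and $\mathcal{J}_j$'s displayed in (\ref{eq:innersd1}), the exponential region contributing only $O(e^{-kN^{2/3-\varepsilon}})$ by Lemma \ref{le:expmatch}. First I would feed in the single-integral asymptotics: $\mathcal{I}_{n,1}$ from (\ref{eq:singleB}), $\mathcal{I}_{n,2}=O(n^{-7/8})$ from (\ref{eq:bulksing}), and $\mathcal{I}_{n,3}=\int_{u_-}^{u_+}\mathcal{A}\mathcal{E}_n\,\D u+O(N^{-7/6})$ from (\ref{eq:singairy}). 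Since $\mathcal{I}_{n,2}$ is only of order $N^{-7/8}$, the terms $\tfrac12\mathcal{I}_{n,2}\mathcal{I}_{m,2}$, $\tfrac12(\mathcal{I}_{n,2}\mathcal{I}_{m,1}-\mathcal{I}_{n,1}\mathcal{I}_{m,2})$ and the $\mathcal{I}_{m,2}$, $\mathcal{I}_{n,2}$ contributions to the mixed block are all $O(N^{-25/24})$ or smaller and are absorbed into the error. Using $\mathcal{I}_{m,1}\sim(-1)^m\sqrt{2\pi/(Nt)}$ and the mean-value estimate $\int_{u_-}^{v_-}\mathcal{A}\mathcal{E}_m\,\D u=O(N^{-7/8})$, the surviving mixed block $\tfrac12\bigl(\mathcal{I}_{n,3}(\mathcal{I}_{m,2}+\mathcal{I}_{m,1})-(\mathcal{I}_{n,1}+\mathcal{I}_{n,2})\mathcal{I}_{m,3}\bigr)-\tfrac12\mathcal{I}_{n,3}\int_{u_-}^{v_-}\mathcal{A}\mathcal{E}_m\,\D u$ reduces to $\tfrac12\sqrt{2\pi/(Nt)}\bigl((-1)^m\mathcal{I}_{n,3}-(-1)^n\mathcal{I}_{m,3}\bigr)+O(N^{-25/24})$; writing $n-m=k$ and $(-1)^m=(-1)^n(-1)^k$ and expanding $\mathcal{I}_{n,3}\pm\mathcal{I}_{m,3}$ through the $\mathcal{E}_n$-dependence $\mathcal{E}_n-\mathcal{E}_m\approx\tfrac12\,4^{1/3}k/(N^{1/3}(T-u))$ and the $n$-dependence of $\mathcal{A}$, this yields exactly the parity-split $I_0$, $I_1$ terms of (\ref{eq:asyminner}).

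For the iterated integrals I would substitute $\mathcal{J}_1$ from (\ref{eq:doubleB}), $\mathcal{J}_2$ from (\ref{eq:doubulk1}), and the structural identity (\ref{eq:j3i}) of Lemma \ref{le:corr1} for $\mathcal{J}_3$. The decisive point is the cancellation of the bulk endpoint functions $\mathcal{F}_l$ of (\ref{eq:fl}): Lemma \ref{le:corr1} packages $\mathcal{J}_3$ so that $\tfrac12\mathcal{I}_{n,3}\mathcal{I}_{m,3}-\mathcal{J}_3=-\mathcal{F}_{n-m}-k\mathcal{J}_{31}-k(2(N-n)+k)\mathcal{J}_{32}-\tfrac12\mathcal{I}_{n,3}\int_{u_-}^{v_-}\mathcal{A}\mathcal{E}_m\,\D u+O(N^{-4/3+\varepsilon/2})$, while the $j=2$ block $\tfrac12\mathcal{I}_{n,2}\mathcal{I}_{m,2}-\mathcal{J}_2$ contributes, via (\ref{eq:doubulk1}), a term $+\mathcal{F}_{n-m}$ together with the rational remnant $\mp\tfrac{2\pi}{N\sqrt{t(t-4)}}$ for $k=\pm1$ and $\mp\tfrac{2\pi}{N}\bigl(\tfrac{t-2}{\sqrt{t(t-4)}}-1\bigr)$ for $k=\pm2$, modulo $O(N^{-5/4})$. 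Adding the $j=2$ and $j=3$ blocks annihilates $\mathcal{F}_{n-m}$, and the surviving rational pieces from $\mathcal{J}_1$, $\mathcal{J}_2$ and from the leading term $\tfrac12\mathcal{I}_{n,1}\mathcal{I}_{m,1}=(-1)^k\pi/(Nt)+O(N^{-11/8})$ of the $j=1$ block are rewritten in the scaled variable $T=(N/4)^{2/3}(t-4)$, using the identity $N^{2/3}4^{1/3}\sqrt{tT}=N\sqrt{t(t-4)}$, and collected into $k\mathcal{J}_{B1}+\mathcal{J}_{B2}+\mathcal{J}_{B3,k}$ with $\mathcal{J}_{B1}$, $\mathcal{J}_{B2}$, $\mathcal{J}_{B3,k}$ as in (\ref{eq:jBI})--(\ref{eq:jB3}) (the $\pm\tfrac{2\pi}{N}$ summands there serving only to absorb the $j=1$ piece and the constant ``$-1$'' in the $k=2$ formula); the terms $-k\mathcal{J}_{31}-k(2(N-n)+k)\mathcal{J}_{32}$, with $\mathcal{J}_{31}$, $\mathcal{J}_{32}$ as in (\ref{eq:j3j}), are carried through verbatim.

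The last step is purely organizational: collect the $(-1)^n$ factors, isolate the coefficients that are linear in $N-n$, discard every remainder that has been shown to be $O(N^{-25/24})$ or smaller (note $5/4$, $11/8$, $7/4$ and $4/3-\varepsilon/2$ all exceed $25/24$ for $\varepsilon\le1/20$), and read off (\ref{eq:asyminner}). I expect the main obstacle to be not any one estimate but the control of subleading orders: because the $O(1)$-level pieces $\mathcal{F}_{n-m}$ cancel identically and the genuine content of (\ref{eq:asyminner}) sits at orders $N^{-2/3}$, $N^{-1}$ and $N^{-4/3}$, one has to retain the next correction in virtually every ingredient --- the $\mathcal{E}_n$- and $\mathcal{A}$-corrections to $\mathcal{I}_{n,3}$, the $N^{-1}$-expansions of $1/(nt)$ and of $(t(t-4))^{-1/2}$ near $t=4$, and the precise $N-n$-dependence of $\mathcal{J}_{31}$, $\mathcal{J}_{32}$ --- and one must check that every discarded term is small uniformly in $T$ along the contour $\Xi_+$, which is exactly the uniformity recorded in Remark \ref{re:uniformT}.
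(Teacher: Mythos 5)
Your proposal reconstructs the paper's own proof: the paper also assembles (\ref{eq:asyminner}) by substituting the region-by-region estimates from (\ref{eq:singleB}), (\ref{eq:bulksing}), (\ref{eq:singairy}), (\ref{eq:doubleB}), (\ref{eq:doubulk1}) and Lemma \ref{le:corr1} into the decomposition (\ref{eq:innersd1}), discarding the blocks of order $O(N^{-25/24})$ or smaller, and then collecting terms. The cancellation of $\mathcal{F}_{n-m}$ between the bulk and Airy double integrals, the parity split producing the $I_0$, $I_1$ terms, and the identification of the rational remnants with $k\mathcal{J}_{B1}+\mathcal{J}_{B2}+\mathcal{J}_{B3,k}$ via $N^{2/3}4^{1/3}\sqrt{tT}=N\sqrt{t(t-4)}$ are exactly the steps the paper takes.
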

We can simplify the expression further by the observation that
$\left<L_n,L_m\right>_1=0$ whenever $n-m=1$ and $n$ is even. (See
Corollary \ref{cor:linear}) Taking $k=1$ and $n$ even in
(\ref{eq:asyminner}), we have
\begin{equation*}
\begin{split}
-\Bigg(\mathcal{J}_{31}
+(2(N-n)+1)\mathcal{J}_{32}\Bigg)+\mathcal{J}_{B1}+\mathcal{J}_{B2}
+I_0+\left(2(n-N)-1\right)I_1=O\left(N^{-\frac{25}{24}}\right)
\end{split}
\end{equation*}
As this holds for any finite $N-n$ as long as $n$ is even, we obtain
the following relations.
\begin{equation}\label{eq:IJrel}
\begin{split}
\mathcal{J}_{32}=-I_1+O\left(N^{-\frac{25}{24}}\right),\quad
-\mathcal{J}_{31}+\mathcal{J}_{B1}+I_0=-\mathcal{J}_{B2}+O\left(N^{-\frac{25}{24}}\right).
\end{split}
\end{equation}
Note that although in the above equation, integration limits
$u_{\pm}$ depend on $n$, the effect of changing these limits will
only result in terms of order $O\left(N^{-\frac{25}{24}}\right)$ and
hence we can consider them as fixed under the change of $n$.

Let us now compute the factor $\kappa_{n-1}\kappa_{m-1}$. We have
$\kappa_{n}=-\frac{2\pi i}{h_{n,0}}$. By (\ref{eq:hn}), we see that
$h_{n,0}/h_{m,0}=1+O\left(N^{-1}\right)$. From this, Proposition
\ref{pro:asyminner} and (\ref{eq:IJrel}), we see that the skew inner
products that we need have the following asymptotics.
\begin{equation}\label{eq:prodasym}
\begin{split}
&\frac{2\pi}{h_{N,0}}\left<L_{N-1},L_{N-2}\right>_1=
-2I_0+6I_1+O\left(N^{-\frac{25}{24}}\right),\\
&\frac{2\pi}{h_{N,0}}\left<L_{N},L_{N-2}\right>_1=
-2I_0+2I_1-\mathcal{J}_{B2}
+\mathcal{J}_{B3,2}+O\left(N^{-\frac{25}{24}}\right).
\end{split}
\end{equation}
\begin{remark} From (\ref{eq:prodasym}), we see that for large
enough $N$, the products $\left<L_{N-1},L_{N-2}\right>_1$ and
$\left<L_{N-3},L_{N-4}\right>_1$ will be non-zero if
$\int_{-\infty}^{\infty}\frac{Ai}{(T-u)^{\frac{1}{2}}}\D u\neq 0$.
Since this is an analytic function with jump on $\mathbb{R}$ that is
not identically zero, we can choose the contour $\Xi_+$ in Theorem
\ref{thm:phdis} such that
$\int_{-\infty}^{\infty}\frac{Ai}{(T-u)^{\frac{1}{2}}}\D u\neq 0$ on
$\Xi_+$. We can therefore assume that
$\int_{-\infty}^{\infty}\frac{Ai}{(T-u)^{\frac{1}{2}}}\D u\neq 0$.
\end{remark}

Note that, by Remark \ref{re:uniformT}, these error terms are
uniform in $T$ as $|T|\rightarrow N^{\frac{2}{3}}$. Let us look at
the behavior of the skew products when $|T|\rightarrow
N^{\frac{2}{3}}$ and when $t$ remain finite.

First by (\ref{eq:jBI}), we see that as $T\rightarrow\infty$, the
integrals $I_j$ are of the following orders as $T\rightarrow\infty$.
\begin{equation}\label{eq:ITinfty}
\begin{split}
I_j=O\left(T^{-\frac{2j+1}{2}}t^{-\frac{1}{2}}N^{-\frac{2+j}{3}}\right).
\end{split}
\end{equation}
(The statement is clear if $|T|>u_-$. If $|T|<u_-$, then by breaking
the range of the integral into $(u_-,-|T|^{\frac{1}{2}})$ and
$(-|T|^{\frac{1}{2}},u_+)$ and integrate by parts the integral over
$(u_-,-|T|^{\frac{1}{2}})$ using (\ref{eq:Aasym}), one can check
that (\ref{eq:ITinfty}) is correct.)

By using $t=4 +\left(\frac{4}{N}\right)^{\frac{2}{3}}T$ in
(\ref{eq:jB3}), we see that $\mathcal{J}_{B3,2}$ is of the following
order.
\begin{equation}\label{eq:TjB3}
\begin{split}
\mathcal{J}_{B3,2}= O\left(N^{-1}\left(\frac{T}{N^{\frac{2}{3}}}\right)^{\frac{1}{2}}\right), \quad \hbox{$|T|\rightarrow N^{\frac{2}{3}}$ and $t$ finite.} \\
\end{split}
\end{equation}
From these we obtain the behavior of the skew products as
$T\rightarrow\infty$.
\begin{lemma}\label{le:Tskew}
The skew products $\left<L_n,L_m\right>_1$ in (\ref{eq:prodasym})
are of the following orders as $T\rightarrow\infty$.
\begin{equation}\label{eq:Tskew}
\begin{split}
&\frac{2\pi}{h_{N,0}}\left<L_{n},L_{m}\right>_1=
                                                    O\left(T^{-\frac{1}{2}}N^{-\frac{2}{3}}\right), \quad \hbox{$|T|\rightarrow N^{\frac{2}{3}}$ and $t$ finite.} \\
\end{split}
\end{equation}
\end{lemma}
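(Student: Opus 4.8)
The plan is to read Lemma~\ref{le:Tskew} off directly from the explicit formulas (\ref{eq:prodasym}) for the two skew products $\langle L_{N-1},L_{N-2}\rangle_1$ and $\langle L_N,L_{N-2}\rangle_1$, simply inserting the large-$T$ estimates already recorded for the quantities that occur there. The organising observation is that in the regime under consideration --- $t$ finite, i.e.\ $t=4+(4/N)^{2/3}T$ with $T$ comparable to $N^{2/3}$ --- one simultaneously has $t^{-1/2}=O(1)$ (since $t$ stays bounded and bounded away from $0$), $T\asymp N^{2/3}$, and hence $T^{-1/2}N^{-2/3}\asymp N^{-1}$. Consequently every error term of size $o(N^{-1})$ and every ``bulk'' contribution of size $O(N^{-1})$ that appears in (\ref{eq:prodasym}) is automatically $O(T^{-1/2}N^{-2/3})$ in this window, and the proof reduces to checking exponents.

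Carrying this out: by (\ref{eq:prodasym}), $\tfrac{2\pi}{h_{N,0}}\langle L_{N-1},L_{N-2}\rangle_1=-2I_0+6I_1+O(N^{-25/24})$. By (\ref{eq:ITinfty}), $I_0=O(T^{-1/2}t^{-1/2}N^{-2/3})=O(T^{-1/2}N^{-2/3})$, while $I_1=O(T^{-3/2}t^{-1/2}N^{-1})$, so $I_1/I_0=O(T^{-1}N^{-1/3})\to 0$ and the $I_1$-term is of strictly smaller order; the remainder $O(N^{-25/24})=o(N^{-1})=o(T^{-1/2}N^{-2/3})$ since $T\asymp N^{2/3}$. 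This gives the asserted bound for $\langle L_{N-1},L_{N-2}\rangle_1$. For $\langle L_N,L_{N-2}\rangle_1$ the formula (\ref{eq:prodasym}) carries in addition the terms $-\mathcal{J}_{B2}+\mathcal{J}_{B3,2}$: here $\mathcal{J}_{B2}=2\pi/N$ and, since $T\le CN^{2/3}$ in the regime considered, $N^{-1}\le C'T^{-1/2}N^{-2/3}$; and by (\ref{eq:TjB3}), $\mathcal{J}_{B3,2}=O\!\big(N^{-1}(T/N^{2/3})^{1/2}\big)=O(T^{1/2}N^{-4/3})$, which for $T\le CN^{2/3}$ is again $O(T^{-1/2}N^{-2/3})$. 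Combining these with the bounds on $I_0,I_1$ yields the claim for $\langle L_N,L_{N-2}\rangle_1$. Finally the same estimate for $\langle L_{N-3},L_{N-4}\rangle_1$, which is also needed (it enters the kernel $K_1$), follows by the identical computation with $(n,m)=(N-3,N-4)$ in place of $(N-1,N-2)$ in (\ref{eq:asyminner}) and (\ref{eq:IJrel}), using $h_{N-2,0}/h_{N,0}=1+O(N^{-1})$.

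So the lemma is essentially a corollary of (\ref{eq:prodasym}) together with the already-recorded large-$T$ behaviour (\ref{eq:ITinfty}) and (\ref{eq:TjB3}), and the only point that requires genuine care --- the step to watch --- is uniformity in $T$: the expansions leading to (\ref{eq:prodasym}) come with $O(N^{-25/24})$ errors that must remain valid as $|T|\to N^{2/3}$ along the contour $\Xi_+$ of Theorem~\ref{thm:phdis}. This is exactly the content of Remark~\ref{re:uniformT}, which holds because all $T$-dependence in that analysis sits in the benign factors $(T-u)^{-j/2}$ and in the function $\mathcal{L}(u)$, whose expansion (\ref{eq:Luord}) is uniform in $T$ provided $T$ stays off the $u$-contour. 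Granting that uniformity, the proof is pure numerology, done above. If instead one wanted a self-contained derivation of the key input (\ref{eq:ITinfty}), it would proceed as in the parenthetical following that equation: pull the common factor $(T-u)^{-1/2}$ out of $\mathcal{A}$, use the decay of the Airy part (exponential as $u\to+\infty$, of size $|u|^{-1/4}$ as $u\to-\infty$), and for $|T|<|u_-|$ split the $u$-integral at $-|T|^{1/2}$ and integrate the oscillatory piece over $(u_-,-|T|^{1/2})$ by parts using the asymptotics (\ref{eq:Aasym}).
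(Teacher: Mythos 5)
Your proof is correct and takes essentially the same approach as the paper: the paper states Lemma~\ref{le:Tskew} as an immediate consequence of (\ref{eq:prodasym}) combined with the $T\to\infty$ estimates (\ref{eq:ITinfty}) and (\ref{eq:TjB3}), and you have simply made explicit the exponent-counting (using $T\le CN^{2/3}$ so that $N^{-1}=O(T^{-1/2}N^{-2/3})$ absorbs the bulk terms $\mathcal{J}_{B2}$, $\mathcal{J}_{B3,2}$ and the $O(N^{-25/24})$ remainder) together with the appeal to Remark~\ref{re:uniformT} for uniformity in $T$, which is exactly what the paper leaves implicit.
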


We can now compute the asymptotic of the kernel.
\subsection{Asymptotics of the kernel $S_1(x,y)$} First let us used
the results in the previous sections to compute the correction term
to the kernel. Let us define the variables $\xi_1$ and $\xi_2$ to be
\begin{equation}\label{eq:scalevar}
\xi_1=\left(\frac{N}{4}\right)^{\frac{2}{3}}\left(x-4\right),\quad
\xi_2=\left(\frac{N}{4}\right)^{\frac{2}{3}}\left(y-4\right).
\end{equation}
We will assume that $\xi_1$ and $\xi_2$ are bounded from below.
First we need to compute the integrals $\epsilon(\pi_{n,1}w)$, which
is a linear combinations of the integrals of the $L_nw$. We have
\begin{equation*}
\epsilon\left(L_nw\right)(y)=-\int_{y}^{\infty}L_n(s)w(s)\D
s+\frac{1}{2}\int_{-\infty}^{\infty}L_n(s)w(s)\D s.
\end{equation*}
To compute the first term, let us first assume
$\xi_2<N^{\frac{1}{8}}$. Then we have, by the asymptotic formula of
the $L_n(x)$ inside the Airy region, and the estimates \cite{AbSt},
\begin{equation}\label{eq:aest}
|Ai(t)|\leq Ce^{-(2/3)t^{3/2}},\quad
|Ai^{\prime}(t)|<C(1+t^{1/4})e^{-2/3t^{3/2}},\quad t>0,
\end{equation}
the following.
\begin{equation}\label{eq:eL}
\frac{\sqrt{i\kappa_{n-1}}N}{n}\int_{y}^{\infty}L_n(s)w(s)\D
s=\int_{u\left(\frac{N}{n}y\right)}^{\infty}
\mathcal{A}\mathcal{E}_n\D
u+O\left(e^{-k\xi_2}N^{-\frac{7}{6}}\right)
\end{equation}
for some $k>0$, where $u=f_n(y)$. Note that by (\ref{eq:aest}), we
in fact have an exponential decay of order $e^{-k\xi_2^{3/2}}$ in
the error term above. However, the decay $e^{-k\xi_2}$ will be
sufficient for our purpose. The lower limit
$u\left(\frac{N}{n}y\right)$ can be expressed in terms of $\xi_2$ as
follows.
\begin{equation*}
u\left(\frac{N}{n}y\right)=\xi_2+\left(\frac{4}{N}\right)^{\frac{1}{3}}(N-n)
-\left(\frac{4}{N}\right)^{\frac{2}{3}}\frac{\xi_2^2}{20}+O\left(\left(\frac{\xi_2}{N^{\frac{2}{3}}}\right)^jN^{-\frac{1}{3}}\right).
\end{equation*}
where $j>0$. By using mean value theorem and (\ref{eq:aest}), we can
write the integral as
\begin{equation}\label{eq:asymint}
\begin{split}
\int_{u\left(\frac{N}{n}y\right)}^{\infty}
\mathcal{A}\mathcal{E}_n\D
u&=\mathcal{L}_0(\xi_2)+\left(\frac{4}{N}\right)^{\frac{1}{3}}(N-n)\mathcal{L}_1(\xi_2)\\
&+\frac{1}{2}\left(\frac{4}{N}\right)^{\frac{2}{3}}(N-n)^2\mathcal{L}_2(\xi_2)+O\left(e^{-k\xi_2}/N^{\frac{7}{8}}\right),
\end{split}
\end{equation}
for some $k>0$. The $\mathcal{L}_j$ in (\ref{eq:asymint}) are given
by
\begin{equation}\label{eq:calA}
\begin{split}
\mathcal{L}_0&=\int_{\xi_2}^{\infty}\mathcal{A}\D u
+\frac{4^{\frac{2}{3}}\xi_2^2}{20N^{\frac{2}{3}}}\mathcal{A}(\xi_2),\quad
\mathcal{L}_1=-\mathcal{A}(\xi_2)-\int_{\xi_2}^{\infty}\frac{
\mathcal{A}(u)}{2(T- u)}\D
u,\\
\mathcal{L}_2&=-\mathcal{A}^{\prime}(\xi_2)+\frac{
\mathcal{A}(\xi_2)}{T- \xi_2}
+\frac{3}{4}\int_{\xi_2}^{\infty}\frac{\mathcal{A}(u)}{(T- u)^2}\D
u.
\end{split}
\end{equation}
On the other hand, if $\xi_2>N^{\frac{1}{8}}$, then from Lemma
\ref{le:expmatch} and the asymptotic formula for the Airy function,
we see that
\begin{equation*}
\frac{\sqrt{i\kappa_{n-1}}N}{n}\int_{y}^{\infty}L_n(s)w(s)\D
s=O\left(e^{-k\xi_2}\right),\quad
\int_{u\left(\frac{N}{n}y\right)}^{\infty}
\mathcal{A}\mathcal{E}_n\D u=O\left(e^{-k\xi_2}\right)
\end{equation*}
for some constant $k>0$. Therefore if we choose the constant $k$ in
(\ref{eq:asymint}) to be small enough, then (\ref{eq:asymint})
remains valid for all $\xi_2$ bounded below.

Similarly, the second term in (\ref{eq:eL}) is given by
\begin{equation*}
\frac{\sqrt{i\kappa_{n-1}}}{2}\int_{-\infty}^{\infty}L_n(s)w(s)\D
s=\frac{(-1)^n\sqrt{\pi}}{\sqrt{2Nt}}+\frac{1}{2}\int_{u_-}^{u_+}\mathcal{A}\mathcal{E}_n(u)\D
u+O\left(N^{-\frac{7}{8}}\right).
\end{equation*}
Let $u_{-,N}$ be $f_N(c_-)$, then changing the lower limit in the
above integral into $u_{-,N}$ will only result in an error term of
order
$O\left(N^{-1+\frac{3\varepsilon}{4}}\right)=O\left(N^{-\frac{7}{8}}\right)$.
Similarly, changing the upper limit to $+\infty$ will only result in
an exponentially small error term. We can therefore change the lower
limit in the integration to $u_{-,N}$ and the upper limit to
$+\infty$. Let $\xi_{2,N}$ be the value of $\xi_2$ at $u=u_{-,N}$,
then we have
\begin{equation}\label{eq:asymint1}
\begin{split}
\sqrt{i\kappa_{n-1}}\epsilon\left(L_nw\right)(y)&=
\Psi_0(\xi_2)+\left(\frac{4}{N}\right)^{\frac{1}{3}}(N-n)\Psi_1(\xi_2)+\frac{1}{2}\left(\frac{4}{N}\right)^{\frac{2}{3}}(N-n)^2\Psi_2(\xi_2)\\
&+\frac{(-1)^n\sqrt{\pi}}{\sqrt{2Nt}}+O\left(N^{-\frac{7}{8}}\right),
\end{split}
\end{equation}
where $\Psi_j(\xi_2)$ is given by
\begin{equation*}
\begin{split}
\Psi_j(\xi_2)=\frac{1}{2}\mathcal{L}_j(\xi_{2,N})-\mathcal{L}_j(\xi_2),\quad
j=0,1,2.
\end{split}
\end{equation*}
Similarly, the orthogonal polynomials $L_n(x)$ are given by
\begin{equation}\label{eq:opasym}
\begin{split}
&L_n(x)w(x)=\frac{\sqrt{h_{N,0}}}{2\left(T-
\xi_1\right)^{\frac{1}{2}}}
\Bigg(\left(\frac{N}{4}\right)^{\frac{1}{2}}Ai(\xi_1)-\left(\frac{N}{4}\right)^{\frac{1}{6}}\left(\frac{\alpha+1}{2}+
(n-N)\right)Ai^{\prime}(\xi_1)\\
&+\left(\frac{N}{4}\right)^{-\frac{1}{6}}\left(\frac{(N-n)^2}{2}Ai^{\prime\prime}(\xi_1)
+f_1(\xi_1)+(N-n)f_2(\xi_2)\right)
+O\left(\frac{e^{-k\xi_1}}{N^{\frac{1}{6}}}\right)\Bigg)
\end{split}
\end{equation}
where $f_1$ and $f_2$ are functions that are independent on $N$, $n$
and $T$. Then by using (\ref{eq:k1form}), we obtain the asymptotics
for the correction kernel $K_1(x,y)$.
\begin{equation}\label{eq:mainker}
\begin{split}
&K_1(x,y)= \frac{Ai(\xi_1)\Psi_0(\xi_2)}{(T-
\xi_1)^{\frac{1}{2}}}\mathcal{Q}_{00}
+\frac{Ai^{\prime}(\xi_1)\Psi_0(\xi_2)}{(T- \xi_1)^{\frac{1}{2}}}\mathcal{Q}_{10}\\
&+\frac{Ai(\xi_1)\Psi_1(\xi_2)}{(T-
\xi_1)^{\frac{1}{2}}}\mathcal{Q}_{01}
+\frac{Ai^{\prime}(\xi_1)\Psi_1(\xi_2)}{(T-
\xi_1)^{\frac{1}{2}}}\mathcal{Q}_{11}
+\frac{Ai(\xi_1)\Psi_2(\xi_2)}{(T-
\xi_1)^{\frac{1}{2}}}\mathcal{Q}_{02}+
\frac{Ai^{\prime\prime}(\xi_1)\Psi_0(\xi_2)}{(T- \xi_1)^{\frac{1}{2}}}\mathcal{Q}_{20}\\
&+\frac{Ai(\xi_1)}{(T- \xi_1)^{\frac{1}{2}}}\mathcal{Q}_{03}
+\frac{Ai^{\prime}(\xi_1)}{(T-
\xi_1)^{\frac{1}{2}}}\mathcal{Q}_{13}+O\left(\frac{e^{-k\xi_1}N^{\frac{5}{8}}}{1+\sqrt{|T|}}\right)
+O\left(e^{-k\xi_1}N^{\frac{1}{3}}\right)
\end{split}
\end{equation}
The coefficients $\mathcal{Q}_{jk}$ are given by
\begin{equation}\label{eq:qjk}
\begin{split}
\mathcal{Q}_{00}&=\left(\frac{N}{4}\right)^{-\frac{1}{6}}\frac{NT
}{4\sqrt{2\pi}} ,\quad
\mathcal{Q}_{10}=\left(\frac{N}{4}\right)^{\frac{1}{6}}\frac{N
\left(4I_1+\mathcal{J}_{B2}-\mathcal{J}_{B3,2}\right)}
{8\sqrt{2\pi}I_0}+\left(\frac{4}{N}\right)^{\frac{1}{2}}\frac{NT}{4\sqrt{2\pi}},\\
\mathcal{Q}_{01}&=-\left(\frac{N}{4}\right)^{\frac{1}{6}}\frac{N
(4I_1+\mathcal{J}_{B2}-\mathcal{J}_{B3,2})} {8\sqrt{2\pi}I_0},\quad
\mathcal{Q}_{11}=\left(\frac{N}{4}\right)^{-\frac{1}{6}}\frac{N }{4\sqrt{2\pi}},\\
\mathcal{Q}_{02}&=-\left(\frac{N}{4}\right)^{-\frac{1}{6}}\left(\frac{N
}{2\sqrt{2\pi}}
+\frac{3N (4I_1+\mathcal{J}_{B2}-\mathcal{J}_{B3,2})}{16\sqrt{2\pi}I_0}\right),\\
\mathcal{Q}_{20}&=\left(\frac{4}{N}\right)^{\frac{5}{6}}\frac{NT}{8\sqrt{2\pi}}
+\left(\frac{N}{4}\right)^{-\frac{1}{6}}\frac{3N \left(4I_1+\mathcal{J}_{B2}-\mathcal{J}_{B3,2}\right)}{16I_0\sqrt{2\pi}},\\
\mathcal{Q}_{03}&=-\frac{N
(4I_1+\mathcal{J}_{B2}-\mathcal{J}_{B3,2})}{16\sqrt{t}I_0}
+\left(\frac{4}{N}\right)^{\frac{2}{3}}\frac{TN}{16\sqrt{t}},\\
\mathcal{Q}_{13}&=\left(\frac{4}{N}\right)^{\frac{1}{3}}\frac{N}{8\sqrt{t}}\left(-1+\frac{T}{2}\left(\frac{4}{N}
\right)^{\frac{2}{3}}-\frac{3
(4I_1+\mathcal{J}_{B2}-\mathcal{J}_{B3,2})}{4I_0}\right).
\end{split}
\end{equation}
From (\ref{eq:ITinfty}) and (\ref{eq:TjB3}), we see that as
$T\rightarrow\infty$, the kernel $K_1$ has the following behavior
\begin{equation*}
K_1(\xi_1,\xi_2)=O\left(N^{\frac{2}{3}}\right), \quad
\hbox{uniformly as $|T|\rightarrow N^{\frac{2}{3}}$ and $t$ finite.}
\end{equation*}
In fact, by using (\ref{eq:prodasym}), (\ref{eq:Tskew}) and
(\ref{eq:k1form}), we obtain the following.
\begin{lemma}\label{le:kTinfty}
As $T\rightarrow\infty$ and $t$ remains bounded, the kernel $K_2$
and $K_1$ become the Airy kernels in the large $N$ limit.
\begin{equation}\label{eq:kTinfty}
\begin{split}
\left(\frac{4}{N}\right)^{\frac{2}{3}}K_2(\xi_1,\xi_2)&=\frac{Ai(\xi_1)Ai^{\prime}(\xi_2)-Ai(\xi_2)Ai^{\prime}(\xi_1)}{\xi_1-\xi_2}+O\left(\frac{e^{-k\left(\xi_1+\xi_2\right)}}{TN^{\frac{1}{3}}}\right),\\
\left(\frac{4}{N}\right)^{\frac{2}{3}}K_1(\xi_1,\xi_2)&=\frac{1}{2}Ai(\xi_1)\int_{-\infty}^{\xi_2}Ai(u)\D
u+o(1)e^{-k\xi_1}.
\end{split}
\end{equation}
for some $k>0$.
\end{lemma}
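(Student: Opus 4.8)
The plan is to treat the two kernels separately, using the exact Christoffel--Darboux formula (\ref{eq:k2}) for $K_2$ and the asymptotic expansion (\ref{eq:mainker})--(\ref{eq:qjk}) for $K_1$, and in each case to extract the leading behaviour as $T\to\infty$. Throughout I write $x=4+\xi_1(4/N)^{2/3}$, $y=4+\xi_2(4/N)^{2/3}$ and $t-4=(4/N)^{2/3}T$.

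For $K_2$ I would first rewrite the prefactor in scaled variables, $\left(\frac{y(t-y)}{x(t-x)}\right)^{1/2}=\left(\frac{T-\xi_2}{T-\xi_1}\right)^{1/2}\bigl(1+O(N^{-2/3})\bigr)=1+O\!\left(\tfrac{\xi_1-\xi_2}{T}\right)+O(N^{-2/3})$, so that it produces only a relative error of order $T^{-1}$, which is absorbed into the stated error term. Once the factors $(x(t-x))^{-1/2}$, $(y(t-y))^{-1/2}$ are reabsorbed into $w$, what remains in (\ref{eq:k2}) is exactly the edge Christoffel--Darboux kernel of the monic Laguerre polynomials at $b_+=4$. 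Inserting the Airy-region asymptotics (\ref{eq:airy}) (equivalently (\ref{eq:opasym})) for $L_N$ and $L_{N-1}$ and running the usual Christoffel--Darboux cancellation — the $Ai(\xi_1)Ai(\xi_2)$ and $Ai'(\xi_1)Ai'(\xi_2)$ contributions drop out, the $Ai(\xi_1)Ai'(\xi_2)-Ai(\xi_2)Ai'(\xi_1)$ contribution survives with coefficient $(\xi_1-\xi_2)^{-1}$ after multiplication by $(4/N)^{2/3}$ — reproduces the Airy kernel exactly as in \cite{V}, the decay $e^{-k(\xi_1+\xi_2)}$ coming from the Airy estimates (\ref{eq:aest}). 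This gives the first line of (\ref{eq:kTinfty}).

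For $K_1$ the first step is to record the $T\to\infty$ sizes of the ingredients of (\ref{eq:mainker})--(\ref{eq:qjk}): $I_j=O\!\left(T^{-(2j+1)/2}t^{-1/2}N^{-(2+j)/3}\right)$ from (\ref{eq:ITinfty}), $\mathcal{J}_{B2}=2\pi/N$ and $\mathcal{J}_{B3,2}=O(T^{1/2}N^{-4/3})$ from (\ref{eq:TjB3}), and — from (\ref{eq:asymint1}), (\ref{eq:calA}) and the leading form $\mathcal{A}(u)=\sqrt{\pi/2}\,(4/N)^{1/6}Ai(u)(T-u)^{-1/2}\bigl(1+O(N^{-1/3})\bigr)$ — the expansion $\Psi_0(\xi_2)=\sqrt{\pi/2}\,(4/N)^{1/6}T^{-1/2}\bigl(\int_{-\infty}^{\xi_2}Ai(u)\,\D u-\tfrac12\bigr)$, together with the expansions of $\Psi_1,\Psi_2$ read off from $\mathcal{L}_1,\mathcal{L}_2$, which are smaller by successive powers of $N^{-1/3}$ and $T^{-1/2}$. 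Substituting these into the eight terms of (\ref{eq:mainker}), multiplying by $(4/N)^{2/3}$ and converting the $\mathcal{Q}_{20}$ term via $Ai''(\xi_1)=\xi_1 Ai(\xi_1)$, one checks that each of the three $Ai'(\xi_1)$-terms ($\mathcal{Q}_{10}\Psi_0$, $\mathcal{Q}_{11}\Psi_1$, $\mathcal{Q}_{13}$) carries an extra factor $T^{-1/2}$ or $N^{-1/3}$ and hence vanishes in the limit, while among the $Ai(\xi_1)$-terms only $\mathcal{Q}_{00}\Psi_0$ and the leading part of $\mathcal{Q}_{03}$ survive: $\mathcal{Q}_{00}\Psi_0$ produces $\tfrac12 Ai(\xi_1)\bigl(\int_{-\infty}^{\xi_2}Ai-\tfrac12\bigr)$ and the $\mathcal{J}_{B2}$-part of $\mathcal{Q}_{03}$ restores the missing $\tfrac14 Ai(\xi_1)$, leaving $\tfrac12 Ai(\xi_1)\int_{-\infty}^{\xi_2}Ai(u)\,\D u$. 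An alternative and, for the cancellations, more transparent route is to start directly from (\ref{eq:k1form}): by Lemma \ref{le:Tskew} the skew products $\langle L_{N+1},L_{N-1}\rangle_1$, $\langle L_N,L_{N-2}\rangle_1$, $\langle L_{N-1},L_{N-2}\rangle_1$, $\langle L_{N+1},L_{N-2}\rangle_1$ are all of order $T^{-1/2}N^{-2/3}h_{N,0}$, so by (\ref{eq:relprod}) and (\ref{eq:prodasym}) the ratios in (\ref{eq:k1form}) have finite limits, and the $N$-independent constants they generate line up with the leading parts of $\epsilon(L_Nw)$, $\epsilon(L_{N\pm1}w)$ in (\ref{eq:asymint1}); inserting the Airy forms (\ref{eq:opasym}) of $L_{N-1}w,L_{N-2}w$ then exhibits the $Ai'(\xi_1)$-terms cancelling, with the surviving $Ai(\xi_1)$-terms assembling into $\tfrac12 Ai(\xi_1)\int_{-\infty}^{\xi_2}Ai(u)\,\D u$, which is the second line of (\ref{eq:kTinfty}).

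The main obstacle is this bookkeeping itself: several individual terms of (\ref{eq:mainker}) are of order $1$ in the limit, and when $|T|$ is allowed to grow like $N^{2/3}$ even more terms reach this size, so the statement rests on exact cancellations among the $\mathcal{Q}_{jk}$, and one must carry the subleading $O(N^{-1/3})$ and $O(T^{-1/2})$ corrections to $\mathcal{A}$, to the $\Psi_j$ and to the skew products coherently through the whole computation. A secondary but essential point is uniformity as $|T|\to N^{2/3}$ with $t$ bounded, which is supplied by Remark \ref{re:uniformT}: the only dependence on $T$ not carried by explicit factors $(T-u)^{-j/2}$ sits inside $\mathcal{L}(u)=(N/4)^{-1/6}\bigl(1+O(u/N^{2/3})\bigr)$, whose error is uniform in $T$ off the integration contour. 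The exponential decay $e^{-k\xi_1}$ in the final error is inherited from the Airy estimates (\ref{eq:aest}) already used in deriving (\ref{eq:asymint}) and (\ref{eq:opasym}).
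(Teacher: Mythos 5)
Your proposal is correct and follows essentially the same route as the paper. For $K_2$ you do exactly what the paper does (read off the Airy kernel from (\ref{eq:k2}) via (\ref{eq:airy})/(\ref{eq:opasym}), with the prefactor $\bigl(\tfrac{T-\xi_2}{T-\xi_1}\bigr)^{1/2}\to 1$ contributing only to the error). For $K_1$ the route you describe as the ``more transparent alternative'' --- starting from (\ref{eq:k1form}), feeding in the skew-product asymptotics (\ref{eq:prodasym}) together with (\ref{eq:relprod}) and Lemma \ref{le:Tskew}, then inserting (\ref{eq:asymint1}) and (\ref{eq:opasym}) and watching the $Ai'(\xi_1)$ pieces cancel --- is precisely the paper's proof; your first route, via (\ref{eq:mainker})--(\ref{eq:qjk}), is the same computation with the substitutions already carried out, and your spot checks (the $\tfrac12 Ai(\xi_1)\bigl(\int_{-\infty}^{\xi_2}Ai-\tfrac12\bigr)$ from $\mathcal{Q}_{00}\Psi_0$ and the restored $\tfrac14 Ai(\xi_1)$ from the $\mathcal{J}_{B2}$-part of $\mathcal{Q}_{03}$) agree with the two-term expression the paper records before combining them.
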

\begin{proof} The statement for the kernel $K_2$ follows immediately
from the representation (\ref{eq:k2}) and the Airy asymptotics of
the Laguerre polynomials. To prove the statement for the correction
term $K_1$, first note that, by (\ref{eq:prodasym}), we see that
\begin{equation}\label{eq:lnn}
\begin{split}
\frac{2\pi}{h_{N,0}}\left<L_{N},L_{N-2}\right>_1&=\frac{2\pi(t-2
)}{N \sqrt{t(t-4 )}}
-\frac{2\pi}{N }+O\left(N^{-\frac{25}{24}}\right),\\
\frac{2\pi}{h_{N,0}}\left<L_{N-1},L_{N-2}\right>_1&=\frac{4\pi}{N\sqrt{t(t-4
)}} +O\left(N^{-\frac{25}{24}}\right)
\end{split}
\end{equation}
uniformly for $|T|<cN^{\frac{2}{3}}$. By dividing the range of
integration in $\int_{u_-}^{u_+}\frac{Ai}{(T- u)^{\frac{1}{2}}}\D u$
into $[u_-,-|T|^{\frac{1}{2}}]$ and $[-|T|^{\frac{1}{2}},u_+]$, we
see that
\begin{equation*}
\int_{u_-}^{u_+}\frac{Ai}{(T- u)^{\frac{1}{2}}}\D
u=\frac{1}{T^{\frac{1}{2}}}\left(\int_{-\infty}^{\infty}Ai(u)\D
u+O\left(T^{-\frac{3}{8}}\right)\right)=\frac{1}{T^{\frac{1}{2}}}\left(1+O\left(T^{-\frac{3}{8}}\right)\right),
\end{equation*}
From this, (\ref{eq:lnn}), (\ref{eq:asymint1}), (\ref{eq:opasym})
and (\ref{eq:k1form}), we see that in this limit, $K_1$ is given by
\begin{equation*}
\begin{split}
&\left(\frac{4}{N}\right)^{\frac{2}{3}}K_1(\xi_1,\xi_2)=\frac{1}{2}Ai(\xi_1)\left(\frac{1}{2}-\int_{\xi_2}^{\infty}Ai(u)\D
u+O\left(T^{-\frac{3}{8}}\right)+O\left(N^{-\frac{1}{3}}\right)\right)\\
&+\left(\frac{1}{4}+O\left(N^{-\frac{9}{24}}T^{\frac{1}{2}}\right)+O\left(T^{-\frac{3}{8}}\right)\right)
Ai(\xi_1)\left(1+O\left(N^{-\frac{1}{3}}\right)\right)\\
&=\frac{1}{2}Ai(\xi_1)\left(\int_{-\infty}^{\xi_2}Ai(u)\D
u+O\left(T^{-\frac{3}{8}}\right)+O\left(N^{-\frac{9}{24}}T^{\frac{1}{2}}\right)+O\left(N^{-\frac{1}{3}}\right)\right)
\end{split}
\end{equation*}
This proves the lemma.
\end{proof}

By using the explicit expressions for $\mathcal{A}$ and $\Psi_j$, we
obtain the asymptotic formula for the kernel when $T$ is finite.
\begin{proposition}\label{pro:asymker}
Let $T$ be of order $N^{\frac{1}{3}-c}$ for some positive $0<c\leq
\frac{1}{3}$, then for $\xi_1$ and $\xi_2$ bounded from below, the
kernel $K_1(\xi_1,\xi_2)$ is given by
\begin{equation}\label{eq:asymk1}
\begin{split}
&\left(\frac{4}{N}\right)^{\frac{2}{3}}K_1(\xi_1,\xi_2)=K_{1,\infty}(\xi_1,\xi_2)+O\left(N^{-\frac{1}{24}}e^{-k\xi_1}\right),\\
&K_{1,\infty}(\xi_1,\xi_2)=\Bigg(\frac{T}{2}
H_0(\xi_1)\int_{-\infty}^{\xi_2}H_0\D u
+\frac{1}{2}H_1(\xi_1)\int_{-\infty}^{\xi_2}H_1(u)\D u\\
&\mathcal{B}_1\left(H_1(\xi_1)\int_{-\infty}^{\xi_2}H_0(u)\D
u-H_0(\xi_1)\int_{-\infty}^{\xi_2}H_1(u)\D
u\right)\\
&-H_0(\xi_1)\int_{-\infty}^{\xi_2}H_2(u)\D u
+\mathcal{B}_2H_0(\xi_1)\Bigg).
\end{split}
\end{equation}
for some $k>0$, where $H_j$, $\mathcal{B}_1$ and $\mathcal{B}_2$ are
given by
\begin{equation}\label{eq:C1}
\begin{split}
H_j(u)&=\frac{Ai^{(j)}(u)}{(T-u)^{\frac{1}{2}}},\quad
\mathcal{B}_1=-\frac{\int_{-\infty}^{\infty}H_1(u)\D
u+1}{2\int_{-\infty}^{\infty}H_0(u)\D u},\\
\mathcal{B}_2&=-\frac{\mathcal{B}_1}{2}-\frac{T}{4}\int_{-\infty}^{\infty}H_0\D
u+\frac{\mathcal{B}_1}{2}\int_{-\infty}^{\infty}H_1\D
u+\frac{1}{2}\int_{-\infty}^{\infty}H_2\D u,\quad
\end{split}
\end{equation}
\end{proposition}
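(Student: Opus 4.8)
The expansion (\ref{eq:mainker}) together with the explicit coefficients (\ref{eq:qjk}) is the starting point; these follow from the exact identity (\ref{eq:k1form}) by substituting the Airy-region asymptotics (\ref{eq:opasym}) for $L_{m}(x)w(x)$, the expansion (\ref{eq:asymint1}) of $\epsilon(L_{n}w)(y)$ in the functions $\Psi_0,\Psi_1,\Psi_2$ and the constants $(-1)^n\sqrt{\pi}/\sqrt{2Nt}$, the skew-product asymptotics (\ref{eq:prodasym}), and the value (\ref{eq:hn}) of $h_{n,0}$ (with $M-N=\alpha$ fixed and $\gamma=1$). The plan is therefore to evaluate $(4/N)^{2/3}$ times the right-hand side of (\ref{eq:mainker}) in the range $T=O(N^{1/3-c})$, $0<c\le\tfrac{1}{3}$, and to identify the limit with $K_{1,\infty}$.

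The first step is to record the leading behaviour of the ingredients of (\ref{eq:qjk}). Near the soft edge $\mathcal{A}(u)=\sqrt{\pi/2}\,(N/4)^{-1/6}H_0(u)\,(1+O(N^{-1/3}))$ (the $Ai^{\prime}$ term being down by $O(N^{-1/3})$), so $\mathcal{A}_1(u)=\mathcal{A}(u)/(T-u)$ and, integrating by parts over a range exhausting $\mathbb{R}$ (the Airy function killing the endpoint terms), $\int\mathcal{A}_1=-2\sqrt{\pi/2}\,(N/4)^{-1/6}\int_{-\infty}^{\infty}H_1\D u\,(1+o(1))$. Since $\xi_{2,N}=f_N(c_-)\to-\infty$, the functions $\Psi_j(\xi_2)=\tfrac{1}{2}\mathcal{L}_j(\xi_{2,N})-\mathcal{L}_j(\xi_2)$ of (\ref{eq:calA}) converge, after the same rescaling, to $\sqrt{\pi/2}\,(N/4)^{-1/6}\big(\int_{-\infty}^{\xi_2}H_j\D u-\tfrac{1}{2}\int_{-\infty}^{\infty}H_j\D u\big)$, the relevant integrations by parts collapsing $\mathcal{L}_j(\xi_2)$ to $\sqrt{\pi/2}\,(N/4)^{-1/6}\int_{\xi_2}^{\infty}H_j\D u$ for $j=0,1,2$. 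Feeding these into (\ref{eq:jBI}), (\ref{eq:jB3}) gives $I_0=-\tfrac{\pi}{\sqrt{Nt}}(N/4)^{-1/6}\int_{-\infty}^{\infty}H_0\D u\,(1+o(1))$ and, with $\mathcal{J}_{B2}=2\pi/N$ and $\mathcal{J}_{B3,2}$ of strictly smaller order, $4I_1+\mathcal{J}_{B2}-\mathcal{J}_{B3,2}=\tfrac{4\pi}{N\sqrt t}\big(\int_{-\infty}^{\infty}H_1\D u+1\big)(1+o(1))$ --- the ``$+1$'' being exactly $\mathcal{J}_{B2}$ evaluated at $t\to4$. Consequently the quotient $N(4I_1+\mathcal{J}_{B2}-\mathcal{J}_{B3,2})/(8\sqrt{2\pi}\,I_0)$ occurring in $\mathcal{Q}_{10},\mathcal{Q}_{01},\mathcal{Q}_{02},\mathcal{Q}_{20},\mathcal{Q}_{03},\mathcal{Q}_{13}$ is, up to an explicit power of $N$, a fixed multiple of $\mathcal{B}_1=-(\int H_1\D u+1)/(2\int H_0\D u)$, which is the mechanism that turns the $\mathcal{Q}_{jk}$ into the $\mathcal{B}_1$-coefficients of $K_{1,\infty}$.

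The last step is to read off the $O(1)$ limit of $(4/N)^{2/3}$ times (\ref{eq:mainker}). Several blocks drop out: the $Ai^{\prime\prime}$-block $\mathcal{Q}_{20}$, the $T$-proportional subleading parts of the $\mathcal{Q}_{jk}$, and the $(-1)^n$ pieces of (\ref{eq:asymint1}) (with $(-1)^N=(-1)^{N-2}=1$ and $(-1)^{N-1}=(-1)^{N+1}=-1$ since $N$ is even), all of which are $O(N^{-1/6})$ or smaller after multiplication by $(4/N)^{2/3}$. The leading parts of $\mathcal{Q}_{00},\mathcal{Q}_{10},\mathcal{Q}_{01},\mathcal{Q}_{11},\mathcal{Q}_{02}$, combined with the above limits of $\Psi_0,\Psi_1,\Psi_2$, produce the $\xi_2$-dependent part of $K_{1,\infty}$ --- the terms $\tfrac{T}{2}H_0(\xi_1)\int_{-\infty}^{\xi_2}H_0\D u$, $\tfrac{1}{2}H_1(\xi_1)\int_{-\infty}^{\xi_2}H_1\D u$, $-H_0(\xi_1)\int_{-\infty}^{\xi_2}H_2\D u$ and the $\mathcal{B}_1$-bracket --- plus constant-in-$\xi_2$ remainders. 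The step I expect to be the main obstacle is assembling the constant term $\mathcal{B}_2H_0(\xi_1)$: its four pieces $-\tfrac{\mathcal{B}_1}{2}$, $-\tfrac{T}{4}\int H_0\D u$, $\tfrac{\mathcal{B}_1}{2}\int H_1\D u$, $\tfrac{1}{2}\int H_2\D u$ come from the limit $-\tfrac{\mathcal{B}_1}{2}H_0(\xi_1)$ of the $\mathcal{Q}_{03}$-block and from the ``$-\tfrac{1}{2}\int H_j\D u$'' remainders of $\mathcal{Q}_{00},\mathcal{Q}_{01},\mathcal{Q}_{02}$ respectively, and one must check in addition that the constant-in-$\xi_2$ multiples of $H_1(\xi_1)$ produced by $\mathcal{Q}_{10},\mathcal{Q}_{11},\mathcal{Q}_{13}$ cancel exactly (they add up to $\tfrac{1}{4}H_1(\xi_1)$ from $\mathcal{Q}_{10},\mathcal{Q}_{11}$ and $-\tfrac{1}{4}H_1(\xi_1)$ from $\mathcal{Q}_{13}$), since $K_{1,\infty}$ contains no $H_1(\xi_1)$ term. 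This requires careful tracking of the numerical constants ($\sqrt{\pi/2}$, powers of $4$, and so on) and of the ratio $\left<L_N,L_{N-2}\right>_1/\left<L_{N-1},L_{N-2}\right>_1=1+O(N^{-1/3})$; that all the apparently larger contributions cancel is ultimately encoded in (\ref{eq:prodasym}) via the relations (\ref{eq:IJrel}) and the identity $\left<L_k,L_{k-1}\right>_1=0$ for even $k$ (Corollary \ref{cor:linear}), which is also what makes $S_1$ of order $N^{2/3}$ rather than $N^{4/3}$. Finally, uniformity in $T$ as $|T|\to N^{2/3}$ along $\Xi_+$ follows from Remark \ref{re:uniformT}, and the error is $(4/N)^{2/3}$ times the one already present in (\ref{eq:mainker}), dominated by $(4/N)^{2/3}N^{5/8}e^{-k\xi_1}=O(N^{-1/24}e^{-k\xi_1})$, every substitution error and every dropped subleading term being $O(N^{-1/6}e^{-k\xi_1})$ or smaller.
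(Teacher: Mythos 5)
Your proposal is correct and follows the same route the paper takes (the paper itself supplies no written proof beyond "By using the explicit expressions for $\mathcal{A}$ and $\Psi_j$, we obtain the asymptotic formula"). You correctly identify the inputs (\ref{eq:mainker}), (\ref{eq:qjk}), (\ref{eq:asymint1}), (\ref{eq:opasym}), (\ref{eq:prodasym}); your reductions of $\mathcal{L}_j$ to $\sqrt{\pi/2}(N/4)^{-1/6}\int_{\xi_2}^{\infty}H_j$ via integration by parts, the identification of the ratio $N(4I_1+\mathcal{J}_{B2}-\mathcal{J}_{B3,2})/(8\sqrt{2\pi}I_0)$ as producing $\mathcal{B}_1$ up to a power of $N$, the assembly of the four pieces of $\mathcal{B}_2$ from the constant remainders of $\mathcal{Q}_{00},\mathcal{Q}_{01},\mathcal{Q}_{02},\mathcal{Q}_{03}$, and the cancellation of the $H_1(\xi_1)$ constants ($\tfrac14$ from $\mathcal{Q}_{10}+\mathcal{Q}_{11}$ against $-\tfrac14$ from $\mathcal{Q}_{13}$) all check out, as does the error estimate $(4/N)^{2/3}N^{5/8}=O(N^{-1/24})$.
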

As pointed out in \cite{DG}, \cite{DG2} and \cite{DGKV}, the
eigenvalue statistics will not be affected by the rescaling
\begin{equation}\label{eq:matker2}
K\mapsto
\left(\frac{N}{4}\right)^{-\frac{2}{3}\sigma_3}K\left(\frac{N}{4}\right)^{\frac{2}{3}\sigma_3}
\end{equation}
of the matrix kernel. By rescaling the kernel in this way, all the
entries will have the same order in the large $N$ limit. From now
on, we shall use this rescaled kernel and denote it also by $K$.

From (\ref{eq:asymk1}), we obtain the following estimate for the
rescaled matrix kernel $K$.
\begin{proposition}\label{pro:asyker}
Let $K_{\infty}$ be the $2\times 2$ matrix whose entries are given
by
\begin{equation}\label{eq:kinfty}
\begin{split}
K_{\infty,11}(\xi_1,\xi_2)&=K_{\infty,22}(\xi_2,\xi_1)=K_{1,\infty}(\xi_1,\xi_2)+K_{2,\infty}(\xi_1,\xi_2),\\
K_{\infty,12}(\xi_1,\xi_2)&=-\frac{\p K_{11,\infty}}{\p\xi_2},\quad
K_{\infty,21}(\xi_1,\xi_2)=-\int_{\xi_1}^{\xi_2}K_{11,\infty}(u,\xi_2)\D
u,
\end{split}
\end{equation}
where $K_{2,\infty}(\xi_1,\xi_2)$ is given by
\begin{equation*}
\begin{split}
K_{2,\infty}(\xi_1,\xi_2)=\left(\frac{T- \xi_2}{T-
\xi_1}\right)^{\frac{1}{2}}\frac{Ai(\xi_1)Ai^{\prime}(\xi_2)-Ai(\xi_2)Ai^{\prime}(\xi_1)}{\xi_1-\xi_2}.
\end{split}
\end{equation*}
Suppose $T$ is of order $N^{\frac{1}{3}-c}$ for some positive
$0<c\leq \frac{1}{3}$ and that $\xi_1$ and $\xi_2$ are bounded from
below. Let $K(\xi_1,\xi_2)$ be the rescaled matrix kernel in
(\ref{eq:matker2}), then there exists $k>0$ such that
\begin{equation*}
\begin{split}
\left(\frac{4}{N}\right)^{\frac{2}{3}}K(\xi_1,\xi_2)&=K_{\infty}(\xi_1,\xi_2)+\begin{pmatrix}O\left(N^{-\frac{1}{24}}e^{-k\xi_1}\right)&
O\left(N^{-\frac{1}{24}}e^{-k\left(\xi_1+\xi_2\right)}\right)\\
O\left(N^{-\frac{1}{24}}\right)&O\left(N^{-\frac{1}{24}}e^{-k\xi_2}\right)
\end{pmatrix}
\end{split}
\end{equation*}
\end{proposition}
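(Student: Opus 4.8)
The strategy is to treat the four entries of the matrix kernel $K$ in (\ref{eq:matker}) one at a time, reducing each to the scalar asymptotics already assembled. For the $(1,1)$ entry $S_1(x,y)$ we use the exact decomposition $S_1=K_2+K_1$ of Theorem \ref{thm:baik}, where $K_2$ is the Laguerre Christoffel--Darboux kernel (\ref{eq:k2}) and $K_1$ is the finite-rank correction (\ref{eq:ker1}). The asymptotics of the correction term are supplied by Proposition \ref{pro:asymker}, so all that is missing for the $(1,1)$ entry is the asymptotics of $K_2$, which I would obtain by the standard Plancherel--Rotach / Airy analysis. Writing (\ref{eq:k2}) in the rescaled variables $\xi_1,\xi_2$ of (\ref{eq:scalevar}), the prefactor $\bigl(y(t-y)/(x(t-x))\bigr)^{1/2}$ becomes $\bigl((T-\xi_2)/(T-\xi_1)\bigr)^{1/2}$ up to corrections that are $O(N^{-2/3})$ uniformly for $\xi_1,\xi_2$ bounded below and $|T|\le cN^{2/3}$, while feeding the Airy asymptotics (\ref{eq:airy}) of $L_{N}$ and $L_{N-1}$ together with the value of $h_{N-1,0}$ from (\ref{eq:hn}) into the Christoffel--Darboux difference produces, after the same cancellations as in the $\beta=2$ analysis of \cite{DGKV} and \cite{DG}, the estimate $(4/N)^{2/3}K_2=K_{2,\infty}+O(N^{-1/24}e^{-k\xi_1})$ with $K_{2,\infty}$ the deformed Airy kernel in the statement. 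Adding this to Proposition \ref{pro:asymker} gives the $(1,1)$ entry, and replacing $(\xi_1,\xi_2)$ by $(\xi_2,\xi_1)$ gives the $(2,2)$ entry $S_1(y,x)$ with its error $O(N^{-1/24}e^{-k\xi_2})$.

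For the off-diagonal entries I would exploit the explicit forms that follow from (\ref{eq:ker}). Since $\partial_y\epsilon(r_kw)(y)=r_k(y)w(y)$, the $(1,2)$ entry is $-\partial_yS_1(x,y)=-\sum_{j,k}r_j(x)w(x)\mu_{jk}r_k(y)w(y)$, i.e. it has the same finite-rank-plus-Christoffel--Darboux structure as $S_1$ but with the $y$-slot now carrying the polynomial $L_n(y)w(y)$ rather than the integral $\epsilon(L_nw)(y)$. Running the computation of Section \ref{se:asymskew} with the Airy asymptotics (\ref{eq:opasym}) in place of (\ref{eq:asymint1}) in the $y$-variable, and inserting the rescaling (\ref{eq:matker2}) — which, as recalled in \cite{DG} and \cite{DGKV}, leaves the eigenvalue statistics unchanged and equalises the orders of the four entries — then yields $(4/N)^{2/3}K_{12}=-\partial_{\xi_2}K_{11,\infty}+O(N^{-1/24}e^{-k(\xi_1+\xi_2)})$; the extra decay in $\xi_2$ is automatic because every $y$-dependent term now carries a factor $Ai^{(j)}(\xi_2)$. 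For the $(2,1)$ entry $IS_1(x,y)$ I would use the exact identity $\partial_xIS_1(x,y)=S_1(x,y)$ together with $IS_1(x,x)=0$, which holds because $\mu_{jk}=-\mu_{kj}$; integrating gives $IS_1(x,y)=\int_y^xS_1(u,y)\,\D u$, and after rescaling the limit is $-\int_{\xi_1}^{\xi_2}K_{11,\infty}(u,\xi_2)\,\D u$. Passing the $N\to\infty$ limit under this integral is justified by dominated convergence: the error $O(N^{-1/24}e^{-k\xi_1})$ of the $(1,1)$ entry is integrable against $\D u$ over the finite range between $\xi_1$ and $\xi_2$, and $\int_{\xi_1}^{\xi_2}e^{-ku}\,\D u$ stays bounded, which is precisely why the $(2,1)$ error in the statement is $O(N^{-1/24})$ with no exponential gain.

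The main obstacle I anticipate lies entirely in the precision bookkeeping of the first step: to match the $O(N^{-1/24})$ error one must expand $K_2$ past its leading Airy term, keeping track of the prefactor $(T-\xi)^{-1/2}$ corrections, the ratio $h_{N-1,0}/h_{N,0}=1+O(N^{-1})$, and the one-step difference between $L_{N-1}$ and $L_{N}$, and these subleading pieces have to cancel against the corresponding subleading terms that were isolated inside $K_1$ in Section \ref{se:asymskew}. Throughout this cancellation every estimate must remain uniform in $T$ as $|T|\to N^{2/3}$ (Remark \ref{re:uniformT}), which constrains how the integration contours near the edge are deformed. Once the scalar $(1,1)$ estimate is established with this uniformity, the transfer to the full matrix kernel by the differentiation and integration arguments above is routine.
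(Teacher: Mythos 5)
Your proof takes essentially the same route as the paper: treat the four matrix entries separately, invoke the already-established asymptotics of $K_1$ (Proposition \ref{pro:asymker}) together with the Laguerre/Airy asymptotics of $K_2$ for the $(1,1)$ and $(2,2)$ entries, obtain the $(1,2)$ entry by replacing the $\epsilon(L_nw)$ factor in the derivation of (\ref{eq:mainker}) with the polynomial asymptotics (\ref{eq:opasym}), and obtain the $(2,1)$ entry from the identity $IS_1(x,y)=\int_y^xS_1(u,y)\,\D u$, which follows from $\partial_x IS_1=S_1$ and $IS_1(y,y)=0$ (the latter being the paper's skew-symmetry of $\epsilon(S_1)$, equivalently the antisymmetry of $\mu$). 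The one glitch is a sign slip in your formula for $-\partial_yS_1$ (it should be $+\sum r_j w\,\mu_{jk}\,r_k w$, not $-\sum$), but this does not affect the structure of the argument; your explanation of why the $(2,1)$ error loses the exponential decay under integration also agrees with what the paper implicitly uses.
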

\begin{proof} The statement for the $11^{th}$ and $22^{th}$ entries
follows immediately from (\ref{eq:asymk1}), the representation
(\ref{eq:k2}) and the asymptotics of the Laguerre polynomials inside
the Airy region. The statement for the $12^{th}$ entry follows by
replacing $\epsilon(L_nw)$ in (\ref{eq:asymint1}) by the asymptotics
of the polynomials in (\ref{eq:opasym}) in the derivation of
(\ref{eq:mainker}). The computation is the same as the derivation of
(\ref{eq:mainker}) and we shall not carry out the details here. To
obtain the results for the $21^{th}$ entry, we use the fact that
$\epsilon\left(S_1\right)(x,y)$ is skew symmetric to obtain (See
\cite{DG}, \cite{DG2}, \cite{DGKV})
\begin{equation*}
\epsilon\left(S_1\right)(x,y)=\epsilon\left(S_1\right)(x,y)-\epsilon\left(S_1\right)(y,y)=
-\int_{x}^{y}S_1(t,y)\D t.
\end{equation*}
The statement for the $21^{th}$ entry then follows from integrating
(\ref{eq:asymk1}) and the asymptotic formula for $K_2$.
\end{proof}
A similar statement can be obtained for the Airy kernels when
$T\rightarrow\infty$.
\begin{lemma}\label{le:airymatker}
Let $K_{airy}(\xi_1,\xi_2)$ be the following matrix kernel
\begin{equation*}
\begin{split}
K_{airy,11}(\xi_1,\xi_2)&=K_{airy,22}(\xi_2,\xi_1)=\frac{Ai(\xi_1)Ai^{\prime}(\xi_2)-Ai(\xi_2)Ai^{\prime}(\xi_1)}{\xi_1-\xi_2}
+\frac{1}{2}Ai(\xi_1)\int_{-\infty}^{\xi_2}Ai(u)\D u,\\
K_{airy,12}(\xi_1,\xi_2)&=-\frac{\p}{\p\xi_2}K_{airy,11}(\xi_2,\xi_1),\quad
K_{airy,21}(\xi_1,\xi_2)=-\int_{\xi_1}^{\xi_2}K_{airy,11}(u,\xi_2)\D
u.
\end{split}
\end{equation*}
Then for $T\rightarrow\infty$ with $t$ finite and $\xi_1$ and
$\xi_2$ bounded from below, there exists $k>0$ such that the
rescaled matrix kernel $K(\xi_1,\xi_2)$ in (\ref{eq:matker2}) is of
the following order as $N\rightarrow\infty$.
\begin{equation*}
\begin{split}
\left(\frac{4}{N}\right)^{\frac{2}{3}}K(\xi_1,\xi_2)&=K_{airy}(\xi_1,\xi_2)+
\begin{pmatrix}o\left(1\right)e^{-k\xi_1}&
o\left(1\right)e^{-k\left(\xi_1+\xi_2\right)}\\
o\left(1\right)&o\left(1\right)e^{-k\xi_2}
\end{pmatrix}
\end{split}
\end{equation*}
\end{lemma}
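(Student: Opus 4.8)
The plan is to repeat, almost verbatim, the argument used in the proof of Proposition \ref{pro:asyker}, the only change being that the finite-$T$ input from Proposition \ref{pro:asymker} is replaced by the $T\to\infty$ input from Lemma \ref{le:kTinfty}. I would begin with the $11^{th}$ and $22^{th}$ entries. Since $S_1=K_2+K_1$, the rescaled $11^{th}$ entry $(4/N)^{2/3}S_1(\xi_1,\xi_2)$ is the sum of the two expressions in (\ref{eq:kTinfty}), and adding them gives exactly $K_{airy,11}(\xi_1,\xi_2)$; the combined error is $O\!\left(e^{-k(\xi_1+\xi_2)}/(TN^{1/3})\right)+o(1)e^{-k\xi_1}$, which is of the form $o(1)e^{-k\xi_1}$ after possibly decreasing $k$. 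The $22^{th}$ entry equals $S_1(y,x)=K_2(x,y)+K_1(y,x)$, so it follows directly by substituting the swapped arguments $(\xi_2,\xi_1)$ into (\ref{eq:kTinfty}) and using the symmetry $K_2(x,y)=K_2(y,x)$; this produces $K_{airy,11}(\xi_2,\xi_1)=K_{airy,22}(\xi_1,\xi_2)$ with error $o(1)e^{-k\xi_2}$, as claimed.

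For the $12^{th}$ entry, which by (\ref{eq:matker}) is $-\p_{\xi_2}S_1$, I would observe that differentiating (\ref{eq:ker}) in $y$ replaces each factor $\epsilon(L_mw)(y)$ occurring in (\ref{eq:k1form}) by $-L_m(y)w(y)$, the boundary contributions at $y=\pm\infty$ vanishing, and that the Christoffel--Darboux part is treated identically. One then substitutes the Airy asymptotics (\ref{eq:opasym}) for $L_m(y)w(y)$ in place of the $\epsilon$-asymptotics (\ref{eq:asymint1}), and carries out the same bookkeeping that led to (\ref{eq:mainker}) and to Lemma \ref{le:kTinfty}, now in the regime $T\to\infty$. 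Since $\p_{\xi_2}\int_{-\infty}^{\xi_2}Ai(u)\,\D u=Ai(\xi_2)$, this yields precisely the stated expression for $K_{airy,12}(\xi_1,\xi_2)$, with an extra decay factor $e^{-k\xi_2}$ supplied by the Airy asymptotics of $L_m(y)w(y)$.

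For the $21^{th}$ entry I would use, as in \cite{DG}, \cite{DG2}, \cite{DGKV}, the skew-symmetry of $\epsilon(S_1)$: since $\epsilon(S_1)(y,y)=0$ one has $IS_1(x,y)=\epsilon(S_1)(x,y)=-\int_{x}^{y}S_1(s,y)\,\D s$, which after rescaling becomes an integral in the $\xi$-variable of the $11^{th}$-entry asymptotics just established; integrating (\ref{eq:kTinfty}) term by term reproduces $K_{airy,21}(\xi_1,\xi_2)=-\int_{\xi_1}^{\xi_2}K_{airy,11}(u,\xi_2)\,\D u$. The step that needs genuine care — and which I expect to be the main obstacle — is the interchange of $\lim_{N\to\infty}$ with this integration: because $\xi_2$ is only assumed bounded below, the interval of integration may have unbounded length, so one must carry the exponential decay in the integration variable through all the estimates of Lemma \ref{le:kTinfty} and invoke the exponential-region bound Lemma \ref{le:expmatch} on the part of the range lying in the exponential region, in order to conclude that integrating an $o(1)$ error against $\D s$ over an unbounded interval still produces an $o(1)$ error that is uniform for $\xi_1,\xi_2$ bounded below. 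Once this uniformity is in place, all remaining steps are routine repetitions of the substitutions already performed in the proof of Proposition \ref{pro:asyker}, and I would not reproduce those computations.
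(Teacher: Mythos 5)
Your proposal is correct and follows the same route as the paper, which states Lemma \ref{le:airymatker} without an explicit proof as the $T\to\infty$ analogue of Proposition \ref{pro:asyker}: the intended argument is precisely to replay the proof of that proposition with Lemma \ref{le:kTinfty} substituted for Proposition \ref{pro:asymker} as input. The uniformity concern you raise for the $21^{\mathrm{th}}$ entry is a fair point that the paper leaves implicit, but it is handled exactly as you describe, by carrying the exponential decay $e^{-ku}$ in the integration variable through the estimates (and using Lemma \ref{le:expmatch} beyond the Airy region), so that the accumulated error over the interval $[\xi_1,\xi_2]$ remains $o(1)$ uniformly for $\xi_1,\xi_2$ bounded below.
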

In order to show that the convergence of the Fredholm determinant,
we need the following bounds on the derivatives of the kernel $K_2$.
\begin{lemma}\label{le:derker}
For $T$ of order $N^{\frac{1}{3}-c}$ for some $0<c\leq\frac{1}{3}$,
we have
\begin{equation*}
\left|\frac{\p^l}{\p\xi_1^l}\frac{\p^j}{\p\xi_1^j}\left(\left(\frac{4}{N}\right)^{\frac{2}{3}}
K_2(\xi_1,\xi_2)-K_{2,\infty}(\xi_1,\xi_2)\right)\right|=O\left(N^{-\frac{2}{3}}e^{-k\left(\xi_1+\xi_2\right)}\right)
\end{equation*}
for some $k>0$ and $l$, $j=0,1$.
\end{lemma}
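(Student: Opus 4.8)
\emph{Setup.} Using the identity $w_0^{1/2}(x)=w(x)\bigl(x(t-x)\bigr)^{1/2}$ between the two weights, the Christoffel--Darboux formula (\ref{eq:k2}) can be written purely in terms of the functions $(L_Nw)(x),(L_{N-1}w)(x)$ and a rational prefactor:
\begin{equation*}
K_2(x,y)=\frac{y(t-y)}{h_{N-1,0}(x-y)}\Bigl((L_Nw)(x)(L_{N-1}w)(y)-(L_Nw)(y)(L_{N-1}w)(x)\Bigr).
\end{equation*}
The plan is to substitute the refined Airy asymptotics (\ref{eq:opasym}) for $(L_Nw)$ and $(L_{N-1}w)$, pass to the scaled variables $x=4+(4/N)^{2/3}\xi_1$, $y=4+(4/N)^{2/3}\xi_2$, $t=4+(4/N)^{2/3}T$, and use $h_{N,0}/h_{N-1,0}=1+O(N^{-1})$ from (\ref{eq:hn}); the derivative bounds are then obtained by differentiating the resulting expansion term by term (equivalently by a Cauchy estimate on a disc of fixed $\xi$-radius, or by pushing one derivative through the Lax equation $\p_xZ=A(x)Z$ of (\ref{eq:LODE})--(\ref{eq:matA})).

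\emph{Leading order.} In the antisymmetric numerator the two leading pieces $(N/4)^{1/2}Ai(\xi_i)\cdot(N/4)^{1/2}Ai(\xi_j)$ cancel. The surviving leading term comes from pairing a leading factor with the $Ai'$-correction $-(N/4)^{1/6}\bigl(\tfrac{\alpha+1}{2}+(n-N)\bigr)Ai'$ in (\ref{eq:opasym}); the $\tfrac{\alpha+1}{2}$ part is $n$-independent and drops out of the antisymmetrisation, leaving $(N/4)^{2/3}\bigl(Ai(\xi_1)Ai'(\xi_2)-Ai(\xi_2)Ai'(\xi_1)\bigr)$. Combining this with $y(t-y)=4(4/N)^{2/3}(T-\xi_2)(1+O(N^{-2/3}))$, $x-y=(4/N)^{2/3}(\xi_1-\xi_2)$, and the common factor $(T-\xi_1)^{-1/2}(T-\xi_2)^{-1/2}$ coming out of the prefactor in (\ref{eq:opasym}) produces exactly $(N/4)^{2/3}K_{2,\infty}(\xi_1,\xi_2)$ with $K_{2,\infty}$ as in Proposition \ref{pro:asyker}; after the rescaling (\ref{eq:matker2}) this is the asserted leading term.

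\emph{The error, and the main obstacle.} Naively the next correction to the scaled kernel is of size $N^{-1/3}$ rather than $N^{-2/3}$: it arises from pairing a leading factor with the $Ai''$-term $(N/4)^{-1/6}\tfrac{(N-n)^2}{2}Ai''$, from the $n$-independent $\alpha$-dependent term $f_1(\xi)$ in (\ref{eq:opasym}), and from the conformal-map correction which shifts the Airy argument away from $\xi_1$ by $O(\xi_1^2N^{-2/3})$. Using the Airy equation $Ai''(\xi)=\xi\,Ai(\xi)$ one finds $Ai(\xi_1)Ai''(\xi_2)-Ai(\xi_2)Ai''(\xi_1)=(\xi_2-\xi_1)Ai(\xi_1)Ai(\xi_2)$, and likewise $f_1(\xi_1)Ai(\xi_2)-f_1(\xi_2)Ai(\xi_1)$ is divisible by $\xi_1-\xi_2$, so after division by $x-y$ all three contributions are proportional to $Ai(\xi_1)Ai(\xi_2)$ with an $N^{1/3}$ prefactor; the content of the lemma is that their coefficients cancel. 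Establishing this cancellation is the hard part, and I see two routes. The bookkeeping route carries along the explicit $f_1$ of \cite{V} together with the $O((N-n)/N)$ correction to the argument shift and checks that the $N^{1/3}$-terms annihilate -- mechanical but tedious. The route I would prefer bypasses the explicit asymptotics: in the Airy disc write $Z(x)=E_N(x)\,\Phi^{Ai}\bigl((N/4)^{2/3}\lambda(x)\bigr)$ times a diagonal factor, with $E_N$ analytic and uniformly invertible, $\Phi^{Ai}$ the standard Airy parametrix and $\lambda(x)=(x-4)(1+O(x-4))$ conformal; substituting into (\ref{eq:K2form}) and using that the Christoffel--Darboux kernel built from $\Phi^{Ai}$ is \emph{exactly} the Airy kernel in the arguments $(N/4)^{2/3}\lambda(x),(N/4)^{2/3}\lambda(y)$ (twisted only by the $(T-\xi_i)^{-1/2}$ factors and by $E_N$), the sole sources of error are the replacement of $(N/4)^{2/3}\lambda(x)$ by $\xi_1$, which is $O(\xi_1^2N^{-2/3})$ on compacts, the replacement of $E_N(x)$ by $E_N(4)$, again $O(N^{-2/3})$ at the edge to the relevant order, and the small-norm RH error $I+O(N^{-1})$. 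Collecting these yields $O(N^{-2/3})$ directly, the $N^{-1/3}$-cancellation now being automatic.

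\emph{Derivatives, decay and uniformity in $T$.} Every term of the expansion is a product of an Airy function or derivative in $\xi_1$, an Airy function or derivative in $\xi_2$, and a rational factor in $\xi_1,\xi_2,T$ whose only non-smoothness is the $(T-\xi_i)^{-1/2}$ prefactor, which stays bounded and smooth since $T$ is of order $N^{1/3-c}$ and at fixed distance from the integration contour (cf.\ Remark \ref{re:uniformT}). Hence differentiating the $O(N^{-2/3})$ estimate up to the orders required in the statement preserves the bound. The factor $e^{-k(\xi_1+\xi_2)}$ is inherited from the bounds (\ref{eq:aest}) on $Ai$ and $Ai'$ for positive argument (with the extra polynomial growth from $Ai''=\xi Ai$ harmless) together with boundedness on the fixed negative range allowed by ``$\xi$ bounded from below'', and it survives differentiation at the cost of shrinking $k$. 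Uniformity as $|T|\rightarrow N^{2/3}$ is exactly Remark \ref{re:uniformT} combined with the explicit $(T-\xi_i)^{-1/2}$ and $(T-\xi_i)^{-1}$ dependence of all remainders.
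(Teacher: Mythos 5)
Your proposal attempts a full re-derivation of an estimate that the paper obtains by \emph{citation}. The paper's actual proof is two lines: it invokes (3.8) of Deift--Gioev [DG2], which gives exactly
\begin{equation*}
\left|\p^l_{\xi_2}\p^j_{\xi_1}\left(\left(\tfrac{4}{N}\right)^{\frac{2}{3}}K_{lag}(\xi_1,\xi_2)-K_{2,airy}(\xi_1,\xi_2)\right)\right|=O\left(N^{-\frac{2}{3}}e^{-k(\xi_1+\xi_2)}\right),
\end{equation*}
and then notes that $K_2=\bigl(\tfrac{y(t-y)}{x(t-x)}\bigr)^{1/2}K_{lag}$ and $K_{2,\infty}=\bigl(\tfrac{T-\xi_2}{T-\xi_1}\bigr)^{1/2}K_{2,airy}$ are conjugations of those kernels by scalar factors whose ratio is $1+O(N^{-2/3})$ with all the required derivative bounds. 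That settles the lemma. You never mention [DG2]; instead you set out to re-prove its content.

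The gap in your argument is precisely at the point you flag as ``the hard part''. In your preferred RH/parametrix route you write $Z=E_N\cdot(\text{diagonal factor})\cdot\Phi^{Ai}$, and then claim the only error sources are the conformal-map shift ($O(N^{-2/3})$), replacing $E_N(x)$ by $E_N(4)$ (claimed $O(N^{-2/3})$), and the small-norm error ($O(N^{-1})$), so that ``the $N^{-1/3}$-cancellation [is] automatic''. This accounting is not correct. The diagonal factor $N^{\sigma_3/6}$ (eq.\ (\ref{eq:Yrep}); it is the factor your phrase ``times a diagonal factor'' alludes to) is sandwiched between the analytic prefactor and the Airy model solution, and when you compute $Z_+^{-1}(y)Z_+(x)$ you must conjugate $E_N^{-1}(y)E_N(x)$ by $N^{\sigma_3/6}$. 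The $(2,1)$ entry of $E_N^{-1}(y)E_N(x)-I$, which is $O(x-y)=O(N^{-2/3})$ as a matrix entry, is boosted by $N^{1/3}$ to $O(N^{-1/3})$ after conjugation; and it is exactly this boosted $(2,1)$ slot that the vector $(0\ 1)Y^{-1}_+(y)\cdots Y_+(x)(1\ 0)^T$ in (\ref{eq:K2form}) picks up, paired with $A_{11}(f_N(x))A_{11}(f_N(y))\propto Ai\cdot Ai$. So the naive parametrix estimate leaves an apparent $O(N^{-1/3})\,Ai(\xi_1)Ai(\xi_2)$ term proportional to $[P_a^{-1}P_a']_{21}(4)$, i.e.\ exactly the ``$N^{-1/3}$-cancellation'' you were trying to bypass. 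Whether that coefficient vanishes, or is cancelled against a matching piece of the outer/Airy asymptotics, is the genuine content of the [DG2] estimate, and it is not established by your three-item error list. Since you explicitly decline to carry out the ``bookkeeping route'' and the RH route as written does not close the gap, the proposal does not constitute a proof. If you want to avoid the citation, the honest statement is that you must verify that the $N^{1/3}$-boosted $(2,1)$ contribution of the analytic prefactor is, in fact, $O(N^{-2/3})$, which is where all the work in [DG2] actually lies.
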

\begin{proof} The lemma is an immediate consequence of the following results in
\cite{DG2} (\cite{DG2}, (3.8))
\begin{equation}\label{eq:DG2}
\left|\frac{\p^l}{\p\xi_1^l}\frac{\p^j}{\p\xi_1^j}\left(\left(\frac{4}{N}\right)^{\frac{2}{3}}
K_{lag}(\xi_1,\xi_2)-K_{2,airy}(\xi_1,\xi_2)\right)\right|=O\left(N^{-\frac{2}{3}}e^{-k\left(\xi_1+\xi_2\right)}\right)
\end{equation}
where $K_{2,airy}$ is the Airy kernel
\begin{equation}\label{eq:airyker}
K_{2,airy}(\xi_1,\xi_2)=\frac{Ai(\xi_1)Ai^{\prime}(\xi_2)-Ai(\xi_2)Ai^{\prime}(\xi_1)}{\xi_1-\xi_2}.
\end{equation}
and $K_{lag}$ is the Christoffel Darboux kernel of the Laguerre
polynomials
\begin{equation*}
K_{lag}(x,y)=w_0^{\frac{1}{2}}(x)w_0^{\frac{1}{2}}(y)\kappa_{N-1}^2\frac{L_{N}(x)L_{N-1}(y)-L_N(y)L_{N-1}(x)}{x-y}
\end{equation*}
and $x=4+\xi_1\left(4/N\right)^{\frac{2}{3}}$,
$y=4+\xi_2\left(4/N\right)^{\frac{2}{3}}$. As $K_2$ is the conjugate
to $K_{lag}$ and $K_{2,\infty}$ is the conjugate to $K_{2,airy}$,
\begin{equation*}
K_{2}=\left(\frac{y(t- y)}{x(t-
x)}\right)^{\frac{1}{2}}K_{lag},\quad K_{2,\infty}=\left(\frac{T-
\xi_2}{T- \xi_1}\right)^{\frac{1}{2}}K_{2,airy},
\end{equation*}
the lemma follows from (\ref{eq:DG2}).
\end{proof}
The corresponding statement when $T\rightarrow\infty$ follows from
the same argument but with $K_{2,\infty}$ replaced by $K_{2,airy}$.

With the estimates in Proposition \ref{pro:asyker} and Lemma
\ref{le:derker}, we can obtain the following asymptotic result for
the determinant $\det_2\left(I-\chi K\chi\right)$.
\begin{proposition}\label{pro:largest}
Let $\zeta=(z-4)\left(N/4\right)^{\frac{2}{3}}$,
$\xi_1=(x-4)\left(N/4\right)^{\frac{2}{3}}$,
$\xi_2=(y-4)\left(N/4\right)^{\frac{2}{3}}$, $g(\xi)=\sqrt{1+\xi^2}$
and $G=\diag(g,g^{-1})$, then as $N\rightarrow\infty$ and $T$ of
order up to $o\left(N^{\frac{1}{3}}\right)$, we have
\begin{equation*}
\sqrt{\mathrm{det}_2\left(I-\chi_zG(\xi_1)KG^{-1}(\xi_2)\chi_z\right)}=
    \sqrt{\mathrm{det}_2\left(I-\chi_{\zeta}G(\xi_1)K_{\infty}G^{-1}(\xi_2)\chi_{\zeta}\right)}+o(1),
\end{equation*}
and for $T\rightarrow\infty$ while $t$ remain finite, we have
\begin{equation*}
\sqrt{\mathrm{det}_2\left(I-\chi_zG(\xi_1)KG^{-1}(\xi_2)\chi_z\right)}=
    \sqrt{\mathrm{det}_2\left(I-\chi_{\zeta}G(\xi_1)K_{airy}G^{-1}(\xi_2)\chi_{\zeta}\right)}+o(1),
\end{equation*}
where $K$ is the rescaled kernel in (\ref{eq:matker2}) and $\det_2$
is the regularized 2-determinant
$\det_2(I+A)=\det((I+A)e^{-A})e^{\tr(A_{11}+A_{22})}$ for the
$2\times 2$ matrix kernel $A$ with entries $A_{ij}$. The
characteristic functions $\chi_z$ and $\chi_{\zeta}$ are
$\chi_z=\chi_{[z,\infty)}$ and $\chi_{\zeta}=\chi_{[\zeta,\infty)}$
respectively.
\end{proposition}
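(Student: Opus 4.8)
\textbf{Proof proposal for Proposition \ref{pro:largest}.}

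The plan is to deduce convergence of the regularized 2-determinants from the kernel estimates already established in Proposition \ref{pro:asyker} (and Lemma \ref{le:airymatker} for the $T\to\infty$ regime) together with the derivative bounds in Lemma \ref{le:derker}. First I would recall the standard fact that $\det_2(I+A)$ is continuous on the space of Hilbert--Schmidt operators, and that a $2\times2$ matrix kernel $A(\xi_1,\xi_2)$ acting on $L^2([\zeta,\infty))\oplus L^2([\zeta,\infty))$ is Hilbert--Schmidt precisely when each scalar entry $A_{ij}$ is square-integrable over $[\zeta,\infty)^2$. So the real content is: (i) show the rescaled kernel $\left(4/N\right)^{2/3}G(\xi_1)KG^{-1}(\xi_2)$, cut off by $\chi_{[z,\infty)}$, converges in Hilbert--Schmidt norm on $[\zeta,\infty)$ to $G(\xi_1)K_\infty G^{-1}(\xi_2)\chi_{[\zeta,\infty)}$ (respectively to the Airy version); and (ii) handle the mismatch between the cutoff point $z$ in the $x,y$ variables and $\zeta$ in the $\xi$ variables, which after the change of variables $\xi=(x-4)(N/4)^{2/3}$ is exactly the same point, so this is only a bookkeeping matter.

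The key steps, in order. \emph{Step 1:} change variables from $(x,y)$ to $(\xi_1,\xi_2)$; under $x=4+\xi_1(4/N)^{2/3}$ the operator with kernel $K(x,y)$ on $L^2([z,\infty))$ becomes unitarily equivalent to the operator with kernel $(4/N)^{2/3}K(\xi_1,\xi_2)$ on $L^2([\zeta,\infty))$, and the conjugation by $G$ is a bounded, boundedly invertible multiplier that does not change the determinant. \emph{Step 2:} split each matrix entry of $(4/N)^{2/3}G K G^{-1}-GK_\infty G^{-1}$ using the off-diagonal estimate in Proposition \ref{pro:asyker}: the $11$, $12$, $22$ entries carry a factor $e^{-k\xi_1}$, $e^{-k(\xi_1+\xi_2)}$, $e^{-k\xi_2}$ respectively times $O(N^{-1/24})$, so their $L^2([\zeta,\infty)^2)$ norms are $O(N^{-1/24})$ uniformly for $\zeta$ bounded below — here one uses that $g(\xi)=\sqrt{1+\xi^2}$ grows only polynomially, so it is absorbed by the exponential weights. \emph{Step 3:} the $21$ entry has only an $O(N^{-1/24})$ bound with no decay in $\xi_1$; this is the delicate one, and one shows square-integrability by using that $K_{\infty,21}$ and its approximant are of the form $-\int_{\xi_1}^{\xi_2}(\cdots)\D u$ with the integrand decaying in the upper variable, so after conjugation by $G$ the relevant entry $g^{-1}(\xi_1)(\cdots)g(\xi_2)$ is controlled: the $g^{-1}(\xi_1)$ prefactor provides the decay in $\xi_1$ that the raw estimate lacks, and one invokes Lemma \ref{le:derker} to control the kernel near the diagonal. \emph{Step 4:} conclude $\|(4/N)^{2/3}G K G^{-1}\chi - GK_\infty G^{-1}\chi\|_{\mathrm{HS}}=o(1)$; then, since $\det_2$ depends continuously on the operator in Hilbert--Schmidt norm (and since in addition the diagonal trace corrections $\tr(A_{11}+A_{22})$ converge, using that the diagonal values of the difference kernel are integrable by the same exponential weights), take square roots, noting $TW_2$-type determinants are positive so the square root is continuous. \emph{Step 5:} repeat verbatim with $K_\infty$ replaced by $K_{airy}$, using Lemma \ref{le:airymatker} and the $T\to\infty$ version of Lemma \ref{le:derker} pointed out in the text.

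The main obstacle I expect is \emph{Step 3}: the $21$-entry of the matrix kernel is an $\epsilon$-transform of $S_1$, so it is only bounded (not decaying) as $\xi_1\to-\infty$ without the $G$-conjugation, and one must verify carefully that the weight $g^{-1}(\xi_1)$ together with the integral representation $\epsilon(S_1)(x,y)=-\int_x^y S_1(t,y)\D t$ (used in the proof of Proposition \ref{pro:asyker}) does produce a genuinely Hilbert--Schmidt error on the half-line $[\zeta,\infty)$ with $\zeta$ allowed to be any fixed real number. A secondary technical point is the uniformity of all error estimates in $T$ as $|T|\to N^{2/3}$, which is needed so that the convergence holds along the contour $\Xi_+$ of Theorem \ref{thm:phdis}; this is exactly what Remark \ref{re:uniformT} was set up to provide, so I would cite it at the point where the $o(1)$ is claimed to be uniform in $T$ of order $o(N^{1/3})$.
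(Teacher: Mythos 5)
Your proposal matches the approach the paper itself takes: the paper simply says ``The proof of this proposition is exactly the same as the proof of Corollary 1.4 in [DG2]'' and relies on the kernel estimate of Proposition \ref{pro:asyker} (respectively Lemma \ref{le:airymatker}) together with the derivative bound of Lemma \ref{le:derker}. You have correctly reproduced the structure of that cited argument: change variables to unravel the $z$ versus $\zeta$ cutoff, establish Hilbert--Schmidt convergence of the $G$-conjugated error kernel on $[\zeta,\infty)^2$ using the exponential weights in the $11$, $12$, $22$ entries, handle the $21$ entry (whose raw error is $O(N^{-1/24})$ with no decay) via the polynomial weight from $G$, and then appeal to the continuity of $\mathrm{det}_2$ in Hilbert--Schmidt norm together with convergence of the diagonal traces needed for the $e^{\tr(A_{11}+A_{22})}$ correction. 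Citing Remark \ref{re:uniformT} for the uniformity in $T$ is the right move.

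One small computational slip is worth correcting. With $G=\diag(g,g^{-1})$ and hence $G^{-1}=\diag(g^{-1},g)$, the $(2,1)$ entry of $G(\xi_1)K(\xi_1,\xi_2)G^{-1}(\xi_2)$ is
\begin{equation*}
g^{-1}(\xi_1)\,K_{21}(\xi_1,\xi_2)\,g^{-1}(\xi_2),
\end{equation*}
not $g^{-1}(\xi_1)(\cdots)g(\xi_2)$ as you wrote. Both variables therefore carry the decaying factor $(1+\xi^2)^{-1/2}$, which is exactly what makes the flat $O(N^{-1/24})$ error square-integrable on $[\zeta,\infty)^2$; the correct factor actually makes Step 3 easier than you feared. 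The rest of your reasoning is sound and in line with the intended argument.
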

The proof of this proposition is exactly the same as the proof of
Corollary 1.4 in \cite{DG2}. We shall not repeat the details of the
proof here. As in \cite{DG2}, the function $g(\xi)=\sqrt{1+\xi^2}$
is to ensure that the 2-determinant exists and there is a great
freedom in the choice of the $g(\xi)$.

We will now analyze the asymptotics of the derivative $\frac{\p\log
\det \mathbb{D}}{\p t}$ in (\ref{eq:derpar}).
\subsection{Asymptotics of the derivative $\frac{\p\log \det
\mathbb{D}}{\p t}$}\label{se:asymder}

We will now compute the asymptotics of the derivative $\frac{\p\log
\det \mathbb{D}}{\p t}$ in (\ref{eq:derpar}). Let us write the
integral in (\ref{eq:derpar}) in the following form
\begin{equation}\label{eq:derpar1}
\frac{\p}{\p
t}\log\det\mathbb{D}=-\int_{\mathbb{R}_+}\frac{K_2(x,x)}{t- x}\D
x-\int_{\mathbb{R}_+}\frac{K_1(x,x)}{t- x}\D x,
\end{equation}
where $K_2(x,y)$ is the kernel given by the Laguerre polynomials
(\ref{eq:k2}) and $K_1(x,y)$ is the correction term on the right
hand side of (\ref{eq:kerform1}).

To compute the contribution from the kernel $K_2$, we will use the
following differential identity \cite{BEH}. (See also \cite{MM}
Lemma 2.1) Let $\hat{Z}$ be the matrix related to the matrix $Z$ in
(\ref{eq:Phimat}) by $\hat{Z}=Zw_0^{-\frac{\sigma_3}{2}}$. Then we
have
\begin{lemma}\label{le:K2t}
Let $\hat{Z}=Zw_0^{-\frac{\sigma_3}{2}}$, where $Z$ is the matrix in
(\ref{eq:Phimat}). Then we have
\begin{equation}\label{eq:K2t}
\int_{\mathbb{R}}\frac{K_2(x,x)}{t- x}\D x=\frac{1}{2
}\tr\left(\hat{Z}^{-1}\left(t\right)\hat{Z}^{\prime}\left(t\right)\sigma_3\right)
\end{equation}
\end{lemma}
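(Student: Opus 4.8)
The plan is to identify the kernel $K_2(x,x)$ with a derivative of the Riemann--Hilbert solution $\hat Z$ and then collapse the resulting integral by residue calculus. First I would recall from \eqref{eq:K2form} that
\begin{equation*}
K_2(x,y)=\left(\frac{y(t-y)}{x(t-x)}\right)^{\frac{1}{2}}\frac{1}{2\pi i(x-y)}\left(0\quad1\right)Z_+^{-1}(y)Z_+(x)\begin{pmatrix}1\\0\end{pmatrix},
\end{equation*}
and that $Z=\hat Z\,w_0^{\sigma_3/2}$, so that the diagonal conjugating factors $w_0^{\pm\sigma_3/2}$ cancel between $Z_+^{-1}(y)$ and $Z_+(x)$ only on the diagonal; keeping careful track of them one rewrites $K_2$ in terms of $\hat Z$. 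Taking the limit $y\to x$ in the bracket using L'H\^opital (the numerator $\left(0\ 1\right)Z_+^{-1}(y)Z_+(x)\left(1\ 0\right)^T$ vanishes at $y=x$) produces the standard Christoffel--Darboux-at-the-diagonal expression
\begin{equation*}
K_2(x,x)=\frac{1}{2\pi i}\left(0\quad1\right)\hat Z_+^{-1}(x)\hat Z_+'(x)\begin{pmatrix}1\\0\end{pmatrix}\;+\;(\text{diagonal }w_0'/w_0\text{ terms}),
\end{equation*}
so that $K_2(x,x)$ is essentially the $(2,1)$ entry of the logarithmic derivative $\hat Z_+^{-1}\hat Z_+'$.

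Next I would treat the integral $\int_{\mathbb R}\frac{K_2(x,x)}{t-x}\,\mathrm dx$ as a contour integral. The function $\hat Z(z)$ is analytic off $\mathbb R_+$ with the jump in \eqref{eq:RHPY} (specialized to the $2\times2$ Laguerre case of \eqref{eq:Phimat}), and $\hat Z(z)=(I+O(z^{-1}))\,\mathrm{diag}(z^N,z^{-N})$ as $z\to\infty$. The key computation is that the matrix $\hat Z^{-1}(z)\hat Z'(z)$ has a jump across $\mathbb R_+$ whose $(2,1)$ entry is, up to the explicit rational prefactor $\bigl(x(t-x)\bigr)^{-1}$ and constants, precisely $K_2(x,x)$; this is the algebraic heart of the BEH differential identity \cite{BEH}. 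Integrating $\frac{1}{t-x}$ against this jump and closing the contour at infinity (where the integrand decays like $z^{-2}$ by the asymptotics of $\hat Z$, since the leading $\mathrm{diag}(z^N,z^{-N})$ factors contribute no residue to $\tr(\hat Z^{-1}\hat Z'\sigma_3)$ beyond a constant that cancels), the only contribution comes from the pole at $z=t$, which is a point off $\mathbb R_+$ where $\hat Z$ is analytic. Evaluating that residue gives $\tfrac12\tr\bigl(\hat Z^{-1}(t)\hat Z'(t)\sigma_3\bigr)$.

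The main obstacle I anticipate is bookkeeping rather than conceptual: one must verify that the diagonal $w_0$-derivative terms dropped above, together with the sub-leading terms in the $z\to\infty$ expansion of $\hat Z$, do not contribute spurious residues — i.e. that the only surviving pole is at $z=t$ and that the normalization constant is exactly $\tfrac12$. Since $w_0(x)=x^{M-N}e^{-Mx}$ has a rational logarithmic derivative, the matrix $\hat Z$ satisfies a polynomial ODE in $z$ (as noted around \eqref{eq:LODE}), and $\tr(\hat Z^{-1}\hat Z'\sigma_3)$ is a rational function of $z$ whose only finite pole is forced by the $(t-x)^{-1/2}$ branch points; checking that this rational function has the claimed residue at $t$ and no other poles, and that its behavior at infinity matches, is the delicate step. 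This is exactly the content of Lemma 2.1 of \cite{MM} and the identity in \cite{BEH}, so I would invoke those, supplying the short verification that our $\hat Z$ and weight $w_0$ satisfy their hypotheses.
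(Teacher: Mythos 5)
Your overall strategy — rewrite $K_2(x,x)$ via L'H\^opital in terms of the logarithmic derivative $\hat Z^{-1}\hat Z'$, then treat $\int_{\mathbb R}\frac{K_2(x,x)}{t-x}\,\D x$ by deforming a contour around $\mathbb R_+$ into the plane and picking up the residue at $z=t$ — is exactly the paper's route, and you correctly cite \cite{MM} and \cite{BEH} for it. However, there is a concrete error in the algebraic heart of the argument.

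You write that ``$\hat Z^{-1}(z)\hat Z'(z)$ has a jump across $\mathbb R_+$ whose $(2,1)$ entry is \dots precisely $K_2(x,x)$.'' This is false: writing $\hat Z_+=\hat Z_- \begin{pmatrix}1&w_0\\0&1\end{pmatrix}$, a direct computation shows the $(2,1)$ entry of $\hat Z^{-1}\hat Z'$ is \emph{continuous} across $\mathbb R_+$ (the jump matrix is unipotent upper-triangular, so conjugation by it does not change the $(2,1)$ entry). What L'H\^opital gives is
\begin{equation*}
K_2(x,x)=\frac{w_0(x)}{2\pi i}\left(\hat Z_+^{-1}(x)\hat Z_+'(x)\right)_{21},
\end{equation*}
i.e.\ the $(2,1)$ entry of the boundary value multiplied by $w_0$ (not by $(x(t-x))^{-1}$ — the prefactor in \eqref{eq:K2form} is $1$ on the diagonal $y=x$, and the weight entering the jump of $\hat Z$ is $w_0=x^{M-N}e^{-Mx}$, not $w$). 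Since this quantity is not the jump of any function analytic off $\mathbb R_+$, you cannot collapse the contour with it directly. The quantity that does jump, and whose jump is proportional to $K_2(x,x)$, is the $\sigma_3$-trace: one computes
\begin{equation*}
\tr\left(\hat Z_-^{-1}\hat Z_-'\sigma_3\right)-\tr\left(\hat Z_+^{-1}\hat Z_+'\sigma_3\right)=2\,w_0(x)\left(\hat Z_+^{-1}\hat Z_+'\right)_{21}=4\pi i\,K_2(x,x),
\end{equation*}
which is precisely the paper's key identity. With that in hand, $g(z)=\tr(\hat Z^{-1}\hat Z'\sigma_3)$ is analytic off $\mathbb R_+$, behaves like $2N/z+O(z^{-2})$ at infinity so the large circle drops out, and the residue at $z=t$ delivers $\tfrac12 g(t)$. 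Your proposal states the right final formula but skips over why the $\sigma_3$-trace (rather than a single matrix entry) is the object that supports the contour argument; fixing the mis-statement about the $(2,1)$-entry jump and supplying the short computation above closes the gap.
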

The lemma can be proven in exactly the same way as Lemma 2.1 of
\cite{MM}. The key is to use (\ref{eq:K2form}), the jump condition
of $\hat{Z}$ on $\mathbb{R}_+$ and L'Hopital Rule to write
$K_2(x,x)$ as
\begin{equation*}
K_2(x,x)=\frac{1}{4\pi
i}\left(\tr\left(\hat{Z}_-^{-1}\hat{Z}_-^{\prime}\sigma_3\right)-\tr\left(\hat{Z}_+^{-1}\hat{Z}_+^{\prime}\sigma_3\right)\right),\quad
x\in\mathbb{R}_+
\end{equation*}
and then deform the contour of integration to obtain (\ref{eq:K2t}).

The asymptotics of the matrix $\hat{Z}$ can be found in \cite{V}.
For $x\notin[0,4]$, the asymptotics of $\hat{Z}$ is given by
\begin{equation}\label{eq:Youtside}
\hat{Z}(x)=(i\kappa_{N-1})^{-\frac{\sigma_3}{2}}R(x)P_{\infty}(x)e^{N\left(\frac{x}{2}-\varphi(x)\right)\sigma_3},
\end{equation}
where $R(x)$ is of the form $I+O\left(N^{-1}\right)$ and
$P_{\infty}(x)$ is a matrix bounded in $x$ for $x\notin[0,4]$. Near
the point $4$, both the matrix $P_{\infty}$ and $P_{\infty}^{-1}$
have a forth root singularity. The derivative of $R(x)$ is of order
$O\left(N^{-1}\right)$ and the derivative of $P_{\infty}(x)$ remains
bounded. From this, we have
\begin{equation}\label{eq:K2out}
\begin{split}
\tr\left(\hat{Z}^{-1}\left(t\right)\hat{Z}^{\prime}\left(t\right)\sigma_3\right)
=N\left(1-\sqrt{\frac{t-4 }{t}}\right)+O\left(1\right).
\end{split}
\end{equation}
As we will see, this is the part that determines where the saddle
point is. Once we have done the saddle point analysis later on in
this section, we will see that at the phase transition, the saddle
point will be inside the Airy region. We therefore also need the
asymptotics of the matrix $\hat{Z}(x)$ inside the Airy region. This
again, can be found in \cite{V}.
\begin{lemma}\label{le:Vair}(\cite{V}, Section 5.3)
The asymptotics of the matrix $\hat{Z}(x)$ inside a small disc
$U_{\delta}$ of radius $\delta$ around $4$ is given by
\begin{equation*}
\hat{Z}(x)=\frac{\sqrt{2}e^{\frac{i\pi}{4}}(i\tilde{\kappa}_{N-1})^{-\frac{\sigma_3}{2}}}{x^{\frac{1}{4}}(x-4)^{\frac{1}{4}}}R(x)
\begin{pmatrix}\cos\eta_+&-i\sin\eta_+\\
-i\cos\eta_-&-\sin\eta_-\end{pmatrix}f_N(x)^{\frac{\sigma_3}{4}}A(f_N(x))w_0^{-\frac{\sigma_3}{2}},
\end{equation*}
where $\eta_{\pm}$ is given by (\ref{eq:eta}) and $A(\xi)$ is the
matrix
\begin{equation*}
\begin{split}
A(\xi)&=              \sqrt{2\pi}e^{-\frac{\pi i}{12}}\begin{pmatrix}Ai(\xi)&Ai(\omega^2\xi)\\
Ai^{\prime}(\xi)&\omega^2Ai^{\prime}(\omega^2\xi)\end{pmatrix}e^{-i\frac{\pi}{6}\sigma_3}, \quad\xi\in I, \\
A(\xi)&=              \sqrt{2\pi}e^{-\frac{\pi i}{12}}\begin{pmatrix}Ai(\xi)&Ai(\omega^2\xi)\\
Ai^{\prime}(\xi)&\omega^2Ai^{\prime}(\omega^2\xi)\end{pmatrix}e^{-i\frac{\pi}{6}\sigma_3}\begin{pmatrix}1&0\\
-1&1\end{pmatrix}, \quad \xi\in II, \\
A(\xi)&= \sqrt{2\pi}e^{-\frac{\pi i}{12}}\begin{pmatrix}Ai(\xi)&-\omega^2Ai(\omega \xi)\\
Ai^{\prime}(\xi)&-Ai^{\prime}(\omega
\xi)\end{pmatrix}e^{-i\frac{\pi}{6}\sigma_3}\begin{pmatrix}1&0\\
1&1\end{pmatrix},  \quad\xi\in
III,\\
A(\xi)&=\sqrt{2\pi}e^{-\frac{\pi i}{12}}\begin{pmatrix}Ai(\xi)&-\omega^2Ai(\omega \xi)\\
Ai^{\prime}(\xi)&-Ai^{\prime}(\omega
\xi)\end{pmatrix}e^{-i\frac{\pi}{6}\sigma_3}, \quad\xi\in IV.
\end{split}
\end{equation*}
where $\omega=e^{\frac{2\pi i}{3}}$ and the regions $I$, $II$, $III$
and $IV$ are given by
\begin{equation*}
I=\left\{\xi|\quad 0<\arg(\xi)<2\pi/3\right\},\quad
II=\left\{\xi|\quad 2\pi/3<\arg(\xi)<\pi\right\},\quad
III=\overline{II},\quad IV=\overline{I},
\end{equation*}
where the overline indicates complex conjugation. The matrix $R(x)$
is again of the form $I+O\left(N^{-1}\right)$. Its derivative is of
order $R^{\prime}(x)=O\left(N^{-1}\right)$.
\end{lemma}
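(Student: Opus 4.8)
The plan is to read the stated formula directly off the Deift--Zhou steepest descent analysis of Laguerre-type orthogonal polynomials carried out in \cite{V}. First I would observe that $\hat Z=Zw_0^{-\sigma_3/2}$, with $Z$ as in (\ref{eq:Phimat}), is exactly the solution of the standard $2\times 2$ Riemann--Hilbert problem for monic orthogonal polynomials with respect to $w_0(x)=x^{M-N}e^{-Mx}$ on $\mathbb{R}_+$: it is analytic off $\mathbb{R}_+$, on $\mathbb{R}_+$ it has the jump matrix with $1$'s on the diagonal, $w_0$ in the upper-right corner and $0$ in the lower-left, and it behaves like $(I+O(x^{-1}))x^{N\sigma_3}$ at infinity. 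The only point of departure from \cite{V} is the normalisation of the weight: there the relevant weight is $x^{\alpha}e^{-4nx}$, and, as already noted in the Remark following Proposition \ref{pro:V}, the change of variable $y=\tfrac{4n}{N}x$ turns it into $w_0$; this rescales the conformal map $f_N$ and the constants $\kappa_{N-1}$, $\tilde\kappa_{N-1}$ but leaves the structure of the analysis untouched.

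Then I would run the usual chain of transformations $\hat Z\mapsto T\mapsto S\mapsto R$: normalise at infinity with the $g$-function of the (rescaled) Marchenko--Pastur equilibrium measure on $[0,4]$; open lenses around $[0,4]$ so that $S$ has jumps exponentially close to the identity off the cut and purely oscillatory on it; solve the outer model problem to produce the global parametrix $P_\infty$, which carries the fourth-root singularities at the endpoints $0$ and $4$; and build local parametrices on small discs about the two endpoints --- a Bessel parametrix at the hard edge $0$ and an \emph{Airy} parametrix at the soft edge $4$. Matching on the boundary circles gives $R=SP^{-1}=I+O(N^{-1})$ with $R'=O(N^{-1})$, uniformly in the parameter ranges of interest; these estimates are precisely the ones established in \cite{V}.

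Next, inside the disc $U_\delta$ about $4$ the local parametrix is built from Airy functions exactly in the form $A(f_N(x))$ of the statement, conjugated by $f_N^{\sigma_3/4}$ and dressed on the left by $x^{-1/4}(x-4)^{-1/4}\left(\begin{matrix}\cos\eta_+&-i\sin\eta_+\\-i\cos\eta_-&-\sin\eta_-\end{matrix}\right)$ so as to match the boundary behaviour of $P_\infty$; the four sectorial pieces $I$--$IV$ of $A(\xi)$ are forced by requiring continuity of $\hat Z$ across the rays $\arg f_N\in\{0,\pm 2\pi/3,\pi\}$, which are the images of the jump contour of $S$ under $f_N$. Unwinding $Z=\hat Z w_0^{\sigma_3/2}=(i\kappa_{N-1})^{-\sigma_3/2}R\,P$ on $U_\delta$, with $P$ the Airy parametrix just described, and collecting the scalar prefactors --- $\sqrt2\,e^{i\pi/4}$, the $e^{-\pi i/12}$ and $e^{-i\pi\sigma_3/6}$ coming from the model Airy problem, and $(i\tilde\kappa_{N-1})^{-\sigma_3/2}$ from the normalisation --- yields the asserted expression for $\hat Z(x)$.

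I would not expect a genuine obstacle here, since the content is entirely that of \cite{V}; the step that needs care rather than ingenuity is the bookkeeping of the constant prefactors and of the branch choices for $x^{1/4}(x-4)^{1/4}$, $f_N^{\sigma_3/4}$ and $\eta_\pm$ across the four sectors, together with checking that the explicit rescaling $y=\tfrac{4n}{N}x$ disturbs none of this. All of it is recorded in Section~5.3 of \cite{V}, so in the present paper the Lemma is simply quoted, with the rescaling remark attached.
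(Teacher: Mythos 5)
Your proposal is correct and matches what the paper actually does: the Lemma is quoted directly from Section 5.3 of \cite{V}, with the only issue being the rescaling $y=\tfrac{4n}{N}x$ already addressed in the Remark after Proposition \ref{pro:V}, and your outline of the Deift--Zhou chain $\hat Z\mapsto T\mapsto S\mapsto R$ with an Airy parametrix at $4$ and a Bessel parametrix at $0$ is precisely the argument carried out in \cite{V}. You also correctly observe that no proof is needed here beyond bookkeeping the scalar prefactors and branch choices and checking that the rescaling does not disturb them.
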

From the behavior of the functions $\eta_{\pm}$, we see that the
asymptotics of $\hat{Z}(x)$ inside the Airy region is of the form
\begin{equation}\label{eq:Yrep}
\hat{Z}(x)=(i\kappa_{N-1})^{-\frac{\sigma_3}{2}}R(x)P_a(x)N^{\frac{\sigma_3}{6}}A(f_N(x))w_0^{-\frac{\sigma_3}{2}},
\end{equation}
where both $P_a(x)$ and $P_a^{-1}(x)$ are bounded and analytic
inside $U_{\delta}$ (See \cite{V}). Moreover, as
$f_N(x)\rightarrow\infty$, there is a matching condition between the
formula for $\hat{Z}(x)$ in the Airy region and its formula in the
outside region (\ref{eq:Youtside}).
\begin{equation}\label{eq:Ymatch}
P_a(x)N^{\frac{\sigma_3}{6}}A(f_N(x))w_0^{-\frac{\sigma_3}{2}}
=P_{\infty}(x)\left(I+O\left(f_N^{-\frac{3}{2}}\right)\right)e^{N\left(\frac{x}{2}-\varphi(x)\right)\sigma_3}.
\end{equation}
From this and the fact that $P_{\infty}$ is of order
$O\left((x-4)^{-\frac{1}{4}}\right)$ as $x\rightarrow 4$, we see
that (\ref{eq:K2out}) remains valid if $T=\left(t-4
\right)\left(N/4\right)^{\frac{2}{3}}$ is large, but with some
modifications in the error term.
\begin{equation}\label{eq:K2out1}
\begin{split}
\tr\left(\hat{Z}^{-1}\left(t\right)\hat{Z}^{\prime}\left(t\right)\sigma_3\right)
=N\left(1-\sqrt{\frac{t-4}{t}}\right)+O\left(N^{\frac{2}{3}}T^{-\frac{5}{2}}\right)+O\left(NT^{-\frac{3}{2}}\right).
\end{split}
\end{equation}
We shall divide the range of $T$ into the regimes $0<|T|\leq
N^{1/5}$ and $|T|\geq N^{1/5}$ and use (\ref{eq:K2out1}) to for
$|T|\geq N^{1/5}$. Let us now assume $|T|\leq N^{1/5}$. Then from
(\ref{eq:Yrep}) we obtain
\begin{equation}\label{eq:Yasym1}
\begin{split}
&\tr\left(\hat{Z}^{-1}\left(t\right)\hat{Z}^{\prime}\left(t\right)\sigma_3\right)=
\tr\left(A^{-1}\left(f_N\left(t\right)\right)A^{\prime}\left(f_N\left(t\right)\right)f_N^{\prime}\sigma_3\right)\\
&+\tr\left(P_a^{-1}P_a^{\prime}N^{\frac{\sigma_3}{6}}A\sigma_3A^{-1}N^{-\frac{\sigma_3}{6}}\right)
+M-\frac{M-N}{t}+O\left(N^{-\frac{2}{3}}\right),
\end{split}
\end{equation}
The error term is uniform in $T$ throughout $U_{\delta}$. By using
the asymptotic formula for the Airy functions, we see that the
matrix $A(\xi)$ has the following behavior as
$\xi\rightarrow\infty$.
\begin{equation}\label{eq:Amatasym}
A(\xi)=\frac{1}{\sqrt{2}}\xi^{-\frac{\sigma_3}{4}}\begin{pmatrix}1&1\\
-1&1\end{pmatrix}e^{-\frac{\pi
i}{4}\sigma_3}\left(I+O(\xi^{-\frac{3}{2}})\right)e^{-\frac{2}{3}\xi^{\frac{3}{2}}\sigma_3},
\quad   \xi\rightarrow\infty
\end{equation}
Therefore uniformly in $U_{\delta}$, the second term in
(\ref{eq:Yasym1}) is of order
\begin{equation*}
\tr\left(P_a^{-1}P_a^{\prime}N^{\frac{\sigma_3}{6}}A\sigma_3A^{-1}N^{-\frac{\sigma_3}{6}}\right)=
O\left((N/T)^{\frac{1}{3}}\right)
\end{equation*}
By (\ref{eq:Amatasym}), the first term in (\ref{eq:Yasym1}) has the
following behavior as $f_N\rightarrow\infty$.
\begin{equation*}
\tr\left(A^{-1}\left(f_N\left(t\right)\right)A^{\prime}\left(f_N\left(t\right)\right)\sigma_3\right)
=-2f_N^{\frac{1}{2}}+O\left(f_N^{-1}\right),\quad
f_N\rightarrow\infty.
\end{equation*}
Since
$f_N\left(t\right)=T\left(1+O\left(T/N^{\frac{2}{3}}\right)\right)$,
we have, by mean value theorem,
\begin{equation*}
\begin{split}
&\tr\left(A^{-1}\left(f_N\left(t\right)\right)A^{\prime}\left(f_N\left(t\right)\right)f_N^{\prime}\sigma_3\right)
=\tr\left(A^{-1}\left(T\right)A^{\prime}\left(T\right)f_N^{\prime}\sigma_3\right)+O\left(T^{\frac{3}{2}}/N^{\frac{2}{3}}\right)\\
&=\left(\frac{N}{4}\right)^{\frac{2}{3}}\tr\left(A^{-1}\left(T\right)A^{\prime}\left(T\right)\sigma_3\right)+O\left(T^{\frac{3}{2}}\right)
\end{split}
\end{equation*}
Therefore after integration, we have
\begin{equation*}
\int^t_{t_0}\frac{K_2(x,x)}{t- x}\D
x=\frac{1}{2}\int^{T}_{T_0}\tr\left(A^{-1}\left(T\right)
A^{\prime}\left(T\right)\sigma_3\right)\D
T+\frac{M(t-t_0)}{2}+O\left(N^{-\frac{2}{15}}\right),
\end{equation*}
where $|T|\leq N^{\frac{1}{5}}$ and $|T_0|=N^{\frac{1}{5}}$. For
$|T|\geq N^{\frac{1}{5}}$ and $|T_0|=N^{\frac{1}{5}}$, we have, by
(\ref{eq:K2out1}), the following
\begin{equation*}
\begin{split}
\int_{t_0}^{t}\tr\left(\hat{Z}^{-1}\left(t\right)\hat{Z}^{\prime}\left(t\right)\sigma_3\right)
=N\int_{t_0}^t\left(1-\sqrt{\frac{t-4}{t}}\right)\D
t+O\left(N^{\frac{7}{30}}\right).
\end{split}
\end{equation*}
Since the first term is of order
$O\left(T^{\frac{3}{2}}\right)+O\left(N^{\frac{1}{3}}T\right)$, we
see that for $|T|\geq N^{\frac{1}{5}}$, the first term will dominate
over the error term. Summarizing, we have the following.
\begin{proposition}\label{pro:K2cont}
Let $T=(t-4 )\left(N/4\right)^{\frac{2}{3}}$. Then the contribution
of the kernel $K_2$ to the logarithmic derivative of
$\det\mathbb{D}$ is given by
\begin{enumerate}
\item Uniformly for $|T|\leq N^{\frac{1}{5}}$, we have
\begin{equation}\label{eq:K2cont1}
\begin{split}
\int^t_{t_0}\frac{K_2(x,x)}{t- x}\D x&=\frac{1}{2
}\int^{T}_{T_0}\tr\left(A^{-1}\left(T\right)
A^{\prime}\left(T\right)\sigma_3\right)\D
T\\
&+\frac{M}{2 }(t-t_0)+O\left(N^{-\frac{2}{15}}\right),
\end{split}
\end{equation}
for any $|T_0|=N^{\frac{1}{5}}$, where the integration contour does
not cross the jump contours of the matrix $A$.
\item Uniformly for $|T|>N^{\frac{1}{5}}$, we have
\begin{equation}\label{eq:K2cont2}
\begin{split}
\int^t_{t_0}\frac{K_2(x,x)}{t- x}\D x =\frac{N}{2
}\int_{t_0}^t\left(1-\sqrt{\frac{t-4 }{t}}\right)\D
t+O\left(N^{\frac{7}{30}}\right).
\end{split}
\end{equation}
for any $|T_0|=N^{\frac{1}{5}}$.
\end{enumerate}
where the integration contour does not cross $(-\infty,4]$.
\end{proposition}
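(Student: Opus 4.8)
\textbf{Proof proposal for Proposition \ref{pro:K2cont}.}

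The plan is to start from the differential identity of Lemma \ref{le:K2t}, which expresses $\int_{\mathbb{R}}\frac{K_2(x,x)}{t-x}\D x$ as $\frac12\tr\left(\hat Z^{-1}(t)\hat Z'(t)\sigma_3\right)$, and then feed in the Riemann--Hilbert asymptotics for $\hat Z$ recalled in \eqref{eq:Youtside} and Lemma \ref{le:Vair}. The case split on $|T|$ corresponds precisely to whether the evaluation point $t$ of the differential identity lies in the outer region or inside the Airy parametrix disc $U_\delta$ around $4$. First I would treat the regime $|T|>N^{1/5}$: here $t$ is at distance $\gg N^{-2/3}\cdot N^{1/5}$ from $4$, well outside $U_\delta$ once $N$ is large, so \eqref{eq:K2out} (or its refined version \eqref{eq:K2out1}) applies directly and gives $\tr\left(\hat Z^{-1}(t)\hat Z'(t)\sigma_3\right)=N\left(1-\sqrt{(t-4)/t}\right)+O\!\left(N^{2/3}T^{-5/2}\right)+O\!\left(NT^{-3/2}\right)$. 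Integrating in $t$ from $t_0$ (with $|T_0|=N^{1/5}$) to $t$, the leading term produces $\frac{N}{2}\int_{t_0}^t\left(1-\sqrt{(t-4)/t}\right)\D t$, while the error terms, after the change of variables $t=4+(4/N)^{2/3}T$, integrate to $O\!\left(N^{2/3}\cdot N^{-2/3}\cdot(N^{1/5})^{-3/2}\right)$-type contributions that are dominated by $O\!\left(N^{7/30}\right)$; one must just check that the leading term itself is of order $O\!\left(T^{3/2}\right)+O\!\left(N^{1/3}T\right)$ so that for $|T|\geq N^{1/5}$ it indeed swamps the error, which is the bookkeeping sketched right before the proposition statement. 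This yields \eqref{eq:K2cont2}.

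For the regime $|T|\leq N^{1/5}$, the point $t$ sits inside $U_\delta$, so I would substitute the local form \eqref{eq:Yrep}, $\hat Z(x)=(i\kappa_{N-1})^{-\sigma_3/2}R(x)P_a(x)N^{\sigma_3/6}A(f_N(x))w_0^{-\sigma_3/2}$, into $\tr\left(\hat Z^{-1}(t)\hat Z'(t)\sigma_3\right)$. Differentiating this product and using the cyclicity of the trace splits the expression into four pieces: the $A$-piece $\tr\left(A^{-1}(f_N(t))A'(f_N(t))f_N'\sigma_3\right)$, the $P_a$-conjugation piece $\tr\left(P_a^{-1}P_a'\,N^{\sigma_3/6}A\sigma_3A^{-1}N^{-\sigma_3/6}\right)$, the $R$-piece which is $O(N^{-1})$ since $R=I+O(N^{-1})$ with $R'=O(N^{-1})$, and the $w_0^{-\sigma_3/2}$-piece which contributes $-\frac12(\log w_0)'\tr(\text{something}\cdot\sigma_3)$ giving the $M-\frac{M-N}{t}$ term (equivalently the $\frac{M}{2}(t-t_0)$ after integration, up to a harmless $O(N^{-2/3})$). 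For the $P_a$-piece one uses the asymptotics \eqref{eq:Amatasym} of the Airy matrix $A(\xi)$ as $\xi\to\infty$ together with the boundedness of $P_a$, $P_a^{-1}$ and $P_a'$ on $U_\delta$ to see it is $O\!\left((N/T)^{1/3}\right)$, which after integration in $T$ over an interval of length $\lesssim N^{1/5}$ contributes at most $O\!\left(N^{1/3}\cdot(N^{1/5})^{2/3}/N^{1/3}\right)=O\!\left(N^{2/15}\right)$ — wait, this needs care — more precisely one estimates $\int^T (N/s)^{1/3}\D s=O\!\left(N^{1/3}T^{2/3}\right)$, so with $|T|\le N^{1/5}$ this is $O\!\left(N^{1/3+2/15}\right)$, which is too large; so instead I would keep this term but note it is exactly of the form that will be absorbed when comparing with the Airy model, OR show (as the proof does) that after integration it actually combines into the clean $A$-only integral. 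The correct route, following the displayed computation, is: replace $f_N(t)$ by $T$ using $f_N(t)=T(1+O(T/N^{2/3}))$ and the mean value theorem, absorb $f_N'=(N/4)^{2/3}(1+O(T/N^{2/3}))$, giving $\tr\left(A^{-1}(f_N(t))A'(f_N(t))f_N'\sigma_3\right)=(N/4)^{2/3}\tr\left(A^{-1}(T)A'(T)\sigma_3\right)+O\!\left(T^{3/2}\right)$, and then integrate; the accumulated errors total $O\!\left(N^{-2/15}\right)$. This gives \eqref{eq:K2cont1}.

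The main obstacle I anticipate is controlling the $P_a$-conjugation term and the replacement-of-$f_N(t)$-by-$T$ step \emph{uniformly} in $T$ across the whole window $|T|\le N^{1/5}$, including near the jump contours of the Airy parametrix $A(\xi)$ where $A$ is only piecewise analytic; one must choose the integration contour in $t$ to avoid these rays (as noted in the statement) and verify that all the $O$-estimates — in particular the $O\!\left(N^{-2/3}\right)$ from $R$, the $O\!\left((N/T)^{1/3}\right)$ from $P_a$, and the Airy-asymptotic remainder $O\!\left(f_N^{-1}\right)$ — remain valid up to the boundary $|T|=N^{1/5}$, so that the two regimes overlap consistently at $|T_0|=N^{1/5}$. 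Everything else is routine: the trace algebra, the change of variables $t=4+(4/N)^{2/3}T$, and the elementary integral $\int(1-\sqrt{(t-4)/t})\,\D t$.
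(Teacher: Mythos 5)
Your overall strategy is exactly the one the paper uses: start from the differential identity of Lemma~\ref{le:K2t}, substitute the outer asymptotics \eqref{eq:Youtside} for $|T|>N^{1/5}$ and the Airy parametrix form \eqref{eq:Yrep} for $|T|\le N^{1/5}$, expand the trace into the $A$-piece, the $P_a$-piece, the $R$-piece and the $w_0$-piece, and then integrate. The $|T|>N^{1/5}$ regime you handle correctly.

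The gap is in the $|T|\le N^{1/5}$ regime, at exactly the step you yourself flag as uncertain. You estimate
$\int^T(N/s)^{1/3}\,\D s=O\!\left(N^{1/3}T^{2/3}\right)$ and conclude the $P_a$-contribution is $O\!\left(N^{1/3+2/15}\right)$, ``too large,'' and then speculate that it must be absorbed into the clean $A$-only integral. That is not what happens, and no absorption is needed. The integral you must bound is $\frac12\int_{t_0}^{t}\tr\!\left(P_a^{-1}P_a'\,N^{\sigma_3/6}A\sigma_3A^{-1}N^{-\sigma_3/6}\right)\D t$, taken in the variable $t$, and you omitted the Jacobian $\D t=(4/N)^{2/3}\D T$ when switching to $T$. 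With that factor restored, and using the uniform bound $O\!\left(N^{1/3}\right)$ for this piece throughout $U_\delta$ (the displayed $O\!\left((N/T)^{1/3}\right)$ estimate is its refinement for large $|T|$; for $T=O(1)$ the integrand is simply $O(N^{1/3})$), you get
\begin{equation*}
\int_{t_0}^{t}O\!\left(N^{1/3}\right)\D t
=O\!\left(N^{1/3}\right)\cdot\left(\frac{4}{N}\right)^{2/3}\cdot O\!\left(N^{1/5}\right)
=O\!\left(N^{-2/15}\right),
\end{equation*}
which is precisely the error in \eqref{eq:K2cont1} and is in fact the dominant one: the mean-value-theorem error $O(T^{3/2})$ integrates (again with the Jacobian) to $O\!\left(N^{-2/3}T_0^{5/2}\right)=O\!\left(N^{-1/6}\right)$, and the $R$-piece (which is $O(N^{-2/3})$, not $O(N^{-1})$, because the conjugated Airy factor can be as large as $O(N^{1/3})$) contributes even less after integration. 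So the $P_a$-piece is a genuine error term of the claimed size — it does not combine with the $A$-only integral — and your proof is incomplete until you carry the $(4/N)^{2/3}$ Jacobian through and identify this as the source of the $O(N^{-2/15})$ bound.
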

Let us now compute the contribution from the correction term
$K_1(x,y)$. Before we compute the asymptotics, let us make a further
simplification using (\ref{eq:k1form}). First by using the
recurrence relation of the Laguerre polynomials (\ref{eq:recur}), we
see that
\begin{equation*}
L_N=\frac{tL_N}{t- x}-\frac{ xL_N}{t- x} =\frac{L_N}{M(t-
x)}\left(Mt-(M+N+1) \right)-\frac{ L_{N+1}}{t- x} -\frac{N}{M}\frac{
L_{N-1}}{t- x}.
\end{equation*}
Substituting this back into (\ref{eq:k1form}), we obtain
\begin{equation}\label{eq:K1}
\begin{split}
&\int_{\mathbb{R}_+}\frac{K_1(x,y)}{t- x}\D x= -\frac{M
}{2h_{0,N-2}}\Bigg(\left<\frac{L_{N-2}}{t-
x},L_N\right>_1-2\frac{\left<L_N,L_{N-2}\right>_1}
{\left<L_{N-1},L_{N-2}\right>_1}\frac{\p}{\p t}\left<L_{N-1},L_{N-2}\right>_1\Bigg)\\
&-\frac{M }{h_{0,N-1}}\frac{\p}{\p
t}\left<L_{N+1},L_{N-1}\right>_1-\frac{1}{2h_{0,N-1}}\left<\frac{L_N}{t-
x},L_{N-1}\right>_1 +\frac{N }{2h_{0,N-1}}\left<\frac{L_{N-1}}{t-
x},L_{N-1}\right>_1.
\end{split}
\end{equation}
where we have used the fact that $\left<L_N,L_{N-1}\right>_1=0$.

The main task is to compute the products $\left<L_n/(t-
x),L_m\right>_1$. The analysis is very similar to those in Section
\ref{se:asymskew}. First note that, outside of the Airy region, the
analysis in Section \ref{se:asymskew} remains the same and we have
\begin{lemma}\label{le:besselbulk}
The single and double integrals in $\left<L_n/(t- x),L_m\right>_1$
have the following contributions in the Bessel region.
\begin{equation}\label{eq:derB}
\begin{split}
\left(i\kappa_{n-1}\right)^{\frac{1}{2}}\int_{0}^{N^{-\frac{1}{2}}}\frac{L_n(x)w(x)}{t-
x}\D
x&=\tilde{\mathcal{I}}_{1,n}=\frac{(-1)^n\sqrt{2\pi}}{\sqrt{n}t^{\frac{3}{2}}}+O\left(n^{-\frac{7}{8}}\right).
\end{split}
\end{equation}
The double integral in the Bessel region is given by
\begin{equation}\label{eq:derB2}
\begin{split}
&i\sqrt{\kappa_{n-1}\kappa_{m-1}}\int_0^{N^{-\frac{1}{2}}}\frac{L_n(x)w(x)}{t-
x}\int_x^{N^{-\frac{1}{2}}}L_m(y)w(y)\D x\D
y\\
&=\tilde{\mathcal{J}}_{1}=\frac{(-1)^{m+n}2\pi}{nt^2}+O\left(n^{-\frac{5}{4}}\right).
\end{split}
\end{equation}
The single integral in the bulk region is given by
\begin{equation}\label{eq:bulksingder}
\begin{split}
&\left(i\kappa_{n-1}\right)^{\frac{1}{2}}\int_{N^{-\frac{1}{2}}}^{c_-}\frac{L_n(x)w(x)}{t-
x}\D x=\tilde{\mathcal{I}}_{2,n}=O\left(n^{-\frac{7}{8}}\right)
\end{split}
\end{equation}
Let $n-m=k$, then the asymptotics of the double integrals in the
bulk region is given by the following.
\begin{equation}\label{eq:doubulkder}
\begin{split}
\tilde{\mathcal{J}}_2=O\left(N^{-\frac{5}{4}}\right), \quad
k=0,\quad \tilde{\mathcal{J}}_2=-\frac{\p\mathcal{J}_{2}}{\p t}
+O\left(N^{-\frac{5}{4}}\right), \quad k=1,2,
\end{split}
\end{equation}
where $\tilde{\mathcal{J}}_2$ is given by
\begin{equation*}
\tilde{\mathcal{J}}_2=i\sqrt{\kappa_{n-1}\kappa_{m-1}}\int_{N^{-\frac{1}{2}}}^{c_-}\frac{L_n(x)w(x)}{t-
x}\int_x^{c_-}L_m(y)w(y)\D x\D y.
\end{equation*}
and $\mathcal{J}_2$ is given in (\ref{eq:doubulk1}).
\end{lemma}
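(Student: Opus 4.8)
The plan is to re-run the region-by-region asymptotic analysis of Section~\ref{se:asymskew} with the single modification that the factor $L_n(x)w(x)$ occupying the outer (or single) slot is replaced by $L_n(x)w(x)/(t-x)$. The central observation is that, since $t$ stays at finite distance from $[0,4)$, after the rescaling $x\mapsto\frac{n}{N}x$ the extra factor $(t-\frac{n}{N}x)^{-1}$ and all of its $x$-derivatives are uniformly bounded on the Bessel region and are of order $O(N^{\varepsilon})$ on the bulk region $[N^{-1/2},c_-]$ --- this is exactly the hypothesis already imposed in Proposition~\ref{pro:bulksing}. Multiplying an integrand by a function with uniformly bounded derivatives does not degrade any of the integration-by-parts and oscillation estimates used in Section~\ref{se:asymskew}, so the stated error orders survive; and at leading order, inserting $(t-\frac{n}{N}x)^{-1}$ amounts either to multiplication by $t^{-1}$ in the Bessel region, where $\frac{n}{N}x\to 0$, or to applying $-\partial_t$ to the leading term in the bulk region, where the surviving $t$-dependence enters only through the factor $(t-x)^{-1}$.

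For the Bessel region I would follow the proof of Proposition~\ref{pro:besin} line by line. After rescaling and using (\ref{eq:hatL}) and (\ref{eq:behatL}), the only new ingredient is the factor $(t-\frac{n}{N}x)^{-1}$, which by (\ref{eq:xtilde}) equals $t^{-1}(1+O(\tilde f_n/n^2)+O(n^{-1}))$ throughout this region. Hence the leading Bessel contribution of Proposition~\ref{pro:besin} is multiplied by $t^{-1}$ while the error terms are multiplied by a bounded quantity, which produces $\tilde{\mathcal I}_{1,n}$ in (\ref{eq:derB}); the identical argument, with the extra factor sitting on the outer variable, gives $\tilde{\mathcal J}_1$ in (\ref{eq:derB2}). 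In both cases the leading term picks up exactly one power of $t^{-1}$, as one checks directly against (\ref{eq:singleB}) and (\ref{eq:doubleB}).

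For the bulk region the single integral $\tilde{\mathcal I}_{2,n}$ in (\ref{eq:bulksingder}) is obtained by running the integration-by-parts argument of Proposition~\ref{pro:bulksing} with the bounded factor $(t-\frac{n}{N}x)^{-1}$ present; its size and that of its derivative are $O(N^{\varepsilon})$, which with $\varepsilon\le 1/20$ is absorbed into the error just as the other powers of $N^{\varepsilon}$ there, so the bound $O(n^{-7/8})$ stands. For the double integral, I would start from the leading-order identity (\ref{eq:doubulk0}): the extra $(t-\frac{n}{N}x)^{-1}$ on the outer integrand replaces $(t-x)^{-1}$ by $(t-x)^{-2}$ in the leading integral, and since the remaining part of that integrand, the integration limits, and the endpoint terms $\mathcal F_l$ of Proposition~\ref{pro:doubulk} all depend on $t$ only through $(t-x)^{-1}$ when $k=n-m=1,2$, one gets $\tilde{\mathcal J}_2=-\partial_t\mathcal J_2+O(n^{-7/4})$, which together with (\ref{eq:doubulk1}) gives (\ref{eq:doubulkder}). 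When $k=0$ the leading integrand vanishes identically, so $\tilde{\mathcal J}_2$ is controlled purely by the error terms, which remain $O(N^{-5/4})$ after multiplication by the bounded factor.

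I expect the main obstacle to be the bookkeeping near the right endpoint $x=\frac{N}{n}c_-$ of the bulk region, where $t$ may sit in the Airy region so that $t-\frac{n}{N}x$ is only bounded below by a constant times $N^{-\varepsilon}$: there the extra factor contributes powers $N^{\varepsilon}$ that must be tracked alongside the powers already generated by the oscillatory integrations by parts (exactly as in Proposition~\ref{pro:bulksing} and Lemma~\ref{le:diffmn}) and checked to stay below the claimed error orders under $\varepsilon\le 1/20$. A secondary point requiring care is to make the heuristic ``inserting $(t-x)^{-1}$ equals applying $-\partial_t$'' precise: one must verify that the remainders in the asymptotic expansions of $\hat L_n$, once multiplied by the bounded smooth factor $(t-\frac{n}{N}x)^{-1}$, still integrate to the advertised order, which follows because multiplication by a function with bounded derivatives preserves the integration-by-parts bounds of Lemma~\ref{le:besint} and of the bulk estimates, but it should be spelled out term by term.
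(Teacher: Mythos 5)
Your proposal is correct and coincides with the paper's (implicit) approach: the paper states Lemma~\ref{le:besselbulk} without a separate proof and merely remarks that $\varepsilon\le 1/20$ keeps the error orders unchanged, which is exactly the region-by-region reuse of Propositions~\ref{pro:besin}, \ref{pro:bulksing} and \ref{pro:doubulk} with the additional factor $(t-\tfrac{n}{N}x)^{-1}$ that you carry out. One small caution in your framing: the $j$-th $x$-derivative of this factor on the bulk region near $c_-$ is $O(N^{(j+1)\varepsilon})$ rather than uniformly $O(N^{\varepsilon})$, so each successive integration by parts costs an extra $N^{\varepsilon}$; you do flag this as the main bookkeeping obstacle, and the bound $\varepsilon\le 1/20$ is indeed sufficient once those extra powers are tracked as you describe.
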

Note that the choice of $\varepsilon\leq 1/20$ ensures that the
error terms are of the same order as before.

In the Airy region, the analysis is more different. Let us now
compute these integrals inside the Airy region. As in Section
\ref{se:Airy}, we have
\begin{equation}\label{eq:Atilde}
\begin{split}
&\frac{N}{m}\left(i\kappa_{m-1}\right)^{\frac{1}{2}}\int_{\frac{n}{m}x}^{c_+}\hat{L}_m(y)\D
y=\int_{\nu(u)}^{v_+}\mathcal{A}\mathcal{E}_m\D
v+O\left(N^{-\frac{7}{6}}\right),\\
&\frac{N}{n}\left(i\kappa_{n-1}\right)^{\frac{1}{2}}\int_{c_-}^{c_+}\frac{\hat{L}_n(x)}{t-\frac{n}{N}
x}\D
x=\left(\frac{N}{4}\right)^{\frac{2}{3}}\int_{u_-}^{u_+}\mathcal{A}_1\tilde{\mathcal{E}}_n\D
u+O\left(N^{-\frac{1}{2}}\right),
\end{split}
\end{equation}
where $\mathcal{A}_1=\mathcal{A}/(T- u)$ and $\tilde{\mathcal{E}}_n$
is of the form
\begin{equation}\label{eq:Etilde}
\tilde{\mathcal{E}}_n=1-\frac{3}{2}v_{0,n}+\frac{15}{8}v_{0,n}^2+O\left(N^{-1}\right),
\end{equation}
where $v_{0,n}$ is given in (\ref{eq:v0}). By using mean value
theorem, (\ref{eq:vcoef}), we obtain the following estimate.
\begin{equation}\label{eq:Amint}
\begin{split}
&\int_{\nu\left(u\right)}^{v_+}\mathcal{A}\mathcal{E}_m\D v=
\int_{u}^{v_+}\mathcal{A}\mathcal{E}_m\D
v+\Bigg(\left(\frac{4}{N}\right)^{\frac{1}{3}}(m-n)\mathcal{A}(u)\mathcal{E}_m(u)\\
&-\left(\frac{4}{N}\right)^{\frac{2}{3}}\frac{(m-n)^2}{2}\mathcal{A}^{\prime}(u)\mathcal{E}_m(u)\Bigg)
\left(1+O\left(\frac{u}{N}\right)\right)+\frac{1}{3!}\mathcal{A}^{\prime\prime}(\xi_u)\left(u-\nu\right)^3\mathcal{E}_m(\xi_u)
\end{split}
\end{equation}
for some $\xi_u$ between $u$ and $\nu(u)$. As
\begin{equation*}
\begin{split}
&\left|\int_{u_-}^{u_+}\mathcal{A}_1(u)\mathcal{A}^{\prime\prime}(\xi_u)(\nu-u)^3\mathcal{E}_m(\xi_u)\D
u\right|\\
&<\frac{1}{N^{\frac{4}{3}}}\left|\int_{u_-}^{-|T|^{\frac{1}{2}}}\frac{C}{u^{\frac{1}{2}}(|T|-
u)}\D u\right|+O\left(\frac{1}{(1+|T|^2)N^{\frac{4}{3}}}\right),
\end{split}
\end{equation*}
we see that this term is of order
$O\left(N^{-\frac{4}{3}}T^{-\frac{1}{2}}\right)$ as
$T\rightarrow\infty$ and $|T|<|u_-|$. Applying similar argument to
the other error terms in (\ref{eq:Amint}), we see that
\begin{equation}\label{eq:airydouder}
\begin{split}
&\int_{u_-}^{u_+}\mathcal{A}_1\tilde{\mathcal{E}}_n\int_{\nu(u)}^{v_+}\mathcal{A}\mathcal{E}_m\D
v\D
u\\
&=\int_{u_-}^{u_+}\mathcal{A}_1\tilde{\mathcal{E}}_n\int_{u}^{v_+}\mathcal{A}\mathcal{E}_m\D
v\D u+\left(\frac{4}{N}\right)^{\frac{1}{3}}(m-n)
\int_{u_-}^{u_+}\mathcal{A}_1\mathcal{A}\tilde{\mathcal{E}}_n\mathcal{E}_m\D u\\
&-\left(\frac{4}{N}\right)^{\frac{2}{3}}\frac{(m-n)^2}{2}
\int_{u_-}^{u_+}\mathcal{A}_1\mathcal{A}^{\prime}\D
u+O\left(N^{-\frac{4}{3}}T^{-\frac{1}{2}}\right).
\end{split}
\end{equation}
Let us now determine the behavior of these terms as $|T|\rightarrow
N^{\frac{2}{3}}$. We have
\begin{lemma}\label{le:HijT}
As $|T|\rightarrow |u_-|$, the terms $\mathcal{A}_0(i,j)$ is of
order $O\left(N^{-\frac{1}{3}}T^{-i-j-1}\right)$ in $T$.
\end{lemma}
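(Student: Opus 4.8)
\textbf{Proof proposal for Lemma \ref{le:HijT}.}

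The plan is to estimate $\mathcal{A}_0(i,j)=\int_{u_-}^{u_+}\mathcal{A}_i(u)\int_u^{v_+}\mathcal{A}_j(v)\,\D v\,\D u$ by tracking carefully how each factor of $(T-u)^{-1/2}$ contributes a gain in inverse powers of $T$ when $|T|\to|u_-|$. Recall that $\mathcal{A}_j(u)=\mathcal{A}(u)/(T-u)^{j}$ and that $\mathcal{A}(u)$ itself already carries one factor $(T-u)^{-1/2}$ through its definition (see the expression after Lemma \ref{le:Aasym}), together with a factor $\left(N/4\right)^{-1/6}$ coming from $\mathcal{L}(u)$ as recorded in (\ref{eq:Luord}). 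So the integrand of $\mathcal{A}_0(i,j)$ has, schematically, the form $\left(N/4\right)^{-1/3}$ times Airy-type functions divided by $(T-u)^{i+1/2}(T-v)^{j+1/2}$. The first step is therefore to isolate the $\left(N/4\right)^{-1/3}$ prefactor and reduce the claim to showing that the $T$-dependent double integral is of order $T^{-i-j-1}$ as $|T|\to|u_-|$.

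First I would split the outer range $[u_-,u_+]$ at the point $-|T|^{1/2}$, exactly as in the proof of Lemma \ref{le:kTinfty} and in the remark following Proposition \ref{pro:asyminner}. On the portion where $|u|\geq |T|^{1/2}$, one has $|T-u|\gtrsim |u|$, so the factors $(T-u)^{-i-1/2}$ are dominated by $|u|^{-i-1/2}$ and Lemma \ref{le:airyint}, specifically the double-integral estimate (\ref{eq:airydou}) with $i_1=i_2=0$, $j_1=2i$, $j_2=2j$, shows that the contribution is of order $|u_-|$ raised to an appropriately negative power; since in this section $u_-/N^{2/3}=O(N^{-\varepsilon})$ but we are now taking $|T|\to N^{2/3}$, i.e.\ $|T|\sim|u_-|$, this bound is consistent with $O(T^{-i-j-1})$. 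On the complementary portion $|u|\leq |T|^{1/2}$, the quantity $(T-u)$ is essentially $T$ up to bounded factors, so one simply pulls out $T^{-i-j-1}$ and is left with $\int Ai(u)\int_u^{v_+}Ai(v)\,\D v\,\D u$ type integrals over a range of length $O(|T|^{1/2})$, which are bounded (indeed convergent, using the exponential decay of $Ai$ at $+\infty$ and the $O(u^{-3/4})$ decay of $\int_{-\infty}^u Ai$ at $-\infty$ from (\ref{eq:airyasymp})). Combining the two ranges gives $\mathcal{A}_0(i,j)=O\!\left(N^{-1/3}T^{-i-j-1}\right)$. By Remark \ref{re:uniformT}, all error terms are uniform in $T$ on the relevant contour, so this estimate holds uniformly as required.

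The main obstacle I anticipate is the bookkeeping on the inner integral $\int_u^{v_+}\mathcal{A}_j(v)\,\D v$ when the outer variable $u$ lies in the crossover zone near $-|T|^{1/2}$: there the inner integral neither collapses to a bounded quantity nor is purely governed by the $|v|^{-j-1/2}$ decay, so one must interpolate the two regimes and check that the worst case still produces the claimed power $T^{-i-j-1}$ rather than, say, $T^{-i-j-1/2}$. I expect this to be handled by one extra integration by parts against $\int_{-\infty}^v Ai$ (or $Ai$ itself for the $Ai'$ terms), exactly of the kind carried out in the proof of Lemma \ref{le:airyint}, which converts the borderline $|v|^{-1/2}$ factor into an integrable $|v|^{-3/4}$ factor and closes the estimate. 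Beyond that, the argument is routine; no new analytic input is needed since all the Airy and integral estimates are already in place in Lemmas \ref{le:airyint}, \ref{le:Aasym} and the asymptotics (\ref{eq:airyasymp}).
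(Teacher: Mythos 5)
Your overall strategy is the same as the paper's (split at $-|T|^{1/2}$, pull out the $N^{-1/3}$ prefactor from $\mathcal{L}$, use $|T-u|\gtrsim T$ on the bounded range), and your treatment of the range $u\in[-|T|^{1/2},u_+]$ is sound. However, there is a genuine gap on the range $u\in[u_-,-|T|^{1/2}]$, and you seem half-aware of it. You propose to invoke Lemma~\ref{le:airyint}, in particular (\ref{eq:airydou}), but that lemma produces a bound of the form $O\left(|s_-|^{-i-j-1}\right)+O(1)$ in which the $T$-dependence of the $O(1)$ term is simply not tracked. Lemma~\ref{le:HijT} is precisely the statement that this $O(1)$ contribution (coming from the positive axis and from near the crossover) actually carries the factor $T^{-i-j-1}$; citing the coarser lemma therefore begs the question rather than proving it. (There is also a small index mismatch: with $\mathcal{A}_j\sim\mathrm{Airy}/(T-u)^{j+1/2}$ you should take $j_1=2i+1$, $j_2=2j+1$, not $2i,2j$.)

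What is missing is the further split of the inner integral $\int_u^{v_+}\mathcal{A}_j$ at $-|T|^{1/2}$ when the outer variable $u$ lies in $[u_-,-|T|^{1/2}]$. The paper writes $\mathcal{A}_0(i,j)$ as a sum of three terms: (i) both $u,v\in[u_-,-|T|^{1/2}]$, where the oscillatory asymptotics (\ref{eq:Aasym}) and one integration by parts in $v$ yield the pointwise bound $C N^{-1/3}|u|^{-1}(|T|-u)^{-i-j-1}$ for $\mathcal{A}_i(u)\int_u^{-|T|^{1/2}}\mathcal{A}_j$; (ii) a separable term $\int_{u_-}^{-|T|^{1/2}}\mathcal{A}_i\cdot\int_{-|T|^{1/2}}^{v_+}\mathcal{A}_j$, where the first factor is controlled by integration by parts against the phase and the second by $|T-v|\gtrsim T$; and (iii) the range you already handled. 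You anticipate that ``one extra integration by parts'' closes the crossover, which is the correct idea, but as written your proof declares $O(N^{-1/3}T^{-i-j-1})$ before carrying that step out. You should actually perform the inner split and the integration by parts, and track how each boundary term and remainder picks up its powers of $T$; without this the argument is incomplete.
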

\begin{proof} Let us write the integral in the following form
\begin{equation*}
\mathcal{A}_0(i,j)=\int_{u_-}^{-\left|T^{\frac{1}{2}}\right|}\mathcal{A}_i\int_{u}^{-\left|T^{\frac{1}{2}}\right|}\mathcal{A}_j\D
v\D u+\int_{u_-}^{-\left|T^{\frac{1}{2}}\right|}\mathcal{A}_i\D
u\int_{-\left|T^{\frac{1}{2}}\right|}^{v_+}\mathcal{A}_j\D v
+\int_{-\left|T^{\frac{1}{2}}\right|}^{u_+}\mathcal{A}_i\int_{u}^{v_+}\mathcal{A}_j\D
v\D u
\end{equation*}
Then it is clear that the third term is of order
$O\left(N^{-\frac{1}{3}}T^{-i-j-1}\right)$ in $N$ and $T$ as
$|T|\rightarrow |u_-|$. By using the asymptotic formula
(\ref{eq:Aasym}) to integrate the second term by parts, we see that
this term is also of order
$O\left(N^{-\frac{1}{3}}T^{-i-j-1}\right)$. For the first term, we
have, from (\ref{eq:Aasym}), the following
\begin{equation*}
\left|\int_{u_-}^{-\left|T^{\frac{1}{2}}\right|}\mathcal{A}_i\int_{u}^{-\left|T^{\frac{1}{2}}\right|}\mathcal{A}_j\D
v\D
u\right|<\left|\int_{u_-}^{-\left|T^{\frac{1}{2}}\right|}\frac{C}{N^{\frac{1}{3}}u(|T|-
u)^{i+j+1}}\D u\right|
\end{equation*}
for some constant $C>0$ independent on $T$ and $N$. Integrating this
gives us the result.
\end{proof}
From Lemma \ref{le:HijT} and the asymptotic behavior of
$\mathcal{A}$, we see that the various terms in the above expansion
are of the following behavior as $|T|\rightarrow N^{\frac{2}{3}}$.
\begin{equation}\label{eq:Hord}
\begin{split}
&\int_{u_-}^{u_+}\mathcal{A}_1\mathcal{A}\D
u=O\left(N^{-\frac{1}{3}}T^{-\frac{3}{2}}\right),\quad
\int_{u_-}^{u_+}\mathcal{A}_1\mathcal{A}^{\prime}\D
u=O\left(N^{-\frac{1}{3}}T^{-2}\right),\\
&\int_{u_-}^{u_+}\mathcal{A}_1^2\D
u=O\left(N^{-\frac{1}{3}}T^{-\frac{5}{2}}\right),\quad
\mathcal{A}_0(i,j)=O\left(N^{-\frac{1}{3}}T^{-i-j-1}\right).
\end{split}
\end{equation}
From (\ref{eq:airydouder}), we obtain the double integral inside of
the Airy region.

Let us now compute the order of the following term in (\ref{eq:K1})
inside the Airy region.
\begin{equation}\label{eq:M1}
D_1=-\frac{M }{2h_{0,N-2}}\left<\frac{L_{N-2}}{t- x},L_N\right>_1
+\frac{N }{2h_{0,N-1}}\left<\frac{L_{N-1}}{t- x},L_{N-1}\right>_1.
\end{equation}

First by differentiating the identity
$\left<L_N,L_{N-1}\right>_1=0$, we can establish the following.
\begin{lemma}\label{le:dLNN}
Let $PH(i,j)$ be
\begin{equation}\label{eq:PH}
PH(i,j)=\frac{1}{2}\int_{u_-}^{u_+}\mathcal{A}_i\D
u\int_{u_-}^{u_+}\mathcal{A}_j\D u-\mathcal{A}_0(i,j),
\end{equation}
and let $\tilde{\mathcal{J}}_{3k}$ be the followings.
\begin{equation}\label{eq:tildeJ}
\begin{split}
\tilde{\mathcal{J}}_{31}&=\left(\frac{N}{4}\right)^{\frac{1}{3}}\int_{u_-}^{u_+}\mathcal{A}_1\mathcal{A}\D
u,\quad
\tilde{\mathcal{J}}_{32}=-\left(\frac{N}{4}\right)^{\frac{1}{3}}\frac{3 }{2}PH(2,0),\\
\tilde{\mathcal{J}}_{33}&=\frac{3}{8}PH(1,2)-\frac{1}{2}\left(
\int_{u_-}^{u_+}\mathcal{A}_1^2\D
u-\int_{u_-}^{u_+}\mathcal{A}_1\mathcal{A}^{\prime}\D
u\right),\\
\tilde{\mathcal{J}}_{34}&=\frac{3}{8}\left(5PH(3,0)-PH(1,2)\right),\quad
\tilde{\mathcal{J}}_{35}=-2 \int_{u_-}^{u_+}\mathcal{A}_1^2\D u,
\end{split}
\end{equation}
and let $\tilde{I}_j$ be
\begin{equation}\label{eq:tildeI}
\begin{split}
\tilde{I}_1=\left(\frac{N}{4}\right)^{\frac{1}{6}}\frac{\sqrt{2\pi}}{4\sqrt{t}}
\int_{u_-}^{u_+}\mathcal{A}_1\D u,\quad
\tilde{I}_2=\left(\frac{N}{4}\right)^{\frac{1}{3}}\frac{3\sqrt{2\pi}
}{4\sqrt{t}} \int_{u_-}^{u_+}\mathcal{A}_2\D u.
\end{split}
\end{equation}
Let $k=n-m$, then $\left<\frac{L_{n}}{t- x},L_m\right>_1$ is given
by
\begin{equation}\label{eq:DL}
\begin{split}
&\frac{2\pi}{h_{N,0}}\left<\frac{L_{n}}{t- x},L_m\right>_1
=-PH(1,0)+k\tilde{\mathcal{J}}_{31}+(N-n)\tilde{\mathcal{J}}_{32}+k^2\tilde{\mathcal{J}}_{33}
+(N-n)^2\tilde{\mathcal{J}}_{34}\\
&+k(N-n)\tilde{\mathcal{J}}_{35}+(-1)^n\Big((-1)^k\tilde{I}_1+(-1)^k(N-n)\tilde{I}_2\Big)
+\frac{\p}{\p
t}\mathcal{J}_2\\
&+O\left(N^{-\frac{9}{24}}/\left(1+|T|^{\frac{3}{2}}\right)\right)+O\left(N^{-\frac{2}{3}}/\left(1+|\sqrt{T}|\right)\right)
\end{split}
\end{equation}
\end{lemma}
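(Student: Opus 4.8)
The plan is to follow, almost line for line, the strategy of Section \ref{se:asymskew} for the plain skew products $\left<L_n,L_m\right>_1$, adapting each of the four regions to the presence of the extra factor $(t-x)^{-1}$. First I would split $\left<\frac{L_n}{t-x},L_m\right>_1$ using the decomposition (\ref{eq:innersd}) into a product of single integrals minus a double integral, and break $\mathbb{R}_+$ into the Bessel, bulk, Airy and exponential regions. The Bessel and bulk contributions are already recorded in Lemma \ref{le:besselbulk}: in the Bessel region $(t-x)^{-1}$ is bounded and analytic and only multiplies the leading term by $t^{-1}$ up to acceptable error, while in the bulk region the double integral equals $-\p_t\mathcal{J}_2$ plus an $O(N^{-5/4})$ error --- this is exactly the source of the $\p_t\mathcal{J}_2$ term in (\ref{eq:DL}); the exponential region is negligible as before. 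The choice $\varepsilon\le 1/20$ keeps all of these error terms of the same order as in Section \ref{se:asymskew}.

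The real work is the Airy region. There I would change variables to $u=f_n(x)$, $v=f_m(y)$ and use (\ref{eq:calB}) to write $\left(t-\frac{n}{N}x\right)^{-1}=\left(\frac{4}{N}\right)^{2/3}(T-u)^{-1}\mathcal{G}_n^{-1}(u)$, so that the net effect of the extra factor is to replace $\mathcal{A}$ by $\mathcal{A}_1=\mathcal{A}/(T-u)$ in the outer integral, together with the modified prefactor $\tilde{\mathcal{E}}_n$ of (\ref{eq:Etilde}); this is precisely the content of (\ref{eq:Atilde}). Feeding this into (\ref{eq:Amint}) and (\ref{eq:airydouder}) --- both already established --- I would expand the Airy double integral in powers of $(N-n)$ and $(N-m)$, and then symmetrize it by the integration-by-parts identity of Lemma \ref{le:FG} (equivalently Corollary \ref{cor:Hjk}), collecting the combinations $PH(i,j)$ of (\ref{eq:PH}), $\mathcal{A}_0(i,j)$ of (\ref{eq:calH0}), and the single integrals $\int\mathcal{A}_1\mathcal{A}$, $\int\mathcal{A}_1^{2}$ and $\int\mathcal{A}_1\mathcal{A}^{\prime}$. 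The orders in $T$ of all of these objects as $|T|\to N^{2/3}$ come from Lemma \ref{le:HijT} and (\ref{eq:Hord}), which together with the order of $\tilde{\mathcal{E}}_n$ yield the stated errors $O(N^{-9/24}/(1+|T|^{3/2}))$ and $O(N^{-2/3}/(1+\sqrt{|T|}))$; uniformity in $T$ is guaranteed by Remark \ref{re:uniformT}.

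What remains is to absorb the endpoint terms left over at $u=u_-$. As in Lemma \ref{le:Fmn}, these must be matched against the differentiated bulk endpoint terms, which here appear as $-\p_t\mathcal{F}_l$; using the expression for $\mathcal{L}^2(u)$ together with the relation between $F_n(x)$, $F_m\left(\frac{n}{m}x\right)$ and $(n-m)\arccos\left(\frac{x}{2}-1\right)$ established in the proof of Lemma \ref{le:Fmn}, one checks that the $(T-u)^{-1}$-weighted singular pieces recombine into the clean expression involving $\tilde{\mathcal{J}}_{31},\dots,\tilde{\mathcal{J}}_{35}$ of (\ref{eq:tildeJ}), while the single-integral cross terms produce $\tilde{I}_1,\tilde{I}_2$ of (\ref{eq:tildeI}), giving (\ref{eq:DL}) up to the claimed errors. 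Finally, to reduce the number of independent constants I would differentiate the identity $\left<L_N,L_{N-1}\right>_1=0$, valid for even $N$ by Corollary \ref{cor:linear}. Since $\p_t w(x)=-\frac{1}{2(t-x)}w(x)$ and the skew product is antisymmetric (compare the computation of $\p_tD_{ij}$ in the proof of Proposition \ref{pro:derpar}), this gives $\left<\frac{L_N}{t-x},L_{N-1}\right>_1=\left<\frac{L_{N-1}}{t-x},L_N\right>_1$, and since (\ref{eq:DL}) must hold for every finite value of $N-n$ with $n$ even, one extracts linear relations among the $\tilde{\mathcal{J}}_{3k}$, $\tilde{I}_j$ and $PH(i,j)$ in the spirit of (\ref{eq:IJrel}) --- exactly the relations that will later make the $\tilde{\mathcal{J}}_{3k}$ drop out of the final kernel, as in Proposition \ref{pro:asyminner}.

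The main obstacle is the bookkeeping in the Airy region and the cancellation against $-\p_t\mathcal{F}_l$: the factor $(t-x)^{-1}$ makes the integrands one power more singular at the lower endpoint $u_-$, so one has to carry one extra order in the Taylor expansion in $\nu(u)-u$ and verify that the leftover tails --- of order $O\left(u^{k-5/2}/N^{2(k+1)/3}\right)$ after the matching --- are still integrable on $(u_-,0)$ under the constraint $\varepsilon\le 1/20$, all while keeping every estimate uniform in $T$ as $|T|\to N^{2/3}$. Checking that these more singular endpoint contributions assemble into the finite combinations $PH(i,j)$ rather than leaving an uncancelled divergent remainder is the delicate point of the argument.
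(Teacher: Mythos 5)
Your outline matches the paper's strategy, which the paper itself states only as a one-sentence invocation of (\ref{eq:airydouder}) and Lemma \ref{le:besselbulk}; you have correctly filled in the intermediate bookkeeping across the Bessel, bulk, Airy and exponential regions, including the role of $\tilde{\mathcal{E}}_n$ and the emergence of $\partial_t\mathcal{J}_2$ from Lemma \ref{le:besselbulk}. Two minor technical corrections: the $PH(i,j)$ combinations come directly from pairing the $\tfrac{1}{2}\int\!\cdot\int$ piece of the skew product against the double integral $\mathcal{A}_0(i,j)$ (exactly as $\tfrac{1}{2}\left(\int\mathcal{A}\right)^2-\mathcal{A}_0(0,0)$ arose in the non-derivative computation), \emph{not} from the symmetrization of Lemma \ref{le:FG}/Corollary \ref{cor:Hjk}, which was needed only to handle the symmetric pairs $\mathcal{A}(j,k)+\mathcal{A}(k,j)$ that do not occur here since the weight $(t-x)^{-1}$ always attaches to the outer $L_n$; and the extra factor $(T-u)^{-1}$ actually \emph{improves}, rather than worsens, convergence near $u_-$ (where it is $O(N^{-2/3+\varepsilon})$), the genuinely delicate regime being instead when $|T|$ grows towards the scale $N^{2/3}$ of the Airy region boundary --- which your appeal to Lemma \ref{le:HijT}, the estimates (\ref{eq:Hord}), and Remark \ref{re:uniformT} does correctly address.
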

The lemma follows from a straightforward calculation using
(\ref{eq:airydouder}) and Lemma \ref{le:besselbulk}.

As in the computation of the skew products
$\left<L_{2k},L_{2k-1}\right>_1=0$ can be used to simplify the
expressions $\left<\frac{L_n}{t- x},L_m\right>_1$. In this case, we
differentiate the identity $\left<L_{2k},L_{2k-1}\right>_1=0$ with
respect to $t$. Then we obtain
\begin{equation*}
-2\frac{\p}{\p t}\left<L_{n},L_{n-1}\right>_1=\left<\frac{L_{n}}{t-
x},L_{n-1}\right>_1- \left<\frac{L_{n-1}}{t- x},L_{n}\right>_1=0.
\end{equation*}
By using Lemma \ref{le:dLNN}, we obtain the following identities.
\begin{equation}\label{eq:idder}
\begin{split}
2\tilde{\mathcal{J}}_{31}-\tilde{\mathcal{J}}_{32}-2\tilde{I}_1+2\frac{\p
\mathcal{J}_{2}}{\p t}=E_1,\quad
-\tilde{\mathcal{J}}_{34}+\tilde{\mathcal{J}}_{35}-\tilde{I}_2=E_2,
\end{split}
\end{equation}
where $E_1$ and $E_2$ are error terms that behave as the error terms
in (\ref{eq:DL}) and $\mathcal{J}_2$ in the above is taken to be the
expression in (\ref{eq:doubulk1}) with $n-m=1$.

We can now compute the term $D_1$ in (\ref{eq:M1}). By using Lemma
\ref{le:dLNN}, we obtain the following
\begin{equation}\label{eq:M1asy}
\begin{split}
&\frac{4\pi D_1}{M
}=\tilde{\mathcal{J}}_{35}-4\tilde{\mathcal{J}}_{33} +\frac{2\pi}{N
}\frac{\p }{\p t}\sqrt{\frac{t-4 }{t}}-\frac{\p}{\p
t}\left(\mathcal{F}_{2}-2\mathcal{F}_1\right)+E.
\end{split}
\end{equation}
where the error term $E$ has the same order as the ones in
(\ref{eq:DL}).

Let us now show that $\mathcal{F}_{2}-2\mathcal{F}_1$ is of order
$O\left(N^{-1-\frac{\varepsilon}{2}}\right)$.

From (\ref{eq:fl}), we see that $\mathcal{F}_{2}-2\mathcal{F}_1$ is
given by
\begin{equation*}
\begin{split}
&\mathcal{F}_{2}-2\mathcal{F}_1=
\int_{\frac{N}{n}c_-}^{4}\frac{4\left(\sin\left(2\phi\right)-2\sin\phi\right)}{Nx(4-x)\left(t-
x\right)}\D
x=\int_{\frac{N}{n}c_-}^{4}\frac{8\left(\sin\phi\left(\cos\phi-1\right)\right)}{Nx(4-x)\left(t-
x\right)}\D
x\\
&=-\int_{\frac{N}{n}c_-}^{4}\frac{4\phi^3\left(1+O\left(\phi\right)\right)}{Nx(4-x)\left(t-
x\right)}\D x.
\end{split}
\end{equation*}
where again, $\phi=\arccos\left(\frac{x}{2}-1\right)$. As
$\phi=\sqrt{4-x}\left(1+O\left(x-4\right)\right)$, we have
\begin{equation*}
\begin{split}
&\mathcal{F}_{2}-2\mathcal{F}_1=
-\int_{\frac{N}{n}c_-}^{4}\frac{4\sqrt{4-x}\left(1+O\left(x-4\right)\right)}{Nx\left(t-
x\right)}\D x.
\end{split}
\end{equation*}
By the change of variable
$x=4+\left(\frac{4}{N}\right)^{\frac{2}{3}}\xi$ and $t=4
+\left(\frac{4}{N}\right)^{\frac{2}{3}}T$, it is easy to see that
the above integral is of order
$O\left(N^{-1-\frac{\epsilon}{2}}\right)$. Hence after integration,
we obtain
\begin{equation}\label{eq:M1int}
\begin{split}
&\int^tD_1\D t=\frac{1}{2}\sqrt{\frac{t-4
}{t}}-\frac{1}{4}\int_{-\infty}^{\infty}\frac{Ai^2(u)}{(T- u)^2}\D
u\\
&-\frac{3}{4}\int^T\left(\frac{Ai}{(T-
u)^{\frac{3}{2}}},\frac{Ai}{(T- u)^{\frac{5}{2}}}\right)_1\D
T+\int^TO\left(E\right)\D T
\end{split}
\end{equation}
where
$\left(f,g\right)_1=\int_{\mathbb{R}}\int_{\mathbb{R}}\epsilon(x-y)f(x)g(y)\D
x\D y$ and the error $E$ is of the order indicated in (\ref{eq:DL}).
In particular, the error term is of order $o(1)$ uniformly in
$T=o\left(N^{\frac{2}{3}}\right)$. For
$T=O\left(N^{\frac{2}{3}}\right)$, the error term is of order
$O(1)$.

Let us now consider the other terms in (\ref{eq:K1}). Let $D_2$ be
\begin{equation*}
D_2=\frac{M
}{h_{0,N-2}}\frac{\left<L_{N},L_{N-2}\right>_1}{\left<L_{N-1},L_{N-2}\right>_1}\frac{\p}{\p
t}\left<L_{N-1},L_{N-2}\right>_1-\frac{M }{h_{0,N-1}}\frac{\p}{\p
t}\left<L_{N+1},L_{N-1}\right>_1
\end{equation*}
By using (\ref{eq:prodasym}) and (\ref{eq:relprod}), we see that
$D_2$ is given by
\begin{equation*}
\begin{split}
&\frac{2\pi D_2}{M
}=\left(\frac{4I_1+\mathcal{J}_{B2}-\mathcal{J}_{B3,2}}{2I_0}+O\left(N^{-\frac{9}{24}}T^{\frac{1}{2}}\right)\right)
\frac{\p}{\p
t}\left(-2I_0+6I_1+O\left(N^{-\frac{25}{24}}\right)\right)\\
&+\frac{\p}{\p
t}\left(4I_1-\mathcal{J}_{B3,2}+O\left(N^{-\frac{25}{24}}\right)\right).
\end{split}
\end{equation*}
Note that the error term $O\left(N^{-\frac{25}{24}}\right)$ is
uniform in $T$ as $|T|\rightarrow N^{\frac{2}{3}}$ and its
derivative is of the same order and is also uniform in $T$. From
this, we obtain
\begin{equation*}
\begin{split}
&\frac{2\pi D_2}{M
}=\left(\frac{4I_1+\mathcal{J}_{B2}-\mathcal{J}_{B3,2}}{2I_0}+O\left(N^{-\frac{9}{24}}T^{\frac{1}{2}}\right)\right)
\frac{\p}{\p
t}\left(-2I_0+O\left(N^{-\frac{1}{3}}\right)\right)\\
&+\frac{\p}{\p
t}\left(4I_1-\mathcal{J}_{B3,2}+O\left(N^{-\frac{25}{24}}\right)\right).
\end{split}
\end{equation*}
By using the definitions of $I_0$, $I_1$ and $\mathcal{J}_{B2}$,
$\mathcal{J}_{B3,2}$ in Proposition \ref{pro:asyminner}, we obtain
the following for $D_2$.
\begin{equation}\label{eq:M2asy}
\begin{split}
\int^tD_2\D t&=\frac{2 }{\sqrt{t}}\int_{-\infty}^{\infty}H_1\D
u-\sqrt{\frac{t-4 }{t}}-\log
\left(t^{-\frac{1}{2}}\int_{-\infty}^{\infty}H_0\D
u\right)\\
&-2 \int^T\frac{\left(\int_{-\infty}^{\infty}H_1\D
u\right)^2}{\sqrt{t}\int_{-\infty}^{\infty}H_0\D u}\D T
+O\left(N^{-\frac{1}{24}}\log
T\right)+O\left(N^{-\frac{1}{3}}T^{\frac{1}{2}}\right).
\end{split}
\end{equation}
The orders of $T$ in the error terms indicates their behavior as
$|T|\rightarrow N^{\frac{2}{3}}$. Summarizing, we obtain the
contribution to the derivative from the correction kernel
$K_1(x,y)$.
\begin{proposition}\label{pro:corrder}
Let $|T|<N^{\frac{1}{3}}$. Then the contribution of $K_1(x,y)$ to
the logarithmic derivative is given by
\begin{equation}\label{eq:intcorr}
\begin{split}
&\int^t\int_{\mathbb{R}}\frac{K_1(x,x)}{t- x}\D x\D t=
-\frac{1}{4}\int_{-\infty}^{\infty}\frac{Ai^2}{(T- u)^2}\D
u+\int_{-\infty}^{\infty}H_1\D u-\log
\left(\int_{-\infty}^{\infty}H_0\D
u\right)\\&-\frac{3}{4}\int^T\left(\frac{Ai}{(T-
u)^{\frac{3}{2}}},\frac{Ai}{(T- u)^{\frac{5}{2}}}\right)_1\D
T-\int^T\frac{\left(\int_{-\infty}^{\infty}H_1\D
u\right)^2}{\int_{-\infty}^{\infty}H_0\D u}\D T+o(1).
\end{split}
\end{equation}
where the error term is uniform in $T$. As $T\rightarrow\infty$ but
$t$ remains finite, the function
$\int^t\int_{\mathbb{R}}\frac{K_1(x,x)}{t- x}\D x\D t$ will be of
order $O(1)$ in $N$.
\end{proposition}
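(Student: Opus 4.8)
The plan is to start from the exact differential identity (\ref{eq:K1}), which already rewrites $\int_{\mathbb{R}_+}\frac{K_1(x,x)}{t-x}\D x$ as a finite linear combination of the weighted skew products $\left<\frac{L_n}{t-x},L_m\right>_1$, with $n,m\in\{N-2,N-1,N,N+1\}$, and of the $t$-derivatives $\frac{\p}{\p t}\left<L_{N-1},L_{N-2}\right>_1$ and $\frac{\p}{\p t}\left<L_{N+1},L_{N-1}\right>_1$. First I would organise this combination exactly as in the text, namely $\int_{\mathbb{R}_+}\frac{K_1(x,x)}{t-x}\D x=D_1+D_2-\frac{1}{2h_{0,N-1}}\left<\frac{L_N}{t-x},L_{N-1}\right>_1$, with $D_1$ as in (\ref{eq:M1}) and $D_2$ as in the definition just below it. The antiderivatives $\int^t D_1\D t$ and $\int^t D_2\D t$ are already computed in (\ref{eq:M1int}) and (\ref{eq:M2asy}), so the only new ingredient is to treat the residual term by applying Lemma \ref{le:dLNN} with $n=N$, $m=N-1$ (so $k=1$, $N-n=0$; since $N$ is even the sign $(-1)^n$ is $+1$). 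This writes the residual through $PH(1,0)$, $\tilde{\mathcal{J}}_{31}$, $\tilde{\mathcal{J}}_{33}$, $\tilde{I}_1$, $\frac{\p}{\p t}\mathcal{J}_2$ and the controlled errors of (\ref{eq:DL}); its algebraic prefactor is reduced to a universal constant via $h_{N,0}/h_{N-1,0}=1+O(N^{-1})$, which follows from (\ref{eq:hn}).

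Next I would add the three pieces, integrate once in $t$, and perform the cancellations. The key tool here is the pair of scalar identities (\ref{eq:idder}), obtained by differentiating $\left<L_{2k},L_{2k-1}\right>_1=0$ (Corollary \ref{cor:linear}) in $t$: they let me eliminate the non-universal combinations $\tilde{\mathcal{J}}_{31},\tilde{\mathcal{J}}_{32},\tilde{\mathcal{J}}_{34},\tilde{\mathcal{J}}_{35},\tilde{I}_1,\tilde{I}_2$ and $\frac{\p}{\p t}\mathcal{J}_2$ that appear in the assembled expression, together with the elementary bound $\mathcal{F}_2-2\mathcal{F}_1=O(N^{-1-\varepsilon/2})$ obtained just before (\ref{eq:M1int}). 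Simultaneously, because $|T|<N^{1/3}$ forces $t-4=O(N^{-1/3})$ and hence $t\rightarrow 4$, the algebraic prefactors simplify: $\sqrt{(t-4)/t}=O(N^{-1/6})=o(1)$, $t^{-1/2}\rightarrow 1/2$, and $\log\left(t^{-1/2}\int_{-\infty}^{\infty}H_0\D u\right)=\log\int_{-\infty}^{\infty}H_0\D u+\mathrm{const}+o(1)$, the additive constant being absorbed into the constant of integration of the indefinite integrals $\int^t$, $\int^T$. What survives is exactly the right-hand side of (\ref{eq:intcorr}): the terms $-\frac14\int_{-\infty}^{\infty}\frac{Ai^2}{(T-u)^2}\D u$ and $-\frac34\int^T\left(\frac{Ai}{(T-u)^{3/2}},\frac{Ai}{(T-u)^{5/2}}\right)_1\D T$ descend from (\ref{eq:M1int}); the terms $\int_{-\infty}^{\infty}H_1\D u-\log\int_{-\infty}^{\infty}H_0\D u$ and $-\int^T\frac{\left(\int_{-\infty}^{\infty}H_1\D u\right)^2}{\int_{-\infty}^{\infty}H_0\D u}\D T$ descend from (\ref{eq:M2asy}); and the residual skew product enters only through the cancellations enforced by (\ref{eq:idder}).

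For the error bound I would collect the error terms attached by (\ref{eq:M1int}), (\ref{eq:M2asy}) and Lemma \ref{le:dLNN}; after the $t$-integration these are of the shapes $O(N^{-9/24})$, $O(N^{-2/3}|T|^{1/2})$, $O(N^{-1/24}\log|T|)$ and $O(N^{-1/3}|T|^{1/2})$ (the first arising because the $|T|^{-3/2}$ decay of the error in (\ref{eq:DL}) makes $\int^T$ of it absolutely convergent), all of which are $o(1)$ for $|T|<N^{1/3}$, and by Remark \ref{re:uniformT} uniformly so in $T$. This gives the uniform $o(1)$ statement. Finally, for the regime $T\rightarrow\infty$ with $t$ fixed, I would feed in the $T$-decay recorded in Lemma \ref{le:HijT} and (\ref{eq:Hord}), together with the bounds (\ref{eq:ITinfty}) and (\ref{eq:TjB3}) (all valid up to $|T|\rightarrow N^{2/3}$), which show every genuinely $T$-dependent term in the assembled combination stays bounded as $T$ grows; hence the $t$-antiderivative is $O(1)$ in $N$. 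Consistency with the $K_1$ limit in Lemma \ref{le:kTinfty} is an independent check.

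The step I expect to be the main obstacle is the bookkeeping in the second paragraph: one must drag every non-universal functional --- the $\tilde{\mathcal{J}}$'s, the $\tilde{I}$'s, $PH(1,0)$, and the $\mathcal{A}_0(i,j)$'s buried inside them --- through the $t$-integration and check that the two identities (\ref{eq:idder}) are \emph{exactly} enough to annihilate all of them, while tracking carefully which algebraic prefactors converge to a finite constant and which converge to $0$ as $t\rightarrow 4$. A secondary, more mechanical, difficulty is verifying that every error term is genuinely uniform in $T$ over the full range $|T|<N^{1/3}$ (and, for the last assertion, $|T|\le N^{2/3}$), for which Remark \ref{re:uniformT} and the explicit $T$-orders in (\ref{eq:ITinfty}), (\ref{eq:TjB3}) and (\ref{eq:Hord}) are the relevant inputs.
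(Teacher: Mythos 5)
Your plan is essentially the paper's own route: split $\int_{\mathbb{R}_+}\frac{K_1(x,x)}{t-x}\D x$ via (\ref{eq:K1}) into $D_1+D_2$ plus the residual $-\tfrac{1}{2h_{0,N-1}}\left<\tfrac{L_N}{t-x},L_{N-1}\right>_1$, take $\int^t D_1\D t$ from (\ref{eq:M1int}) and $\int^t D_2\D t$ from (\ref{eq:M2asy}), and simplify the prefactors as $t\to 4$. In fact, combining (\ref{eq:M1int}) and (\ref{eq:M2asy}) and using $\tfrac{2}{\sqrt{t}}\to 1$, $\log t^{-1/2}\to -\log 2$ (absorbed into the constant), and $\sqrt{(t-4)/t}=O(N^{-1/3}T^{1/2})=o(1)$ already produces every term on the right-hand side of (\ref{eq:intcorr}); so $D_1+D_2$ alone reproduces the formula, and the role of the residual is purely as a term to be shown $o(1)$ after $t$-integration.

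That is exactly where your description is off, and where I would flag a genuine gap. You claim that (\ref{eq:idder}) is ``exactly enough to annihilate'' the non-universal combinations appearing in the residual, with Lemma \ref{le:dLNN} applied at $(n,m)=(N,N-1)$. But applying (\ref{eq:DL}) there gives
\[
\frac{2\pi}{h_{N,0}}\left<\tfrac{L_N}{t-x},L_{N-1}\right>_1 = -PH(1,0)+\tilde{\mathcal{J}}_{31}+\tilde{\mathcal{J}}_{33}-\tilde{I}_1+\tfrac{\p\mathcal{J}_2}{\p t}+E,
\]
and inserting the first identity in (\ref{eq:idder}) only rewrites $\tilde{\mathcal{J}}_{31}-\tilde{I}_1+\tfrac{\p\mathcal{J}_2}{\p t}$ as $\tfrac12\tilde{\mathcal{J}}_{32}+\tfrac12 E_1$, leaving $-PH(1,0)+\tfrac12\tilde{\mathcal{J}}_{32}+\tilde{\mathcal{J}}_{33}+\cdots$, which is still $O(1)$ in $N$ (for bounded $T$). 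The identities reduce the raw $\tfrac{4\pi D_1}{M}$ from $O(1)$ to $O(N^{-1/3})$, but they do not similarly squash the residual's skew product. The residual is nonetheless negligible, but for a simpler reason you should state explicitly: its algebraic prefactor is $\tfrac{1}{2h_{0,N-1}}$ rather than $\tfrac{M}{2h_{0,N-2}}$, so, after multiplying by $\tfrac{h_{N,0}}{2\pi}$, it is a universal $O(1)$ constant while $D_1$, $D_2$ carry an extra factor $O(M)$. Thus $\mathrm{Res}(t)=O(1)$ while $D_1,D_2=O(N^{2/3})$, and after the change of variable $t\mapsto T$ (jacobian $(4/N)^{2/3}$) and the $T$-decay from Lemma \ref{le:HijT} and (\ref{eq:Hord}), one gets $\int^t\mathrm{Res}(t)\D t=O(N^{-2/3})=o(1)$ uniformly in $T$. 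This is implicit in the paper (the residual is never revisited after (\ref{eq:K1})), and making it explicit is the missing step in both your sketch and the text. The rest of your plan — the $t\to 4$ prefactor bookkeeping, the error uniformity via Remark \ref{re:uniformT}, and the large-$T$ boundedness argument for the final claim — is correct and matches the paper's reasoning.
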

The fact that the integral remains bounded for $t$ finite follows
from the estimates (\ref{eq:Hord}) and the expression
(\ref{eq:intcorr}).
\subsection{Steepest descent analysis}
Before we apply steepest descent analysis to the contour $\Gamma$,
let us take a closer look at the integration constants in
Proposition \ref{pro:K2cont} and \ref{pro:corrder}. Since the
integration constants must be the same for all $T$ while the
functions involved have jump discontinuities on $\mathbb{R}$, care
must be taken when choosing these integration constants.

First note that the expressions in Proposition \ref{pro:K2cont} and
\ref{pro:corrder} can be simplified further through integration by
parts. By repeat use of integration by parts, we can write the
following terms in (\ref{eq:M1int}) as
\begin{equation*}
\begin{split}
&-\frac{3}{4}\left(\frac{Ai}{(T- u)^{\frac{3}{2}}}, \frac{Ai}{(T-
u)^{\frac{5}{2}}}\right)_1+\frac{1}{2}\int_{-\infty}^{\infty}\frac{Ai^2}
{(T- u)^3}\D u=\\
&2\int_{-\infty}^0\left(H_1\int_{-\infty}^uH_2\D
v-\frac{(-u)^{\frac{1}{2}}}{2\pi(T- u)}\right)\D u +
2\int_0^{\infty}H_1\int_{-\infty}^uH_2\D v\D
u-\int_{-\infty}^{\infty}H_1\D u\int_{-\infty}^{\infty}H_2\D u\\
&-\int_{-\infty}^0\left(\frac{\left(Ai^{\prime}\right)^2-AiAi^{\prime\prime}}{T-
u} -\frac{(-u)^{\frac{1}{2}}}{\pi(T- u)}\right)\D
u-\int_0^{\infty}\frac{\left(Ai^{\prime}\right)^2-AiAi^{\prime\prime}}{T-
u}\D u.
\end{split}
\end{equation*}
The term $\frac{(-u)^{\frac{1}{2}}}{\pi(T- u)}$ is there to ensure
the convergence of the respective integrals. From the jump
discontinuities and the behavior at $T\rightarrow\infty$, one can
check that
\begin{equation*}
\begin{split}
&\int_{-\infty}^0\left(\frac{\left(Ai^{\prime}\right)^2-AiAi^{\prime\prime}}{T-
u} -\frac{(-u)^{\frac{1}{2}}}{\pi (T- u)}\right)\D u
+\int_0^{\infty}\frac{\left(Ai^{\prime}\right)^2-AiAi^{\prime\prime}}{T-
u}\D
u\\
&=\frac{1}{2}\tr\left(A^{-1}\left(T\right)A^{\prime}\left(T\right)\sigma_3\right)
+T^{\frac{1}{2}}
\end{split}
\end{equation*}
Therefore we have, for $|T|<N^{\frac{1}{5}}$,
\begin{equation*}
\begin{split}
&\int^t\int_{\mathbb{R}}\frac{S_1(x,x)}{t- x}\D x\D t=
2\int^T\int_{-\infty}^0\left(H_1\int_{-\infty}^uH_2\D
v-\frac{(-u)^{\frac{1}{2}}}{2\pi(T- u)}\right)\D u \D T\\&+
2\int^T\int_0^{\infty}H_1\int_{-\infty}^uH_2\D v\D u\D T
-\int^T\int_{-\infty}^{\infty}H_1\D u\int_{-\infty}^{\infty}H_2\D
u\D T\\&+\int_{-\infty}^{\infty}H_1\D u-\log
\left(\int_{-\infty}^{\infty}H_0\D
u\right)-\int^T\frac{\left(\int_{-\infty}^{\infty}H_1\D
u\right)^2}{\int_{-\infty}^{\infty}H_0\D u}\D
T+\frac{Mt}{2}-\frac{2}{3}T^{\frac{3}{2}}+C+o(1),
\end{split}
\end{equation*}
where $C$ is an integration constant that has no jump on
$\mathbb{R}$. By Lemma \ref{le:HijT}, we see that the term
$\left(\int_{-\infty}^{\infty}H_1\D
u\right)^2/\int_{-\infty}^{\infty}H_0\D u$ is of order
$T^{-\frac{5}{2}}$ as $|T|>N^{\frac{1}{5}}$. Hence the we can choose
the base point of this integral to be $+\infty$. For the other
terms, integration by parts shows that
\begin{equation}\label{eq:S11}
\begin{split}
&\mathcal{S}_{11}= 2\int_{-\infty}^0\left(H_1\int_{-\infty}^uH_2\D
v-\frac{(-u)^{\frac{1}{2}}}{2\pi(T- u)}\right)\D u+
2\int_0^{\infty}H_1\int_{-\infty}^uH_2\D v\D u
\\&-\int_{-\infty}^{\infty}H_1\D u\int_{-\infty}^{\infty}H_2\D u
=-\frac{1}{\pi}\int_{-\infty}^{0}\frac{2\pi(Ai^{\prime})^2-(-u)^{\frac{1}{2}}}{(T-
u)}\D
u\\
&-2\int_{0}^{\infty}\frac{(Ai^{\prime})^2}{(T- u)}\D
u+O\left(T^{-2}\right)
\end{split}
\end{equation}
as $T\rightarrow\infty$. Therefore if we let $0_{\pm}$ be the origin
on the positive and negative sides of $\mathbb{R}$, then
$\int_{0_+}^{0_-}\mathcal{S}_{11}\D T\in i\mathbb{R}$ is bounded,
where the integration contour goes from $0_+$ to $\infty$ along the
positive side of $\mathbb{R}_+$ and then from $\infty$ back to $0_-$
along the negative side of $\mathbb{R}_+$. Therefore we can fix the
integration constant by defining the integral of $\mathcal{S}_{11}$
to be $\int_{0_+}^T\mathcal{S}_{11}\D
T+1/2\int_{0_-}^{0_+}\mathcal{S}_{11}\D T$, which is the same as
$\int_{0_-}^T\mathcal{S}_{11}\D
T-1/2\int_{0_-}^{0_+}\mathcal{S}_{11}\D T$. We shall use these as
definitions of the function $\int^T\mathcal{S}_{11}\D T$ in the
upper and lower half planes respectively. Summarizing, we obtain
\begin{proposition}\label{pro:derasym}
The logarithm of $\det\mathbb{D}$ is given by the following
\begin{enumerate}
\item Uniformly for $|T|<N^{\frac{1}{5}}$, we have
\begin{equation}\label{eq:derasy1}
\begin{split}
&\det\mathbb{D}=C_0\exp\left(-\mathcal{S}_{10}-\int_{0_{\pm}}^T\mathcal{S}_{11}\D
T\mp1/2\int_{0_-}^{0_+}\mathcal{S}_{11}\D
T-\frac{M}{2}t\right)\\
&\times\int_{-\infty}^{\infty}H_0\D u\left(1+o(1)\right),\quad
\pm\mathrm{Im}(T)>0
\end{split}
\end{equation}
where $C_0$ is an integration constant and $\mathcal{S}_{11}$ is
given by (\ref{eq:S11}), while $\mathcal{S}_{10}$ is
\begin{equation*}
\begin{split}
\mathcal{S}_{10}=\int_{-\infty}^{\infty}H_1\D
u+\int_{T}^{\infty}\frac{\left(\int_{-\infty}^{\infty}H_1\D
u\right)^2}{\int_{-\infty}^{\infty}H_0\D u}\D
T-\frac{2}{3}T^{\frac{3}{2}}.
\end{split}
\end{equation*}
\item Uniformly for $|T|>N^{\frac{1}{5}}$, $\det\mathbb{D}$ is
given by
\begin{equation}\label{eq:derasy2}
\begin{split}
\det\mathbb{D}=C_0\exp\left(-\frac{N}{2}\int_{0}^t\left(1-\sqrt{\frac{t-4}{t}}\right)\D
t+O\left(N^{\frac{7}{30}}\right)\right).
\end{split}
\end{equation}
\end{enumerate}
\end{proposition}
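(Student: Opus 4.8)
The plan is to assemble Proposition \ref{pro:derpar} with the two regional estimates already obtained, namely Proposition \ref{pro:K2cont} for the Christoffel--Darboux part $K_2$ and Proposition \ref{pro:corrder} for the correction part $K_1$, and then to integrate in $t$ while tracking the integration constant. The starting point is the exact differential identity of Proposition \ref{pro:derpar},
\begin{equation*}
\frac{\p}{\p t}\log\det\mathbb{D}=-\int_{\mathbb{R}_+}\frac{S_1(x,x)}{t- x}\D x
=-\int_{\mathbb{R}_+}\frac{K_2(x,x)}{t- x}\D x-\int_{\mathbb{R}_+}\frac{K_1(x,x)}{t- x}\D x,
\end{equation*}
where in the last step we use the decomposition $S_1=K_2+K_1$ coming from (\ref{eq:kerform1}). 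Fixing a base point $t_0$ with $|T_0|=N^{1/5}$ and integrating from $t_0$ to $t$ reduces the whole statement to the two antiderivatives $\int^t_{t_0}\frac{K_2(x,x)}{t- x}\D x$ and $\int^t_{t_0}\int_{\mathbb{R}}\frac{K_1(x,x)}{t- x}\D x\D t$, which are exactly what Propositions \ref{pro:K2cont} and \ref{pro:corrder} provide.

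For $|T|<N^{1/5}$ I would use part (1) of Proposition \ref{pro:K2cont} together with Proposition \ref{pro:corrder}. Adding the two contributions and passing to the scaled variable via $t=4+(4/N)^{2/3}T$, the term $\frac12\int^{T}_{T_0}\tr\left(A^{-1}A^{\prime}\sigma_3\right)\D T$ from the $K_2$ part must be combined with the Airy functionals $-\tfrac14\int\frac{Ai^2}{(T- u)^2}\D u$ and the double integrals from the $K_1$ part. The mechanism is the identity
\begin{equation*}
\int_{-\infty}^0\left(\frac{\left(Ai^{\prime}\right)^2-AiAi^{\prime\prime}}{T- u}-\frac{(-u)^{1/2}}{\pi(T- u)}\right)\D u
+\int_0^{\infty}\frac{\left(Ai^{\prime}\right)^2-AiAi^{\prime\prime}}{T- u}\D u
=\frac12\tr\left(A^{-1}(T)A^{\prime}(T)\sigma_3\right)+T^{1/2},
\end{equation*}
which converts the Riemann--Hilbert trace into the Airy functionals making up $\mathcal{S}_{11}$, while the $\frac{M}{2}(t-t_0)$ and $-\tfrac23T^{3/2}$ pieces organize into $\mathcal{S}_{10}$. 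Exponentiating the resulting antiderivative then gives (\ref{eq:derasy1}), modulo the still-undetermined constant $C_0$.

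The delicate point is the choice of antiderivative $\int^T\mathcal{S}_{11}\D T$. Because $\mathcal{S}_{11}$ and the other $T$-dependent functionals carry jump discontinuities across $\mathbb{R}$ --- from the factors $(T- u)^{-j/2}$ and from the $\epsilon$-paired Airy integrals --- a single integration constant valid for all $T$ has to be fitted, which is the subtlety flagged in the text. Using (\ref{eq:S11}) and the decay $\mathcal{S}_{11}=O(T^{-2})$ as $T\to\infty$, the contour integral running from the origin out to $\infty$ along the positive side of $\mathbb{R}_+$ and back along the negative side, $\int_{0_+}^{0_-}\mathcal{S}_{11}\D T$, is purely imaginary and finite; symmetrizing, one sets $\int^T\mathcal{S}_{11}\D T:=\int_{0_+}^T\mathcal{S}_{11}\D T+\tfrac12\int_{0_-}^{0_+}\mathcal{S}_{11}\D T$ for $\mathrm{Im}(T)>0$ and $\int_{0_-}^T\mathcal{S}_{11}\D T-\tfrac12\int_{0_-}^{0_+}\mathcal{S}_{11}\D T$ for $\mathrm{Im}(T)<0$. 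This produces the $\mp\tfrac12\int_{0_-}^{0_+}\mathcal{S}_{11}\D T$ term in (\ref{eq:derasy1}) and makes the two half-plane formulae compatible restrictions, as required; everything genuinely $T$-independent (the $\log$ and linear-in-$t$ constants, the normalization of $\det\mathbb{D}$, the residual constant $C$) is absorbed into $C_0$.

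For $|T|>N^{1/5}$ the correction kernel no longer enters at leading order: by Proposition \ref{pro:corrder} the quantity $\int^t\int_{\mathbb{R}}\frac{K_1(x,x)}{t- x}\D x\D t$ is $O(1)$ there, while part (2) of Proposition \ref{pro:K2cont} gives that the $K_2$ part equals $\frac{N}{2}\int_{t_0}^t\left(1-\sqrt{(t-4)/t}\right)\D t+O(N^{7/30})$. Integrating and moving the base point to $0$ (which affects only the constant and controlled errors) yields $\log\det\mathbb{D}=-\frac{N}{2}\int_0^t\left(1-\sqrt{(t-4)/t}\right)\D t+O(N^{7/30})$ up to $C_0$, i.e.\ (\ref{eq:derasy2}). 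Matching the two formulae on the circle $|T|=N^{1/5}$ then pins down $C_0$ consistently, since there the $|T|<N^{1/5}$ and $|T|>N^{1/5}$ expansions agree within their stated error orders. The main obstacle is exactly this bookkeeping of the integration constant across the cut, together with verifying the trace/Airy-integral identity above that makes the $K_1$ and $K_2$ contributions coalesce into the clean functions $\mathcal{S}_{10}$ and $\mathcal{S}_{11}$; the remainder is routine integration of the asymptotic formulae already in hand.
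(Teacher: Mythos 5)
Your proposal follows essentially the same route as the paper: use Proposition \ref{pro:derpar} and the decomposition $S_1=K_2+K_1$, feed in Propositions \ref{pro:K2cont} and \ref{pro:corrder}, then convert the trace of $A^{-1}A'\sigma_3$ into the Airy functionals via the stated identity, integrate, and organize the result into $\mathcal{S}_{10}$ and $\mathcal{S}_{11}$ with the symmetrized antiderivative to handle the cut. One small inaccuracy: after the integration by parts in (\ref{eq:S11}), $\mathcal{S}_{11}$ is \emph{not} $O(T^{-2})$ as $T\to\infty$; the paper's display gives explicit $(T-u)^{-1}$--type terms which are only $O(T^{-1})$, plus an $O(T^{-2})$ remainder. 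What actually makes $\int_{0_+}^{0_-}\mathcal{S}_{11}\,\mathrm{d}T$ finite is that the \emph{jump} of $\mathcal{S}_{11}$ across $\mathbb{R}_+$ is controlled by the residue of the $(T-u)^{-1}$ kernel against $(Ai')^2$, which decays exponentially; the $O(T^{-2})$ remainder then contributes an absolutely convergent tail. This does not affect the rest of your argument — the symmetrized definition of $\int^T\mathcal{S}_{11}\,\mathrm{d}T$ and the matching of the two regimes at $|T|=N^{1/5}$ proceed exactly as you describe — but you should justify the boundedness of the cut integral via the jump rather than via a uniform $O(T^{-2})$ estimate.
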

\subsection{Saddle point analysis}
We can now carry out the steepest descent analysis for the contour
integral in $t$. As we have seen in the last section, the
regularized determinant $\mathrm{det}_2\left(I-\chi
GKG^{-1}\chi\right)$ remains bounded in $N$ for all values of $t$.
Then from (\ref{eq:derasy2}), we see that for $|T|>N^{\frac{1}{5}}$,
the saddle point is the solution of the following equation.
\begin{equation*}
\frac{2\tau}{(1+\tau)}-\left(1-\sqrt{\frac{t-4}{t}}\right)=0.
\end{equation*}
The saddle point is then given by
\begin{equation*}
t_{saddle}=\frac{(1+\tau)^2}{\tau}.
\end{equation*}
For $\tau\in(-1,0)$, we have $t_{saddle}\in(-\infty,0)$ and for
$0<\tau<1/4$, we have and $t_{saddle}\in[4,\infty)$. Hence the
saddle point $t_{saddle}$ will not intersect the bulk region $(0,4)$
for any value of $\tau\neq \pm1$. In this case, we can deform the
contour $\Gamma$ such that it does not intersect the interval
$[0,4]$ and by Proposition \ref{pro:largest}, we see that the
kernels $K_1$ and $K_2$ become the respective Airy kernels for all
$t\in\Gamma$. The largest eigenvalue distribution in this case
becomes
\begin{equation*}
\mathbb{P}\left(\left(\lambda_{max}-4\right)\left(N/4\right)^{\frac{2}{3}}\leq
\zeta\right)=C_0
\sqrt{\mathrm{det}_2\left(I-\chi_{[\zeta,\infty)}GK_{airy}G^{-1}\chi_{[\zeta,\infty)}\right)}\int_{\Gamma}\left(f_1(t)+f_2(t,z)\right)\D
t
\end{equation*}
for some function $f_1(t)$ independent on $z$ and $f_2(t,z)$ of
order $o(1)$. This gives us the Tracy Widom distribution for the
largest eigenvalue. This is a known result in \cite{FS}.

However, when $\tau=1$, saddle point coincides with the right edge
point. In this case, the main contribution of the integral in $t$
will come from a neighborhood of $t=4$ and the kernels will become
significantly different from the Airy kernel. This is the case when
the phase transition happens. Let us now find the steepest descent
contour in this case.
\begin{lemma}\label{le:steep}
There exists $\delta>0$ such that
\begin{equation}\label{eq:steep}
\mathrm{Re}\left(\int_4^{x}\sqrt{\frac{s-4}{s}}\D s\right)<0,\quad
\left|\mathrm{Im}(x)\right|<\delta,\quad -\delta<\mathrm{Re}(x)<4
\end{equation}
\end{lemma}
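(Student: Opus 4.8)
The plan is to prove the elementary estimate \eqref{eq:steep} by directly analyzing the real part of $\varphi(x)=\int_4^x\frac12\sqrt{\frac{s-4}{s}}\,\D s$ near the right endpoint $4$. First I would set $x=4-\xi$ with $\xi$ small and complex, $|\xi|<\delta$, and use the local expansion of the integrand. Since $\sqrt{\frac{s-4}{s}}$ vanishes like $\sqrt{s-4}$ at $s=4$, the branch being chosen so that $\sqrt{s-4}/\sqrt{s}$ takes its boundary value from the upper half plane on $(-\infty,4)$ (this is the same branch convention already fixed for $\varphi_+$ in Definition \ref{de:bihomap}), we get
\begin{equation*}
\varphi(4-\xi)=-\frac{1}{2}\int_0^{\xi}\sqrt{\frac{\eta}{4-\eta}}\,\D\eta
=-\frac{1}{2}\cdot\frac{2}{3}\cdot\frac{\xi^{3/2}}{2}\left(1+O(\xi)\right)
=-\frac{\xi^{3/2}}{6}\left(1+O(\xi)\right),
\end{equation*}
where $\xi^{3/2}$ is the principal branch (positive for $\xi>0$). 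The key point is that for $\xi$ ranging over the sector $|\arg\xi|<\pi/2+c$ (which is what the strip $|\mathrm{Im}(x)|<\delta$, $\mathrm{Re}(x)<4$ maps into for small $\delta$, after shrinking), the quantity $\xi^{3/2}$ has argument in $(-3\pi/4-3c/2,\,3\pi/4+3c/2)$, so $\mathrm{Re}(\xi^{3/2})$ can be negative; however the claim only asserts $\mathrm{Re}(\varphi)<0$, i.e.\ $\mathrm{Re}(\xi^{3/2})>0$ after the sign, so this naive approach needs the strip to be genuinely thin.

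So the cleaner route is to work not in a full strip but to observe that $\mathrm{Re}\,\varphi$ is harmonic and to track its behavior on the relevant region. I would argue as follows. On the real interval $(-\infty,4)$, writing $x=r<4$ real, the integrand $\frac12\sqrt{\frac{s-4}{s}}$ has boundary value $\frac{i}{2}\sqrt{\frac{4-s}{s}}$ (purely imaginary) for $s\in(0,4)$, hence $\varphi_+(r)=\int_4^r\frac{i}{2}\sqrt{\frac{4-s}{s}}\,\D s$ is purely imaginary for $r\in(0,4)$ and $\mathrm{Re}\,\varphi_+(r)=0$ there; for $r<0$ the integrand becomes $-\frac12\sqrt{\frac{4-s}{-s}}<0$ (real negative, after crossing the branch point at $0$), so $\mathrm{Re}\,\varphi_+(r)<0$ for $r<0$. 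Thus on the boundary segment $\mathrm{Re}(x)\in(-\delta,4)$, $\mathrm{Im}(x)=0$ we have $\mathrm{Re}\,\varphi\le 0$, with equality only on $(0,4)$ and at $4$. Then I would compute $\frac{\p}{\p(\mathrm{Im}\,x)}\mathrm{Re}\,\varphi$ at points of $(0,4)$ via the Cauchy--Riemann equations: $\frac{\p}{\p y}\mathrm{Re}\,\varphi=-\frac{\p}{\p x}\mathrm{Im}\,\varphi=-\mathrm{Im}(\varphi')$ and near such a point $\varphi'(x)=\frac12\sqrt{\frac{x-4}{x}}$, whose imaginary part is nonzero (it has a definite sign, $\frac12\sqrt{\frac{4-x}{x}}>0$ on the upper side). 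Hence moving off $(0,4)$ into the region $\mathrm{Re}(x)<4$, $\mathrm{Re}\,\varphi$ decreases; combined with continuity and the endpoint expansion near $4$ (which shows $\mathrm{Re}\,\varphi<0$ in a punctured neighborhood of $4$ within a small enough sector opening to the left), a compactness argument on the rectangle $\{-\delta\le\mathrm{Re}(x)\le 4-\delta_0\}\times\{|\mathrm{Im}(x)|\le\delta\}$ away from the corner, plus the explicit local model at the corner, gives the strict inequality throughout.

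Concretely, the steps in order: (1) fix the branch and record $\varphi'(x)=\frac12\left(\frac{x-4}{x}\right)^{1/2}$ with the boundary values on $\mathbb{R}$ just described; (2) establish the endpoint asymptotics $\varphi(x)=-\frac{1}{6}(4-x)^{3/2}(1+O(4-x))$ and deduce $\mathrm{Re}\,\varphi(x)<0$ for $x$ in a small sector $\{|x-4|<\delta_1,\ |\arg(4-x)|<\pi/2+c\}$ with $c>0$ small — note the strip $\{|\mathrm{Im}(x)|<\delta,\ \mathrm{Re}(x)<4\}$ lies in this sector near $4$ once $\delta$ is chosen small relative to $\delta_1$; (3) away from $4$, i.e.\ on $\mathrm{Re}(x)\le 4-\delta_1$, use that $\mathrm{Re}\,\varphi_+$ vanishes on the compact segment $[\,\delta_1',\,4-\delta_1\,]$ of $\mathbb{R}$ and is negative on $[-\delta,\,0]$, while $\p_y\mathrm{Re}\,\varphi=-\mathrm{Im}(\varphi')$ is bounded away from $0$ on $[\delta_1',4-\delta_1]$, so for $|\mathrm{Im}(x)|<\delta$ with $\delta$ small, $\mathrm{Re}\,\varphi(x)<0$ by the mean value theorem / Taylor expansion in $\mathrm{Im}(x)$; (4) patch the two regions and choose $\delta=\min$ of the constants produced. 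The main obstacle is purely bookkeeping: making sure the single constant $\delta$ works simultaneously in the corner sector at $x=4$ and in the bulk strip, i.e.\ that the sector estimate in step (2) and the transversality estimate in step (3) overlap — this is where one has to be a little careful that $\delta$ is taken small enough that the strip near $4$ stays inside the admissible sector $|\arg(4-x)|<\pi/2+c$. Everything else is routine calculus with the explicit integrand, and no hard analysis is needed beyond the harmonicity / Cauchy--Riemann observation.
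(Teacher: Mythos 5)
Your argument follows the same route as the paper's proof: harmonicity of $\mathrm{Re}\,\varphi$, the Cauchy--Riemann relation $\p_y\mathrm{Re}\,\varphi=-\mathrm{Im}\,\varphi'=-\tfrac12\sqrt{(4-x)/x}$ on $(0,4)_+$ forcing decrease off the real axis, and local control at the endpoints. You add a more careful treatment at $x=4$ (where $\varphi'$ vanishes and transversality degenerates), while the paper instead records the expansion near $x=0$ (where $\varphi'$ blows up, but in the helpful direction); both corners merit attention and your instinct to look at $4$ is sound.

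Two branch/sign slips are worth correcting, though neither derails the argument since your fall-back Cauchy--Riemann step is exactly the paper's. First, consistency with (\ref{eq:fmap}) and (\ref{eq:behf}) --- which require $f_n$ to be real and positive just to the right of $4$ --- forces
\begin{equation*}
\varphi(x)=\tfrac16(x-4)^{3/2}\bigl(1+O(x-4)\bigr),
\end{equation*}
with $(x-4)^{3/2}$ the branch that is positive on $(4,\infty)$; your $-\tfrac16(4-x)^{3/2}$ with the principal branch of $(4-x)^{3/2}$ differs from this by a factor $\mp i$. With the correct branch, $\arg(x-4)\in(\pi/2,\pi]$ in the closed upper half of the strip with $\mathrm{Re}(x)<4$, so $\arg\bigl((x-4)^{3/2}\bigr)\in(3\pi/4,3\pi/2]$ and $\mathrm{Re}\,\varphi\le 0$ there automatically, with equality only on the real axis. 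The worry that ``the naive approach needs the strip to be genuinely thin'' is therefore an artifact of the branch error: the local expansion by itself covers a full half-disc at $4$, which makes the patching in your step (4) cleaner. Second, for $s<0$ on the $+$ side of the cut the integrand $\tfrac12\sqrt{(s-4)/s}$ is real and \emph{positive}, not negative: following the branch from $(4,\infty)$ around $[0,4]$ through the upper half plane, $\arg(s-4)$ and $\arg(s)$ each advance by $\pi$, so the quotient keeps argument $0$. Your conclusion $\mathrm{Re}\,\varphi_+(r)<0$ for $r<0$ is still correct, but it comes from the orientation of $\int_0^r$ against a positive integrand, not from a negative integrand.
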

\begin{proof} The statement for $0<\mathrm{Re}(x)<4$ is a standard property that
follows from the Cauchy-Riemann equation. (See, e.g. \cite{V}).
Since
\begin{equation*}
\frac{\p}{\p
x}\mathrm{Im}\left(\int_4^{x}\left(\sqrt{\frac{s-4}{s}}\right)_{\pm}\D
s\right)=\pm\sqrt{\frac{4-x}{x}},
\end{equation*}
By the Cauchy-Riemann equations, we see that
$\mathrm{Re}\left(\int_4^{x}\left(\sqrt{\frac{s-4}{s}}\right)_{\pm}\D
s\right)$ is decreasing as we move away from the real axis. As this
real part is zero on $[0,4]$, we obtain the inequality
(\ref{eq:steep}) on for $0<\mathrm{Re}(x)<4$. As the function
$\int_4^{x}\sqrt{\frac{s-4}{s}}\D s$ behaves as
$c_0-2\sqrt{2}(-x)^{\frac{1}{2}}+O\left(x\right)$ when $x\rightarrow
0$ for some $c_0\in i\mathbb{R}$, we see that in a sufficiently
small disk $D$ center at $x=0$, we have
\begin{equation*}
\mathrm{Re}\left(\int_4^{x}\left(\sqrt{\frac{s-4}{s}}\right)_{\pm}\D
s\right)<0,\quad x\in D\setminus \mathbb{R}_+.
\end{equation*}
This completes the proof of the lemma.
\end{proof}
\begin{figure}
\centering \psfrag{4}[][][1.25][0.0]{\small$4$}
\psfrag{0}[][][1.25][0.0]{\small$0$}
\psfrag{xi}[][][1.25][0.0]{\small$\Xi$}
\psfrag{gamma}[][][1.25][0.0]{\small$\Gamma$}
\includegraphics[scale=0.5]{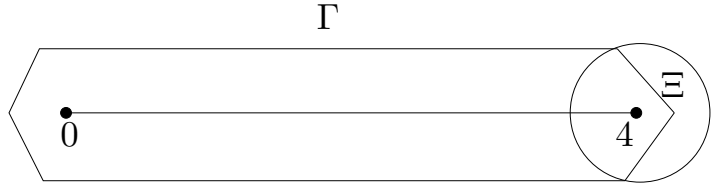}
\caption{The steepest descent contour $\Gamma$ is inside a
neighborhood of $[0,4]$ the conditions of Lemma \ref{le:steep}
holds. Inside a neighborhood of $4$, we choose the contour to be
symmetric and denote it by $\Xi$.}\label{fig:intcont}
\end{figure}
We can therefore choose our integration contour as in Figure
\ref{fig:intcont} to obtain
\begin{proposition}\label{pro:asymdis}
Let
$w=\left(\frac{N}{4}\right)^{\frac{1}{3}}(1-\tau)\in(-\infty,\infty)$
and let $\zeta=(z-4)\left(N/4\right)^{\frac{2}{3}}$, then as
$N\rightarrow\infty$, the largest eigenvalue distribution is given
by
\begin{equation*}
\begin{split}
\lim_{N\rightarrow\infty}\mathbb{P}\left(\left(\lambda_{max}-4\right)\left(\frac{N}{4}\right)^{\frac{2}{3}}\leq\zeta\right)
&=C\int_{\Xi}e^{-\frac{wT}{2}-\frac{1}{2}\mathcal{S}}
\left(\int_{-\infty}^{\infty}H_0\D
u\right)^{\frac{1}{2}}\\
&\times\left(\mathrm{det}_2\left(I-\chi_{\zeta}G(\xi_1)K_{\infty}G^{-1}(\xi_2)\chi_{\zeta}\right)\right)^{\frac{1}{2}}\D
T,
\end{split}
\end{equation*}
for some constant $C$, where $\Xi$ is a symmetric contour that does
not contain any zero of $\int_{-\infty}^{\infty}H_0\D u$ and
approaches $\infty$ in the sector $\pi/3<\arg T<4\pi/3$, $\arg T\neq
\pi$. It intersects the $\mathbb{R}$ at a point $T_0>\zeta$. The
function $\mathcal{S}$ is given by
\begin{equation*}
\mathcal{S}=\mathcal{S}_{10}+\int_{0_{\pm}}^T\mathcal{S}_{11}\D
T\pm1/2\int_{0_-}^{0_+}\mathcal{S}_{11}\D
T,\quad\pm\mathrm{Im}(T)>0.
\end{equation*}
\end{proposition}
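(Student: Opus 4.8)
The plan is to feed the large-$N$ asymptotics assembled in Sections~\ref{se:asymskew}--\ref{se:asymder} into the exact integral representation of Theorem~\ref{thm:main2} and then carry out a steepest descent analysis of the $t$-integral, specialised to the critical window in which $w=(N/4)^{1/3}(1-\tau)$ stays bounded and $M/N\to 1$. First I would use Proposition~\ref{pro:logder0} (equivalently Proposition~\ref{pro:derpar}) to identify $\exp\big(-\tfrac{1}{2}\int_{c_0}^t\int_{\mathbb{R}_+}\tfrac{S_1(x,x)}{s-x}\,\D x\,\D s\big)$ with $\sqrt{\det\mathbb{D}(t)}$ up to a $t$-independent constant, so that the integrand in (\ref{eq:Pmaxdet}) becomes $e^{\frac{M\tau t}{2(1+\tau)}}\sqrt{\det\mathbb{D}(t)}\sqrt{\mathrm{det}_2(I-\chi K\chi)}$, where after the rescaling (\ref{eq:matker2}) and the $G$-conjugation (both of which leave $\mathrm{det}_2$ unchanged, cf.\ \cite{DG}, \cite{DG2}) the determinant factor is exactly the one appearing in Proposition~\ref{pro:largest}. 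Substituting Proposition~\ref{pro:derasym}(1) for $\det\mathbb{D}$ turns the exponential prefactor into $\exp\big(\big(\tfrac{M\tau}{2(1+\tau)}-\tfrac{M}{4}\big)t-\tfrac{1}{2}\mathcal{S}(T)\big)\big(\int_{-\infty}^{\infty}H_0\,\D u\big)^{1/2}$ up to an $N$-dependent but $t$-independent constant and a factor $1+o(1)$. A short computation, using $\tfrac{M\tau}{2(1+\tau)}-\tfrac{M}{4}=\tfrac{M}{4}\cdot\tfrac{\tau-1}{\tau+1}=-\tfrac{w}{2}(N/4)^{2/3}(1+O(N^{-1/3}))$ and $t=4+(4/N)^{2/3}T$, shows that the $t$-dependent part of the exponent reduces to $-\tfrac{wT}{2}(1+o(1))$, the remaining $O((N/4)^{2/3})$ piece being $T$-independent and therefore absorbed, together with the Jacobian $\D t=(4/N)^{2/3}\D T$, into the overall normalisation constant $C$.

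Next I would fix the contour. By Lemma~\ref{le:steep} one can deform $\Gamma$ so that outside a small fixed disc around $t=4$ it stays in the region $\mathrm{Re}\int_4^{x}\sqrt{(s-4)/s}\,\D s<0$ and hugs $[0,4]$, while inside the disc it is the image under $t=4+(4/N)^{2/3}T$ of a fixed symmetric contour $\Xi$ (truncated at $|T|=R$) that avoids the branch cut $(-\infty,\zeta]$, crosses $\mathbb{R}$ at a point $T_0>\zeta$, avoids the zeros of $\int_{-\infty}^{\infty}H_0\,\D u$, and escapes to infinity in the admissible sector. On the outer portion, Proposition~\ref{pro:derasym}(2) identifies the exponential scale of $\det\mathbb{D}$, and combined with $\mathrm{Re}\big(\tfrac{M\tau t}{2(1+\tau)}\big)\sim\tfrac{N}{4}\mathrm{Re}(t)$ and Lemma~\ref{le:steep} the real part of the exponent there is strictly below its value at $t=4$ by a quantity that grows with $N$; since $\sqrt{\mathrm{det}_2(I-\chi K\chi)}$ stays bounded (Proposition~\ref{pro:largest}, Airy case), the outer contribution is exponentially small compared with the disc contribution and hence $o(1)$ after normalisation.

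On the inner portion I would change variables to $T$, factor the $T$-independent constant and the Jacobian into $C$, and invoke Proposition~\ref{pro:derasym}(1) together with Proposition~\ref{pro:largest} to conclude that, pointwise in $T$, the integrand converges to
\begin{equation*}
e^{-\frac{wT}{2}-\frac{1}{2}\mathcal{S}}\Big(\int_{-\infty}^{\infty}H_0\,\D u\Big)^{\frac{1}{2}}\Big(\mathrm{det}_2\big(I-\chi_{\zeta}G(\xi_1)K_{\infty}G^{-1}(\xi_2)\chi_{\zeta}\big)\Big)^{\frac{1}{2}}.
\end{equation*}
Interchanging limit and integral is then a dominated-convergence argument: for $|T|\le R$ the error terms in Proposition~\ref{pro:derasym}(1) are uniform (any fixed $R$ eventually satisfies $R\le N^{1/5}$) and $\mathrm{det}_2$ is uniformly bounded, while for the tail one uses $\mathcal{S}_{10}\sim-\tfrac{2}{3}T^{3/2}$, which makes $e^{-\frac{1}{2}\mathcal{S}}$ decay in the chosen sector, together with the boundedness and nonvanishing of $\big(\int_{-\infty}^{\infty}H_0\,\D u\big)^{1/2}$ on $\Xi$ (cf.\ the remark following Proposition~\ref{pro:asyminner}); this also permits $R\to\infty$. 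Assembling the outer and inner parts gives the stated formula, and the symmetry of $\Xi$ under complex conjugation, i.e.\ the splitting into $\Xi_+$ and its mirror image, produces the real-valued probability.

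The step I expect to be hardest is the uniform control needed to pass the limit inside the integral on an unbounded contour: one must dominate the $o(1)$ errors in both $\det\mathbb{D}$ and $\mathrm{det}_2(I-\chi K\chi)$ simultaneously and uniformly as $T$ ranges over all of $\Xi$, stitching together the regimes $|T|\le N^{1/5}$, $N^{1/5}<|T|<o(N^{1/3})$ and $|T|\to\infty$ handled respectively by Propositions~\ref{pro:derasym}, \ref{pro:largest} and Lemma~\ref{le:kTinfty}. A secondary difficulty is making the steepest descent deformation rigorous while keeping the contour clear of the moving branch cut $(-\infty,\zeta]$ and of the zeros of $\int_{-\infty}^{\infty}H_0\,\D u$ and still respecting the sectorial decay of the integrand, which is delicate but essentially topological bookkeeping once Lemma~\ref{le:steep} is in hand.
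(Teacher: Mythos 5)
Your proposal is correct and follows the same route the paper takes: combine Theorem~\ref{thm:main2} with Proposition~\ref{pro:logder0} to replace the exponential prefactor by $\sqrt{\det\mathbb{D}}$, substitute the uniform asymptotics of Proposition~\ref{pro:derasym} for $\det\mathbb{D}$ and of Proposition~\ref{pro:largest} for the regularized Fredholm determinant, extract the $T$-dependent exponent $-wT/2$ via the algebra $\tfrac{M\tau}{2(1+\tau)}-\tfrac{M}{4}=\tfrac{M}{4}\cdot\tfrac{\tau-1}{\tau+1}$, absorb the $T$-independent $O((N/4)^{2/3})$ piece and the Jacobian into $C$, and use Lemma~\ref{le:steep} together with Proposition~\ref{pro:derasym}(2) to show the contour outside an $O((4/N)^{2/3})$ neighbourhood of $t=4$ is exponentially suppressed. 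The paper presents this step very tersely ("We can therefore choose our integration contour as in Figure~\ref{fig:intcont} to obtain\ldots"), and your write-up simply makes explicit the assembly that the paper leaves implicit, including the correct identification of the difficulties in stitching together the $|T|\le N^{1/5}$, $N^{1/5}<|T|=o(N^{1/3})$, and $|T|\to\infty$ regimes.
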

\begin{remark} In the above formula, the integration should be
understood as a sum of integrations performed over the contours
$\Xi_+$ and $\Xi_-$, where $\Xi_{\pm}$ are the intersections of
$\Xi$ with the upper/lower half plane. Near the intersection point
$T_0$, the boundary values of the integrand in the upper/lower half
planes are to be taken when performing these integrals.
\end{remark}

\section{Fredholm determinant}\label{se:Fred}
In this section we will carry out the final part of the analysis and
express the determinant
$\mathrm{det}_2\left(I-\chi_{[z,\infty)}K\chi_{[z,\infty)}\right)$
in (\ref{eq:Pmaxdet}) in terms of the Hastings-McLeod solution of
the Painlev\'e II equation. We will follow the approach in
\cite{TW1}. While the following operations are formal and did not
take into account the fact that $I-G\chi K_{\infty}G^{-1}\chi$ is
not of trace class, the procedure and the resulting formula
(\ref{eq:det}) can be justified rigorously as in \cite{TW1}.

Let $S_{1,\infty}$ be the operator with the kernel
$S_{1,\infty}=K_{1,\infty}+K_{2,\infty}$, $D$ the differential
operator, $\epsilon$ the operator with kernel
$\epsilon(\xi_1-\xi_2)$ and $\chi$ the multiplication by
$\chi_{[\zeta,\infty)}$, where
$\zeta=(z-4)\left(N/4\right)^{\frac{2}{3}}$. Then by Proposition
\ref{pro:largest}, we would like to consider the determinant of the
following operator.
\begin{equation}\label{eq:Fredholm}
I-G\chi\begin{pmatrix}S_{1,\infty}&S_{1,\infty}D\\
\tilde{\epsilon}(S_{1,\infty})-\epsilon&S_{1,\infty}^T\end{pmatrix}\chi
G^{-1},
\end{equation}
where $S_{1,\infty}^T$ is the transpose of $S_{1,\infty}$ and
$\tilde{\epsilon}f=\int_{\xi_2}^{\xi}f(t)\D t$. Then by
(\ref{eq:asymk1}) and (\ref{eq:kinfty}), we see that
\begin{equation*}
\begin{split}
&-\frac{\p}{\p\xi_2}K_{1,\infty}(\xi_1,\xi_2)-\frac{\p}{\p\xi_1}K_{1,\infty}(\xi_2,\xi_1)
=\frac{\p}{\p\xi_2}K_{2,\infty}(\xi_1,\xi_2)+\frac{\p}{\p\xi_1}K_{2,\infty}(\xi_2,\xi_1)\\
&=-TH_0(\xi_1)H_0(\xi_2)- H_1(\xi_1)H_1(\xi_2)+H_0(\xi_1)H_2(\xi_2)+
H_2(\xi_1)H_0(\xi_1).
\end{split}
\end{equation*}
This implies $S_{1,\infty}D=DS_{1,\infty}^T$ and as
$D\tilde{\epsilon}=I$, we can follow \cite{TW1} to write the
operator as
\begin{equation*}
I-AB,\quad A=G\begin{pmatrix}\chi D&0\\
0&\chi\end{pmatrix}\begin{pmatrix}1&0\\
1&1\end{pmatrix}g,\quad B=g^{-1}\begin{pmatrix}1&0\\
-1&1\end{pmatrix}\begin{pmatrix}\tilde{\epsilon}(S_{1,\infty})&S_{1,\infty}^T\\
\tilde{\epsilon}(S_{1,\infty})-\epsilon&S_{1,\infty}^T\end{pmatrix}\chi
G^{-1}
\end{equation*}
then by using $\det\left(I-AB\right)=\det\left(I-BA\right)$, we
obtain
\begin{equation*}
I-g^{-1}\begin{pmatrix}1&0\\
-1&1\end{pmatrix}\begin{pmatrix}\tilde{\epsilon}(S_{1,\infty})\chi D&S_{1,\infty}^T\chi\\
\tilde{\epsilon}(S_{1,\infty})-\epsilon\chi D&S_{1,\infty}^T\chi\end{pmatrix}\begin{pmatrix}1&0\\
1&1\end{pmatrix}g,
\end{equation*}
from this, we see that the determinant can be written as the
determinant of the scalar operator
\begin{equation*}
\mathrm{det}_2\left(I-\chi
GK_{\infty}G^{-1}\chi\right)=\mathrm{det}_2\left(I-g^{-1}\left(\tilde{\epsilon}(S_{1,\infty})\chi
D+S_{1,\infty}^T\chi -S_{1,\infty}^T\chi\epsilon\chi
D\right)g\right).
\end{equation*}
Let us now show that $\tilde{\epsilon}
S_{1,\infty}=S_{1,\infty}^T\epsilon$. First by writing
$\tilde{\epsilon}S_{1,\infty}$ as
\begin{equation*}
\tilde{\epsilon}S_{1,\infty}=\epsilon\left(S_{1,\infty}\right)(\xi_1,\xi_2)-
\epsilon\left(S_{1,\infty}\right)(\xi_2,\xi_2),
\end{equation*}
we see that the first term is equal to $S_{1,\infty}\epsilon$
because $S_{1,\infty}D=DS_{1,\infty}^T$, and hence $\epsilon
S_{1,\infty}=S_{1,\infty}^T\epsilon$. Let us compute the second
term. Let $f$ be an $L^2$ function, then
\begin{equation}\label{eq:s}
\begin{split}
\epsilon\left(S_{1,\infty}\right)(\xi_2,\xi_2)f&=
-\int_{-\infty}^{\infty}\int_{\xi_2}^{\infty}S_{1,\infty}(t,\xi_2)f(\xi_2)\D
t\D\xi_2\\
&+\frac{1}{2}\int_{-\infty}^{\infty}\int_{-\infty}^{\infty}S_{1,\infty}(t,\xi_2)f(\xi_2)\D
t\D\xi_2.
\end{split}
\end{equation}
These terms can be computed using integration by parts. The first
term becomes
\begin{equation*}
\begin{split}
\int_{-\infty}^{\infty}\int_{\xi_2}^{\infty}S_{1,\infty}(t,\xi_2)f(\xi_2)\D
t\D\xi_2&=
\int_{-\infty}^{\infty}S_{1,\infty}(\xi_2,\xi_2)\int_{-\infty}^{\xi_2}f(u)\D
u\D
t\D\xi_2\\
&-\int_{-\infty}^{\infty}\int_{\xi_2}^{\infty}\frac{\p}{\p\xi_2}S_{1,\infty}(t,\xi_2)\int_{-\infty}^{\xi_2}f(u)\D
u\D t\D\xi_2.
\end{split}
\end{equation*}
As $-\frac{\p}{\p\xi_2}S_{1,\infty}(t,\xi_2)=\frac{\p}{\p
t}S_{1,\infty}(\xi_2,t)$, we obtain
\begin{equation*}
\begin{split}
\int_{-\infty}^{\infty}\int_{\xi_2}^{\infty}S_{1,\infty}(t,\xi_2)\int_{-\infty}^{\xi_2}f(u)\D
u\D t\D\xi_2&=
\int_{-\infty}^{\infty}S_{1,\infty}(\xi_2,\xi_2)\int_{-\infty}^{\xi_2}f(u)\D
u\D
t\D\xi_2\\
&
-\int_{-\infty}^{\infty}S_{1,\infty}(\xi_2,\xi_2)\int_{-\infty}^{\xi_2}f(u)\D
u\D t\D\xi_2=0.
\end{split}
\end{equation*}
The second term in (\ref{eq:s}) can be computed similarly and we
obtain $\tilde{\epsilon} S_{1,\infty}=S_{1,\infty}^T\epsilon$.
Therefore we have
\begin{equation}\label{eq:det}
\mathrm{det}_2\left(I-\chi
GK_{\infty}G^{-1}\chi\right)=\mathrm{det}_2\left(I-g^{-1}\left(S_1^T\epsilon\chi
D+S_1^T\chi -S_1^T\chi\epsilon\chi D\right)g\right).
\end{equation}
As mentioned before, the procedure in obtaining (\ref{eq:det}) can
be rigorously justified as in \cite{TW1}. We shall therefore treat
(\ref{eq:det}) as a rigorous formula and refer the readers to
\cite{TW1}.
\subsection{Differential equations for the Fredholm determinant}
Before we compute the asymptotics of this determinant, let us first
recall some facts about Fredholm determinants and their relations to
Painlev\'e equations.

Let us now recall some basic facts about operators of the form
(\ref{eq:airyker}). The Airy kernel (\ref{eq:airyker}) is an
integrable kernel, that is, it has the form
\begin{equation}\label{eq:intop}
K(x,y)=\frac{\sum_{j=1}^kf_{1j}(x)f_{2j}(y)}{x-y}.
\end{equation}
In this case, we have $k=2$ and $f_{11}=-f_{22}=Ai$, while
$f_{21}=f_{12}=Ai^{\prime}$. Operators of these form appear often in
random matrix theory and have been studied extensively in the
literature. These operators were first singled out as a
distinguished class in \cite{IIKS} in which their properties were
also studied. In random matrix theory, they were used to obtain the
celebrated Tracy-Widom distribution.

Let us remind ourselves the following well-known facts about
integrable operators. (See e.g. \cite{D}, \cite{IIKS}, \cite{TW3})
Let the kernel of an integrable operator on a contour $\Sigma$ be
given by (\ref{eq:intop}). Suppose $I-K$ is invertible, then the
resolvent $R$ given by
\begin{equation}\label{eq:resolvent}
R=(I-K)^{-1}K=(I-K)^{-1}-I,
\end{equation}
is also an integrable operator with kernel given by
\begin{equation}\label{eq:resokern}
R(x,y)=\frac{\sum_{j=1}^kF_{1j}(x)F_{2j}(y)}{x-y},
\end{equation}
where $F_{ij}$ are given by $F_{ij}=(I-K)^{-1}f_{ij}$. Let us now
specialize to the resolvent of the operator with the kernel
$K_{2,airy}\chi$ acting on $\mathbb{R}$. Then the resolvent of this
kernel is given by $(I-K_{2,airy}\chi)^{-1}=I+R_0\chi$, where $R_0$
has the kernel
\begin{equation}\label{eq:rels}
\begin{split}
R_0(\xi_1,\xi_2)&=\frac{\Phi_0(\xi_1)\Phi_1(\xi_2)-\Phi_0(\xi_2)\Phi_1(\xi_1)}{\xi_1-\xi_2}.
\end{split}
\end{equation}
where $\Phi_0$ and $\Phi_1$ are
\begin{equation}\label{eq:Phi}
\Phi_0=\left(I-K_{2,airy}\chi\right)^{-1}Ai,\quad
\Phi_1=\left(I-K_{2,airy}\chi\right)^{-1}Ai^{\prime}.
\end{equation}
For our analysis, we will also need $\Phi_2$ defined by
$\Phi_2=\left(I-K_{2,airy}\chi\right)^{-1}Ai^{\prime\prime}$. The
resolvent $R_0\chi$ is closely related to the Hastings-McLeod
solution of the Painlev\'e II equation. In fact, the determinant
$\det\left(I+R_0\chi\right)$ is well-known in the random matrix
literature and is given by the Tracy-Widom distribution for the GUE
\cite{TWairy}.
\begin{equation}\label{eq:TW2}
\det(I+R_0\chi)=TW_2(\zeta).
\end{equation}
The operator $S_{1,\infty}$ has kernel
$S_{1,\infty}=K_{1,\infty}+\psi^{-1}K_{2,airy}\psi$, where $\psi$ is
the multiplication of the function $\psi=(T- \xi)^{\frac{1}{2}}$. We
will also use the same notation to denote this square root function
itself.

By substituting the asymptotic kernels into (\ref{eq:det}), we
obtain
\begin{equation}\label{eq:detres}
\mathrm{det}_2\left(I-G\chi  K_{\infty}\chi
G^{-1}\right)=\det(I+g^{-1}R^Tg\chi)\det\left(I-g^{-1}\tilde{K}g\right),
\end{equation}
where $\tilde{K}$ and $R$ are the operators with the following
kernels
\begin{equation}\label{eq:prektilde}
\begin{split}
&R(\xi_1,\xi_2)=\psi^{-1}(\xi_1)R_0(\xi_1,\xi_2)\psi(\xi_2),\\
&\tilde{K}=(I+R^T\chi)\left(K_{1,\infty}^T+K_{2,airy}^T\right)(1-\chi)\epsilon\chi
D+(I+R^T\chi)K_{1,\infty}^T\chi.
\end{split}
\end{equation}
As the resolvent operator $R$ is conjugate to the resolvent $R_0$,
we have $\det(I+g^{-1}R^Tg\chi)=\det(I+R_0\chi)$, which is the
Tracy-Widom distribution for the GUE by (\ref{eq:TW2}). Let us now
consider the determinant of $I-g^{-1}\tilde{K}g$. We will follow the
steps in \cite{TW1} to show that the operator $g^{-1}\tilde{K}g$ is
of finite rank, that is, it is of the form
\begin{equation}\label{eq:finrank}
\tilde{K}=\sum_{j=1}^k\alpha_j\otimes\beta_j,
\end{equation}
where $f\otimes h$ is the operator with kernel $f(x)h(y)$. As in
\cite{TW1}, the operator $(1-\chi)\epsilon\chi D$ is given by (See
(16) in \cite{TW1})
\begin{equation}\label{eq:echi}
(1-\chi)\epsilon\chi
D=\left(1-\chi\right)\left(-\epsilon_{\zeta}\otimes\delta_{\zeta}+\epsilon_{\infty}\otimes\delta_{\infty}\right),
\end{equation}
where $\epsilon_{\zeta}$, $\epsilon_{\infty}$, $\delta_{\zeta}$ and
$\delta_{\infty}$ are given by
\begin{equation*}
\epsilon_{\zeta}(\xi)=\epsilon(\xi-\zeta),\quad
\epsilon_{\infty}(\xi)=-\frac{1}2,\quad
\delta_{\zeta}=\delta(\xi-\zeta),\quad
\delta_{\infty}=\delta(\xi-\infty).
\end{equation*}
From these definitions, it is easy to see that
\begin{equation}\label{eq:echi2}
\epsilon_{\zeta}(1-\chi)=\epsilon_{\infty}(1-\chi)=-\frac{1}{2}(1-\chi).
\end{equation}
Now from (\ref{eq:asymk1}), we see that the kernel $K_{1,\infty}$ is
of rank 2.
\begin{equation}\label{eq:K1rank}
\begin{split}
&K_{1,\infty}=H_0\otimes G_0+H_1\otimes G_1,\\
G_0&=\left(\frac{T}{2}I(H_0)-\mathcal{B}_1I(H_1)-
I(H_2)+\mathcal{B}_2\right),\quad
G_1=\left(\frac{1}{2}I(H_1)+\mathcal{B}_1I(H_0)\right)
\end{split}
\end{equation}
where $I$ is the integration $I(f)=\int_{-\infty}^{\xi}f(x)\D x$. By
using this, (\ref{eq:echi}) and (\ref{eq:echi2}) in
(\ref{eq:prektilde}), we see that $\tilde{K}$ can be written as
\begin{equation*}
\begin{split}
\tilde{K}&=\frac{1}{2}\left(I+R^T\chi\right) \Big(\psi
K_{2,airy}\psi^{-1}(1-\chi)+\left(H_0,(1-\chi)\right)G_0\\
&+\left(H_1,(1-\chi)\right)G_1\Big)
\otimes\left(\delta_{\zeta}-\delta_{\infty}\right)+\left(I+R^T\chi\right)\sum_{j=0}^1G_j\otimes
H_j\chi,
\end{split}
\end{equation*}
where $(f,h)=\int_{\mathbb{R}}f(x)h(x)\D x$ and we have used
$(a\otimes b)(c\otimes d)=(b,c)a\otimes d$ to obtain the above
equation. From this, we see that $\tilde{K}$ is indeed of the form
(\ref{eq:finrank}) with $k=3$ and $\alpha_j$, $\beta_j$ given by
\begin{equation}\label{eq:alpha}
\begin{split}
\alpha_1&=\frac{1}{2}\left(I+R^T\chi\right) \left(\psi
K_{2,airy}\psi^{-1}(1-\chi)+\left(H_0,(1-\chi)\right)G_0+\left(H_1,(1-\chi)\right)G_1\right),\\
\beta_1&=\delta_{\zeta}-\delta_{\infty},\quad
\alpha_j=(I+R^T\chi)G_{j-2},\quad \beta_j=H_{j-2}\chi,\quad j=2,3,
\end{split}
\end{equation}
Then the determinant $\det\left(I-g^{-1}\tilde{K}g\right)$ is given
by the determinant of the $3\times 3$ matrix with entries
$\delta_{ij}-(\alpha_i,\beta_j)$. We will now derive a close system
of ODEs in the variable $\zeta$ for these matrix entries. From the
expression of $G_j$ in (\ref{eq:K1rank}), we see that the matrix
entries involve the following quantities
\begin{equation}\label{eq:uij}
\begin{split}
Q_{-,j}&=\psi\left(I+R_0\chi\right)\psi^{-1}I(H_j),\quad
u_{-,jk}=\left(Q_{-,j},H_k\chi\right),\\
V_k&=\left(\psi\left(I+R_0\chi\right)K_{2,airy}\psi^{-1}(1-\chi),H_k\chi\right),\quad
k=0,1,\quad j=0,1,2.
\end{split}
\end{equation}
We will also introduce some auxiliary functions
\begin{equation}\label{eq:aux}
\begin{split}
Q_{+,j}&=\psi^{-1}\left(I+R_0\chi\right)\psi I(H_j),\quad u_{+,jk}=
\left(\psi^2Q_{+,j},H_k\chi\right),\\
\mathcal{R}_{\pm}&=\psi^{\mp1}\int_{-\infty}^{\zeta}R_0(\zeta,\xi)\psi^{\pm
1}(\xi)\D \xi,\quad
\tilde{\mathcal{R}}_{\pm}=-\psi^{\mp1}\int_{\zeta}^{\infty}R_0(\zeta,\xi)\psi^{\pm
1}(\xi)\D \xi\\
\mathcal{P}_{+,0}&=\int_{-\infty}^{\zeta}\Phi_0\psi\D \xi,\quad
\mathcal{P}_{-,j}=\int_{-\infty}^{\zeta}\Phi_j\psi^{-1}\D \xi,\\
\tilde{\mathcal{P}}_{+,0}&=-\int_{\zeta}^{\infty}\Phi_0\psi\D
\xi,\quad
\tilde{\mathcal{P}}_{-,j}=-\int_{\zeta}^{\infty}\Phi_j\psi^{-1}\D
\xi,\quad k=0,1,\quad j=0,1,2.
\end{split}
\end{equation}
which will be more convenient for the purpose of deriving the ODEs.
Note that, as in \cite{TW1} and \cite{TW3}, the $u_{\pm,jk}$ can be
written as
\begin{equation}\label{eq:upmjk}
\begin{split}
u_{\pm,jk}&=\left(\left(I+R_0\chi\right)\psi^{\pm 1}I(H_j),\psi
H_k\chi\right)=\left(\psi^{\pm 1}I(H_j),\left(I+R_0\chi\right)\psi
H_k\chi\right)\\
&=\left(\Phi_k,\psi^{\pm 1}I(H_j)\chi\right),
\end{split}
\end{equation}
where we have used (\ref{eq:Phi}) in the above. By using the
definition of the resolvent (\ref{eq:resolvent}) to write $R_0\chi$
as $R_0\chi=K_{2,airy}\chi\left(I+K_{2,airy}\chi\right)^{-1}$, we
see that
\begin{equation}\label{eq:xi1asym}
\begin{split}
R_0(\xi_1,\xi_2)=O\left(e^{-\frac{2}{3}\xi_1^{\frac{3}{2}}}\right),\quad
\xi_1\rightarrow\infty,\quad
Q_{\pm,j}(\infty)=\int_{-\infty}^{\infty}H_j\D \xi.
\end{split}
\end{equation}
We have the following relations between these variables.
\begin{lemma}\label{le:vV} The functions $V_k$ and
$Q_{+,k}$ in (\ref{eq:uij}) can be written as
\begin{equation}\label{eq:vV}
\begin{split}
V_k&=\mathcal{P}_{-,k}-\left(H_k,(1-\chi)\right),\quad
Q_{+,j}-Q_{-,j}=\frac{\Phi_0}{\psi}u_{-,j1}-\frac{\Phi_1}{\psi}u_{-,j0},\\
\mathcal{R}_+&-\mathcal{R}_-=\frac{\Phi_0(\zeta)}{\psi(\zeta)}\mathcal{P}_{-,1}
-\frac{\Phi_1(\zeta)}{\psi(\zeta)}\mathcal{P}_{-,0}.
\end{split}
\end{equation}
\end{lemma}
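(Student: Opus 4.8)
The plan is to obtain all three identities from a single algebraic observation: since $\psi(\xi)^{2}=T-\xi$, one has $\psi^{2}(\xi_{1})-\psi^{2}(\xi_{2})=-(\xi_{1}-\xi_{2})$, so conjugating the resolvent kernel $R_{0}$ by $\psi^{\pm 1}$ cancels the $(\xi_{1}-\xi_{2})$ in the denominator of its integrable form $R_{0}(\xi_{1},\xi_{2})(\xi_{1}-\xi_{2})=\Phi_{0}(\xi_{1})\Phi_{1}(\xi_{2})-\Phi_{0}(\xi_{2})\Phi_{1}(\xi_{1})$, leaving a rank-two operator. Concretely, I would first prove the operator identity
\[
\psi^{-1}(R_{0}\chi)\psi-\psi(R_{0}\chi)\psi^{-1}
=\frac{\Phi_{0}}{\psi}\otimes\frac{\Phi_{1}}{\psi}\chi-\frac{\Phi_{1}}{\psi}\otimes\frac{\Phi_{0}}{\psi}\chi
\]
by a direct computation of kernels: the kernel on the left is $R_{0}(\xi_{1},\xi_{2})\chi(\xi_{2})\bigl(\psi(\xi_{2})/\psi(\xi_{1})-\psi(\xi_{1})/\psi(\xi_{2})\bigr)$, and substituting $\psi^{2}(\xi)=T-\xi$ together with the integrable form of $R_{0}$ gives precisely the right-hand side.

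For the second identity I would use that $Q_{\pm,j}=\psi^{\mp 1}(I+R_{0}\chi)\psi^{\pm 1}I(H_{j})$, so the scalar parts $\psi^{\mp1}\psi^{\pm1}$ cancel in the difference and $Q_{+,j}-Q_{-,j}$ is exactly the rank-two operator above applied to $I(H_{j})$. The two resulting pairings are $\bigl(\tfrac{\Phi_{1}}{\psi}\chi,\,I(H_{j})\bigr)$ and $\bigl(\tfrac{\Phi_{0}}{\psi}\chi,\,I(H_{j})\bigr)$, which by the rewriting (\ref{eq:upmjk}) are $u_{-,j1}$ and $u_{-,j0}$ respectively; this yields $Q_{+,j}-Q_{-,j}=\tfrac{\Phi_{0}}{\psi}u_{-,j1}-\tfrac{\Phi_{1}}{\psi}u_{-,j0}$. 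For the third identity I would run the same cancellation, but now evaluating the conjugated kernel at the fixed point $\zeta$ and integrating against $\psi^{\pm 1}$ over $(-\infty,\zeta)$ rather than testing against $I(H_{j})$: then $\mathcal{R}_{+}-\mathcal{R}_{-}=\int_{-\infty}^{\zeta}R_{0}(\zeta,\xi)\bigl(\psi(\xi)/\psi(\zeta)-\psi(\zeta)/\psi(\xi)\bigr)\D\xi$ collapses to $\tfrac{\Phi_{0}(\zeta)}{\psi(\zeta)}\int_{-\infty}^{\zeta}\Phi_{1}\psi^{-1}\D\xi-\tfrac{\Phi_{1}(\zeta)}{\psi(\zeta)}\int_{-\infty}^{\zeta}\Phi_{0}\psi^{-1}\D\xi$, i.e. $\tfrac{\Phi_{0}(\zeta)}{\psi(\zeta)}\mathcal{P}_{-,1}-\tfrac{\Phi_{1}(\zeta)}{\psi(\zeta)}\mathcal{P}_{-,0}$.

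For the identity $V_{k}=\mathcal{P}_{-,k}-\bigl(H_{k},(1-\chi)\bigr)$ the argument is slightly different: here I would move operators across the inner product, using that $R_{0}$ and $K_{2,airy}$ are symmetric and that $(I+R_{0}\chi)=(I-K_{2,airy}\chi)^{-1}$. Since $\psi H_{k}=Ai^{(k)}$, one has $(I+\chi R_{0})(\psi H_{k}\chi)=\chi\Phi_{k}$, so $V_{k}=\bigl(K_{2,airy}\psi^{-1}(1-\chi),\,\chi\Phi_{k}\bigr)$; then the resolvent relation $(I-K_{2,airy}\chi)\Phi_{k}=Ai^{(k)}$, i.e. $K_{2,airy}\chi\Phi_{k}=\Phi_{k}-Ai^{(k)}$, reduces this to $\int_{-\infty}^{\zeta}\psi^{-1}(\Phi_{k}-Ai^{(k)})\D\xi=\mathcal{P}_{-,k}-\int_{-\infty}^{\zeta}H_{k}\D\xi=\mathcal{P}_{-,k}-\bigl(H_{k},(1-\chi)\bigr)$.

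The routine part is the kernel algebra; the part that needs care — and which I expect to be the main obstacle — is the bookkeeping of the projections $\chi$ and $1-\chi$ when passing operators across the inner products. In particular $(I+R_{0}\chi)^{\ast}=I+\chi R_{0}\neq I+R_{0}\chi$, so the adjoint manipulations used for $V_{k}$ and for deriving (\ref{eq:upmjk}) are legitimate only because the functions being paired are already supported on $[\zeta,\infty)$ or $(-\infty,\zeta)$ in the right way; and the factor $(1-\chi)$ entering $V_{k}$ through the $\delta_{\zeta},\delta_{\infty}$ terms of $(1-\chi)\epsilon\chi D$ must be handled consistently as the indicator of $(-\infty,\zeta)$, together with the relations (\ref{eq:echi2}).
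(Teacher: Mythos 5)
Your proof is correct and uses essentially the same mechanism as the paper: the commutator $\psi^{\mp1}R_0\psi^{\pm1}-\psi^{\pm1}R_0\psi^{\mp1}$ collapsing to a rank-two operator because $\psi^2(\xi_1)-\psi^2(\xi_2)=-(\xi_1-\xi_2)$ cancels the Cauchy denominator of the integrable kernel, together with the symmetry of $R_0$, $K_{2,airy}$ and the resolvent relation. The only cosmetic difference is in the $V_k$ step, where you move $(I+R_0\chi)$ to the adjoint side to produce $\chi\Phi_k$ before pairing $K_{2,airy}$ across, while the paper first replaces $(I+R_0\chi)K_{2,airy}$ by $R_0$ and then moves $R_0$ across; both land on the same intermediate $\Phi_k-Ai^{(k)}$.
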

\begin{proof}
By using the definition of the resolvent (\ref{eq:resolvent}), we
see that $(I+R_0\chi)K_{2,airy}=R_0$ and hence $V_k$ can be written
as
\begin{equation*}
V_k=\left(R_0\psi^{-1}(1-\chi),\psi
H_k\chi\right)=\left(\psi^{-1}(1-\chi),R_0\psi H_k\chi\right)=
\left((1-\chi),\Phi_k\psi^{-1}\right)-\left(H_k,(1-\chi)\right),
\end{equation*}
where we have used the property of integrable operators
(\ref{eq:Phi}). The first equation then follows immediately from
this.

By using $\psi=(T- \xi)^{\frac{1}{2}}$, one can easily verify the
following.
\begin{equation*}
\psi^{-1}R_0\psi-\psi
R_0\psi^{-1}=\frac{\Phi_0}{\psi}\otimes\frac{\Phi_1}{\psi}
-\frac{\Phi_1}{\psi}\otimes\frac{\Phi_0}{\psi}
\end{equation*}
The equation relating $Q_{\pm,j}$ and $\mathcal{R}_{\pm}$ then
follows from this and (\ref{eq:upmjk}).
\end{proof}
By subtracting $1/2(H_0,(1-\chi))$ times the second column, and
$1/2(H_1,(1-\chi))$ times the third column from the first column of
the determinant $\det\left(I-(\alpha_j,\beta_k)\right)$, we see that
the determinant is the same as
\begin{equation}\label{eq:3det}
\det\left(I-(\alpha_j,\beta_k)\right)=\det\begin{pmatrix}1-\frac{\mathcal{R}_-}{2}&-(\alpha_2,\beta_1)&-(\alpha_3,\beta_1)\\
-\frac{1}{2}\mathcal{P}_{-,0}&1-(\alpha_2,\beta_2)&-(\alpha_3,\beta_2)\\
-\frac{1}{2}\mathcal{P}_{-,1}&-(\alpha_2,\beta_3)&1-(\alpha_3,\beta_3)\end{pmatrix}.
\end{equation}
By using (\ref{eq:xi1asym}), (\ref{eq:K1rank}), (\ref{eq:uij}) and
(\ref{eq:aux}), we see that the other entries $(\alpha_i,\beta_j)$
can be written as
\begin{equation}\label{eq:matrixent}
\begin{split}
\left(\alpha_2,\beta_1\right)&=\frac{T}{2}\left(q_{-,0}-q_{-,0}^{(\infty)}\right)
-\mathcal{B}_1\left(q_{-,1}-q_{-,1}^{(\infty)}\right)-
\left(q_{-,2}-q_{-,2}^{(\infty)}\right)
-\mathcal{B}_2\tilde{\mathcal{R}}_{-},\\
\left(\alpha_2,\beta_j\right)&=\frac{T}{2}u_{-,0j-2}
-\mathcal{B}_1u_{-,1j-1}- u_{-,2j-2}
-\mathcal{B}_2\tilde{\mathcal{P}}_{-,j-2},\\
\left(\alpha_3,\beta_1\right)&=\frac{1}{2}\left(q_{-,1}-q_{-,1}^{(\infty)}\right)
+\mathcal{B}_1\left(q_{-,0}-q_{-,0}^{(\infty)}\right),\\
\left(\alpha_3,\beta_j\right)&=\frac{1}{2}u_{-,1j-2}
+\mathcal{B}_1u_{-,0j-2},\quad j=2,3,
\end{split}
\end{equation}
where $q_{-,j}$ are the values of $Q_{-,j}$ at $\zeta$ and
$q_{-,j}^{(\infty)}$ are the values of $Q_{-,j}$ at $\infty$.

Let us now derive ODEs for these functions in the variable $\zeta$.
First recall the following formula that holds for arbitrary operator
$K$ that depends smoothly on a parameter $\zeta$.
\begin{equation}\label{eq:diffop}
\frac{\p}{\p \zeta}(I-K)^{-1}=(I-K)^{-1}\frac{\p K}{\p
\zeta}(I-K)^{-1}.
\end{equation}
Applying this to the operator $K_{2,airy}\chi$, we obtain, as in
\cite{TW3}, the followings.
\begin{equation}\label{eq:difker}
\begin{split}
\frac{\p}{\p
\zeta}K_{2,airy}\chi=-K_{2,airy}(\xi_1,\zeta)\delta(\zeta-\xi_2),\quad
\frac{\p}{\p
\zeta}\left(I-K_{2,airy}\chi\right)^{-1}=-R_0(\xi_1,\zeta)\rho(\zeta,\xi_2)
\end{split}
\end{equation}
where $\rho(\xi_1,\xi_2)$ is the kernel of $I+R_0\chi$, that is,
$\rho(\xi_1,\xi_2)=\delta(\xi_1-\xi_2)+R_0(\xi_1,\xi_2)\chi$. Now
from (\ref{eq:airyker}), we obtain the following.
\begin{equation*}
\left(\frac{\p}{\p
\xi_1}+\frac{\p}{\p\xi_2}\right)K_{2,airy}(\xi_1,\xi_2)\chi =
-Ai(\xi_1)Ai(\xi_2)\chi+K_{2,airy}(\xi_1,\xi_2)\delta(\xi_2-\zeta).
\end{equation*}
This then implies the following for the derivative of
$\rho(\xi_1,\xi_2)$.
\begin{equation*}
\begin{split}
\left(\frac{\p}{\p \xi_1}+\frac{\p}{\p\xi_2}\right)\rho(\xi_1,\xi_2)
&=\left(I+R_0\chi\right)\left(\left(\frac{\p}{\p
\xi_1}+\frac{\p}{\p\xi_2}\right)K_{2,airy}(\xi_1,\xi_2)\chi\right)\left(I+R_0\chi\right)\\
&=-\Phi_0(\xi_1)\Phi_0(\xi_2)\chi+R_0(\xi_1,\zeta)\rho(\xi,\xi_2),
\end{split}
\end{equation*}
where we have used the fact that the kernel of the operator
$(f\otimes g)A$ is the same as the kernel $(f\otimes A^Tg)$ and that
$\chi(I+R_0\chi)$ is the same as its transpose, together with
$(I+R_0\chi)K_{2,airy}\chi=R_0\chi$.

From this and (\ref{eq:difker}), we obtain the following for
$\rho(\xi_1,\xi_2)$ as in \cite{TW1}.
\begin{equation}\label{eq:drho}
\left(\frac{\p}{\p\zeta}+\frac{\p}{\p\xi_1}+\frac{\p}{\p\xi_2}\right)\rho(\xi_1,\xi_2)
=-\Phi_0(\xi_1)\Phi_0(\xi_2)\chi,
\end{equation}
 From (\ref{eq:drho}) and
$\rho(\xi_1,\xi_2)=\delta(\xi_1-\xi_2)+R_0(\xi_1,\xi_2)\chi$, we
obtain the following for $\xi_2\in(\zeta,\infty)$.
\begin{equation}\label{eq:dR}
\left(\frac{\p}{\p\zeta}+\frac{\p}{\p\xi_1}+\frac{\p}{\p\xi_2}\right)R_0(\xi_1,\xi_2)
=-\Phi_0(\xi_1)\Phi_0(\xi_2),\quad \xi_2\in[\zeta,\infty).
\end{equation}
We can now derive a system of ODEs satisfied by the functions in
(\ref{eq:uij}) and (\ref{eq:aux}).
\begin{lemma}\label{le:dif1}
Let $q_{\pm,j}$ and $\phi_j$ be the values of $Q_{\pm,j}$ and
$\Phi_j$ at $\xi=\zeta$ respectively and let the function
$\mathcal{Q}_j$ be $\mathcal{Q}_j=q_{+,j}-q_{-,j}$. Then functions
$u_{\pm,jk}$, $q_{\pm,j}$ and $\mathcal{Q}_j$ satisfy the following
differential equations.
\begin{equation}\label{eq:dif1}
\begin{split}
&\frac{\p
q_{+,j}}{\p\zeta}=-\frac{\phi_0}{\psi(\zeta)}u_{+,j0}+\frac{\phi_j}{\psi(\zeta)}+\frac{1}{2\psi^2(\zeta)}\mathcal{Q}_j,\\
&\frac{\p
\mathcal{Q}_{j}}{\p\zeta}=\frac{\p\log\left(\phi_0\psi^{-1}\right)}{\p\zeta}\mathcal{Q}_j
-\frac{\psi}{\phi_0}W\left(\frac{\phi_0}{\psi},\frac{\phi_1}{\psi}\right)u_{-,j0},\\
&u_{-,jk}=W\left(\mathcal{Q}_j,\phi_k\psi^{-1}\right)/W\left(\phi_0\psi^{-1},\phi_1\psi^{-1}\right)
,\quad\frac{\p u_{\pm,jk}}{\p\zeta}=-\psi^{\pm 1}q_{\pm,j}\phi_k,
\end{split}
\end{equation}
where $W(f,g)$ is the Wronskian $W(f,g)=fg^{\prime}-gf^{\prime}$.
\end{lemma}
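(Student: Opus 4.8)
The plan is to derive the system (\ref{eq:dif1}) by differentiating the defining integrals in (\ref{eq:uij}) and (\ref{eq:aux}) with respect to $\zeta$, using the fundamental differentiation rule (\ref{eq:diffop}) for the resolvent together with the two basic identities (\ref{eq:difker}) and (\ref{eq:dR}). All the functions $q_{\pm,j}$, $u_{\pm,jk}$, $\mathcal{Q}_j$, $V_k$, $\mathcal{R}_\pm$ are built from the resolvent kernel $R_0$ of $K_{2,airy}\chi$ and from the Airy functions $Ai^{(j)}$ via the multiplication operators $\psi^{\pm1}$ and the integration $I(H_j)=\int_{-\infty}^\xi H_j$, so differentiating them is a mechanical but careful application of the product and chain rules once the $\zeta$-derivatives of the ingredients are known.

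First I would compute $\frac{\p}{\p\zeta}q_{+,j}$. Writing $q_{+,j}=\left(Q_{+,j}\right)(\zeta)$ with $Q_{+,j}=\psi^{-1}(I+R_0\chi)\psi\, I(H_j)$, the $\zeta$-dependence enters in three places: the evaluation point $\xi=\zeta$, the cutoff $\chi=\chi_{[\zeta,\infty)}$ inside the resolvent (handled by (\ref{eq:difker})), and the upper limit in $I(H_j)$. The term from moving the evaluation point produces $\psi^{-1}\p_\xi\big((I+R_0\chi)\psi I(H_j)\big)|_{\xi=\zeta}$; using the integrable structure (\ref{eq:rels})--(\ref{eq:Phi}) and (\ref{eq:dR}) this collapses to the $\phi_j/\psi(\zeta)$ and $-\frac{\phi_0}{\psi(\zeta)}u_{+,j0}$ pieces, while the remaining contribution, coming from the jump of $\chi$ and from $\p_\zeta$ acting on $R_0$ via (\ref{eq:dR}), assembles into $\frac{1}{2\psi^2(\zeta)}\mathcal{Q}_j$ after invoking the relation $Q_{+,j}-Q_{-,j}=\frac{\Phi_0}{\psi}u_{-,j1}-\frac{\Phi_1}{\psi}u_{-,j0}$ of Lemma \ref{le:vV} evaluated at $\zeta$. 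The equation for $u_{\pm,jk}$ is easiest: from (\ref{eq:upmjk}), $\frac{\p}{\p\zeta}u_{\pm,jk}=\frac{\p}{\p\zeta}\big(\Phi_k,\psi^{\pm1}I(H_j)\chi\big)$, and the only surviving term is the boundary term at the lower endpoint $\zeta$ of $\chi$, giving $-\psi^{\pm1}(\zeta)q_{\pm,j}\phi_k$ directly, once one checks that the $\p_\zeta\Phi_k$ contribution integrates against $\chi$ to zero by the integrable-operator identities.

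For the equation satisfied by $\mathcal{Q}_j=q_{+,j}-q_{-,j}$, I would take the difference of the $\p_\zeta q_{+,j}$ formula and its $q_{-,j}$ analogue, then rewrite $q_{+,j}$ and $q_{-,j}$ in terms of $\mathcal{Q}_j$ and the Wronskians $W(\phi_0\psi^{-1},\phi_1\psi^{-1})$ using the algebraic relation $u_{-,jk}=W(\mathcal{Q}_j,\phi_k\psi^{-1})/W(\phi_0\psi^{-1},\phi_1\psi^{-1})$ — which itself follows from solving the two-dimensional linear system expressing $\mathcal{Q}_j$ and its derivative in the basis $\{\phi_0/\psi,\phi_1/\psi\}$ of solutions of the second-order ODE these functions satisfy (this is where $\psi=(T-\xi)^{1/2}$ and the Airy equation combine, since $\phi_k/\psi$ solves a Schrödinger-type equation). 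The log-derivative coefficient $\frac{\p}{\p\zeta}\log(\phi_0\psi^{-1})$ appears because $\phi_0/\psi$ is a distinguished solution that annihilates one component. The main obstacle I anticipate is precisely this last bookkeeping step: correctly tracking the boundary terms generated by the $\chi$-cutoff under differentiation and showing that the various $\Phi_0\otimes\Phi_0$ correction terms coming from (\ref{eq:dR}) recombine with the $I(H_j)$-endpoint terms into exactly the stated right-hand sides, without extraneous pieces. Once the individual $\p_\zeta$ computations are organized and the Wronskian/linear-algebra reduction is in place, the system (\ref{eq:dif1}) follows; I would present the $u_{\pm,jk}$ equation first, then $q_{+,j}$, then obtain $\mathcal{Q}_j$ by subtraction and the Wronskian substitution.
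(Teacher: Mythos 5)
Your overall strategy is the paper's: differentiate the defining integrals with respect to $\zeta$, use the resolvent identity (\ref{eq:diffop}) together with (\ref{eq:difker}) and (\ref{eq:drho}), integrate by parts, and reorganize. For $\p_\zeta q_{+,j}$ your sketch is essentially correct (though the $\frac{1}{2\psi^2}\mathcal{Q}_j$ piece comes just from combining the $\frac{1}{2\psi^2}q_{+,j}$ from the $\psi^{-1}(\zeta)$ prefactor with the $-\frac{1}{2\psi^2}q_{-,j}$ produced by $\psi'$ inside the integration by parts — Lemma \ref{le:vV} is not what generates it). However, there are two real gaps.

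First, for $\p_\zeta u_{\pm,jk}$ your claim that ``the $\p_\zeta\Phi_k$ contribution integrates against $\chi$ to zero'' and that only the endpoint term of $\chi$ survives is wrong, and the endpoint term alone would give $-\psi^{\pm1}(\zeta)I(H_j)(\zeta)\phi_k$, not $-\psi^{\pm1}(\zeta)q_{\pm,j}\phi_k$. Differentiating $u_{\pm,jk}=\bigl(\Phi_k,\psi^{\pm1}I(H_j)\chi\bigr)$ produces both the boundary term $-\phi_k\psi^{\pm1}(\zeta)I(H_j)(\zeta)$ (the $\delta$-part of $\rho$) and the integral $-\phi_k\int R_0(\xi,\zeta)\psi^{\pm1}(\xi)I(H_j)(\xi)\chi(\xi)\,\D\xi$ (the $R_0\chi$-part of $\rho$), and only their sum reassembles into $-\phi_k\psi^{\pm1}(\zeta)q_{\pm,j}$. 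Dropping the $\p_\zeta\Phi_k$ term breaks the identity.

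Second, your route to the $\mathcal{Q}_j$ ODE — subtract the $\p_\zeta q_{+,j}$ formula from ``its $q_{-,j}$ analogue'' — does not close. If you actually compute $\p_\zeta q_{-,j}$ in the same way you get, after integration by parts, the term $\int\rho\,\p_{\xi_2}\bigl(\psi^{-1}I(H_j)\bigr)\,\D\xi_2 = \int\rho\,\Bigl(\tfrac{I(H_j)}{2\psi^3}+\tfrac{Ai^{(j)}}{\psi^2}\Bigr)\,\D\xi_2$, and neither $\int\rho\,\psi^{-3}I(H_j)$ nor $\int\rho\,\psi^{-2}Ai^{(j)}$ is one of the $q_{\pm,j}$, $u_{\pm,jk}$, $\phi_k$ in the system — they are genuinely new functions. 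The paper sidesteps this by never differentiating $q_{-,j}$ directly: instead it differentiates the algebraic identity $\mathcal{Q}_j=\tfrac{\phi_0}{\psi}u_{-,j1}-\tfrac{\phi_1}{\psi}u_{-,j0}$ from Lemma \ref{le:vV}, observes that the $u_{-,jk}'$ contributions cancel because $\p_\zeta u_{-,jk}=-\psi^{-1}q_{-,j}\phi_k$ is proportional to $\phi_k$, and then solves the resulting $2\times2$ linear system for $u_{-,j0},u_{-,j1}$ to get both the ODE for $\mathcal{Q}_j$ and the Wronskian formula in one shot. You should replace the subtraction step with this differentiation-of-the-algebraic-relation argument; the subtraction does not produce a closed equation. (Also, $\phi_0/\psi$ and $\phi_1/\psi$ are not a solution basis of a common second-order ODE — the Wronskian expression comes from Cramer's rule on the $2\times2$ system, not from a Schr\"odinger-equation structure.)
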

\begin{proof}
First by using (\ref{eq:difker}) and (\ref{eq:upmjk}), one can show
as in \cite{TW3}, that, the derivatives of the $u_{\pm,jk}$ are
given by
\begin{equation}\label{eq:udif}
\begin{split}
\frac{\p
u_{\pm,jk}}{\p\zeta}&=-\left(R(\xi_1,\zeta),\psi^{\pm 1}I(H_j)\chi\right)\phi_k-\left(\psi^{\pm 1}I(H_j)\right)(\zeta)\phi_k,\\
&=-\left(\rho(\xi_1,\zeta),\psi^{\pm
1}I(H_j)\chi\right)\phi_k=-\psi^{\pm 1}q_{\pm,j}\phi_k.
\end{split}
\end{equation}
From this and (\ref{eq:vV}), we see that
\begin{equation}\label{eq:Qjeq}
\begin{split}
&\mathcal{Q}_j=\frac{\phi_0}{\psi(\zeta)}u_{-,j1}-
\frac{\phi_1}{\psi(\zeta)}u_{-,j0},\\
&\frac{\p\mathcal{Q}_j}{\p\zeta}=\left(\frac{\phi_0}{\psi(\zeta)}\right)^{\prime}u_{-,j1}-
\left(\frac{\phi_1}{\psi(\zeta)}\right)^{\prime}u_{-,j0}
\end{split}
\end{equation}
where the prime denotes derivative of $\zeta$. By eliminating
$u_{-,j1}$ from the second equation, we obtain the differential
equation for $\mathcal{Q}_j$ in (\ref{eq:dif1}). From
(\ref{eq:Qjeq}), we see that $u_{-,jk}$ are given by
\begin{equation*}
u_{-,jk}=W\left(\mathcal{Q}_j,\phi_k\psi^{-1}\right)/W\left(\phi_0\psi^{-1},\phi_1\psi^{-1}\right).
\end{equation*}

Now by using the definition of $Q_{+,j}$, we see that
\begin{equation*}
\begin{split}
\frac{\p q_{+,j}}{\p\zeta}&=\frac{1}{2\psi^2(\zeta)}q_{+,j}+
\psi^{-1}(\zeta)\int_{\mathbb{R}}\frac{\p
\rho(\zeta,\xi_2)}{\p\zeta}\psi(\xi_2)I(H_j)(\xi_2)\D \xi_2.
\end{split}
\end{equation*}
By applying (\ref{eq:drho}) to this, we obtain
\begin{equation*}
\begin{split}
&\int_{\mathbb{R}}\frac{\p
\rho(\zeta,\xi_2)}{\p\zeta}\psi(\xi_2)I(H_j)(\xi_2)\D \xi_2=-
\int_{\mathbb{R}}\frac{\p
\rho(\zeta,\xi_2)}{\p\xi_2}\psi(\xi_2)I(H_j)(\xi_2)\D \xi_2 -
\phi_0u_{+,j0}\\
&=\int_{\mathbb{R}}
\rho(\zeta,\xi_2)\frac{\p}{\p\xi_2}\left(\psi(\xi_2)I(H_j)(\xi_2)\right)\D
\xi_2- \phi_0 u_{+,j0}\\
&=-\frac{1}{2\psi(\zeta)}q_{-,j}+\phi_j-\phi_0u_{+,j0}.
\end{split}
\end{equation*}
This, together with (\ref{eq:vV}), gives us the differential
equation for $q_{+,j}$.
\end{proof}
This gives us the first set of differential equations. Let us now
derive the second set of differential equations for the functions
$\mathcal{R}_{\pm}$ and $\mathcal{P}_j$.
\begin{lemma}\label{le:seconddiff}
Let $\mathcal{R}_0$ be $\mathcal{R}_+-\mathcal{R}_-$. Then the
functions $\mathcal{R}_0$, $\mathcal{R}_+$, $\mathcal{P}_{-,j}$ and
$\mathcal{P}_{0,j}$ satisfy the following set of differential
equations.
\begin{equation}\label{eq:seconddiff}
\begin{split}
\frac{\p\mathcal{R}_0}{\p\zeta}&=\frac{\p\log\left(\phi_0\psi^{-1}\right)}{\p\zeta}\mathcal{R}_0-\frac{\psi(\zeta)}{\phi_0}W\left(\frac{\phi_0}{\psi},\frac{\phi_1}{\psi}\right)\mathcal{P}_{-,0},\\
\frac{\p\mathcal{R}_+}{\p\zeta}&=\frac{1}{2\psi^2}\mathcal{R}_0-\frac{\phi_0}{\psi(\zeta)}\mathcal{P}_{+,0},\quad
\frac{\p\mathcal{P}_{\pm,0}}{\p\zeta}=\phi_0\psi^{\pm1}(\zeta)\left(1-\mathcal{R}_{\pm}\right),\\
\mathcal{P}_{-,j}&=W\left(\mathcal{R}_0,\phi_j\psi^{-1}\right)/W\left(\phi_0\psi^{-1},\phi_1\psi^{-1}\right)
\end{split}
\end{equation}
The functions $\tilde{\mathcal{R}}_0$, $\tilde{\mathcal{R}}_+$,
$\tilde{\mathcal{P}}_{-,j}$ and $\tilde{\mathcal{P}}_{0,j}$ also
satisfy the same system of linear ODE.
\end{lemma}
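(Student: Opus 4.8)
The plan is to derive each equation in (\ref{eq:seconddiff}) by the same operator manipulations used for Lemma \ref{le:dif1} and in \cite{TW1}, \cite{TW3}, now with the resolvent acting on $\psi^{\pm1}$ and the Airy functions rather than on $\psi^{\pm1}I(H_j)$. Two preliminary facts will be used repeatedly: first, $R_0$ is symmetric, $R_0(\xi_1,\xi_2)=R_0(\xi_2,\xi_1)$, directly from (\ref{eq:rels}); second, applying (\ref{eq:difker}) to $Ai^{(j)}$ and using $(I+R_0\chi)Ai^{(j)}=\Phi_j$ (see (\ref{eq:Phi})) gives $\p_\zeta\Phi_j(\xi)=-R_0(\xi,\zeta)\phi_j$, the analogue of the derivative formula exploited in Lemma \ref{le:dif1}. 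I would begin with the easiest members: differentiating $\mathcal{P}_{+,0}=\int_{-\infty}^{\zeta}\Phi_0\psi\,\D\xi$ in $\zeta$ produces the boundary term $\phi_0\psi(\zeta)$ plus $\int_{-\infty}^{\zeta}(\p_\zeta\Phi_0)\psi\,\D\xi=-\phi_0\int_{-\infty}^{\zeta}R_0(\zeta,\xi)\psi(\xi)\,\D\xi=-\phi_0\psi(\zeta)\mathcal{R}_+$, so $\p_\zeta\mathcal{P}_{+,0}=\phi_0\psi(\zeta)(1-\mathcal{R}_+)$; the same computation with $\psi\mapsto\psi^{-1}$ gives $\p_\zeta\mathcal{P}_{-,j}=\phi_j\psi^{-1}(\zeta)(1-\mathcal{R}_-)$ for $j=0,1,2$, which is needed below.

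Next I would treat $\mathcal{R}_+=\psi^{-1}(\zeta)\int_{-\infty}^{\zeta}R_0(\zeta,\xi)\psi(\xi)\,\D\xi$. Its $\zeta$-derivative has three pieces: the prefactor derivative, using $\psi'(\zeta)=-\tfrac12\psi^{-1}(\zeta)$; the upper-limit boundary term $\psi^{-1}(\zeta)R_0(\zeta,\zeta)\psi(\zeta)$; and the inner derivative $\frac{\D}{\D\zeta}R_0(\zeta,\xi)$. For the last piece I would invoke (\ref{eq:dR}): since the running variable $\xi$ lies below $\zeta$, I first use the symmetry of $R_0$ to place the slot equal to $\zeta$ into the $\xi_2$-position, where (\ref{eq:dR}) is valid, obtaining $\frac{\D}{\D\zeta}R_0(\zeta,\xi)=-\Phi_0(\zeta)\Phi_0(\xi)-\p_\xi R_0(\zeta,\xi)$. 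Integrating the $\p_\xi R_0$ term by parts against $\psi(\xi)$ generates a boundary term at $\xi=\zeta$ that cancels the $R_0(\zeta,\zeta)$ term, a vanishing contribution at $\xi=-\infty$ (Airy decay), and $\tfrac12\int_{-\infty}^{\zeta}R_0(\zeta,\xi)\psi^{-1}(\xi)\,\D\xi$. Collecting everything with $\int_{-\infty}^{\zeta}R_0(\zeta,\xi)\psi(\xi)\,\D\xi=\psi(\zeta)\mathcal{R}_+$ and $\int_{-\infty}^{\zeta}R_0(\zeta,\xi)\psi^{-1}(\xi)\,\D\xi=\psi^{-1}(\zeta)\mathcal{R}_-$ yields $\p_\zeta\mathcal{R}_+=\tfrac{1}{2\psi^2}\mathcal{R}_0-\tfrac{\phi_0}{\psi(\zeta)}\mathcal{P}_{+,0}$, which is the second equation of (\ref{eq:seconddiff}).

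For the $\mathcal{R}_0$ equation and the Wronskian formula for $\mathcal{P}_{-,j}$ I would use the algebraic relation $\mathcal{R}_0=\tfrac{\phi_0}{\psi(\zeta)}\mathcal{P}_{-,1}-\tfrac{\phi_1}{\psi(\zeta)}\mathcal{P}_{-,0}$ from Lemma \ref{le:vV}. Differentiating it and substituting $\p_\zeta\mathcal{P}_{-,j}=\phi_j\psi^{-1}(\zeta)(1-\mathcal{R}_-)$, the $\mathcal{R}_-$-terms cancel and one is left with $\mathcal{R}_0'=(\phi_0\psi^{-1})'\mathcal{P}_{-,1}-(\phi_1\psi^{-1})'\mathcal{P}_{-,0}$. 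Treating this together with the relation itself as a $2\times2$ linear system in $(\mathcal{P}_{-,1},\mathcal{P}_{-,0})$ with determinant $-W(\phi_0\psi^{-1},\phi_1\psi^{-1})$ and solving by Cramer's rule gives $\mathcal{P}_{-,j}=W(\mathcal{R}_0,\phi_j\psi^{-1})/W(\phi_0\psi^{-1},\phi_1\psi^{-1})$; re-expressing $\mathcal{R}_0'$ with the identity $\tfrac{\psi}{\phi_0}=(\phi_0\psi^{-1})^{-1}$ then converts it into the logarithmic-derivative form $\mathcal{R}_0'=\p_\zeta\log(\phi_0\psi^{-1})\,\mathcal{R}_0-\tfrac{\psi(\zeta)}{\phi_0}W(\phi_0\psi^{-1},\phi_1\psi^{-1})\mathcal{P}_{-,0}$, exactly as in the treatment of $\mathcal{Q}_j$ in (\ref{eq:dif1}). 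Finally, for the tilde functions the same four computations go through verbatim with $\int_{-\infty}^{\zeta}\mapsto-\int_{\zeta}^{\infty}$: the sole boundary term now sits at $+\infty$ and still vanishes by the (super-exponential) Airy decay of $R_0(\zeta,\xi)$ and $\Phi_j(\xi)$, and since (\ref{eq:dR}) is stated precisely for $\xi_2\in[\zeta,\infty)$ no symmetry trick is even required; hence $(\tilde{\mathcal{R}}_0,\tilde{\mathcal{R}}_+,\tilde{\mathcal{P}}_{-,j},\tilde{\mathcal{P}}_{+,0})$ obey the identical system.

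The main obstacle I anticipate is bookkeeping rather than conceptual: separating cleanly the dependence of $R_0$ and $\Phi_j$ on $\zeta$ as an argument from their dependence on $\zeta$ as the parameter defining $\chi_{[\zeta,\infty)}$, and verifying that every boundary term produced by the integrations by parts cancels in pairs. The one genuinely delicate point is the application of (\ref{eq:dR}) in the non-tilde case, where the running integration variable is below $\zeta$ and one must first transport the argument equal to $\zeta$ into the admissible slot via the symmetry of $R_0$; once that is set up, the rest is a direct differentiation-and-integration-by-parts calculation.
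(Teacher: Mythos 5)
Your proposal is correct and follows essentially the same route as the paper: differentiate the defining integrals, use $\p_\zeta\Phi_j(\xi)=-R_0(\xi,\zeta)\phi_j$ together with the symmetry of $R_0$ for the $\mathcal{P}_{\pm,j}$ equations, use (\ref{eq:dR}) and integration by parts for the $\mathcal{R}_+$ equation, and obtain the $\mathcal{R}_0$ and Wronskian identities from the algebraic relation of Lemma \ref{le:vV}. Your version is in fact a little more explicit than the paper's at the point where the boundary term $R_0(\zeta,\zeta)$ produced by differentiating the upper limit cancels against the boundary term coming from the integration by parts of $\p_\xi R_0(\xi,\zeta)$, a cancellation the paper passes over silently.
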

\begin{proof} The proof is similar to (\ref{eq:dif1}). First, by
(\ref{eq:difker}), we can obtain the derivatives of $\Phi_k(\xi)$
with respect to $\zeta$.
\begin{equation*}
\frac{\p}{\p\zeta}\Phi_k(\xi)=-R_0\left(\xi,\zeta\right)\phi_k,
\end{equation*}
From this, we obtain the derivatives of $\mathcal{P}_{\pm,0}$.
\begin{equation*}
\begin{split}
\frac{\p\mathcal{P}_{\pm,j}}{\p\zeta}&=\phi_j\psi^{\pm1}+\int_{-\infty}^{\zeta}\frac{\p\Phi_j(\xi)}{\p\zeta}\psi^{\pm1}(\xi)\D
\xi=\phi_j\psi^{\pm1}-\phi_j\int_{-\infty}^{\zeta}R_0\left(\xi,\zeta\right)\psi^{\pm1}(\xi)\D
\xi.
\end{split}
\end{equation*}
The differential equation for $\mathcal{P}_{\pm,0}$ now follows from
the fact that $R_0(\xi_1,\xi_2)$ is symmetric with respect to the
interchange of $\xi_1$ and $\xi_2$.

From this and (\ref{eq:vV}), we again have the following equations
for $\mathcal{R}_0$.
\begin{equation*}
\begin{split}
\mathcal{R}_0&=\frac{\phi_0}{\psi(\zeta)}\mathcal{P}_{-,1}
-\frac{\phi_1}{\psi(\zeta)}\mathcal{P}_{-,0},\\
\frac{\p\mathcal{R}_0}{\p\zeta}&=\left(\frac{\phi_0}{\psi(\zeta)}\right)^{\prime}\mathcal{P}_{-,1}
-\left(\frac{\phi_1}{\psi(\zeta)}\right)^{\prime}\mathcal{P}_{-,0}.
\end{split}
\end{equation*}
This then gives us the Wroskian equations for $\mathcal{P}_{-,j}$.
Let us now derive the differential equation for $\mathcal{R}_+$.
First by using the fact that $R_0$ is symmetric with respect to the
interchange of its arguments, we can write $R_+$ as
$R_+=\psi^{-1}\int_{-\infty}^{\zeta}R_0(\xi,\zeta)\psi^{-1}(\xi)\D\xi$.
Then we have
\begin{equation*}
\begin{split}
\frac{\p\mathcal{R}_+}{\p\zeta}=\frac{1}{2\psi^2(\zeta)}\mathcal{R}_+
+\psi^{-1}(\zeta)R_0(\zeta,\zeta)\psi(\zeta)+\psi^{-1}(\zeta)\int_{-\infty}^{\zeta}\frac{\p
R_0(\xi,\zeta)}{\p\zeta}\psi(\xi)\D\xi.
\end{split}
\end{equation*}
By using (\ref{eq:dR}), the above becomes
\begin{equation*}
\begin{split}
\frac{\p\mathcal{R}_+}{\p\zeta}&=\frac{1}{2\psi^2(\zeta)}\mathcal{R}_+
-\psi^{-1}(\zeta)\int_{-\infty}^{\zeta}\frac{\p
R_0(\xi,\zeta)}{\p\xi}\psi(\xi)\D\xi-\frac{\phi_0}{\psi(\zeta)}\mathcal{P}_{+,0}\\
&=\frac{1}{2\psi^2(\zeta)}\mathcal{R}_0-\frac{\phi_0}{\psi(\zeta)}\mathcal{P}_{+,0}.
\end{split}
\end{equation*}
The same argument can be applied to the functions The functions
$\tilde{\mathcal{R}}_0$, $\tilde{\mathcal{R}}_+$,
$\tilde{\mathcal{P}}_{-,j}$ and $\tilde{\mathcal{P}}_{0,j}$ to
obtain the same system of linear ODEs.
\end{proof}
The ODEs satisfied by the various functions can be further
simplified. First, the function $\phi_0=(I-K_{2,airy}\chi)Ai(\zeta)$
is known to be the Hastings-McLeod solution of the Painlev\'e II
equation (\ref{eq:HM}) (See \cite{TWairy}).

The Wronskian of $\phi_0/\psi$ and $\phi_1/\psi$ can be written as
\begin{equation}\label{eq:wron}
W\left(\phi_0/\psi,\phi_1/\psi\right)=\psi^{-2}\left(\phi_0\phi_1^{\prime}-\phi_1\phi_0^{\prime}\right)
=\psi^{-2}R_0(\zeta,\zeta).
\end{equation}
The function $R_0(\zeta,\zeta)$, again is known to be the
logarithmic derivative of the Tracy-Widom distribution for the GUE
\cite{TWairy}. Therefore we have
\begin{equation}\label{eq:Rphi1}
R_0(\zeta,\zeta)=\int_{\zeta}^{\infty}\phi_0^2(\xi)\D
\xi=\sigma(\zeta).
\end{equation}
To obtain $\phi_1$, let us take the derivative of $\phi_0$ and use
(\ref{eq:drho}), then we have
\begin{equation*}
\begin{split}
\frac{\p\phi_0}{\p\zeta}&=\int_{\mathbb{R}}\frac{\p\rho(\zeta,\xi)}{\p\zeta}Ai(\xi)\D
\xi=-\int_{\mathbb{R}}\frac{\p\rho(\zeta,\xi)}{\p\xi}Ai(\xi)\D \xi
-\phi_0\left(\Phi_0,Ai\chi\right),\\
&=\phi_1-\phi_0\left(\Phi_0,Ai\chi\right).
\end{split}
\end{equation*}
As in the derivations of (\ref{eq:udif}), we see that
\begin{equation*}
\frac{\p}{\p\zeta}(\Phi_0,Ai\chi)=\phi_0^2,\quad
\frac{\p}{\p\zeta}(\Phi_1,Ai\chi)=\phi_0\phi_1.
\end{equation*}
Since $(\Phi_0,Ai\chi)=(\Phi_1,Ai\chi)=0$ at $\zeta=\infty$, we
obtain
\begin{equation}\label{eq:phi1}
\phi_1=\phi_0^{\prime}+\phi_0\int_{\zeta}^{\infty}\phi_0^2(\xi)\D\xi=
\phi_0^{\prime}+\sigma\phi_0
\end{equation}
To obtain $\phi_2$, we use $Ai^{\prime\prime}(\xi)=\xi Ai(\xi)$ to
obtain
\begin{equation*}
\Phi_2(\xi)=\xi\left(I+R_0\chi\right)Ai+\Phi_1(\xi)(\Phi_0,Ai\chi)-\Phi_0(\xi)(\Phi_1,Ai\chi).
\end{equation*}
Therefore we have
\begin{equation}\label{eq:phi2}
\phi_2=\left(\zeta+\int_{\zeta}^{\infty}\phi_0(\xi)\phi_1(\xi)\D\xi\right)\phi_0-\sigma\phi_1.
\end{equation}
Summarizing, we have the following.
\begin{proposition}\label{pro:difeq}
Let $U$ be the matrix
\begin{equation*}
U=\begin{pmatrix}0&0&0&-\psi\phi_0\\
0&0&\psi^{-1}\phi_0&-\psi^{-1}\phi_0\\
0&-\frac{ \sigma}{\phi_0\psi}&\frac{\p}{\p\zeta}\log\left(\phi_0/\psi\right)&0\\
-\phi_0\psi^{-1}&0&\frac{1}{2\psi^2}&0
\end{pmatrix}
\end{equation*}
and let $\vec{h}_j$ be the vector
$\vec{h}_j=\left(0,0,0,\frac{\phi_j}{\psi}\right)^T$ for $j=0,1,2$
and $\vec{h}_j=0$ for $j=3$ and $j=4$. Then the functions in
(\ref{eq:3det}) and (\ref{eq:matrixent}) are given by
\begin{equation*}
\begin{split}
&q_{-,j}-q_{-,j}^{(\infty)}=v_{j4}-v_{j3}-\int_{-\infty}^{\infty}H_j\D
u,\quad u_{-,0j}=v_{j2},\quad u_{-,jk}=\frac{\psi^2W\left(v_{j3},\phi_k\psi^{-1}\right)}{\sigma},\\
&\mathcal{R}_-=v_{34}-v_{33}+1,\quad \mathcal{P}_{-,0}=v_{32},\quad
\mathcal{P}_{-,1}=\frac{\psi^2W\left(v_{33},\phi_1\psi^{-1}\right)}{\sigma},\\
&\tilde{\mathcal{R}}_-=v_{44}-v_{43}+1,\quad
\tilde{\mathcal{P}}_{-,0}=v_{42},\quad
\tilde{\mathcal{P}}_{-,1}=\frac{\psi^2W\left(v_{43},\phi_1\psi^{-1}\right)}{\sigma},
\end{split}
\end{equation*}
where $\vec{v}_j$ is the vector that satisfies the linear system of
ODEs
\begin{equation*}
\begin{split}
&\frac{\p\vec{v}_j}{\p\zeta}=U(\zeta)\vec{v}_j+\vec{h}_j,\quad
\vec{v}_j\sim\left(0,0,0,\int_{-\infty}^{\infty}H_j\D
u\right)^T,\quad
\zeta\rightarrow+\infty,\quad j=0,1,2,\\
&\vec{v}_3\sim\left(0,0,-1,0\right)^T,\quad
\zeta\rightarrow-\infty,\quad
\vec{v}_4\sim\left(0,0,-1,0\right)^T,\quad \zeta\rightarrow+\infty
\end{split}
\end{equation*}
\end{proposition}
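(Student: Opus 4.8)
\textbf{Proof proposal for Proposition \ref{pro:difeq}.}

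The plan is to package the scalar functions appearing in (\ref{eq:3det}) and (\ref{eq:matrixent}) into the four vector-valued unknowns $\vec v_j=(v_{j1},v_{j2},v_{j3},v_{j4})^T$, $j=0,1,2,3,4$, and to show that each such vector satisfies the stated first-order linear system $\p_\zeta\vec v_j=U\vec v_j+\vec h_j$ together with its boundary condition at $+\infty$ (or $-\infty$ for $j=3$). First I would make the identification explicit: for $j=0,1,2$ set the last two components to be the values $q_{-,j}-q_{-,j}^{(\infty)}$ and (a shifted version of) $u_{-,0j}=v_{j2}$, and the third component to be $\mathcal P_{-,0}$-type quantities; for $j=3$ use $\mathcal R_-$, $\mathcal P_{-,0}$, $\mathcal P_{-,1}$; for $j=4$ use the tilded versions $\tilde{\mathcal R}_-$, $\tilde{\mathcal P}_{-,0}$, $\tilde{\mathcal P}_{-,1}$. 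The point is that Lemma \ref{le:dif1} and Lemma \ref{le:seconddiff} already give a closed system of ODEs in $\zeta$ for exactly these objects, so the matrix $U$ is simply read off from the right-hand sides of (\ref{eq:dif1}) and (\ref{eq:seconddiff}) after one eliminates $u_{-,j1}$ (resp. $\mathcal P_{-,1}$) in favour of the Wronskian expression, and the inhomogeneity $\vec h_j=(0,0,0,\phi_j/\psi)^T$ comes from the $+\phi_j\psi^{-1}$ term in $\p_\zeta q_{+,j}$ and $\p_\zeta\mathcal P_{\pm,j}$.

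Second, I would verify that the coefficient entries of $U$ are precisely the known functions claimed. This uses: (i) $\phi_0=(I-K_{2,\mathrm{airy}}\chi)^{-1}Ai$ is the Hastings-McLeod solution, which is the cited result of \cite{TWairy}; (ii) $W(\phi_0/\psi,\phi_1/\psi)=\psi^{-2}R_0(\zeta,\zeta)=\psi^{-2}\sigma(\zeta)$ by (\ref{eq:wron}) and (\ref{eq:Rphi1}); (iii) $\phi_1=\phi_0'+\sigma\phi_0$ and $\phi_2=\bigl(\zeta+\int_\zeta^\infty\phi_0\phi_1\bigr)\phi_0-\sigma\phi_1$ from (\ref{eq:phi1}), (\ref{eq:phi2}). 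Substituting (ii) into the Wronskian-ratio formulas $u_{-,jk}=W(\mathcal Q_j,\phi_k\psi^{-1})/W(\phi_0\psi^{-1},\phi_1\psi^{-1})$ turns the denominators into $\sigma$, which is how the entry $-\sigma/(\phi_0\psi)$ and the stated formulas $u_{-,jk}=\psi^2 W(v_{j3},\phi_k\psi^{-1})/\sigma$ appear. The entry $\p_\zeta\log(\phi_0/\psi)$ is literally the coefficient of $\mathcal Q_j$ (resp. $\mathcal R_0$) in the second equation of (\ref{eq:dif1}) (resp. (\ref{eq:seconddiff})), and $\tfrac1{2\psi^2}$ and $-\psi\phi_0$, $-\phi_0\psi^{-1}$, $\psi^{-1}\phi_0$ are read directly off $\p_\zeta q_{+,j}$ and $\p_\zeta\mathcal R_+$ after using $\psi^2=T-\xi$ so that $\p_\zeta\log\psi=-\tfrac1{2(T-\zeta)}=-\tfrac1{2\psi^2(\zeta)}$.

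Third, I would pin down the boundary conditions. For $j=0,1,2$, as $\zeta\to+\infty$ the characteristic function $\chi=\chi_{[\zeta,\infty)}$ pushes the resolvent $R_0\chi$ to zero exponentially (by (\ref{eq:xi1asym})), so $Q_{\pm,j}(\zeta)\to\int_{-\infty}^{\infty}H_j\,\D u$ while $u_{\pm,jk}$ and the $\mathcal P$'s tend to $0$; this gives $\vec v_j\sim(0,0,0,\int_{-\infty}^\infty H_j\,\D u)$. For $\vec v_3$ the normalization $1$ in $\mathcal R_-=v_{34}-v_{33}+1$ and the behaviour of $(I+R_0\chi)$ as $\zeta\to-\infty$ (where $\chi$ becomes the identity on all of $\mathbb R$ and $I+R_0\chi=(I-K_{2,\mathrm{airy}}\chi)^{-1}$) forces $v_{33}\to-1$ and the other components to vanish, and similarly for $\vec v_4$ at $+\infty$. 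I expect the main obstacle to be purely bookkeeping rather than conceptual: keeping the five vectors, the shifts by $q_{-,j}^{(\infty)}$ and by $(H_j,(1-\chi))$, and the sign conventions in $\epsilon$, $\tilde\epsilon$, $I(\cdot)$ all consistent, and checking that the \emph{same} matrix $U$ governs the $j=0,1,2$ block (from Lemma \ref{le:dif1}) and the $j=3,4$ block (from Lemma \ref{le:seconddiff}) — which is exactly what the two lemmas were arranged to guarantee, since the ODEs there are structurally identical with $\mathcal Q_j\leftrightarrow\mathcal R_0$, $u_{-,j0}\leftrightarrow\mathcal P_{-,0}$, $q_{+,j}\leftrightarrow\mathcal R_+$, $\phi_j/\psi\leftrightarrow$ inhomogeneity. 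Once the dictionary is fixed, the proposition follows by collecting (\ref{eq:dif1}), (\ref{eq:seconddiff}), (\ref{eq:wron})–(\ref{eq:phi2}) and matching boundary data.
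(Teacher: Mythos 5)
Your proposal is correct and follows exactly the (implicit) argument of the paper: Proposition \ref{pro:difeq} is stated after the phrase ``Summarizing, we have the following,'' and the paper's proof is precisely to collect Lemma \ref{le:dif1} and Lemma \ref{le:seconddiff}, translate their Wronskian denominators via (\ref{eq:wron}) and (\ref{eq:Rphi1}), use (\ref{eq:phi1})--(\ref{eq:phi2}) for the coefficients, and read off the boundary data from (\ref{eq:xi1asym}) — which is what you do. The only quibble is in your description of the dictionary: for $j=0,1,2$ the correct identification is $\vec v_j=(u_{+,j0},\,u_{-,j0},\,\mathcal Q_j,\,q_{+,j})^T$ (so the third slot is $\mathcal Q_j$, not a $\mathcal P$-type quantity — those only appear in the $j=3,4$ blocks as $\vec v_3=(\mathcal P_{+,0},\mathcal P_{-,0},\mathcal R_0,\mathcal R_+-1)^T$ and its tilded analogue), but once this is fixed the rest of your verification goes through unchanged.
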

These functions can also be characterized using the connection
between Fredholm determinants and Riemann-Hilbert problems. We will
outline this connection in Appendix B.
\section*{Appendix: A proof of the j.p.d.f. formula using Zonal polynomials}
\renewcommand{\theequation}{A.\arabic{equation}}
\setcounter{equation}{0}

We present here a simpler algebraic proof of Theorem \ref{thm:main1}
using Zonal polynomials. Zonal polynomials are introduced by James
\cite{James} and Hua \cite{Hu} independently. They are polynomials
with matrix argument that depend on an index $p$ which is a
partition of an integer $k$. The real Zonal polynomials $Z_p(X)$
take arguments in symmetric matrices and are homogenous polynomials
in the eigenvalues of its matrix argument $X$. We shall not go into
the details of their definitions, but only state the important
properties of these polynomials that is relevant to our proof.
Readers who are interested can refer to the excellent references of
\cite{Muir}, \cite{Mac} and \cite{Ta}.

Let $p$ be a partition of an integer $k$ and let $l(p)$ be the
length of the partition. We will use $p\vdash k$ to indicate that
$p$ is a partition of $k$. Let $X$ and $Y$ be $N\times N$ symmetric
matrices and $x_i$, $y_i$ their eigenvalues. Given a partition
$p=(p_1,\ldots,p_{l(p)})$ of the integer $k$, we will order the
parts $p_i$ such that if $i<j$, then $p_i>p_j$. If we have 2
partitions $p$ and
$p^{\prime}=(p^{\prime}_1,\ldots,p_{l(p^{\prime})}^{\prime})$, then
we say that $p<p^{\prime}$ if there exists an index $j$ such that
$p_i=p_i^{\prime}$ for $i<j$ and $p_j<p_j^{\prime}$. Let the
monomial $x^p$ be $x_1^{p_1}\ldots x_{p_{l(p)}}^{p_{l(p)}}$, we say
that $x^{p^{\prime}}$ is of a higher weight than $x^p$ if
$p^{\prime}>p$. Then the Zonal polynomial $Z_{p}(X)$ is a homogenous
polynomial of degree $k$ in the eigenvalues $x_j$ with the highest
weight term being $x^p$. It has the following properties.
\begin{equation}\label{eq:zonal}
\begin{split}
&\left(\tr(X)\right)^k=\sum_{p\vdash k}Z_p(X),\\
&\int_{O(N)}e^{-M\tr\left(XgYg^T\right)}\D g
=\sum_{k=0}^{\infty}\frac{M^k}{k!}\sum_{p\vdash
k}\frac{Z_p(X)Z_p(Y)}{Z_p(I_N)}
\end{split}
\end{equation}
These properties can be found in the references \cite{Muir},
\cite{Mac} and \cite{Ta}. Another important property is the
following generating function formula for the Zonal polynomials,
which can be found in \cite{Mac} and \cite{Ta}.
\begin{equation}\label{eq:genfun}
\prod_{i,j=1}^N\left(1-2\theta
x_iy_j\right)^{-\frac{1}{2}}=\sum_{k=0}^{\infty}\frac{\theta^k}{k!}\sum_{p\vdash
k}\frac{Z_p(X)Z_p(Y)}{d_p}
\end{equation}
where $d_p$ are constants. In particular, if $(k)$ is the partition
of $k$ with length 1, that is, $(k)=(k,0,\ldots,0)$, then the
constant $d_{(k)}$ is given by
\begin{equation*}
d_{(k)}=\frac{1}{(2k-1)!!}.
\end{equation*}
For the rank 1 spiked model, let us consider the case where all but
one $y_j$ is zero and denote the non-zero eigenvalue by $y$. Then
from the fact that the highest weight term in $Z_p(Y)$ is
$y_1^{p_1}\ldots y_{p_{l(p)}}^{p_{l(p)}}$, we see that the only
non-zero $Z_p(Y)$ is $Z_{(k)}(Y)$, which by the first equation in
(\ref{eq:zonal}), is simply $y^k$. Therefore the formulae in
(\ref{eq:zonal}) and (\ref{eq:genfun}) are greatly simplified in
this case.
\begin{equation}\label{eq:gen2}
\begin{split}
&\int_{O(N)}e^{-M\tr\left(XgYg^T\right)}\D g
=\sum_{k=0}^{\infty}M^k\frac{Z_{(k)}(X)y^k}{k!Z_{(k)}(I_N)},\\
&\prod_{i=1}^N\left(1-2\theta
x_iy\right)^{-\frac{1}{2}}=\sum_{k=0}^{\infty}\theta^k\frac{(2k-1)!!Z_{(k)}(X)y^k}{k!}
\end{split}
\end{equation}
By using the generating function formula, we see that $Z_{(k)}(I_N)$
is given by
\begin{equation}\label{eq:ZI}
Z_{(k)}(I_N)=\frac{\left(N/2+k-1\right)!2^k}{(N/2-1)!(2k-1)!!}.
\end{equation}
By taking $\theta=\frac{1}{2t}$ in the second equation of
(\ref{eq:gen2}), we see that
\begin{equation*}
\prod_{i=1}^N\left(t-
x_iy\right)^{-\frac{1}{2}}=t^{-\frac{N}{2}}\sum_{k=0}^{\infty}(2t)^{-k}\frac{(2k-1)!!Z_{(k)}(X)y^k}{k!}.
\end{equation*}
We can now compute the integral
\begin{equation*}
S(t)=\int_{\Gamma}e^{Mt}\prod_{i=1}^N\left(t-
x_iy\right)^{-\frac{1}{2}}\D t
\end{equation*}
by taking residue at $\infty$, which is the $t^{-1}$ coefficient in
the following expansion
\begin{equation*}
e^{Mt}\prod_{i=1}^N\left(t-
x_iy\right)^{-\frac{1}{2}}=\sum_{k,j=0}^{\infty}\frac{M^jt^{-\frac{N}{2}+j-k}(2k-1)!!Z_{(k)}(X)y^k}{2^kj!k!}.
\end{equation*}
This coefficient is given by
\begin{equation*}
\begin{split}
S(t)&=M^{\frac{N}{2}-1}\sum_{k=0}^{\infty}\frac{Z_{(k)}(X)(2k-1)!!y^kM^k}{2^k(N/2+k-1)!k!}\\
&=\frac{M^{\frac{N}{2}-1}}{\left(N/2-1\right)!}\sum_{k=0}^{\infty}\frac{Z_{(k)}(X)y^kM^k}{Z_{(k)}(I_N)k!}=\frac{M^{\frac{N}{2}-1}}{\left(N/2-1\right)!}\int_{O(N)}e^{-M\tr\left(XgYg^T\right)}\D
g.
\end{split}
\end{equation*}
By taking $y=\frac{\tau}{2(1+\tau)}$, this proves Theorem
\ref{thm:main1}. There also exist complex and quarternionic Zonal
polynomials $C_{p}(X)$ and $Q_{p}(X)$ which satisfy the followings
instead.
\begin{equation*}
\begin{split}
&\int_{U(N)}e^{-M\tr\left(XgYg^{\dag}\right)}g^{\dag}\D g
=\sum_{k=0}^{\infty}\frac{M^k}{k!}\sum_{p\vdash
k}\frac{C_p(X)C_p(Y)}{C_p(I_N)},\\
&\int_{Sp(N)}e^{-M\mathrm{Re}\left(\tr\left(XgYg^{-1}\right)\right)}g^{-1}\D
g =\sum_{k=0}^{\infty}\frac{M^k}{k!}\sum_{p\vdash
k}\frac{Q_p(X)Q_p(Y)}{Q_p(I_N)}.
\end{split}
\end{equation*}
Their generating functions are given by
\begin{equation*}
\begin{split}
&\prod_{i,j=1}^N\left(1-2\theta
x_iy_j\right)^{-1}=\sum_{k=0}^{\infty}\frac{\theta^k}{k!}\sum_{p\vdash
k}\frac{C_p(X)C_p(Y)}{c_p},\\
&\prod_{i,j=1}^N\left(1-2\theta
x_iy_j\right)^{-2}=\sum_{k=0}^{\infty}\frac{\theta^k}{k!}\sum_{p\vdash
k}\frac{Q_p(X)Q_p(Y)}{q_p}
\end{split}
\end{equation*}
where $c_{(k)}$ and $q_{(k)}$ are
\begin{equation*}
\begin{split}
c_{(k)}=\frac{1}{2^kk!},\quad q_{(k)}=\frac{1}{(k+1)!2^k}
\end{split}
\end{equation*}
Then by following the same argument as in the real case, we can
write down the following integral formulae for rank one
perturbations of the complex and quarternionic cases.
\begin{equation}\label{eq:compsym}
\begin{split}
&\int_{U(N)}e^{-M\tr\left(XgYg^{\dag}\right)}g^{\dag}\D g
=\frac{(N-1)!}{M^{N-1}}\int_{\Gamma}e^{Mt}\prod_{i=1}^N\left(t-
x_iy\right)^{-1}\D t,\\
&\int_{Sp(N)}e^{-M\mathrm{Re}\left(\tr\left(XgYg^{-1}\right)\right)}g^{-1}\D
g =\frac{(2N-1)!}{M^{2N-1}}\int_{\Gamma}e^{Mt}\prod_{i=1}^N\left(t-
x_iy\right)^{-2}\D t.
\end{split}
\end{equation}
\section*{Appendix B: Connection to Riemann-Hilbert problem}
\renewcommand{\theequation}{A.\arabic{equation}}
\setcounter{equation}{0} The functions $\Phi_{j}$ involved in
(\ref{eq:aux}) can also be found by solving a Riemann-Hilbert
problem (See e.g. \cite{BD}, \cite{CIK}, \cite{D}, \cite{DIK},
\cite{DIZ}). Let $K$ be an integrable operator acting on
$\mathbb{R}$ of the form (\ref{eq:intop}) and $f_i$ be the column
vector with entries $f_{ij}$. Then operator $K\chi$ is also
integrable. Its resolvent can be found by solving the following
Riemann-Hilbert problem.
\begin{equation}\label{eq:RHPres}
\begin{aligned}
1. \quad  & \text{$Y(\xi)$ is analytic in $\mathbb{C}\setminus [\zeta,\infty)$},&&\\
2. \quad & Y_{+}(\xi)=Y_{-}(\xi)\left(I-2\pi if_{1}(\xi)f_{2}^T(\xi)\right),\quad \xi\in(\zeta,\infty),\\
3. \quad  & Y(\xi)=I+O(\xi^{-1}), \quad   \xi\rightarrow\infty,\\
4. \quad &Y(\xi)=O(\log|\xi-\zeta|),\quad \xi\rightarrow \zeta.
\end{aligned}
\end{equation}
Then the vectors $F_{i}$ with entries $F_{ij}$ in
(\ref{eq:resokern}) are given by $Y_+(\xi)f_{ij}$.

This Riemann-Hilbert problem can be connected to the Riemann-Hilbert
problem of the Painlev\'e II equation. We will now outline this
connection. For more details, please see \cite{CIK}. Let $A(\xi)$ be
the matrix in Lemma \ref{le:Vair}. Multiplying the solution $Y$ of
(\ref{eq:RHPres}) by $A$ on the right and then deform the regions
suitably will transform the Riemann-Hilbert problem
(\ref{eq:RHPres}) into the following Riemann-Hilbert problem.
\begin{equation}\label{eq:RHPresX}
\begin{aligned}
1. \quad  & \text{$X(\xi)$ is analytic in $\mathbb{C}\setminus \Sigma$},&&\\
2. \quad & X_{+}(\xi)=X_{-}(\xi)J_X,\quad \xi\in\Sigma,\\
3. \quad  & X(\xi)=\frac{1}{\sqrt{2}}\xi^{-\frac{\sigma_3}{4}}\begin{pmatrix}1&1\\
-1&1\end{pmatrix}e^{-\frac{\pi i}{4}\sigma_3}\left(I+O(\xi^{-\frac{3}{2}})\right)e^{-\frac{2}{3}\xi^{\frac{3}{2}}\sigma_3}, \quad   \xi\rightarrow\infty,\\
4. \quad &X(\xi)=O(\log|\xi-\zeta|),\quad \xi\rightarrow \zeta.
\end{aligned}
\end{equation}
where the contour $\Sigma$ is the union of
\begin{equation*}
\Sigma_1=(-\infty,\zeta],\quad \Sigma_2=\zeta+e^{\frac{2\pi
i}{3}}\mathbb{R}_+,\quad \Sigma_3=\zeta+e^{-\frac{2\pi
i}{3}}\mathbb{R}_+.
\end{equation*}
The contours in $\Sigma$ are all pointing towards $\zeta$. The jump
matrix $J_X$ is given by
\begin{equation*}
J_X=\begin{pmatrix}1&0\\
1&1\end{pmatrix},\quad \xi\in\Sigma_2\cup\Sigma_3,\quad J_X=\begin{pmatrix}0&1\\
-1&0\end{pmatrix},\quad \xi\in\Sigma_1.
\end{equation*}
Since $X=YA$, we have the following.
\begin{equation}\label{eq:X1X2}
X_{11}=\sqrt{2\pi}e^{-\frac{i\pi}{4}}(I-K_{2,airy}\chi)^{-1}Ai,\quad
X_{21}=\sqrt{2\pi}e^{-\frac{i\pi}{4}}(I-K_{2,airy}\chi)^{-1}Ai^{\prime}.
\end{equation}
Note the difference between our Riemann-Hilbert problem
(\ref{eq:RHPresX}) and the one given in (2.11-15) of \cite{CIK} . In
(\ref{eq:RHPresX}), as $\xi\rightarrow\infty$, the next to the
leading order term is of order $\xi^{-1}$ smaller than the leading
order term. This is due to the asymptotic behavior of the Airy
functions in the matrix $A$. As a result, our Riemann-Hilbert
problem (\ref{eq:RHPresX}) will be uniquely solvable. This is
important for us to keep track of the functions $\Phi_0$ and
$\Phi_1$ that appears in the resolvent. In \cite{CIK}, it was shown
that the Riemann-Hilbert problem (\ref{eq:RHPresX}) can be solved by
using the monodromy problem associating with the Painlev\'e II
equation. Let $\Sigma_{\Psi}$ be the union of the contours
\begin{equation*}
\Sigma_{\Psi,1}=e^{\frac{\pi i}{6}}\mathbb{R}_+,\quad
\Sigma_{\Psi,2}=e^{\frac{5\pi i}{6}}\mathbb{R}_+,\quad
\Sigma_{\Psi,3}=e^{\frac{7\pi i}{6}}\mathbb{R}_+,\quad
\Sigma_{\Psi,4}=e^{-\frac{\pi i}{6}}\mathbb{R}_+,
\end{equation*}
and let $S_{\pm}$, $S_1$ and $S_2$ be the regions
\begin{equation*}
S_+=\left\{\xi|\quad
-\frac{\pi}{6}<\arg\xi<\frac{\pi}{6}\right\},\quad S_-=-S_+,\quad
S_1=\left\{\xi|\quad
\frac{\pi}{6}<\arg\xi<\frac{5\pi}{6}\right\},\quad S_2=-S_1.
\end{equation*}
Then $\Psi(\xi,v)$ is the solution to the following monodromy
problem of the Painlev\'e II equation.
\begin{equation}\label{eq:RHPresPsi}
\begin{aligned}
1. \quad  & \text{$\Psi(\xi,v)$ is analytic in $\mathbb{C}\setminus \Sigma_{\Psi}$},&&\\
2. \quad & \Psi_{+}(\xi,v)=\Psi_{-}(\xi,v)J_{\Psi},\quad \xi\in\Sigma_{\Psi},\\
3. \quad  & \Psi(\xi,v)=\left(I+\frac{\Psi_{\infty}}{\xi}+O(\xi^{-2})\right)e^{-i\left(\frac{4}{3}\xi^3+v\xi\right)\sigma_3}, \quad   \xi\rightarrow\infty,\\
4. \quad &\Psi(\xi,v)=\left\{
                      \begin{array}{ll}
                        O\left(|\xi|^{\frac{\sigma_3}{2}}\right)\begin{pmatrix}1&0\\
\pm i&1\end{pmatrix}, & \hbox{$\xi\rightarrow 0$ in $S_{\pm}$, ;} \\
                        O\left(|\xi|^{\frac{\sigma_3}{2}}\right), & \hbox{$\xi\rightarrow 0$ in $S_1$;} \\
                        O\left(|\xi|^{-\frac{\sigma_3}{2}}\right), & \hbox{$\xi\rightarrow 0$ in $S_2$.}
                      \end{array}
                    \right.
\end{aligned}
\end{equation}
where $J_{\Psi}$ is given by
\begin{equation*}
J_{\Psi}=\begin{pmatrix}1&0\\
-i&1\end{pmatrix},\quad \xi\in
\Sigma_{\Psi,1}\cup\Sigma_{\Psi,2},\quad J_{\Psi}=\begin{pmatrix}1&-i\\
0&1\end{pmatrix},\quad \xi\in \Sigma_{\Psi,3}\cup\Sigma_{\Psi,4},
\end{equation*}
where the contours are all pointing towards infinity. The matrix
$\Psi$ also satisfies the Lax equation for the Painlev\'e II
equation.
\begin{equation}\label{eq:lax}
\begin{split}
\frac{\p}{\p v}\Psi&=\begin{pmatrix}-i\xi&q\\
q&i\xi\end{pmatrix}\Psi,\\
\xi\frac{\p}{\p \xi}\Psi&=\begin{pmatrix}-4i\xi^3-(2iq+v)\xi&4q\xi^2+2iq_v\xi-\frac{1}{2}\\
4q\xi^2-2iq_v\xi-\frac{1}{2}&4i\xi^3+(2iq+v)\xi\end{pmatrix}\Psi,
\end{split}
\end{equation}
where $q(v)$ is related to the Hastings-McLeod solution of
Painlev\'e II by (see (1.47) of \cite{CIK})
\begin{equation}\label{eq:qq0}
2^{-\frac{4}{3}}(v+2q^2(v)+2q_v(v))=\phi_0^2\left(2^{\frac{1}{3}}v\right).
\end{equation}
By using the Lax equations (\ref{eq:lax}) and the behavior of $\Psi$
at $\xi\rightarrow\infty$, one can check that the next to the
leading term $\Psi_{\infty}$ in the expansion in
(\ref{eq:RHPresPsi}) is given by
\begin{equation}\label{eq:Psiinf}
\Psi_{\infty}=\begin{pmatrix}-\frac{iq}{2}&\frac{q}{2i}\\
-\frac{q}{2i}&\frac{iq}{2}\end{pmatrix}
\end{equation}
The authors in \cite{CIK} then showed that the solution to the
Riemann-Hilbert problem (\ref{eq:RHPresX}) is given by
\begin{equation*}
X(\xi,\zeta)=\frac{1}{\sqrt{2}}\begin{pmatrix}1&0\\
h&1\end{pmatrix}\left(\xi-\zeta\right)^{-\frac{1}{4}\sigma_3}\begin{pmatrix}1&1\\
-1&1\end{pmatrix}\Psi\left(4^{-\frac{1}{6}}i\left(\xi-\zeta\right)^{\frac{1}{2}},-2^{\frac{1}{3}}\zeta\right)e^{-\frac{i\pi}{4}\sigma_3}
\end{equation*}
for some $h$ independent on $\xi$. The function $h(\zeta)$ is
determined by making sure that the next to the leading order term in
$X$ is of order indicated by (\ref{eq:RHPresX}). By using
(\ref{eq:Psiinf}), one can check that the appropriate choice of $h$
in this case is given by $h=-q$. The properties of the
Riemann-Hilbert problem in (\ref{eq:RHPresPsi}) is studied very
thoroughly in \cite{CIK}, and these properties will hopefully be
useful in the characterization of the functions in (\ref{eq:3det})
and (\ref{eq:matrixent}).

\vspace{.25cm}

\noindent\rule{16.2cm}{.5pt}

\vspace{.25cm}

{\small

\noindent {\sl School of Mathematics \\
                       University of Bristol\\
                       Bristol BS8 1TW, UK  \\
                       Email: {\tt m.mo@bristol.ac.uk}

                       \vspace{.25cm}

                       \noindent  26 January  2011}}

\end{document}